\documentclass{amsart}

\usepackage[T1]{fontenc}
\usepackage{enumerate, amsmath, amsfonts, amssymb, amsthm, thmtools, mathrsfs, wasysym, graphics, graphicx, xcolor, url, hyperref, hypcap, shuffle, xargs, multicol, overpic, pdflscape, multirow, hvfloat, minibox, accents, array, multido, xifthen, a4wide, ae, aecompl, blkarray, pifont, mathtools, etoolbox, dsfont}
\usepackage{marginnote}
\hypersetup{colorlinks=true, citecolor=darkblue, linkcolor=darkblue}
\usepackage[all]{xy}
\usepackage[bottom]{footmisc}
\usepackage{tikz}
\usetikzlibrary{trees, decorations, decorations.markings, shapes, arrows, matrix, calc, fit, intersections, patterns, angles, cd}
\usepackage[external]{forest}
\graphicspath{{figures/}}
\makeatletter\def\input@path{{figures/}}\makeatother
\usepackage{caption}
\captionsetup{width=\textwidth}
\usepackage[noabbrev,capitalise]{cleveref}
\usepackage[export]{adjustbox}
\usepackage{ulem}\normalem


\newtheorem{theorem}{Theorem}
\newtheorem{corollary}[theorem]{Corollary}
\newtheorem{proposition}[theorem]{Proposition}
\newtheorem{lemma}[theorem]{Lemma}
\newtheorem{conjecture}[theorem]{Conjecture}
\newtheorem*{theorem*}{Theorem}
\newtheorem*{proposition*}{Proposition}
\newtheorem*{conjecture*}{Conjecture}

\theoremstyle{definition}
\newtheorem{definition}[theorem]{Definition}
\newtheorem{example}[theorem]{Example}
\newtheorem{remark}[theorem]{Remark}
\newtheorem{question}[theorem]{Question}

\newtheorem{defprop}[theorem]{Definition/Proposition}
\newtheorem{warning}[theorem]{Warning}
\crefname{notation}{Notation}{Notations}
\crefname{problem}{Problem}{Problems}
 
\newcommand{\R}{\mathbb{R}} 
\newcommand{\HH}{\mathbb{H}} 
\renewcommand{\b}[1]{{\boldsymbol{#1}}} 

\newcommand{\set}[2]{\left\{ #1 \;\middle|\; #2 \right\}} 
\newcommand{\bigset}[2]{\big\{ #1 \;\big|\; #2 \big\}} 
\newcommand{\setangle}[2]{\left\langle #1 \;\middle|\; #2 \right\rangle} 
\newcommand{\ssm}{\smallsetminus} 
\newcommand{\dotprod}[2]{\left\langle \, #1 \; \middle| \; #2 \, \right\rangle} 
\newcommand{\symdif}{\,\triangle\,} 
\newcommand{\one}{\b{1}} 
\newcommand{\eqdef}{\mbox{\,\raisebox{0.2ex}{\scriptsize\ensuremath{\mathrm:}}\ensuremath{=}\,}} 

\DeclareMathOperator{\inv}{inv} 
\DeclareMathOperator{\ninv}{ninv} 
\DeclareMathOperator{\Hom}{Hom} 

\newcommand{\ie}{\textit{i.e.}~} 
\newcommand{\eg}{\textit{e.g.}~} 
\newcommand{\aka}{\textit{a.k.a.}~} 
\definecolor{darkblue}{rgb}{0,0,0.7} 
\definecolor{green}{RGB}{57,181,74} 
\definecolor{violet}{RGB}{147,39,143} 
\newcommand{\darkblue}{\color{darkblue}} 
\newcommand{\defn}[1]{\textsl{\darkblue #1}} 
\newcommand{\para}[1]{\smallskip\noindent\uline{#1.}} 

\usepackage{todonotes}

\newcommand{\meet}{\wedge} 
\newcommand{\join}{\vee} 
\newcommandx{\projDown}[1][1={}]{\smash{\pi_\downarrow^{#1}}} 
\newcommandx{\projUp}[1][1={}]{\smash{\pi^\uparrow_{#1}}} 

\newcommandx{\Fan}[1][1=D]{\mathcal{F}_{#1}} 
\newcommand{\polytope}[1]{\mathds{#1}} 

\newcommand{\wigglyComplex}{\mathrm{WC}} 
\newcommand{\wigglyFlipGraph}{\mathrm{WFG}} 
\newcommand{\wigglyIncreasingFlipGraph}{\mathrm{WIFG}} 
\newcommand{\wigglyLattice}{\mathrm{WL}} 
\newcommand{\wigglyFan}{\mathrm{WF}} 
\newcommand{\wigglyhedron}{\polytope{W}} 
\newcommand{\Asso}{\polytope{A}\mathsf{sso}} 

\setcounter{tocdepth}{4}
\makeatletter
\def\l@part{\@tocline{1}{8pt}{0pc}{}{}}
\def\l@section{\@tocline{1}{4pt}{0pc}{}{}}
\makeatother
\let\oldtocpart=\tocpart
\renewcommand{\tocpart}[2]{\sc\large\oldtocpart{#1}{#2}}
\let\oldtocsection=\tocsection
\renewcommand{\tocsection}[2]{\bf\oldtocsection{#1}{#2}}
\let\oldtocsubsubsection=\tocsubsubsection
\renewcommand{\tocsubsubsection}[2]{\quad\oldtocsubsubsection{#1}{#2}}


\title{Wigglyhedra}

\thanks{AB was partially supported by the Australian Research Council DECRA grant DE240100447. 
VP was partially supported by the Spanish project PID2022-137283NB-C21 of MCIN/AEI/10.13039/501100011033 / FEDER, UE, by the Spanish--German project COMPOTE (AEI PCI2024-155081-2 \& DFG 541393733), by the Severo Ochoa and María de Maeztu Program for Centers and Units of Excellence in R\&D (CEX2020-001084-M), by the Departament de Recerca i Universitats de la Generalitat de Catalunya (2021 SGR 00697), and by the French--Austrian project PAGCAP (ANR-21-CE48-0020 \& FWF I 5788).}

\author{Asilata Bapat}
\address{The Australian National University}
\email{asilata.bapat@anu.edu.au}
\urladdr{\url{https://asilata.github.io}}

\author{Vincent Pilaud}
\address[Vincent Pilaud]{Universitat de Barcelona \& Centre de Recerca Matemàtica, Barcelona, Spain}
\email{vincent.pilaud@ub.edu}
\urladdr{\url{https://www.ub.edu/comb/vincentpilaud/}}


\begin{document}

\begin{abstract}
Motivated by categorical representation theory, we define the wiggly complex, whose vertices are arcs wiggling around $n+2$ points on a line, and whose faces are sets of wiggly arcs which are pairwise pointed and non-crossing. The wiggly complex is a $(2n-1)$-dimensional pseudomanifold, whose facets are wiggly pseudotriangulations. We show that wiggly pseudotriangulations are in bijection with wiggly permutations, which are permutations of~$[2n]$ avoiding the patterns~$(2j-1) \cdots i \cdots (2j)$ for~$i < 2j-1$ and~$(2j) \cdots k \cdots (2j-1)$ for~$k > 2j$. These permutations define the wiggly lattice, an induced sublattice of the weak order. We then prove that the wiggly complex is isomorphic to the boundary complex of the polar of the wigglyhedron, for which we give explicit and simple vertex and facet descriptions. Interestingly, we observe that any Cambrian associahedron is normally equivalent to a well-chosen face of the wigglyhedron. Finally, we recall the correspondence of wiggly arcs with objects in a category, and we develop categorical criteria for a subset of wiggly arcs to form a face of the wiggly complex.
\end{abstract}

\maketitle

\centerline{\includegraphics[scale=1.7]{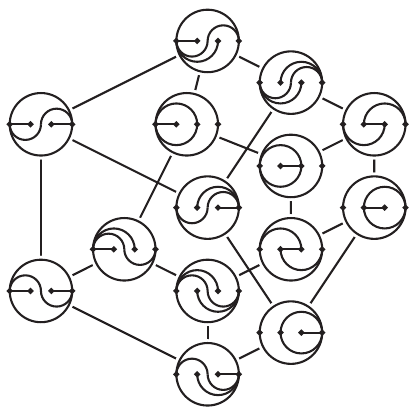}}

\tableofcontents


\section{Introduction}

In this paper we consider wiggly pseudotriangulations, which are built from arcs wiggling around points on a line.
We study their combinatorics in detail, construct polyhedral realizations of their flip graph, and interpret them in terms of categorical representation theory.

\bigskip
\para{Combinatorics}
Fix $n \ge 1$ and consider the~$n+2$ points on the horizontal axis labelled~${0, \dots, n+1}$.
A \defn{wiggly arc} is an abscissa monotone curve starting at some point~$i$, ending at some point~$j$ (with~${0 \le i < j \le n+1}$), and passing above or below each point~$i+1, \dots, j-1$.
Two wiggly arcs are \defn{compatible} if they are pointed (the left endpoint of one is not the right endpoint of the other) and non-crossing (the interiors of the curves are disjoint).
A \defn{wiggly dissection} is a collection of pairwise compatible wiggly arcs, and a \defn{wiggly pseudotriangulation} is an inclusion maximal wiggly dissection.
The \defn{wiggly complex}~$\wigglyComplex_n$ is the simplicial complex of wiggly dissections (minus their external wiggly arcs).
We first establish the following result.

\begin{proposition*}
The wiggly complex~$\wigglyComplex_n$ is a $(2n-1)$-dimensional pseudomanifold without boundary.
\end{proposition*}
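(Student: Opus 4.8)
The plan is to establish three things in turn: (i) $\wigglyComplex_n$ is pure of dimension $2n-1$; (ii) every codimension-one face of $\wigglyComplex_n$ is contained in exactly two facets; and (iii) the dual graph of $\wigglyComplex_n$ is connected. Given (i), statement (ii) says precisely that $\wigglyComplex_n$ is a pseudomanifold and that it has no boundary, while (iii) supplies the strong connectivity that some authors include in the definition of a pseudomanifold.

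\textbf{Purity.} I would first analyze how a wiggly dissection $D$ subdivides the region swept out by the wiggly arcs (the disk bounded by the external arcs $\omega^{+}$, passing above all points, and $\omega^{-}$, passing below all points) into cells, and prove the key structural fact that $D$ is inclusion-maximal if and only if each cell is a \emph{wiggly pseudotriangle}: a cell with exactly three convex corners, the pointedness condition at the interior points $1,\dots,n$ forcing every other corner to be reflex. From this, purity — and the dimension $2n-1$ — follows by a reflex-angle count: each wiggly pseudotriangle has a fixed convex-corner budget, each interior point is a reflex corner exactly once, and Euler's relation for the subdivision links the number of arcs of $D$ to the number of cells, pinning the size of a facet to $2n$. (Alternatively, realizing wiggly arcs as ordinary arcs in the planar point configuration obtained by splitting each interior point into a nearby ``above'' copy and ``below'' copy identifies wiggly pseudotriangulations with the pointed pseudotriangulations of that configuration, so that all facets have equal size by the classical equicardinality theorem for pointed pseudotriangulations.) Establishing the ``maximal $\iff$ all cells wiggly pseudotriangles'' characterization — in particular making ``wiggly pseudotriangle'' precise and handling the cells along the boundary and at the extreme points $0$ and $n+1$ — is the delicate part of this step.

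\textbf{The flip.} As $\wigglyComplex_n$ is pure, a codimension-one face $D$ — a wiggly dissection with $2n-1$ non-external arcs — lies in at least one facet $D\cup\{a\}$, and any wiggly dissection containing $D$ has at most $2n$ non-external arcs; so it suffices to show that there are exactly two arcs $a,a'$ with $D\cup\{a\}$ and $D\cup\{a'\}$ both facets. Every cell of $D$ is a wiggly pseudotriangle except one cell $R$, obtained by deleting $a$ and merging the two pseudotriangles of $D\cup\{a\}$ glued along it; $R$ has exactly four convex corners — a \emph{wiggly pseudo-quadrilateral}. I would then prove the local statement that a wiggly pseudo-quadrilateral is cut into two wiggly pseudotriangles by a single wiggly arc in exactly two ways — along its two ``geodesic'' arcs — and that any wiggly arc completing $D$ to a facet must lie inside $R$ and hence be one of these two. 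This is the wiggly analog of the classical fact that a pseudo-quadrilateral has exactly two geodesic diagonals; I would prove it by tracing the two geodesics of $R$ and checking that each is an honest wiggly arc: abscissa-monotone, with a well-defined above/below pattern, and pointed at every interior point it passes. This local analysis is the main obstacle.

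\textbf{Connectivity.} Finally I would prove the dual graph connected by a sorting argument: fix a reference facet — for instance the ``fan'' wiggly pseudotriangulation all of whose non-external arcs emanate from the point $0$ — and show, by induction on a monotone statistic such as the number of non-external arcs not incident to $0$, that every facet can be carried to it by a sequence of flips. Assembling the three parts, $\wigglyComplex_n$ is a (strongly connected) pure $(2n-1)$-dimensional simplicial complex in which every ridge lies in exactly two facets, i.e.\ a $(2n-1)$-dimensional pseudomanifold without boundary.
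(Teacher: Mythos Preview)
Your plan is correct and mirrors the paper's argument: purity is obtained via an Euler-formula count showing that inclusion-maximal wiggly pseudodissections are exactly those whose cells are all wiggly pseudotriangles, and the flip is established by proving that a wiggly pseudoquadrangle admits exactly two wiggly diagonals. For connectivity the paper uses the statistic dual to yours --- it flips away \emph{final} arcs (those ending at the rightmost point $n{+}1$) and recurses on $n$ --- but this is the same kind of sorting argument. One caution on your parenthetical alternative: identifying wiggly pseudotriangulations with classical pointed pseudotriangulations of a split point configuration is not immediate, since the two notions of ``pointed'' differ (your condition forbids one arc starting where another ends, whereas the classical condition asks that the edges at each vertex span an angle $<\pi$); making that correspondence precise would itself require work.
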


In particular, all wiggly pseudotriangulations have $2n-1$ internal wiggly arcs, each of which can be \defn{flipped}.
We actually describe the flip in terms of wiggly diagonals in wiggly pseudoquadrangles.
We also orient this flip graph in a canonical way, which we call the \defn{wiggly increasing flip graph}~$\wigglyIncreasingFlipGraph_n$.
The wiggly increasing flip graph on wiggly pseudotriangulations of $4$ points is the front page illustration of this paper (throughout the paper, we implicitly orient the graphs and posets from bottom to top).

We then make a small detour to permutations.
A \defn{wiggly permutation} is a permutation of~$[2n]$ avoiding the patterns~$(2j-1) \cdots i \cdots (2j)$ for~$i < 2j-1$ and~$(2j) \cdots k \cdots (2j-1)$ for~$k > 2j$.
Our main result on wiggly permutations is the following.

\begin{proposition*}
The wiggly permutations induce a sublattice of the weak order on permutations of~$[2n]$, which we call the \defn{wiggly lattice}~$\wigglyLattice_n$. Moreover, its Hasse diagram is regular of degree~$2n-1$.
\end{proposition*}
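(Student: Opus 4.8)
The plan is to recognize the wiggly permutations as the elements of a well-understood combinatorial object, namely as a subset of $[2n]$ defined by pattern avoidance, and to invoke the general machinery for sublattices of the weak order. Recall that a subset $X$ of the symmetric group $\mathfrak{S}_m$ is an induced sublattice of the weak order as soon as it is closed under taking meets and joins of consecutive pairs, equivalently as soon as for any $\sigma \in X$ and any descent (respectively ascent) of $\sigma$, the permutation obtained by ``resolving'' that descent in the weak order stays inside $X$ after possibly jumping over a whole block. The cleanest route is to use the criterion, classical in the theory of pattern-avoiding lattice quotients and sublattices of the weak order (going back to Reading's work on lattice congruences and revisited in work on permutree lattices), that a set of permutations defined by the avoidance of a family of patterns of length three with a fixed ``middle value'' forms a sublattice of the weak order, and that moreover such sets correspond to permutrees. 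So the first step is to match our two forbidden pattern families $(2j-1)\cdots i \cdots (2j)$ with $i < 2j-1$ and $(2j)\cdots k \cdots (2j-1)$ with $k > 2j$ to a permutree decoration on $[2n]$: each pair $\{2j-1, 2j\}$ behaves like a node that cannot have a small value pass below the pair on one side and cannot have a large value pass on the other, which is exactly the local condition cut out by a permutree vertex with a prescribed orientation.

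Concretely, I would proceed as follows. First, I reformulate the pattern avoidance: $\sigma$ is wiggly if and only if for every $j \in [n]$, writing $a = \sigma^{-1}(2j-1)$ and $b = \sigma^{-1}(2j)$ for the positions of the two values $2j-1$ and $2j$, no value smaller than $2j-1$ sits strictly between positions $a$ and $b$ while $2j-1$ precedes $2j$, and symmetrically no value larger than $2j$ sits strictly between them while $2j$ precedes $2j-1$. Second, I translate this into the statement that the permutations avoiding these patterns are exactly the linear extensions compatible with a fixed permutree $T$ on $2n$ nodes where the node $2j-1$ is of one decorated type and the node $2j$ is of the ``opposite'' type, the decorations being chosen so that the forced local inequalities match. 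Third, I invoke the theorem that for any permutree $T$, the set of linear extensions of $T$ is an induced sublattice of the weak order (in fact a lattice quotient-and-sublattice), whose Hasse diagram is the permutree flip graph; this immediately gives that $\wigglyLattice_n$ is an induced sublattice of the weak order on $\mathfrak{S}_{2n}$. Fourth, for the regularity of the Hasse diagram I count the covering relations in the weak order that stay within $\wigglyLattice_n$: a cover of $\sigma$ in the weak order transposes two consecutive values, and I must check that exactly $2n-1$ such transpositions (up and down combined) preserve the wiggly condition for each $\sigma$; equivalently, each wiggly permutation has exactly $2n-1$ flips, which should follow from matching wiggly permutations with wiggly pseudotriangulations (each of which has $2n-1$ flippable internal arcs by the previous proposition) via the bijection announced in the abstract, or directly from the permutree structure by counting increasing and decreasing flips.

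The main obstacle, I expect, is the precise combinatorial translation between the two pattern families and a permutree decoration, in particular pinning down exactly which decoration on the nodes $\{2j-1, 2j\}$ yields these and only these forbidden patterns, and checking that the ``middle value'' of each pattern is genuinely fixed (here the middle entries $i$ and $k$ are not fixed values but the outer pair $\{2j-1,2j\}$ is the data that is fixed, so this is a variant of the permutree setup rather than the textbook case and needs care). A secondary subtlety is that the two families together must be simultaneously realizable by a single permutree, \ie the local conditions at different indices $j$ must not interact destructively; I would verify this by checking that the union of the forced arcs in the permutree is acyclic. If the permutree framework does not apply verbatim, the fallback is a direct and self-contained argument: show closure under meet and join by the standard ``if $\sigma$ and $\tau$ are wiggly then so is $\sigma \meet \tau$'' inversion-set computation, using that the inversion set of $\sigma \meet \tau$ is the transitive closure of $\inv(\sigma) \cap \inv(\tau)$ and that the wiggly condition is a statement about which ordered pairs of values may be inversions, so it is preserved under intersection and transitive closure; this is more hands-on but avoids invoking external machinery. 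Either way, the regularity statement is then the combinatorial bookkeeping step of counting the valid elementary transpositions, which I would present last.
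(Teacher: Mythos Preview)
Your permutree approach will not go through. In the permutree framework the avoided patterns constrain what values may sit on either side of a \emph{decorated middle value}~$j$; here the roles are reversed---the fixed data is the \emph{outer pair}~$\{2j-1,2j\}$ and the constraint is on the arbitrary value sandwiched between them. This is not the shape of a permutree condition, and indeed the paper observes that the wiggly complex is a simplicial associahedron only for~$n=2$, the coincidence failing for~$n>2$; so you should not expect the wiggly lattice to be a Cambrian or permutree lattice in general.

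Your fallback is the right idea and is essentially what the paper does, but your formula is wrong: it is \emph{not} true that~$\inv(\sigma \meet \tau)$ is the transitive closure of~$\inv(\sigma) \cap \inv(\tau)$ (take~$\sigma = 231$ and~$\tau = 312$ in~$\mathfrak{S}_3$: the intersection~$\{(3,1)\}$ is already transitively closed but is not an inversion set at all, while~$\sigma \meet \tau = 123$). The correct formulas are~$\inv(\sigma \join \tau) = (\inv(\sigma) \cup \inv(\tau))^{\mathrm{tc}}$ and dually~$\ninv(\sigma \meet \tau) = (\ninv(\sigma) \cup \ninv(\tau))^{\mathrm{tc}}$. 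The paper uses exactly these, together with the inversion-set reformulation of wiggliness (namely~$(2j-1,i) \in \inv(\sigma) \Rightarrow (2j,i) \in \inv(\sigma)$ and~$(k,2j-1) \in \inv(\sigma) \Rightarrow (k,2j) \in \inv(\sigma)$), and checks in a few lines that these implications survive union followed by transitive closure.

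Your regularity argument has a separate gap. Covers in a sublattice are \emph{not} in general covers of the ambient lattice, so ``count the adjacent transpositions that preserve wiggliness'' does not compute the degree in~$\wigglyLattice_n$. Concretely, for~$n=2$ the identity~$1234$ has weak-order covers~$2134,\,1324,\,1243$, of which~$1324$ is not wiggly; its third cover in~$\wigglyLattice_2$ is~$1342$, which sits two steps away in the weak order. The paper handles this by describing, for each ascent~$j$ of~$\sigma$, the explicit minimal wiggly permutation~$\sigma^j$ lying above~$\sigma$ with the new inversion~$(\sigma(j+1),\sigma(j))$; this~$\sigma^j$ is obtained by swapping two \emph{blocks} rather than two entries. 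One then checks that the~$\sigma^j$ are distinct and exhaust the upper covers, and dually for descents; since ascents plus descents number~$2n-1$, regularity follows. Deducing regularity from the bijection with pseudotriangulations is legitimate in principle, but that bijection (as a graph isomorphism) is itself the main content of the following section and cannot be assumed here.
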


We then connect wiggly pseudotriangulations to wiggly permutations.

\begin{proposition*}
The increasing flip graph on wiggly pseudotriangulations~$\wigglyIncreasingFlipGraph_n$ is isomorphic to the Hasse diagram of the wiggly lattice~$\wigglyLattice_n$. In particular, the wiggly pseudotriangulations admit a lattice structure.
\end{proposition*}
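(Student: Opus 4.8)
The plan is to produce an explicit bijection $\Theta$ between wiggly pseudotriangulations and wiggly permutations, and to check that it turns increasing flips into cover relations of the wiggly lattice. I would obtain $\Theta$ from a surjection $\Omega$ from the symmetric group on $[2n]$ onto the set of wiggly pseudotriangulations, defined by an insertion/sweep procedure: reading a permutation $\sigma$ from left to right, the value $\sigma(t)$ prescribes which wiggly arc to insert into the wiggly dissection built so far (equivalently, which increasing flip to apply), keeping it pointed and non-crossing at every step, in the spirit of binary search tree insertion for the Tamari lattice or of $c$-sorting for Cambrian lattices. The first task is to show that $\Omega$ is well defined and surjective, for instance by writing down, for each wiggly pseudotriangulation, a canonical preimage word read directly off its arcs.

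The crux is the interaction of $\Omega$ with the cover relations of the weak order. For a position $p$ with $\sigma(p)<\sigma(p+1)$, I would prove that $\Omega(\sigma s_p)$ is obtained from $\Omega(\sigma)$ by either no move at all or a single increasing flip (described by a wiggly diagonal in a wiggly pseudoquadrangle, as in the flip description mentioned above), and, crucially, that which of the two occurs is a local condition on $\sigma$: the swap is invisible to $\Omega$ exactly when it creates or destroys an occurrence of one of the forbidden patterns $(2j-1)\cdots i\cdots(2j)$ with $i<2j-1$ or $(2j)\cdots k\cdots(2j-1)$ with $k>2j$. Establishing this, together with the confluence statement that two available elementary swaps performed in either order produce the same wiggly pseudotriangulation, shows that the fibers of $\Omega$ are intervals of the weak order on which the maps sending a permutation to the minimum, resp.\ maximum, of its fiber are order-preserving, \ie the classes of a lattice congruence.

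By the standard theory of lattice congruences, the class minima then form a sublattice of the weak order isomorphic to the quotient lattice; by the previous step these minima are precisely the permutations avoiding the two families of patterns, that is, the wiggly permutations, so the sublattice is $\wigglyLattice_n$ and $\Omega$ restricts to the desired bijection $\Theta^{-1}$. Moreover, the cover relations of the quotient lattice are, by construction, mapped injectively via $\Omega$ to the increasing flips, since a flip determines both of its adjacent classes. As the Hasse diagram of $\wigglyLattice_n$ and the wiggly increasing flip graph $\wigglyIncreasingFlipGraph_n$ are both regular of degree $2n-1$ on the same vertex set --- the former by the previous proposition, the latter because every wiggly pseudotriangulation has exactly $2n-1$ internal wiggly arcs, each flippable --- this map from cover relations to edges is a bijection, and one checks that it respects the bottom-to-top orientation. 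Transporting the lattice operations of $\wigglyLattice_n$ along $\Theta$ finally yields the lattice structure on wiggly pseudotriangulations.

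The main obstacle is the middle step: determining exactly which adjacent transpositions are invisible to $\Omega$, and matching that to the two pattern conditions. This demands a careful analysis of how flips in wiggly pseudoquadrangles are triggered during the sweep, plus the diamond/confluence check; the pairs $\{2j-1,2j\}$ in the patterns ought to correspond to the two sides --- above and below --- along which a wiggly arc may pass the $j$-th point, so the bookkeeping, while purely combinatorial, is intricate, and that is where the real work lies.
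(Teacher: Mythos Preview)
Your approach is genuinely different from the paper's, and while the overall strategy is plausible, it carries a real structural risk that the paper's argument avoids entirely.

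The paper does not pass through a surjection from all of~$\mathfrak{S}_{2n}$, nor through lattice congruence theory. Instead it writes down two explicit maps directly between wiggly pseudotriangulations and wiggly permutations: a map~$\Phi$ that labels the corners of each pseudotriangle by~$2h-1$ and~$2h$ (where~$h$ is the hinge) and reads these labels from bottom to top, and a map~$\Psi$ that, for each~$k\in[2n-1]$, builds a wiggly arc~$\alpha(\sigma,k)$ from the prefix set~$\sigma([k])$. It then checks by hand that both are well defined, that they are mutual inverses, and that a cover relation~$\sigma\lessdot\sigma^j$ in~$\wigglyLattice_n$ corresponds to a flip~$\Psi(\sigma)\ssm\{\alpha(\sigma,j)\}=\Psi(\sigma^j)\ssm\{\alpha(\sigma^j,j)\}$. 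The final $(2n-1)$-regularity argument you give is then used exactly as you describe to upgrade the graph morphism to an isomorphism.

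The gap in your plan is that you have not actually defined~$\Omega$, and it is not clear the natural candidate extends. The paper's map~$\Psi$ is the obvious choice for your sweep, but its well-definedness genuinely uses the wiggly pattern avoidance: for an arbitrary permutation one can have~$\{2u,2v-1\}$ appearing before~$\{2u-1,2v\}$ with~$u<v$, and then the prefix recipe produces~$i\ge j$, \ie not a wiggly arc at all. So ``the value~$\sigma(t)$ prescribes which increasing flip to apply'' needs an actual rule, and you would then have to prove that the resulting fibers are congruence classes --- equivalently, that~$\wigglyLattice_n$ is a lattice \emph{quotient} of the weak order, not merely the sublattice established earlier. The paper neither claims nor needs this. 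If it is true, your route would yield strictly more (the congruence and its forcing order); but as written, the ``middle step'' you flag is not just intricate bookkeeping --- it presupposes a structural fact you have not established.
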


\para{Polyhedral geometry}
We then switch to geometric realizations of the wiggly complex~$\wigglyComplex_n$.
Our constructions are inspired by similar constructions for generalized associahedra~\cite{HohlwegLangeThomas,HohlwegPilaudStella} of finite type cluster algebras~\cite{FominZelevinsky-ClusterAlgebrasI,FominZelevinsky-ClusterAlgebrasII}, and for gentle associahedra~\cite{PaluPilaudPlamondon-nonkissing} of support \mbox{$\tau$-tilting} complexes~\cite{AdachiIyamaReiten} on gentle algebras~\cite{ButlerRingel}.
Namely, we first construct a fan supported by certain $\b{g}$-vectors associated to the wiggly arcs.

\begin{theorem*}
The cones generated by the $\b{g}$-vectors of wiggly pseudodissections form a complete simplicial fan, called \defn{wiggly fan}~$\wigglyFan_n$, realizing the wiggly complex~$\wigglyComplex_n$.
\end{theorem*}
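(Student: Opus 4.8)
The plan is to invoke the standard criterion for recognizing when a family of $\b{g}$-vector cones indexed by the faces of a pseudomanifold assembles into a complete simplicial fan; arguments of this shape recur throughout the literature on cluster fans and non-kissing fans, see \cite{HohlwegLangeThomas,HohlwegPilaudStella,PaluPilaudPlamondon-nonkissing} and the references therein. By the first proposition above, $\wigglyComplex_n$ is a pseudomanifold without boundary, and it is connected since its flip graph is (being isomorphic to the Hasse diagram of the lattice $\wigglyLattice_n$). It therefore suffices to establish two facts. (i)~\emph{Independence:} the $\b{g}$-vectors of the $2n-1$ internal wiggly arcs of \emph{some} wiggly pseudotriangulation form a linear basis of the ambient space. (ii)~\emph{Opposite sides:} for any flip exchanging the internal wiggly arcs $\alpha \in T$ and $\alpha' \in T'$ between two adjacent wiggly pseudotriangulations $T$ and $T'$, the vectors $\b{g}(\alpha)$ and $\b{g}(\alpha')$ lie on strictly opposite sides of the hyperplane $\vect\set{\b{g}(\beta)}{\beta \in T \cap T'}$.

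Granting~(i) and~(ii), here is how the theorem follows. Starting from the pseudotriangulation in~(i) and walking along flips, condition~(ii) propagates the basis property to the $\b{g}$-vectors of \emph{every} wiggly pseudotriangulation: once $\set{\b{g}(\beta)}{\beta \in T}$ is a basis we have $\b{g}(\alpha) \notin \vect\set{\b{g}(\beta)}{\beta \in T \cap T'}$, and since $\b{g}(\alpha')$ lies on the opposite side of that hyperplane, $\set{\b{g}(\beta)}{\beta \in T'}$ is again a basis. Hence every maximal cone $\cone\set{\b{g}(\beta)}{\beta \in T}$ is full-dimensional and simplicial; condition~(ii) then says that two maximal cones sharing a wall lie on opposite sides of it; and combining this local compatibility with the pseudomanifold-without-boundary and connectedness properties of $\wigglyComplex_n$ yields a genuine complete simplicial fan. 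This fan realizes $\wigglyComplex_n$: the rays $\R_{\ge 0}\,\b{g}(\alpha)$ are pairwise distinct (immediate from the explicit $\b{g}$-vectors), so the faces of the simplicial cone of $T$ are precisely the cones over the subsets of $T$, and the face poset of the whole fan is therefore $\wigglyComplex_n$.

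For~(i), instead of handling an arbitrary facet I would exhibit a single convenient one: the wiggly pseudotriangulation corresponding to the minimal element of $\wigglyLattice_n$ (equivalently, to the identity permutation of $[2n]$), a ``staircase'' dissection built from short, suitably nested wiggly arcs. After a suitable reordering of its arcs and of the coordinates, the matrix with their $\b{g}$-vectors as rows is in echelon form and has full rank~$2n-1$. Alternatively one can induct on~$n$: deleting the point $n+1$ turns wiggly arcs around $n+2$ points into wiggly arcs around $n+1$ points, is compatible with the recursive block structure of both wiggly pseudotriangulations and $\b{g}$-vectors, and reduces the statement for~$n$ to the one for~$n-1$.

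For~(ii), the decisive input is the local description of flips from the discussion following the first proposition above: a flip replaces one wiggly diagonal $\alpha$ of a wiggly pseudoquadrangle $\square$ by the other wiggly diagonal $\alpha'$. Since the exchange is confined to $\square$, the linear dependence witnessing~(ii) is supported on $\{\alpha,\alpha'\}$ together with the few boundary wiggly arcs of $\square$ — every other arc of $T \cap T'$ occurs with coefficient $0$ — and it takes the form of an \emph{exchange relation} $\b{g}(\alpha) + \b{g}(\alpha') = \sum_{\beta} c_\beta\, \b{g}(\beta)$, the sum running over (a subset of) the boundary arcs of $\square$. One is thus reduced to verifying, for each combinatorial type of wiggly pseudoquadrangle, that such a relation holds and that $\b{g}(\alpha)$ is not itself a linear combination of the $\b{g}$-vectors of the boundary arcs involved; together these give~(ii). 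This type-by-type check — tracking the shapes of the four pseudo-edges of $\square$, which endpoints $\alpha$ and $\alpha'$ share, and how the above/below wiggling of the diagonal changes across the flip — is where the real content lies and is the step I expect to be the main obstacle; everything else is routine manipulation with the explicit $\b{g}$-vectors once these relations are in hand.
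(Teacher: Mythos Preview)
Your overall strategy is the right one, and your condition~(ii) together with its verification via exchange relations matches what the paper does. The paper streamlines that step, however: rather than a type-by-type analysis of pseudoquadrangles, it observes once and for all that any two exchangeable arcs~$\alpha,\alpha'$ determine two specific arcs~$\beta,\beta'$ compatible with everything compatible with both (\cref{prop:uerp}), and then verifies the single identity $\b{g}(\alpha)+\b{g}(\alpha')=\b{g}(\beta)+\b{g}(\beta')$ (or half of it in the non-pointed case) directly from the coordinate description of~$\hat{\b{g}}$. So your case split is avoidable.

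There is, however, a genuine gap in how you combine~(i) and~(ii). Knowing that every maximal cone is full-dimensional simplicial and that adjacent cones lie on opposite sides of their common wall does \emph{not} by itself force the collection of cones to be a fan: it only says that the induced map from the pseudomanifold to the sphere is a local homeomorphism, and nothing prevents it from being a nontrivial cover. A concrete model: take the $12$-cycle as your pseudomanifold and send its vertices to the six rays at angles~$k\pi/3$ in~$\R^2$, going around twice. All your hypotheses hold, yet each $2$-dimensional cone occurs twice. The criterion you need (stated in the paper as \cref{prop:characterizationFan}) replaces your~(i) by the stronger requirement that some vector of the ambient space lie in \emph{exactly one} maximal cone. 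The paper supplies this via \cref{lem:-+...-+}: the vector~$(-1,1,\dots,-1,1)$ lies only in the cone of the specific pseudotriangulation~$T_\leftarrow$. That uniqueness is not a formality---its proof is an inductive analysis of which wiggly arcs can carry positive coefficients in a nonnegative combination summing to that vector---and it is the missing ingredient in your outline. Your~(i) (linear independence for one facet) is then a corollary of~(ii) and connectedness, exactly as you say, but it is not what closes the argument.
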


The proof of this result is based on a fine understanding of the linear dependences between \mbox{$\b{g}$-vectors}.
We further use this understanding to find a height function~$\kappa$ on wiggly arcs which satisfies all wall-crossing inequalities of the wiggly fan~$\wigglyFan_n$.
Without entering the details of the definition of~$\kappa$ at the moment, let us just mention that it is obtained by summing some compatibility degrees of wiggly arcs, analogously to the constructions for cluster algebras~\cite{HohlwegPilaudStella} and gentle algebras~\cite{PaluPilaudPlamondon-nonkissing}.
Using the $\b{c}$-vectors of each pseudotriangulation, defined to form the dual basis of the $\b{g}$-vectors, we obtain our main result.

\begin{theorem*}
The wiggly fan~$\wigglyFan_n$ is the normal fan of a simplicial $(2n-1)$-dimensional polytope, called the \defn{wigglyhedron}~$\wigglyhedron_n$, and defined equivalently~as
\begin{itemize}
\item the intersection of the halfspaces~$\set{\b{x} \!\in\! \HH_{2n}\!}{\!\dotprod{\b{g}(\alpha)}{\b{x}} \!\le\! \kappa(\alpha)}$ for all internal wiggly arcs~$\alpha$,
\item the convex hull of the points~$\b{p}(T) \eqdef \sum\limits_{\alpha \in T} \kappa(\alpha) \, \b{c}(\alpha, T)$ for all wiggly pseudotriangulations~$T$.
\end{itemize}
\end{theorem*}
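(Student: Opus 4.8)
The plan is to show that the height function $\kappa$ makes the wall-crossing inequalities of the wiggly fan $\wigglyFan_n$ tight, so that the polytope obtained by intersecting the halfspaces $\set{\b{x} \in \HH_{2n}}{\dotprod{\b{g}(\alpha)}{\b{x}} \le \kappa(\alpha)}$ has exactly $\wigglyFan_n$ as its normal fan; the vertex description then follows by computing the vertex dual to each maximal cone. Recall the standard criterion (going back to the characterization of polytopality of complete simplicial fans, \eg via the work on deformation cones / wall-crossing inequalities): if $\wigglyFan_n$ is a complete simplicial fan and $\kappa$ assigns a real number to each ray such that for every pair of adjacent maximal cones sharing a wall, the linear dependence among the rays involved (the wall-crossing relation) is satisfied strictly by the corresponding values of $\kappa$, then $\kappa$ is the support function of a polytope whose normal fan is $\wigglyFan_n$.

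First I would make the wall-crossing relations explicit. By the previous theorem $\wigglyFan_n$ is a complete simplicial fan realizing $\wigglyComplex_n$, and by the pseudomanifold proposition each internal wiggly arc $\alpha$ in a wiggly pseudotriangulation $T$ can be flipped to a unique arc $\alpha'$, giving an adjacent pseudotriangulation $T' = (T \ssm \{\alpha\}) \cup \{\alpha'\}$. The flip happens inside a wiggly pseudoquadrangle, so the $\b{g}$-vectors $\b{g}(\alpha)$ and $\b{g}(\alpha')$ together with a small set of $\b{g}$-vectors of the remaining arcs of $T \cap T'$ satisfy a unique (up to scaling) linear dependence; this is precisely the "fine understanding of the linear dependences between $\b{g}$-vectors" invoked after the fan theorem, and I would record it as a lemma of the form $\b{g}(\alpha) + \b{g}(\alpha') = \sum_{\beta} \lambda_\beta \, \b{g}(\beta)$ with $\lambda_\beta > 0$, the sum running over the relevant arcs of the pseudoquadrangle. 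The wall-crossing inequality to verify is then
\begin{equation*}
\kappa(\alpha) + \kappa(\alpha') > \sum_{\beta} \lambda_\beta \, \kappa(\beta).
\end{equation*}

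The heart of the argument is checking this strict submodularity-type inequality for the specific $\kappa$, which is built by summing compatibility degrees of wiggly arcs. I would expand $\kappa(\alpha) + \kappa(\alpha') - \sum_\beta \lambda_\beta \kappa(\beta)$ as a sum over wiggly arcs $\gamma$ of (compatibility degree contributions), and argue that each such local term is nonnegative, with at least one term strictly positive — mirroring the arguments of \cite{HohlwegPilaudStella} for cluster algebras and \cite{PaluPilaudPlamondon-nonkissing} for gentle algebras. Concretely, for a fixed $\gamma$ one partitions wiggly arcs into a few combinatorial types according to how $\gamma$ interacts with the pseudoquadrangle supporting the flip (above/below the relevant points, endpoints inside or outside), and one verifies the numerical inequality type by type; the local configurations around a single flip are bounded in number, so this reduces to a finite case check. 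I expect this case analysis — correctly enumerating the configurations of a third arc relative to a wiggly pseudoquadrangle and its flip, and pinning down the coefficients $\lambda_\beta$ — to be the main obstacle, since wiggly arcs carry the extra "above/below" data that the classical triangulation setting lacks, roughly doubling the combinatorics. Once the strict inequalities hold, the polytopality criterion gives the wigglyhedron $\wigglyhedron_n$ with the stated facet description; its vertices are obtained by solving, for each wiggly pseudotriangulation $T$, the system $\dotprod{\b{g}(\alpha)}{\b{x}} = \kappa(\alpha)$ for $\alpha \in T$, and since the $\b{c}$-vectors $\{\b{c}(\alpha,T)\}_{\alpha \in T}$ are by definition the dual basis of $\{\b{g}(\alpha)\}_{\alpha \in T}$, the unique solution is $\b{p}(T) = \sum_{\alpha \in T} \kappa(\alpha)\, \b{c}(\alpha,T)$, which establishes the convex hull description and completes the proof.
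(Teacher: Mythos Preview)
Your proposal is correct and matches the paper's approach essentially line for line: the paper applies the standard wall-crossing criterion (Proposition on polytopality of fans), uses the exchange relation on $\b{g}$-vectors (which turns out to involve exactly two arcs~$\beta,\beta'$ with coefficients~$1$ or~$1/2$ depending on whether~$\alpha,\alpha'$ cross or are non pointed), and verifies~$\kappa(\alpha)+\kappa(\alpha')>\lambda\big(\kappa(\beta)+\kappa(\beta')\big)$ by summing the termwise inequality~$\delta(\alpha,\gamma)+\delta(\alpha',\gamma)\ge\lambda\big(\delta(\beta,\gamma)+\delta(\beta',\gamma)\big)$ over all wiggly arcs~$\gamma$, with strictness at~$\gamma=\alpha'$. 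The vertex description via the dual basis $\b{c}(\alpha,T)$ is exactly as you wrote.
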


Moreover, as for the classical permutahedron and associahedron, we recover our lattice structure by a suitable linear orientation of the graph of the wigglyhedron.

\begin{proposition*}
The Hasse diagram of the wiggly lattice~$\wigglyLattice_n$ (or equivalently, the wiggly increasing flip graph~$\wigglyIncreasingFlipGraph_n$) is isomorphic to the graph of the wigglyhedron~$\wigglyhedron_n$ oriented in a suitable direction.
\end{proposition*}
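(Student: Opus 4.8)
The plan is to route the statement through the isomorphism already established between the wiggly increasing flip graph $\wigglyIncreasingFlipGraph_n$ and the Hasse diagram of the wiggly lattice $\wigglyLattice_n$, so that what remains is purely geometric: exhibit a linear functional $\ell$ on $\HH_{2n}$ such that orienting the graph of $\wigglyhedron_n$ by increasing $\ell$ reproduces the increasing flip orientation. The natural (and essentially forced) candidate is $\ell \eqdef \dotprod{\b{\omega}}{-}$ with $\b{\omega} \eqdef (1, 2, \dots, 2n)$ — up to sign and the global bottom-to-top convention — namely the functional whose sublevel sets order the vertices of the permutahedron of $S_{2n}$ by the weak order, since $\wigglyLattice_n$ is an \emph{induced} sublattice of that weak order.

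The crux is an explicit description of the edge directions of $\wigglyhedron_n$. Consider a flip exchanging the internal arc $\alpha \in T$ for the internal arc $\beta \in T'$, so that $T \cap T' = T \ssm \{\alpha\} = T' \ssm \{\beta\}$. For every $\gamma \in T \cap T'$ we have $\dotprod{\b{g}(\gamma)}{\b{p}(T)} = \kappa(\gamma) = \dotprod{\b{g}(\gamma)}{\b{p}(T')}$, hence $\b{p}(T') - \b{p}(T)$ is orthogonal to all these $2n-2$ linearly independent vectors $\b{g}(\gamma)$, which cut out the same line of $\HH_{2n}$ as $\b{c}(\beta, T')$. Moreover $\dotprod{\b{g}(\beta)}{\b{p}(T') - \b{p}(T)} = \kappa(\beta) - \dotprod{\b{g}(\beta)}{\b{p}(T)} > 0$, because $\beta \notin T$ so the facet inequality of $\beta$ is strict at the vertex $\b{p}(T)$ (using that the normal fan of $\wigglyhedron_n$ is $\wigglyFan_n$ and $\wigglyhedron_n$ is simplicial). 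Since $\dotprod{\b{g}(\beta)}{\b{c}(\beta, T')} = 1 > 0$ as well, we conclude that $\b{p}(T') - \b{p}(T)$ is a \emph{positive} multiple of $\b{c}(\beta, T')$, so that $\ell(\b{p}(T')) - \ell(\b{p}(T))$ has the same sign as $\dotprod{\b{\omega}}{\b{c}(\beta, T')}$.

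The proposition is thus reduced to a single lemma: for every wiggly pseudotriangulation $T'$ and every internal arc $\beta \in T'$, the flip creating $\beta$ is an increasing flip if and only if $\dotprod{\b{\omega}}{\b{c}(\beta, T')} > 0$. This is where the combinatorics re-enters: using the explicit formulas for the $\b{g}$-vectors, and hence for their dual $\b{c}$-vectors (the same formulas that underlie the vertex description of $\wigglyhedron_n$), one computes $\b{c}(\beta, T')$ and checks that the sign of its pairing with $(1, \dots, 2n)$ is dictated exactly by the combinatorial feature separating increasing from decreasing flips — e.g. by the inversion of the associated wiggly permutation $\pi(T')$ created by the flip, matching $\dotprod{\b{\omega}}{\b{e}_i - \b{e}_{i+1}} = -1 < 0$. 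In fact it is natural to take this sign condition as the working definition of the increasing flip when building $\wigglyIncreasingFlipGraph_n$, in which case the lemma holds by construction.

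Assembling the pieces: the sign computation shows $\ell$ is generic on $\wigglyhedron_n$ (no edge is $\ell$-horizontal), so its orientation of the graph of $\wigglyhedron_n$ is acyclic, and by the lemma this orientation agrees edge-by-edge with $\wigglyIncreasingFlipGraph_n$; composing with the isomorphism $\wigglyIncreasingFlipGraph_n \cong \text{Hasse}(\wigglyLattice_n)$ gives the claim. The main obstacle is the lemma: matching, uniformly over all $2n-1$ flips at each of the many pseudotriangulations, the geometric sign $\dotprod{\b{\omega}}{\b{c}(\beta, T')}$ with the combinatorial notion of increasing flip. This calls on the fine control of $\b{g}$- and $\b{c}$-vectors (and of their linear dependences at a flip) developed for the fan and polytope theorems, plus possibly a short case analysis according to the local shape of the wiggly pseudoquadrangle supporting the flip.
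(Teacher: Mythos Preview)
Your scaffolding matches the paper's: identify the edge direction $\b{p}(T') - \b{p}(T)$ as a positive multiple of the relevant $\b{c}$-vector, then check the sign of a fixed linear functional on it. The gap is the lemma you leave unproved, and both routes you sketch to it fail. The hint ``matching $\dotprod{\b{\omega}}{\b{e}_i - \b{e}_{i+1}} = -1$'' implicitly treats a wiggly cover as a single adjacent transposition, but by \cref{lem:wigglyCoverRelation} a cover $\sigma \lessdot \sigma^j$ may exchange two whole blocks, and the $\b{c}$-vectors are not simple roots of~$\HH_{2n}$ (see \cref{fig:gcMatrices}). The fallback of taking the sign condition \emph{as} the definition of the increasing flip is circular: that orientation is already fixed combinatorially in \cref{def:wigglyIncreasingFlipGraph}, and the proposition's content is precisely that it agrees with a linear one. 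The sublattice-of-weak-order heuristic is not a proof either, since the vertices $\b{p}(T)$ of~$\wigglyhedron_n$ are not permutahedron vertices.

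The paper closes the gap with an actual computation. From the combinatorial description of $\b{c}(\alpha,T)$ (the remark after \cref{def:cvectors}) it extracts $\dotprod{\b{c}(\alpha,T)}{\b{e}_{2i-1}+\b{e}_{2i}} \in \{0,\pm 1/2\}$, nonzero only at the two corner labels~$\bar u,\bar v$ of~$\alpha$, together with the uniform bound $|\b{c}(\alpha,T)_j| \le 1/2$. It then takes the weighted direction $\b{\omega} = \sum_{i \in [n]} 4ni(\b{e}_{2i-1}+\b{e}_{2i}) + \sum_{j \in [2n]} j\,\b{e}_j$ rather than your $(1,\dots,2n)$: when $\bar u \ne \bar v$ the first part contributes $2n(\bar v-\bar u)$ and swamps the second; when $\bar u = \bar v$ only the second part survives and yields exactly~$1/2$. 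Your simpler $\b{\omega} = (1,\dots,2n)$ does also work, but only after invoking the finer fact that the intermediate coordinates of $\b{c}(\alpha,T)$ have magnitude~$1/4$, a computation you have not carried out.
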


\para{Further topics}
The wigglyhedron has interesting connections in lattice theory and discrete geometry.
We first connect the wigglyhedron to the (type~$A$) Cambrian world.

\begin{theorem*}
For any~$\delta \in \{-,+\}^n$,
\begin{itemize}
\item the $\delta$-Cambrian lattice of~\cite{Reading-CambrianLattices} is an interval in the wiggly lattice~$\wigglyLattice_n$,
\item the $\delta$-Cambrian fan of~\cite{ReadingSpeyer} is a link in the wiggly fan~$\wigglyFan_n$,
\item the $\delta$-associahedron of~\cite{HohlwegLange} is normally equivalent to a face of the wigglyhedron~$\wigglyhedron_n$.
\end{itemize}
\end{theorem*}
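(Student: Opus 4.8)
The three statements are three shadows of one geometric picture, so the plan is to set that picture up once and read off all of them. To each sign vector $\delta \in \{-,+\}^n$ I would attach a distinguished wiggly dissection $D_\delta$ made of $n$ pairwise compatible internal wiggly arcs, chosen so that it records $\delta$ arc by arc (the natural candidate being assembled from the $n$ ``short'' arcs, one around each point $k \in [n]$, grouped and sided according to the runs of equal signs in $\delta$ so that the resulting family is genuinely pointed and non\nobreakdash-crossing). The first step is to show that $D_\delta$ is a face of $\wigglyComplex_n$ and that a wiggly arc $\alpha$ is compatible with every arc of $D_\delta$ exactly when $\alpha$ is \emph{$\delta$-flat}, meaning it passes on the side dictated by $\delta$ at each point it spans over. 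As $\delta$-flat arcs are in bijection with the diagonals of an $(n+2)$-gon, and this bijection turns compatibility into non\nobreakdash-crossing, the link of $D_\delta$ in $\wigglyComplex_n$ is isomorphic, as a simplicial complex, to the boundary complex of the $(n-1)$-dimensional associahedron.

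The geometry enters through $\b{g}$-vectors. The $\delta$-Cambrian fan of~\cite{ReadingSpeyer} is built from the $\b{c}$- and $\b{g}$-vectors of the type $A_{n-1}$ cluster algebra with the acyclic seed corresponding to the orientation encoded by $\delta$; I would identify it ray by ray with the image of the $\b{g}$-vectors of the $\delta$-flat arcs under the quotient map $\HH_{2n} \to \HH_{2n}/\vect\bigl(\{\b{g}(\alpha) : \alpha \in D_\delta\}\bigr)$. Here the fine analysis of the linear dependences among $\b{g}$-vectors, already used to prove that $\wigglyFan_n$ is a fan, does the work: it shows that the cone of $D_\delta$ is a face of every pseudotriangulation cone containing it, so that the quotient is well defined on the link, and it lets one compute the projected rays explicitly and match them with the known Cambrian $\b{g}$-vectors. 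This gives the middle bullet: the $\delta$-Cambrian fan is the link of the cone of $D_\delta$ in $\wigglyFan_n$.

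The remaining two bullets are then essentially formal. Since $\wigglyhedron_n$ has normal fan $\wigglyFan_n$, the face of $\wigglyhedron_n$ dual to the cone of $D_\delta$ has normal fan equal to the link of that cone, hence equal to the $\delta$-Cambrian fan, which is in turn the normal fan of the $\delta$-associahedron of~\cite{HohlwegLange}; so these two polytopes are normally equivalent, giving the last bullet. For the first bullet, orient the graph of $\wigglyhedron_n$ in the direction realizing $\wigglyLattice_n$; as for the permutahedron with the weak order and the associahedron with the Tamari lattice, the subgraph carried by a face of $\wigglyhedron_n$ is a closed interval of the corresponding lattice, so the face dual to $D_\delta$ cuts out an interval of $\wigglyLattice_n$, and checking that its induced order is the $\delta$-Cambrian order of~\cite{Reading-CambrianLattices} reduces once more to matching increasing flips through the $\b{g}$-vector dictionary (equivalently, to Reading's description of the $\delta$-Cambrian lattice as the natural orientation of the $\delta$-associahedron graph). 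Alternatively, this interval and the order isomorphism can be produced directly on wiggly permutations, by locating the minimal and maximal wiggly pseudotriangulations refining $D_\delta$ and transporting the bijection of the first step. The one genuinely delicate point is the very first one: pinning down a $D_\delta$ that is a wiggly dissection for \emph{every} $\delta$ and whose $\b{g}$-vector span is exactly the subspace making the quotient fan Cambrian; once $D_\delta$ is correctly identified, everything else leans on the already-established fan and polytope theorems.
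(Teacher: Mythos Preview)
Your overall architecture---choose a distinguished wiggly dissection $D_\delta$ of $n$ arcs, show its link in $\wigglyComplex_n$ is the $\delta$-Cambrian simplicial complex, and then read off the fan and polytope statements from the already-proved realization theorems---is exactly the paper's strategy. The disagreement is in the one place you flag as delicate: the choice of $D_\delta$.

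Your candidate (``short'' arcs, one around each point $k\in[n]$, grouped by runs of signs) does not work as stated. Arcs of the shape $(k-1,k+1,\dots)$ for consecutive $k$ share an endpoint and are therefore non pointed, hence incompatible; grouping by runs produces fewer than $n$ arcs, so the link has the wrong dimension. The paper's $D_\delta$ is quite different: it takes, for each $j\in[n]$, the long arc $(0,j,[1,j[,\varnothing)$ when $\delta_j=+$ and $(0,j,\varnothing,[1,j[)$ when $\delta_j=-$. All of these start at $0$, so pointedness is automatic, and non-crossing is immediate. With this $D_\delta$, the wiggly arcs compatible with it are \emph{not} the ``$\delta$-flat'' arcs you describe; they are again arcs of the form $(0,j,\dots)$, and the paper writes down an explicit bijection $\zeta$ sending a diagonal $(i,j)$ of the $\delta$-gon to such an arc. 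The proof that the link is the Cambrian fan then amounts to checking that $\zeta$ preserves the exchange relations $\b{g}(a)+\b{g}(a')=\b{g}(b)+\b{g}(b')$, so that a linear map $Z\colon\HH_n\to\HH_{2n}$ carries the Cambrian $\b{g}$-vectors to the wiggly ones.

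One further gap: you assert that the vertex set of any face of $\wigglyhedron_n$ is automatically an interval of $\wigglyLattice_n$ ``as for the permutahedron''. This is true here but is not a general fact about simple polytopes whose oriented graph is a lattice Hasse diagram, so it needs its own argument. The paper sidesteps this by defining the $\delta$-wiggly permutations directly (via inequalities on positions of $2j-1$ and $2j$), so that they visibly form an interval of the weak order and hence of $\wigglyLattice_n$, and then checking that the bijection $\Phi$ identifies this interval with the $\delta$-wiggly pseudotriangulations.
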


We then define the wiggly complex~$\wigglyComplex_P$ of an arbitrary planar point set~$P$ (not necessarily aligned along the horizontal axis, nor necessarily in general position).
Our definition recovers:
\begin{itemize}
\item when $P$ consists of $n+2$ collinear points, the wiggly complex~$\wigglyComplex_n$ considered in this~paper,
\item when~$P$ is in general position, the pseudodissection complex of~$P$ considered in discrete and computational geometry~\cite{PocchiolaVegter,RoteSantosStreinu-pseudotriangulations}, and realized by the pseudotriangulation polytope in~\cite{RoteSantosStreinu-polytope} using rigidity and expansive motions.
\end{itemize}
These two families of examples motivate the following conjecture.

\begin{conjecture*}
For any point set~$P$ in the plane, the wiggly complex~$\wigglyComplex_P$ is the boundary complex of a simplicial polytope.
\end{conjecture*}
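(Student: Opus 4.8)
This is an open conjecture, so what follows is a plan of attack rather than a proof. The two cases that are already settled suggest two complementary routes. The collinear case is the wigglyhedron constructed above, via explicit $\b{g}$-vectors, a compatibility-degree height function~$\kappa$, and dual $\b{c}$-vectors; the general position case is the pseudotriangulation polytope of Rote--Santos--Streinu, built from rigidity theory and expansive motions. The plan is either to generalize the first construction to an arbitrary point set, or to degenerate from the generic configurations realized by the second.

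\emph{Route 1: an explicit $\b{g}$-vector fan for~$P$.} First I would check that, for an arbitrary planar point set~$P$, the wiggly complex~$\wigglyComplex_P$ is still a pseudomanifold without boundary (pure, thin, and flip-connected); the arguments of the collinear case should adapt, working with wiggly pseudoquadrangles and diagonal flips. Next I would attach to each internal wiggly arc~$\alpha$ of~$P$ a vector~$\b{g}_P(\alpha)$ in a suitable Euclidean space, specializing to the $\b{g}$-vectors of this paper when~$P$ is collinear and, up to the identifications of Rote--Santos--Streinu, to the expansive directions when~$P$ is generic. The core technical step is to prove that the cones spanned by the~$\b{g}_P$-vectors of wiggly dissections of~$P$ form a complete simplicial fan~$\wigglyFan_P$. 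I would try this by induction on~$|P|$, using a point-deletion/point-insertion operation on wiggly complexes and the expectation that the link of a wiggly arc in~$\wigglyComplex_P$ is again a join of smaller wiggly complexes, so that the local structure of~$\wigglyFan_P$ around each wall reduces to already-known fans --- this is the pattern behind both the Cambrian degenerations noted above and the product structure of links for the wigglyhedron. Finally I would define a height function~$\kappa_P$ as a sum of pairwise compatibility degrees of wiggly arcs of~$P$, exactly as for cluster and gentle algebras, and verify that it satisfies the wall-crossing inequality across every interior wall of~$\wigglyFan_P$; the standard criterion would then certify that~$\wigglyFan_P$ is the normal fan of a polytope whose polar has boundary complex~$\wigglyComplex_P$.

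\emph{Route 2: degeneration from the generic case.} Alternatively, fix a generic configuration~$P_0$ close to~$P$, so that~$\wigglyComplex_{P_0}$ is realized by the Rote--Santos--Streinu polytope, and move~$P_0$ continuously to~$P$. Along a generic path the wiggly complex changes only through finitely many elementary moves (arcs appearing or disappearing, pseudotriangles acquiring flat vertices), and the plan is to track a polytope through each such move, showing that polytopality is preserved --- either by an explicit perturbation of the facet-defining inequalities, or by recognizing~$\wigglyComplex_P$ as a face, a link, or an iterated such construction inside~$\wigglyComplex_{P_0}$, in the spirit of the ``Cambrian associahedron as a face of the wigglyhedron'' phenomenon. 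Carried out in families, this would reduce the conjecture to a single elementary degeneration, \eg sliding one point onto the line through two others, or merging two points. A more optimistic variant would be to prove that every~$\wigglyComplex_P$ on~$n+2$ points is a link in the wigglyhedron~$\wigglyComplex_N$ for~$N$ large, reducing everything to the collinear case, though I would not expect this to hold already for generic~$P$.

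\textbf{Main obstacle.} The crux is that the two settled cases live in genuinely different worlds --- $\b{g}$- and $\b{c}$-vectors and Cambrian/cluster-algebraic combinatorics on one side, rigidity theory and expansive motions on the other --- and no single framework is currently known to contain both. Concretely, for a general~$P$ there is no evident closed form either for the analogue of~$\kappa$ or for the wall-crossing inequalities of~$\wigglyFan_P$ (Route~1), and controlling how a facet description behaves under a degeneration in which faces collapse while new arcs are born is exactly the kind of non-uniform bookkeeping that resists a clean argument (Route~2). A reasonable first step is therefore to settle the smallest genuinely new cases --- one point off a line, points on two lines --- and to test computationally whether~$\wigglyComplex_P$ remains polytopal, and whether its $\b{g}$-vector fan still admits a compatibility-degree height function, before committing to either route in general.
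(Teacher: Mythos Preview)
You correctly identify this as an open conjecture, and the paper treats it the same way: it is stated as Conjecture~\ref{conj:polytopality} with no proof, only the remark that the collinear case is settled by the wigglyhedron and the general-position case by the Rote--Santos--Streinu polytope. Your two proposed routes and the obstacles you flag are a sensible research outline and are consistent with the questions the paper raises immediately after the conjecture.
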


We also present a few open questions related to this conjecture, concerning in particular the possible interpretation of the wiggly complex~$\wigglyComplex_P$ in terms of dual pseudoline arrangements, and the extension to multi wiggly complexes and arbitrary finite Coxeter groups.

\pagebreak
\para{Categorical interpretation}
Each wiggly arc on \(n+2\) points corresponds to an object of an abelian category, such that morphisms and extensions between objects in the category are captured by minimal intersection numbers between the wiggly arcs.
In fact, this correspondence holds for more general curves, which then correspond to objects of a larger triangulated category~\cite{kho.sei:02}.

A general curve can be decomposed into wiggly arcs simply by cutting along its U-turns.
We interpret this operation categorically as the decomposition of an object of the triangulated category into its cohomology pieces, from which we deduce the following.

\begin{proposition*}
The set of wiggly arcs corresponding to the cohomology pieces of an object represented by a general arc is a face of the wiggly complex.
\end{proposition*}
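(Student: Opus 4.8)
The plan is to reduce the assertion to a short geometric check, using the categorical picture recalled above. Recall that an object $X$ represented by a general arc $\gamma$ has a canonical filtration with respect to the $t$-structure whose heart is the ambient abelian category, and that the indecomposable summands of the cohomology objects $H^i(X)$ are exactly the wiggly arcs obtained by cutting $\gamma$ along its U-turns: this is the dictionary between the cohomological filtration of $X$ and the monotone pieces of $\gamma$. Granting it, the proposition amounts to the claim that the pieces $\alpha_1, \dots, \alpha_k$ into which a general arc $\gamma$ is cut at its U-turns are pairwise compatible wiggly arcs. Indeed, pairwise compatibility is precisely the statement that $\{\alpha_1, \dots, \alpha_k\}$ is a wiggly dissection; every wiggly dissection is contained in an inclusion-maximal one, i.e.\ in a wiggly pseudotriangulation (there are finitely many wiggly arcs); hence, discarding the external arcs, the internal arcs among $\alpha_1, \dots, \alpha_k$ form a face of $\wigglyComplex_n$, and by the dictionary these are exactly the wiggly arcs attached to the cohomology objects of $X$.

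It then remains to verify that $\alpha_1, \dots, \alpha_k$ are pairwise compatible. Each $\alpha_i$ is abscissa-monotone, since a U-turn of $\gamma$ is exactly a local extremum of its abscissa, so each $\alpha_i$ is a genuine wiggly arc with endpoints at marked points. For \emph{non-crossing}: the $\alpha_i$ are sub-arcs of the single embedded curve $\gamma$, hence have pairwise disjoint interiors, and these explicit representatives witness that any two of them are non-crossing. For \emph{pointedness}: two pieces that are not consecutive along $\gamma$ are disjoint, hence vacuously pointed, while two consecutive pieces $\alpha_i$ and $\alpha_{i+1}$ meet exactly at the U-turn point $q$ between them; at a right U-turn (local maximum of the abscissa) $q$ is the right endpoint of both $\alpha_i$ and $\alpha_{i+1}$, and at a left U-turn (local minimum) $q$ is the left endpoint of both, so in either case $q$ is never the left endpoint of one piece and the right endpoint of the other, and the pair $\{\alpha_i, \alpha_{i+1}\}$ is pointed. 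Thus all pairs are compatible and $\{\alpha_1, \dots, \alpha_k\}$ is a wiggly dissection, which finishes the argument.

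The only substantial ingredient is the dictionary invoked at the outset — matching the cohomology objects $H^i(X)$, with their above/below decorations, to the geometric pieces of $\gamma$ — and this is exactly where the combinatorics of wiggly arcs and the homological algebra of the $t$-structure must be reconciled; the subsequent compatibility check is elementary. A variant that sidesteps the explicit curve would be to apply the paper's categorical criterion for a set of wiggly arcs to form a face directly to the family $\{H^i(X)\}_i$, using the iterated triangles of the canonical filtration of $X$ together with the vanishing $\Ext^m(H^i(X), H^j(X)) = 0$ for $m < 0$ to bound the relevant morphism and extension spaces between cohomology objects; I expect the bookkeeping of the above/below decorations to be the main obstacle in that route as well.
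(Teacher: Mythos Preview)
Your approach is the same as the paper's --- reduce to checking pairwise compatibility of the monotone pieces of~$\gamma$, verify non-crossing (immediate, as they are sub-arcs of a simple curve), and verify pointedness --- but there is a gap in the pointedness step. Your claim that ``two pieces that are not consecutive along~$\gamma$ are disjoint, hence vacuously pointed'' conflates two notions of disjointness. As portions of the simple curve~$\gamma$, non-consecutive pieces are indeed disjoint in the plane. But the wiggly arcs attached to them have their endpoints \emph{set} to the marked points around which~$\gamma$ makes its U-turns (or to the original endpoints of~$\gamma$), and nothing prevents two non-consecutive pieces from having the same marked point as an endpoint: $\gamma$ may loop around the same marked point~$m$ more than once, or have an original endpoint at~$m$ together with a later U-turn around~$m$. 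So ``disjoint as curves'' does not give ``no common endpoint as wiggly arcs'', and the pointedness of such pairs is exactly the content you still owe.

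The paper closes this gap with a short case analysis. If some piece had right endpoint~$m$ while another had left endpoint~$m$, then at~$m$ the curve~$\gamma$ would simultaneously exhibit a loop around the right of~$m$ (or an original endpoint with~$\gamma$ leaving to the left) and a loop around the left of~$m$ (or an original endpoint with~$\gamma$ leaving to the right). Each of the four combinations forces~$\gamma$ to self-intersect, contradicting simplicity. Your treatment of consecutive pieces is correct and matches the paper, but you need this additional argument to complete the proof.
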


Furthermore, we characterize categorically when two objects in the abelian category are compatible, that is, their corresponding wiggly arcs are compatible.

\begin{proposition*}
Two wiggly arcs are compatible if and only if there are no extensions between the corresponding objects in the abelian category.
\end{proposition*}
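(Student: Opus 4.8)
The plan is to reduce the statement to an explicit reading of the $\Ext^1$-spaces between the objects $X_\alpha$, $X_\beta$ attached to two wiggly arcs $\alpha$, $\beta$ in terms of the local intersection pattern of $\alpha$ and $\beta$, and then to notice that the intersection patterns which produce extensions are exactly the ones forbidden by compatibility. (We treat distinct arcs $\alpha\ne\beta$; the diagonal case is of no content, an arc being compatible with itself and $X_\alpha$ having no self-extension. Here ``extension'' means a non-split short exact sequence, i.e.\ a non-zero class in $\Ext^1$.)

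First I would make the dictionary precise. A wiggly arc $\alpha$ on $n+2$ points determines an indecomposable object $X_\alpha$, realised as a string module over the relevant algebra, and --- by the intersection description of morphisms and extensions already invoked above, transported if necessary from the string combinatorics of \cite{PaluPilaudPlamondon-nonkissing} and the Khovanov--Seidel picture \cite{kho.sei:02} --- once $\alpha$ and $\beta$ are isotoped to realise their minimal intersection number, each of their intersection points has exactly one of three local types: (i)~a common endpoint that is the left endpoint of both arcs, or the right endpoint of both; (ii)~a common endpoint that is the left endpoint of one arc and the right endpoint of the other; (iii)~a transversal crossing of the two open arcs. Moreover each such point contributes $1$ to exactly one of $\dim\Hom(X_\alpha,X_\beta)$, $\dim\Hom(X_\beta,X_\alpha)$, $\dim\Ext^1(X_\alpha,X_\beta)$, $\dim\Ext^1(X_\beta,X_\alpha)$, these contributions running over bases of the four spaces. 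The crucial point, which is the technical core of the argument, is to pin down which slot each type feeds: points of type~(i) contribute only to $\Hom$-spaces, while each point of type~(ii) or~(iii) contributes to an $\Ext^1$-space, in the direction determined by which arc passes locally above the other. I would establish this by exhibiting, for each improper endpoint incidence and each crossing, the corresponding non-split short exact sequence --- the extension obtained by gluing the two string modules along the common substring read off from the picture --- and by checking that these classes span and are independent; for type~(i) one checks conversely that the incidence yields only the expected morphism and nothing in $\Ext^1$.

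Granting this, the equivalence drops out. If $\alpha$ and $\beta$ are compatible then pointedness excludes incidences of type~(ii) and non-crossing excludes incidences of type~(iii), so only type~(i) incidences remain and $\Ext^1(X_\alpha,X_\beta)=\Ext^1(X_\beta,X_\alpha)=0$, i.e.\ there is no extension between $X_\alpha$ and $X_\beta$. Conversely, if $\alpha$ and $\beta$ are not compatible then, even in minimal position, they keep an incidence of type~(ii) (when they are not pointed, since endpoints are fixed) or of type~(iii) (when they cross), and that incidence witnesses a non-zero class in one of the two $\Ext^1$-spaces. The step I expect to be the main obstacle is precisely the slot assignment in the technical core: checking that the local side data at each intersection really predicts the correct direction of the extension, that distinct intersection points give linearly independent extension classes (so that $\Ext^1=0$ genuinely forces every such incidence to be absent rather than merely that they cancel), and that these incidences are the only source of extensions. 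The most economical way around it is probably to avoid redoing homological algebra by hand: identify the abelian category with a module category where the $\Hom$/$\Ext^1$ intersection formulas of the gentle/string framework apply, and then carry out the purely combinatorial translation that, for wiggly arcs, ``pointed and non-crossing'' coincides with ``non-kissing'' --- which is immediate from the abscissa-monotone shape of wiggly arcs.
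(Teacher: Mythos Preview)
Your outline is correct and identifies the right dichotomy: shared same-side endpoints feed $\Hom$, while non-pointed incidences and crossings feed $\Ext^1$. The route, however, differs from the paper's. The paper does not set up a string-module or gentle-algebra model and does not build the extensions by hand. Instead it invokes the bigraded intersection number of Khovanov--Seidel \cite[Sect.~3d, Prop.~4.9]{kho.sei:02} as a black box: this is a two-variable Poincar\'e polynomial recording $\dim\Hom^{g,h}(X,X')$ in $q_1^h q_2^{-g}$, and one reads off $\dim\mathbb{E}^1(X,X')$ by specialising $q_1=q$, $q_2=q^{-1}$ and taking the coefficient of $q$. The only computation left is then purely local: in the path-length grading used here (which differs from the original Khovanov--Seidel grading), one checks that each crossing and each non-pointed incidence contributes exactly one to the $q^1$-coefficient, and that the remaining incidence types contribute nothing at that level. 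This bypasses entirely the issues you flag --- linear independence of extension classes and completeness of the list --- since those are already packaged inside \cite[Prop.~4.9]{kho.sei:02}.

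Your alternative via gentle algebras and non-kissing would also work, but it carries an extra burden you do not address: the abelian category here is the heart $\mathcal{A}_m$ of the linear $t$-structure on the homotopy category of the zigzag algebra, not \emph{a priori} a module category over a gentle algebra, so you would first need to exhibit such an equivalence (or at least a $\Hom$/$\Ext^1$-preserving embedding) before the string-module machinery of \cite{PaluPilaudPlamondon-nonkissing} applies. The paper's approach avoids this by staying inside the Khovanov--Seidel framework already established in \cref{subsec:KSCurveModel}.
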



\section{Wiggly pseudotriangulations and wiggly permutations}
\label{sec:combinatorics}

Fix an integer~$n \ge 1$.
In this section, we define two combinatorial families, the wiggly pseudotriangulations of~$n+2$ points (\cref{subsec:wigglyPseudotriangulations}) and the wiggly permutations of~$[2n]$ (\cref{subsec:wigglyPermutations}), and we prove that they are in bijection (\cref{subsec:bijection}).
These two perspectives enable us to endow these combinatorial families with additional structures: the wiggly pseudotriangulations are actually the facets of the wiggly complex, while the wiggly permutations are actually the elements of the wiggly lattice.
We conclude with a few questions and conjectures on graph properties of the wiggly flip graph (\cref{subsec:graphProperties}).


\subsection{Wiggly pseudotriangulations and the wiggly complex}
\label{subsec:wigglyPseudotriangulations}

We start with wiggly pseudodissections of $n+2$ points on a line.
The following two definitions are illustrated in \cref{fig:incompatible}.

\begin{definition}
A \defn{wiggly arc} is a quadruple $\alpha \eqdef (i,j,A,B)$ where $0 \le i < j \le n+1$ and the sets~$A$ and~$B$ form a partition of~${]i,j[} \eqdef \{i+1, \dots, j-1\}$.
We represent $\alpha$ by an abscissa monotone curve wiggling around the horizontal axis, starting at point~$i$, ending at point~$j$, and passing above the points of~$A$ and below the points of~$B$.
The wiggly arcs~$\alpha_\mathrm{top} \eqdef (0, n+1, [n], \varnothing)$ and~${\alpha_\mathrm{bot} \eqdef (0, n+1, \varnothing, [n])}$ are called \defn{external}, all other wiggly arcs are called \defn{internal}.
\end{definition}

\begin{remark}
\label{rem:numberWigglyArcs}
The number of wiggly arcs is~$\sum_{\ell = 0}^n 2^\ell (n+1-\ell) = 2^{n+2}-n-3$.
\end{remark}

\begin{definition}
\label{def:compatible}
Two wiggly arcs~$(i,j,A,B)$ and~$(i',j',A',B')$ are 
\begin{itemize}
\item \defn{non pointed} if~$i = j'$ or~$i' = j$,
\item \defn{crossing} if~$(A \cap B') \cup (\{i,j\} \cap B') \cup (A \cap \{i',j'\}) \ne \varnothing \ne (A' \cap B) \cup (\{i',j'\} \cap B) \cup (A' \cap \{i,j\})$, that is, if the corresponding curves cross (\ie any isotopy representatives must cross),
\item \defn{incompatible} if they are non pointed or crossing, and \defn{compatible} otherwise.
\end{itemize}
\begin{figure}[b]
\centerline{\includegraphics[scale=1.3]{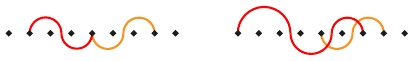}}
\caption{Some incompatible wiggly arcs: non pointed (left) and crossing (right).}
\label{fig:incompatible}
\end{figure}
\end{definition}

\begin{remark}
We want to stress out that our notion of compatibility on wiggly arcs of \cref{def:compatible} might seem similar to, but fundamentally differs from the notion of compatibility used by N.~Reading to define noncrossing arc diagrams~\cite{Reading-arcDiagrams}.
Namely, our wiggly arcs and our crossing condition coincide with that of~\cite{Reading-arcDiagrams}.
However, we forbid an arc to start where another arc ends but allow two arcs to start or end at the same point,
while N.~Reading does precisely the opposite in~\cite{Reading-arcDiagrams}.
\end{remark}

We then consider pairwise compatible sets of wiggly arcs.
The following two definitions are illustrated~in~\cref{fig:pseudodissections}.

\begin{definition}
A \defn{wiggly pseudodissection}~$D$ is a set of pairwise compatible wiggly arcs which contains the exterior wiggly arcs~$\alpha_\mathrm{top}$ and~$\alpha_\mathrm{bot}$. We denote by~$D^\circ \eqdef D \ssm \{\alpha_\mathrm{top}, \alpha_\mathrm{bot}\}$.
\begin{figure}
\centerline{\includegraphics[scale=1.5]{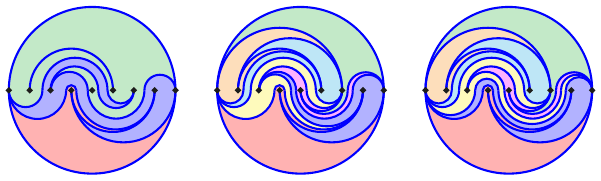}}
\caption{Three wiggly pseudodissections for~$n = 7$. The left one has three wiggly cells, of respective degrees $3$ (red), $8$ (green), and $4$ (blue). The other two have $7$ wiggly pseudotriangles, and are obtained from each other by flipping the wiggly arc separating their yellow and dark blue wiggly pseudotriangles.}
\label{fig:pseudodissections}
\end{figure}
\end{definition}


\begin{definition}
A wiggly pseudodissection~$D$ decomposes the digon bounded by~$\alpha_\mathrm{top}$ and~$\alpha_\mathrm{bot}$ into cells, that we call \defn{wiggly cells}.
The \defn{degree}~$\delta_c$ of a wiggly cell~$c$ is~$\delta_c \eqdef \beta_c/2+2\kappa_c-1$ where~$\beta_c$ is the number of wiggly arcs on the boundary of~$c$ (counted twice if they appear twice along the boundary of~$c$), and~$\kappa_c$ is the number of connected components of the boundary of~$c$.
Two consecutive wiggly arcs along the boundary of~$c$ define a \defn{corner} (resp.~a \defn{hinge}) of~$c$ if they bound a convex (resp.~concave) angle of~$c$.
Note that two corners of~$c$ can coincide at the same point~$i \in [n]$.
\end{definition}

\begin{remark}
\label{rem:degree}
Consider a wiggly cell~$c$ in a wiggly pseudodissection~$D$.
Note that~$\beta_c$ is always even as the arcs of~$D$ are pairwise pointed, so that~$\beta_c/2$ is an integer.
Moreover, we have~$\beta_c \ge 2$ and~$\kappa_c \ge 1$, where at least one of the two inequalities is strict (as otherwise, the two wiggly arcs bounding~$c$ would coincide).
We thus obtain on the one hand that~$\delta_c = \beta_c/2+2\kappa_c-1 \ge 3$ with equality if and only if~$\beta_c = 4$ and~$\kappa_c = 1$, and on the other hand that~$\beta_c/2+\kappa_c-1 \ge 2$.
\end{remark}

\begin{definition}
A \defn{wiggly pseudotriangle} (resp.~\defn{pseudoquadrangle}) is a wiggly cell of degree~$3$ (resp.~$4$).
\end{definition}

\begin{remark}
\label{rem:descriptionPseudotrianglesPseudoquadrangles}
As illustrated in \cref{fig:pseudotriangles}, a wiggly pseudotriangle~$t$ has no point in its interior and $4$ points on its boundary, $3$ corners and $1$ hinge (two corners can coincide at the same point~$i \in [n]$).
As illustrated in \cref{fig:pseudoquadrangles}, a wiggly pseudoquadrangle~$q$ can have
\begin{enumerate}
\item either one point in its interior and~$2$ points on its boundary, both corners,
\item or no point in its interior and $6$ points on its boundary, $4$ corners and $2$ hinges (two corners can coincide at the same point~$i \in [n]$). We then distinguish three situations, depending on the position of the two pairs~$p, p'$ of wiggly arcs incident to the two hinges of~$c$, as illustrated in \cref{fig:pseudoquadrangles} (where~$p$ and~$p'$ are colored in red and orange):
\begin{enumerate}[(a)]
\item those where $p$ and $p'$ are not consecutive along~$c$, and do not overlap vertically,
\item those where $p$ and $p'$ are not consecutive along~$c$, but overlap vertically,
\item those where~$p$ and~$p'$ are consecutive along~$c$.
\end{enumerate}
\end{enumerate}
\begin{figure}
\centerline{\includegraphics[scale=1.3]{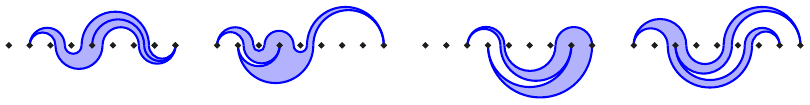}}
\caption{Four wiggly pseudotriangles.}
\label{fig:pseudotriangles}
\end{figure}
\begin{figure}
\centerline{\includegraphics[scale=1.3]{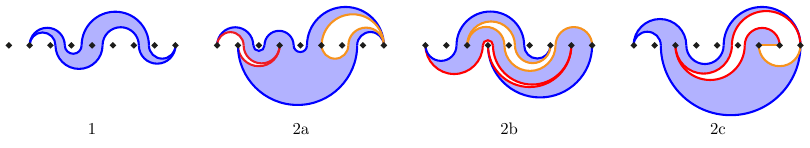}}
\vspace{-.3cm}
\caption{Four types of wiggly pseudoquadrangles described in \cref{rem:descriptionPseudotrianglesPseudoquadrangles}.}
\label{fig:pseudoquadrangles}
\end{figure}
\end{remark}

\begin{proposition}
\label{prop:wigglyPseudotriangulations}
Any wiggly pseudodissection~$D$ contains at most~$2n-1$ internal wiggly arcs and at most~$n$ wiggly cells.
Moreover, the following assertions are equivalent:
\begin{enumerate}[(i)]
\item all wiggly cells of~$D$ are wiggly pseudotriangles,
\item $D$ contains~$n$ wiggly cells and is connected,
\item $D$ contains $2n-1$ internal wiggly arcs,
\item $D$ is an inclusion maximal wiggly pseudodissection.
\end{enumerate}
We then say that~$D$ is a \defn{wiggly pseudotriangulation}.
\end{proposition}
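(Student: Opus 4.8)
The plan is to run one global count on the planar subdivision of the digon induced by~$D$, read off the two inequalities from it, and then obtain the four equivalences by comparing the equality cases of that count. Throughout, write $a \eqdef |D^\circ|$ for the number of internal wiggly arcs of~$D$, $c$ for the number of wiggly cells, $n'$ for the number of points of~$[n]$ that are endpoints of no arc of~$D$ (the \emph{free} points), and $k$ for the number of connected components of the plane graph~$G_D$ whose vertices are the arc‑endpoints (always including~$0$ and~$n+1$, hence $n+2-n'$ of them) and whose edges are the $a+2$ arcs of~$D$. First one establishes three identities by elementary counting. Euler's formula for~$G_D$, whose bounded faces are exactly the wiggly cells, gives
\[
 c \;=\; a-n+n'+k .
\]
Counting arc--cell incidences, each internal arc borders cells twice in total and each external arc once, so $\sum_c \beta_c = 2a+2$. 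Counting face--boundary‑component incidences of~$G_D$, and remembering that in the definition of~$\delta_c$ a free point inside a cell counts as a degenerate boundary component of that cell, one gets $\sum_c \kappa_c = a-n+2n'+2k-1$. Combining,
\[
 \sum_c \delta_c \;=\; \sum_c\Bigl(\tfrac{\beta_c}{2}+2\kappa_c-1\Bigr) \;=\; 2a-n+3n'+3k-1 , \qquad \sum_c\Bigl(\tfrac{\beta_c}{2}+\kappa_c\Bigr) \;=\; 2a-n+2n'+2k .
\]

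Next I would feed in the two cell inequalities recorded in \cref{rem:degree}, namely $\delta_c\ge 3$ and $\tfrac{\beta_c}{2}+\kappa_c\ge 3$ for every wiggly cell~$c$. Summing the first and subtracting~$3c$ using the identities above yields $0\le \sum_c\delta_c-3c = 2n-1-a$, i.e.\ $a\le 2n-1$, with equality if and only if $\delta_c=3$ for every cell. Summing the second yields $0\le\sum_c(\tfrac{\beta_c}{2}+\kappa_c)-3c = 2n-a-n'-k$, i.e.\ $a+n'+k\le 2n$, whence $c=(a+n'+k)-n\le n$, with equality if and only if $\tfrac{\beta_c}{2}+\kappa_c=3$ for every cell. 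This proves the first sentence of the proposition, and the two displayed equality conditions will drive the rest.

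For the equivalences I would argue $(\mathrm{i})\Leftrightarrow(\mathrm{iii})$, then $(\mathrm{ii})\Leftrightarrow(\mathrm{iii})$, then $(\mathrm{iii})\Rightarrow(\mathrm{iv})$, leaving $(\mathrm{iv})\Rightarrow(\mathrm{i})$ for the end. If $(\mathrm{i})$ holds then all $\delta_c=3$, so by the first equality condition $a=2n-1$: this is $(\mathrm{iii})$; conversely $a=2n-1$ forces equality in $\sum_c\delta_c\ge 3c$, hence every $\delta_c=3$, i.e.\ $(\mathrm{i})$. For $(\mathrm{ii})\Leftrightarrow(\mathrm{iii})$: connectedness of~$D$ translates precisely into $k=1$ and $n'=0$ (no free point, and the arcs joining the remaining points form a connected graph), so under $(\mathrm{ii})$ the identity $c=a-n+n'+k$ reads $n=a-n+1$, that is $a=2n-1$; conversely $(\mathrm{iii})$ entails $(\mathrm{i})$, hence $\kappa_c=1$ for every cell and thus $n'=0$ and $\sum_c\kappa_c=c$, which combined with $\sum_c\kappa_c=a-n+2k-1$ and $c=a-n+k$ forces $k=1$ and then $c=(2n-1)-n+1=n$, giving $(\mathrm{ii})$. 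Finally $(\mathrm{iii})\Rightarrow(\mathrm{iv})$ is immediate: since $2n-1$ is the maximum possible number of internal arcs, no wiggly arc can be added to~$D$.

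It remains to prove $(\mathrm{iv})\Rightarrow(\mathrm{i})$, and I expect this to be the main obstacle, as it is the only step with genuine geometric content (maximality is not a purely combinatorial condition). I would argue the contrapositive: if some wiggly cell~$c_0$ is not a wiggly pseudotriangle, i.e.\ $\delta_{c_0}\ge 4$, then a new internal wiggly arc can be inserted into~$c_0$, so~$D$ is not inclusion maximal. Two observations keep the compatibility check manageable: at any marked point, all arcs of~$D$ incident to it share the same endpoint type (all left, or all right), for otherwise two of them would be non‑pointed; and any abscissa‑monotone curve drawn through the interior of~$c_0$ between two marked points is automatically non‑crossing with every arc of~$D$, so only pointedness at its two endpoints must be checked, which one secures by orienting it so its left (resp.\ right) endpoint is not a right (resp.\ left) endpoint of an existing arc. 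The construction of such a curve is then a case analysis on the shape of~$c_0$ following \cref{rem:descriptionPseudotrianglesPseudoquadrangles}: if $\kappa_{c_0}\ge 2$ then $c_0$ carries a marked point or a floating sub‑arrangement of arcs in its interior, to which one attaches a new arc; if $\kappa_{c_0}=1$ then $\delta_{c_0}\ge 4$ forces $\beta_{c_0}\ge 6$ and one exhibits an abscissa‑monotone diagonal of~$c_0$ between two of its corners (for wiggly pseudoquadrangles this is exactly the ``other diagonal'' in each of the three configurations of \cref{rem:descriptionPseudotrianglesPseudoquadrangles}, and cells of degree at least~$5$ reduce to this by cutting off a wiggly pseudotriangle). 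Verifying in each case that the resulting quadruple $(i,j,A,B)$ is a genuine wiggly arc compatible with every arc of~$D$ closes the cycle of implications, and with it the proof.
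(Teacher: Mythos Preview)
Your counting argument is correct and essentially the same as the paper's, with one redundant bookkeeping choice: you track free points~$n'$ and components~$k$ of the non-free subgraph separately, whereas the paper takes the vertex set to be all of~$\{0,\dots,n+1\}$, so that its~$\kappa$ is exactly your~$n'+k$ and Euler reads~$c=a-n+\kappa$ directly. Your use of the two inequalities $\delta_c\ge3$ and $\tfrac{\beta_c}{2}+\kappa_c\ge3$ separately also works, though the paper gets both bounds from the second alone (since $2c\le\sum_c(\tfrac{\beta_c}{2}+\kappa_c-1)=a+\kappa=n+c$ gives $c\le n$, and then $a=n+c-\kappa\le2n-1$). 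The equivalences $(\mathrm{i})\Leftrightarrow(\mathrm{iii})$, $(\mathrm{ii})\Leftrightarrow(\mathrm{iii})$, $(\mathrm{iii})\Rightarrow(\mathrm{iv})$ are argued correctly; the paper runs the cycle $(\mathrm{i})\Rightarrow(\mathrm{ii})\Rightarrow(\mathrm{iii})\Rightarrow(\mathrm{iv})\Rightarrow(\mathrm{i})$ instead, but this is immaterial.

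For $(\mathrm{iv})\Rightarrow(\mathrm{i})$ your outline is sound but your case split $\kappa_{c_0}\ge2$ versus $\kappa_{c_0}=1$ forces more work than necessary, and the reduction ``cells of degree $\ge5$ reduce by cutting off a wiggly pseudotriangle'' is not justified and somewhat circular in spirit. The paper instead splits on whether the \emph{external} boundary of~$c_0$ is a digon. If it is, one takes the rightmost point~$m$ lying strictly inside the digon (necessarily on an internal boundary component or free, since $\delta_{c_0}\ge4$ forces $\kappa_{c_0}\ge2$ in this case) and draws the arc from the left corner of the digon to~$m$ following the common upper profile of the two bounding arcs; this is compatible with~$D$ because anything incompatible with it would cross one of the two bounding arcs. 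If the external boundary is not a digon, one picks any hinge~$j$, say with two arcs~$\alpha,\alpha'$ ending at~$j$ with $\alpha$ below~$\alpha'$, lets~$\gamma$ be the uppermost boundary arc passing below~$j$, and forms the two concatenations $\beta$ (follow $\alpha$ then the right half of~$\gamma$, passing below~$j$) and $\beta'$ (follow $\alpha'$ then the right half of~$\gamma$, passing above~$j$). Both are compatible with~$D$ by the same ``anything incompatible would already hit $\alpha$, $\alpha'$, or~$\gamma$'' argument, and they cannot both lie in~$D$ since then $c_0$ would be the pseudotriangle they bound. This avoids any induction on the degree of~$c_0$ and gives the new arc explicitly.
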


\begin{proof}
Consider a wiggly pseudodissection~$D$ with~$\iota$ internal wiggly arcs, $\gamma$ wiggly cells, and~$\kappa$~connected components (where $D$ is viewed as the edge set of the graph with vertex set given by the marked points $0$ to $n+1$).
Euler's formula gives~$(n+2)-(\iota+2)+(\gamma+1)=(\kappa+1)$, hence~$n - \iota + \gamma = \kappa$.
Moreover, denoting by~$C$ the set of wiggly cells of~$D$, we have~$\sum_{c \in C} \beta_c/2 = 1+\iota$ since each external (resp.~internal) wiggly arc bounds precisely one (resp.~two) wiggly cell of~$D$ (we count a wiggly arc~$\alpha$ with multiplicity~$2$ if it appears twice along the boundary of~$c$), and~${\sum_{c \in C} \big( \kappa_c-1 \big) = \kappa-1}$ since the boundary of each internal connected component of~$D$ is one of the internal connected components of the boundary of precisely one wiggly cell of~$D$.
As~$2 \le \beta_c/2+\kappa_c-1$ for any wiggly cell~$c$ by \cref{rem:degree}, we thus obtain that~$2\gamma \le \sum_{c \in C} (\beta_c/2+\kappa_c-1) = \iota+\kappa = n+\gamma$.
We conclude that~$\gamma \le n$ and~$\iota = n+\gamma-\kappa \le 2n-1$.


To prove the second part of the statement, we show that~(i) $\Rightarrow$ (ii) $\Rightarrow$ (iii) $\Rightarrow$ (iv) $\Rightarrow$ (i).

\para{(i) $\Rightarrow$ (ii)}
For any wiggly cell~$c$, we have~$\delta_c = 3$, so that~$\beta_c = 4$ and~$\kappa_c = 1$ by \cref{rem:degree}, hence~$2 = \beta_c/2+\kappa_c-1$.
Therefore~$\kappa = 1$ ($D$ is connected) and~$2\gamma = \iota+1 = n+\gamma$, so that~$\gamma=n$.

\para{(ii) $\Rightarrow$ (iii)}
As $\gamma=n$ and~$\kappa=1$, we have~$\iota = n+\gamma-\kappa = 2n-1$.

%

\para{(iii) $\Rightarrow$ (iv)}
If~$D$ already contains~$2n-1$ internal wiggly arcs, it has maximal cardinality among all wiggly pseudodissections, hence it is certainly inclusion maximal.

\para{(iv) $\Rightarrow$ (i)}
We now consider a wiggly pseudodissection~$D$ containing a wiggly cell~$c$ which is not a wiggly pseudotriangle, and we prove that~$D$ is not inclusion maximal.
Assume first that the external boundary of~$c$ is a digon bounded by the wiggly arcs~$\alpha \eqdef (i, j, A, B)$ and~$\alpha' \eqdef (i, j, A', B')$.
Denote by~$m$ the maximum of the points~$(A \symdif A') \cup (B \symdif B')$ located inside this digon (where~$\symdif$ denotes the symmetric difference).
Then the wiggly arc~$\beta \eqdef \big( i, m, {]i,m[} \cap (A \cup A'), {]i,m[} \cap (B \cap B') \big)$ is compatible with~$D$ (any wiggly arc incompatible with~$\beta$ crosses~$\alpha$ or~$\alpha'$) and is not in~$D$ (it connects the boundary of~$c$ with one of its internal connected components).

Assume now that the external boundary of~$c$ is a not digon.
Consider a hinge~$j$ of~$c$.
Assume by symmetry that the boundary of~$c$ contains two wiggly arcs~$\alpha \eqdef (i, j, A, B)$ and~$\alpha' \eqdef (i', j, A', B')$, such that~$\alpha$ is below~$\alpha'$. 
Let~$\gamma \eqdef (k, \ell, X, Y)$ denote the uppermost wiggly arc of the boundary of~$c$ such that~$j \in Y$ (it exists since~$j$ is a  hinge of~$c$), and let~$X' \eqdef {]j,\ell[} \cap X$ and~$Y' \eqdef {]j,\ell[} \cap Y$.
Then the two wiggly arcs~$\beta \eqdef \big( i, \ell, A \cup X', B \cup \{j\} \cup Y')$ and~$\beta' \eqdef \big( i', \ell, A'  \cup \{j\} \cup X', B \cup Y')$ are both compatible with~$D$ (any wiggly arc incompatible with~$\beta$ is incompatible with~$\alpha$ or~$\gamma$, and similarly for~$\beta'$) and cannot both belong to~$D$ (otherwise, $c$ would be a wiggly pseudotriangle bounded by~$\alpha, \alpha', \gamma', \gamma$).
\end{proof}

\pagebreak
\begin{remark}
Note that in the proof of \cref{prop:wigglyPseudotriangulations}, we actually obtained that
\[
\sum_{c \in C} (\delta_c-3) = \sum_{c \in C} \beta_c/2 + 2 \sum_{c \in C} (\kappa_c-1) - 2\gamma = (1+\iota) + 2(\kappa-1) - 2(\iota+\kappa-n) = 2n-1-\iota.
\]
\end{remark}

\begin{example}
\label{exm:allSmallWigglyPseudotriangulations}
For~$n = 1$, the $2$ wiggly pseudotriangulations of $3$ points are \smash{\raisebox{-.3cm}{\includegraphics[scale=.8]{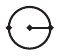}}} and \smash{\raisebox{-.3cm}{\includegraphics[scale=.8]{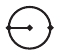}}}.
For~${n = 2}$, the $14$ wiggly pseudotriangulations of $4$ points are \\[.2cm]
\centerline{\includegraphics[scale=.8]{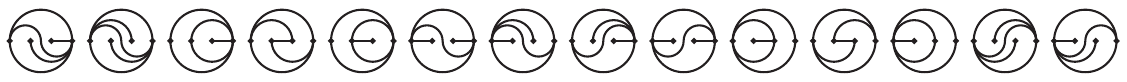}}
\end{example}

\begin{example}
\label{exm:specialWigglyPseudotriangulations}
The following wiggly pseudotriangulations are illustrated in \cref{fig:specialWigglyPseudotriangulations}:
\begin{align*}
T_\downarrow & \eqdef \set{(i-1, n+1, \varnothing, {[i,n]})}{i \in [n+1]} \cup \set{(0, n+1, [i], {]i,n]})}{i \in [n]}, \\
T_\uparrow & \eqdef \set{(0, i, \varnothing, {[i-1]})}{i \in [n+1]} \cup \set{(0, n+1, {]i,n]}, [i])}{i \in [n]}, \\
T_\leftarrow & \eqdef \{(0, 1, \varnothing, \varnothing)\} \cup \set{(0, i+1, \varnothing, [i])}{i \in [n]} \cup \set{(0, i+1, [i], \varnothing)}{i \in [n]}, \\
T_\rightarrow & \eqdef \{(n, n+1, \varnothing, \varnothing)\} \cup \set{(i-1, n+1, \varnothing, [i,n])}{i \in [n]} \cup \set{(i-1, n+1, [i,n], \varnothing)}{i \in [n]}.
\end{align*}
\begin{figure}[h]
\centerline{\includegraphics[scale=1.3]{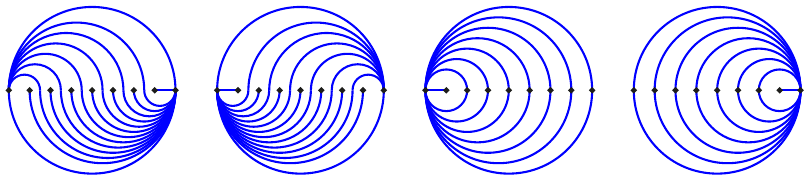}}
\caption{The wiggly pseudotriangulations~$T_\downarrow$ (left), $T_\uparrow$ (middle left), $T_\leftarrow$ (middle right), and $T_\rightarrow$ (right) of \cref{exm:specialWigglyPseudotriangulations} for~$n = 7$.}
\label{fig:specialWigglyPseudotriangulations}
\end{figure}
\end{example}

\begin{remark}
\label{rem:numberWigglyPseudotriangulations}
The number~$wp_n$ of wiggly pseudotriangulations of~$n+2$ points is given by
\[
\begin{array}{c|ccccccccc}
n & 1 & 2 & 3 & 4 & 5 & 6 & 7 & 8 & \dots \\
\hline
wp_n & 2 & 14 & 176 & 3232 & 78384 & 2366248 & 85534176 & 3602770400 & \dots
\end{array}
\]
To compute~$wp_n$, denote by $wp_n(x)$ the polynomial where the coefficient of~$x^i$ is the number of wiggly pseudotriangulations of~$n+2$ points with~$i$ \defn{final} internal wiggly arcs (\ie ending at the last point~$n+1$).
For instance,
\begin{align*}
	wp_1(x) & = x + 1, \\
	wp_2(x) & = 3 x^3 + 5 x^2 + 4 x + 2, \\
	wp_3(x) & = 15 x^5 + 35 x^4 + 44 x^3 + 40 x^2 + 28 x + 14, \\
	wp_4(x) & = 105 x^7 + 315 x^6 + 520 x^5 + 630 x^4 + 620 x^3 + 514 x^2 + 352 x + 176.
\end{align*}
We invite the reader to check the expressions of~$wp_1(x)$ and~$wp_2(x)$ with \cref{exm:allSmallWigglyPseudotriangulations}.
We use this additional variable~$x$ to obtain a recursive formula for~$wp_n(x)$.
Indeed, any wiggly pseudotriangulation of~$n+2$ points can be obtained from a wiggly pseudotriangulation of~$n+1$ points by inserting a single wiggly pseudotriangle with a hinge at~$n$.
Each wiggly pseudotriangulation of~$n+1$ points with $i$ final internal wiggly arcs contributes to:
\begin{itemize}
\item $i+2$ wiggly pseudotriangulations of~$n+2$ points with $i+2$ final internal wiggly arcs, containing the wiggly arc~$(n, n+1, \varnothing, \varnothing)$,
\item $j+1$ wiggly pseudotriangulations of~$n+2$ points with $j$ final internal wiggly arcs, not containing the wiggly arc~$(n, n+1, \varnothing, \varnothing)$,  for all~$0 \le j \le i+1$.
\end{itemize}
\pagebreak
\cref{fig:pseudotriangulationsCounting} illustrates this decomposition.
\begin{figure}
\centerline{\includegraphics[scale=1.3]{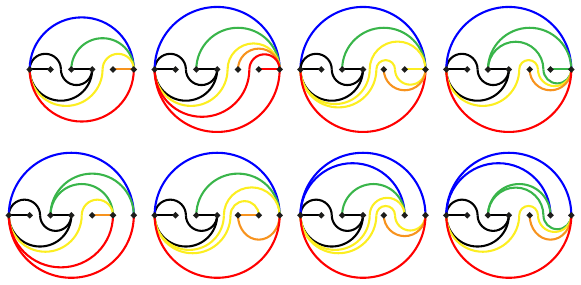}}
\caption{Some wiggly pseudotriangulations of~$7$ points obtained from the top left wiggly pseudotriangulation of~$6$ points by inserting a wiggly pseudotriangle with hinge at~$5$ as described in \cref{rem:numberWigglyPseudotriangulations}. Those on the first line contain the wiggly arc~$(5, 6, \varnothing, \varnothing)$ and are thus obtained by inserting the head of the wiggly pseudotriangle with hinge at~$5$ in one of the final wiggly arcs. Those on the second line do not contain the wiggly arc~$(5, 6, \varnothing, \varnothing)$ and are thus obtained by inserting the two legs of the wiggly pseudotriangle with hinge at~$5$ in two of the final wiggly arcs. The coloring of the wiggly arcs is supposed to help visualizing in which wiggly arcs the head or legs of the new wiggly pseudotriangle were inserted.}
\label{fig:pseudotriangulationsCounting}
\end{figure}
This translates to the recursive formula 
\begin{align*}
wp_{n}(x) & = \overbracket[.5pt]{x \cdot \frac{\partial}{\partial x} \big( x^2 wp_{n-1}(x) \big)}^{\substack{\text{wiggly pseudotriangulations} \\ \text{containing } (n, n+1, \varnothing, \varnothing)}} + \overbracket[.5pt]{\frac{\partial}{\partial x}\Big( \smash{\frac{wp_{n-1}(1)-x^3wp_{n-1}(x)}{1-x}} \Big)}^{\substack{\qquad\; \text{wiggly pseudotriangulations} \qquad\;  \\ \text{not containing } (n, n+1, \varnothing, \varnothing)}} \\
& = \frac{1}{(1 - x)^2} \big( wp_{n-1}(1) + x^2 (2 x^2 - 2 x - 1) wp_{n-1}(x) + x^4 (x - 1) wp_{n-1}’(x) \big).
\end{align*}
This enables us to quickly compute~$wp_n(x)$ and we obtain the numbers of wiggly pseudotriangulations~$wp_n$ by evaluating~$wp_n(x)$ at~$x = 1$.
\end{remark}

Our next statement prepares for the definition of flips in wiggly pseudotriangulations, and is illustrated in \cref{fig:diagonalsPseudoquadrangles}.

\begin{definition}
A \defn{wiggly diagonal} of a wiggly cell~$c$ is a wiggly arc inside~$c$ and compatible with all wiggly arcs of~$c$.
\end{definition}

\begin{proposition}
\label{prop:diagonalsPseudoquadrangle}
Any wiggly pseudoquadrangle has exactly two wiggly diagonals.
Moreover, these two wiggly diagonals either cross precisely once, or are non pointed.
\end{proposition}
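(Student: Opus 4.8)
The plan is to run through the classification of wiggly pseudoquadrangles from \cref{rem:descriptionPseudotrianglesPseudoquadrangles}: the digon with a single interior point (type~(1)), and the three configurations (2a), (2b), (2c) with six boundary arcs, four corners and two hinges. In each type I will exhibit the two wiggly diagonals, show there are no others, and check whether they cross once or are non-pointed. The recurring tool is a degree count. Let $\delta$ be a wiggly diagonal of a wiggly pseudoquadrangle~$q$, so that $q \cup \{\delta\}$ subdivides~$q$ into wiggly cells. Applying the identity $\sum_c(\delta_c-3) = 2n-1-\iota$ from the remark following \cref{prop:wigglyPseudotriangulations} before and after adding~$\delta$ (which only affects the cells inside~$q$ and raises the number~$\iota$ of internal wiggly arcs by one), the sum of $\delta_c-3$ over the cells of $q\cup\{\delta\}$ inside~$q$ equals $(\delta_q-3)-1 = 0$; since each term is nonnegative (\cref{rem:degree}), every such cell is a wiggly pseudotriangle. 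As adding one arc to the disk~$q$ either splits it into two cells or --- if the arc joins the outer boundary to an interior connected component --- leaves a single cell, we conclude that either $\delta$ is a chord cutting~$q$ into two wiggly pseudotriangles, or (only in type~(1), whose unique interior component is a point) $\delta$ joins the outer boundary of~$q$ to that point. In particular wiggly pseudotriangles admit no wiggly diagonal, so the pieces produced by~$\delta$ cannot be refined further.

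For type~(1), write the outer boundary as a digon $\{\alpha,\alpha'\}$ from~$i$ to~$j$ with interior point $m \in {]i,j[}$, so that $\alpha$ and $\alpha'$ only differ in passing above versus below~$m$ and agree on ${]i,m[}$ and on ${]m,j[}$. Any wiggly arc strictly between $\alpha$ and~$\alpha'$ from~$i$ to~$j$ would have to be $\alpha$ or~$\alpha'$ itself, so no chord joins~$i$ to~$j$; hence a wiggly diagonal joins~$\{i,j\}$ to~$m$. Since $\alpha$ and $\alpha'$ agree on ${]i,m[}$, the arc from~$i$ to~$m$ lying between them is uniquely determined, and one checks directly that it is compatible with~$\alpha$ and~$\alpha'$; likewise for the arc from~$m$ to~$j$. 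These are the only two wiggly diagonals, and since one ends at~$m$ and the other starts at~$m$, they are non-pointed.

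For type~(2), a wiggly diagonal~$\delta$ is a chord joining two opposite features $f_k, f_{k+3}$ of the hexagonal boundary, so that each of the two runs of three boundary arcs together with~$\delta$ forms one of the two resulting wiggly pseudotriangles, and hence each piece carries exactly one hinge and three corners. Cutting~$q$ at a corner yields two corners, while cutting at a hinge yields two corners or a corner and a hinge; tracking the two hinges of~$q$ then shows that, among the three candidate chords $\{f_0,f_3\}$, $\{f_1,f_4\}$, $\{f_2,f_5\}$, a chord is infeasible precisely when both hinges of~$q$ lie strictly inside one run of three (forcing a piece to inherit two hinges). Combined with the fact that in the absence of interior points the above/below pattern of a chord inside~$q$ is entirely forced, so each feasible candidate yields at most one wiggly arc, this leaves exactly two realizable wiggly diagonals; going through the subcases (2a), (2b), (2c) with the help of \cref{fig:pseudoquadrangles,fig:diagonalsPseudoquadrangles} makes explicit which candidate is discarded in each. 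Finally, the two surviving chords interleave around the hexagonal boundary, so the corresponding wiggly arcs cross exactly once, unless a degeneracy --- two corners of~$q$ at a common marked point, occurring as a cut endpoint of each diagonal --- makes one diagonal end where the other begins, in which case they are non-pointed; a direct inspection in each subcase confirms the dichotomy.

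The step I expect to be the main obstacle is the realizability analysis in type~(2): once the hinge bookkeeping has narrowed the candidates, one must still verify that each surviving candidate is realized by a genuine abscissa-monotone wiggly arc lying inside~$q$, that this arc is unique, and that the discarded candidate truly admits no diagonal --- all while handling correctly the degenerate configurations where several corners of~$q$ collapse to a single marked point. Type~(1) and the final crossing-versus-non-pointed bookkeeping are comparatively routine.
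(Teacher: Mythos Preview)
Your approach is essentially the same as the paper's: a case-by-case inspection following the classification of \cref{rem:descriptionPseudotrianglesPseudoquadrangles}, with the paper's own proof amounting to little more than naming the two diagonals in each of the four types and pointing to \cref{fig:diagonalsPseudoquadrangles}. Your write-up is in fact more thorough than the paper's: the degree-count argument (using the identity $\sum_c(\delta_c-3)=2n-1-\iota$) to force any diagonal to cut~$q$ into wiggly pseudotriangles, and hence in type~(2) to be a ``long diagonal'' of the hexagonal boundary, is a genuine addition that the paper leaves entirely implicit. Your self-identified weak spot---the realizability and uniqueness analysis in type~(2), including the degenerate coincident-corner cases---is exactly where the paper also punts to the figure, so you are not missing anything the paper supplies.
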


\begin{proof}
Depending on the four types of pseudoquadrangles described in \cref{rem:descriptionPseudotrianglesPseudoquadrangles}:
\begin{enumerate}
\item the two diagonals are the two wiggly arcs inside~$q$, incident to the point in the interior~of~$q$,
\item[(2a)] the two diagonals connect the two pairs of opposite corners of~$q$,
\item[(2b)] one of two diagonals connects two opposite corners of~$q$ while the other connects the two hinges~of~$q$,
\item[(2c)] one of the diagonals connects two opposite corners of~$q$ while the other connects an hinge of~$q$ to a corner of~$q$.
\end{enumerate}
See \cref{fig:diagonalsPseudoquadrangles} where the two diagonals appear in red and orange.
\begin{figure}
\centerline{\includegraphics[scale=1.3]{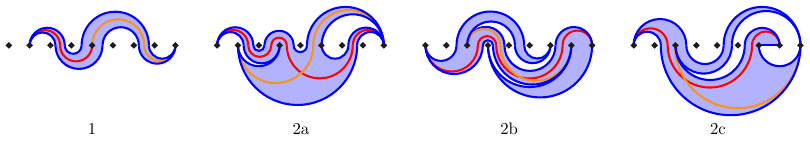}}
\caption{The two diagonals in each of the four types of wiggly pseudoquadrangles.} 
\label{fig:diagonalsPseudoquadrangles}
\end{figure}
\end{proof}


We are now ready to consider the wiggly flip graph and the wiggly complex.
The following definition is illustrated in \cref{fig:wigglyComplex}\,(right) when~$n = 2$.
\begin{figure}
\centerline{\includegraphics[scale=1.1]{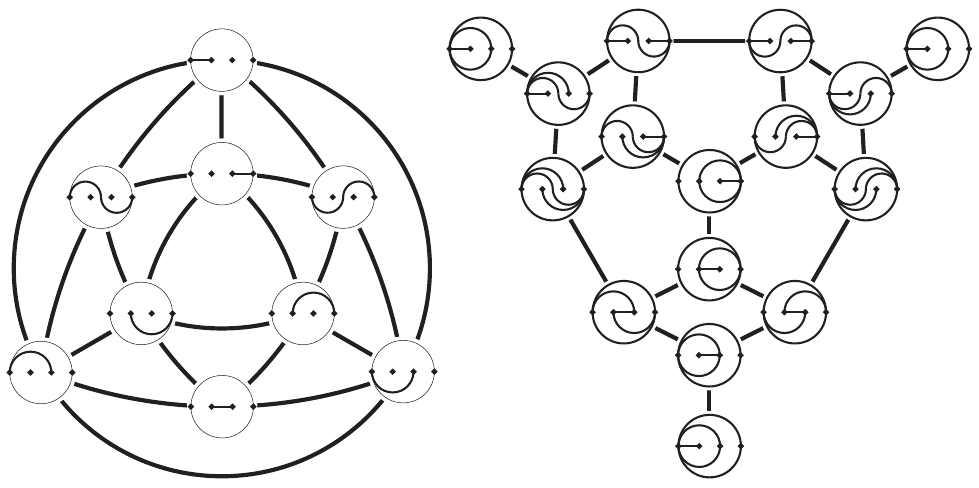}}
\caption{The wiggly complex~$\wigglyComplex_2$ (left) and the wiggly flip graph~$\wigglyFlipGraph_2$ (right). We have embedded the wiggly complex~$\wigglyComplex_2$ in the plane, so that all triangles (including the external one) form a face of~$\wigglyComplex_2$.}
\label{fig:wigglyComplex}
\end{figure}

\begin{definition}
\label{def:wigglyFlipGraph}
The \defn{wiggly flip graph}~$\wigglyFlipGraph_n$ is the graph with a vertex for each wiggly pseudotriangulation, and with an edge between two wiggly pseudotriangulations~$T$ and~$T'$ if there are wiggly arcs~$\alpha \in T$ and~$\alpha' \in T'$ such that~$T \ssm \{\alpha\} = T' \ssm \{\alpha'\}$.
\end{definition}

\begin{proposition}
\label{prop:wigglyFlipGraph}
The wiggly flip graph~$\wigglyFlipGraph_n$ is regular of degree~$2n-1$ and connected.
\end{proposition}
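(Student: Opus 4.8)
The plan is to establish the two claims—regularity of degree $2n-1$ and connectivity—separately, using the structural results already in hand.

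For \textbf{regularity}, the key input is \cref{prop:wigglyPseudotriangulations}, which tells us every wiggly pseudotriangulation $T$ has exactly $2n-1$ internal wiggly arcs, and \cref{prop:diagonalsPseudoquadrangle}, which says every wiggly pseudoquadrangle has exactly two wiggly diagonals. First I would argue that each internal wiggly arc $\alpha \in T$ is flippable in a unique way. Removing $\alpha$ from $T$ merges the two wiggly cells of $T$ adjacent to $\alpha$ (both wiggly pseudotriangles, by (i) of \cref{prop:wigglyPseudotriangulations}) into a single wiggly cell $c$ of $T \ssm \{\alpha\}$; one checks that $c$ has degree $4$, i.e.\ is a wiggly pseudoquadrangle, using the degree bookkeeping (the remark after \cref{prop:wigglyPseudotriangulations} gives $\sum_c (\delta_c - 3) = 2n-1-\iota$, so deleting one internal arc forces exactly one cell of degree $4$ and all others of degree $3$). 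By \cref{prop:diagonalsPseudoquadrangle}, $c$ has exactly two wiggly diagonals, one of which is $\alpha$; the other, call it $\alpha'$, gives a wiggly pseudodissection $T' \eqdef (T \ssm \{\alpha\}) \cup \{\alpha'\}$ with $2n-1$ internal arcs, hence a wiggly pseudotriangulation by (iii)$\Rightarrow$(iv) of \cref{prop:wigglyPseudotriangulations}. This shows each $\alpha \in T^\circ$ yields exactly one neighbour. Conversely, any edge of $\wigglyFlipGraph_n$ incident to $T$ arises this way, and distinct internal arcs $\alpha$ yield distinct neighbours $T'$ (since $T'$ determines $T \cap T' = T \ssm \{\alpha\}$, hence $\alpha$). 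Therefore $T$ has exactly $2n-1$ neighbours.

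For \textbf{connectivity}, I would induct on $n$, using the decomposition described in \cref{rem:numberWigglyPseudotriangulations}: every wiggly pseudotriangulation of $n+2$ points is obtained from one of $n+1$ points by inserting a wiggly pseudotriangle with a hinge at $n$. Concretely, given a wiggly pseudotriangulation $T$ of $n+2$ points, deleting all internal wiggly arcs incident to the point $n$ together with the point $n$ itself (and rerouting appropriately) recovers a wiggly pseudotriangulation $\bar T$ of $n+1$ points; the fibre over $\bar T$ consists of the pseudotriangulations obtained by the two insertion operations of \cref{rem:numberWigglyPseudotriangulations}. It suffices to show (a) any two pseudotriangulations in a common fibre are connected by flips within the fibre or through it, and (b) the induction hypothesis lifts: if $\bar T$ and $\bar T'$ differ by one flip, then some lift of $\bar T$ is flip-connected to some lift of $\bar T'$. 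Alternatively—and this is probably cleaner—I would use the special pseudotriangulations of \cref{exm:specialWigglyPseudotriangulations}: show that every $T$ can be connected by flips to a fixed base point such as $T_\downarrow$, by an explicit "sorting" argument that repeatedly flips away the internal arcs not in $T_\downarrow$ in a suitable order (each flip strictly decreasing some nonnegative potential, e.g.\ the number of arcs in $T^\circ \ssm T_\downarrow^\circ$ weighted by endpoint positions, or the symmetric difference with $T_\downarrow$ measured via wiggly cell degrees).

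\textbf{The main obstacle} is connectivity: regularity is essentially a local computation once the pseudoquadrangle lemma is available, but connectivity requires either a careful induction on $n$ with a genuine argument that flips in the smaller complex lift to flips in the larger one (the delicate point being that inserting a wiggly pseudotriangle at the new hinge interacts nontrivially with which arc of a pseudoquadrangle gets flipped), or an explicit monotone flip sequence to a base pseudotriangulation, where the work is in defining a potential function that provably drops at each step and never gets stuck. I expect the induction route to be the more robust one, paralleling the recursive structure already exploited in \cref{rem:numberWigglyPseudotriangulations}, and I would phrase it so that the fibre over each $\bar T$ is itself flip-connected and meets the fibre over any flip-neighbour of $\bar T$.
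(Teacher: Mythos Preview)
Your regularity argument is correct and matches the paper's: remove an internal arc, observe (via the degree bookkeeping) that exactly one wiggly pseudoquadrangle appears, and apply \cref{prop:diagonalsPseudoquadrangle} to get a unique second diagonal.

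For connectivity, the paper also proceeds by induction on~$n$, but with a sharper reduction than your fibre picture. Rather than working with the full fibration over~$\wigglyFlipGraph_{n-1}$ from \cref{rem:numberWigglyPseudotriangulations} (which, as you note, would require both fibre-connectedness and a lifting argument), the paper singles out one distinguished fibre: the wiggly pseudotriangulations with \emph{no} internal arc ending at~$n+1$. These automatically contain the arcs~$(0,n,[n-1],\varnothing)$ and~$(0,n,\varnothing,[n-1])$ and hence form a subgraph isomorphic to~$\wigglyFlipGraph_{n-1}$, connected by induction. The inductive step is then the single claim that any pseudotriangulation with at least one ``final'' internal arc (ending at~$n+1$) admits a flip to a pseudotriangulation with strictly fewer final internal arcs; the paper proves this by locating a specific final arc whose flip is forced to be non-final. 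This sidesteps both obstacles you identify: there is no fibre-connectedness to check and no flip to lift. Your ``sorting to~$T_\downarrow$ via a potential'' idea is also viable in spirit---indeed the paper remarks that connectivity follows immediately from the later bijection with wiggly permutations (\cref{prop:bijection}), which supplies exactly such a monotone structure---but the direct inductive argument avoids the forward reference.

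One small inaccuracy in your description: the reverse map of \cref{rem:numberWigglyPseudotriangulations} does not delete the point~$n$ or the arcs incident to it; the inserted wiggly pseudotriangle has its hinge at~$n$ and its corners at~$n+1$, so the relevant arcs to control are those ending at~$n+1$.
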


\begin{proof}
Consider any internal wiggly arc~$\alpha$ of a wiggly pseudotriangulation~$T$.
Then~${Q \eqdef T \ssm \{\alpha\}}$ contains~$n-2$ wiggly pseudotriangles and $1$ pseudoquadrangle~$q$.
By \cref{prop:diagonalsPseudoquadrangle}, $q$ admits precisely two wiggly diagonals, namely $\alpha$ and another one~$\alpha'$.
As $\alpha'$ is inside~$q$ and compatible with~$c$, it is also compatible with~$Q$.
We obtain that~$T' \eqdef Q \cup \{\alpha'\}$ is the only wiggly pseudotriangulation other than~$T$ containing~$Q$.
This shows that any internal wiggly arc in any wiggly pseudotriangulation can be flipped, hence that the wiggly flip graph is regular of degree~$2n-1$ by \cref{prop:wigglyPseudotriangulations}\,(iii).

There are various possible proofs for the connectedness.
For instance, it will follow easily from \cref{prop:bijection}.
To avoid relying on this forward reference, we sketch an alternative direct proof by induction on~$n$.
We claim that if a wiggly pseudotriangulation has a non-zero number of final internal wiggly arcs (\ie ending at the last point~$n+1$), then at least one of them can be flipped to a non-final wiggly arc.
This implies that any wiggly pseudotriangulation can be transformed by flips to a wiggly pseudotriangulation with no final internal wiggly arc.
All wiggly pseudotriangulations with no final internal wiggly arc contain the arcs~$(0,n,\varnothing,[n-1])$ and~$(0,n,[n-1],\varnothing)$, since the latter are compatible with all internal wiggly arcs but the final ones.
Hence, they induce a subgraph of~$\wigglyFlipGraph_n$ isomorphic to~$\wigglyFlipGraph_{n-1}$, which is connected by induction.
We conclude that~$\wigglyFlipGraph_n$ is itself connected.

To see the claim, let~$i$ be maximal such that~$T$ contains two distinct wiggly arcs starting at~$i$ and ending at~$n+1$ (it is well-defined as we have the external wiggly arcs~$\alpha_\mathrm{top}$ and~$\alpha_\mathrm{bot}$ for~$i = 0$).
Consider a final internal wiggly arc~$\alpha$ of~$T$ starting weakly after~$i$, and let~$\alpha'$ be the wiggly arc obtained by flipping~$\alpha$ in~$T$.
If~$\alpha'$ is final, then by~\cref{prop:diagonalsPseudoquadrangle}, at least one pseudotriangle~$t$ of~$T$ incident to~$\alpha$ has two corners at~$n+1$.
Let~$h$ be the hinge of~$t$ and~$\beta, \beta'$ be the two wiggly arcs of~$t$ joining~$h$ to~$n+1$.
As~$i < h \le n$ the maximality of~$i$ imposes that~$\beta = \beta'$.
Flipping $\beta = \beta'$ in~$T$ thus yields a wiggly arc ending at~$h \le n$, hence a non-final arc.
\end{proof}

We will also consider an oriented version of the wiggly flip graph~$\wigglyFlipGraph_n$.
The following definition is illustrated in \cref{fig:incompatible2}.

\begin{definition}
\label{def:wigglyIncreasingFlipGraph}
Let~$\alpha \eqdef (i, j, A, B)$ and~$\alpha' \eqdef (i', j', A', B')$ be two wiggly arcs and~$T$ and~$T'$ be two wiggly pseudotriangulations, such that~$T \ssm \{\alpha\} = T' \ssm \{\alpha'\}$.
We say that the flip from~$T$ to~$T'$ is \defn{increasing} if 
\begin{itemize}
\item $\alpha$ starts where~$\alpha'$ ends (that is, $i = j'$), or 
\item $\alpha$ crosses~$\alpha'$ from north-west to south-east (that is, there exists~$1 \le k < \ell \le n$ such that $k \in (A \cap B') \cup (\{i\} \cap B') \cup (A \cap \{i'\})$ and~$\ell \in (A' \cap B) \cup (\{j'\} \cap B) \cup (A' \cap \{j\})$).
\end{itemize}
The \defn{wiggly increasing flip graph}~$\wigglyIncreasingFlipGraph_n$ is the directed graph with a vertex for each wiggly pseudotriangulation and an edge for each increasing~flip.
\begin{figure}
\centerline{\includegraphics[scale=1.3]{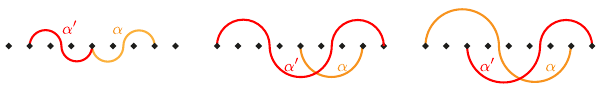}}
\caption{The wiggly arcs~$\alpha$ and~$\alpha'$ of \cref{def:wigglyIncreasingFlipGraph}.}
\label{fig:incompatible2}
\end{figure}
\end{definition}

We will need the following observation about exchangeable arcs, illustrated in \cref{fig:incompatible3}.

\begin{proposition}
\label{prop:uerp}
Let~$\alpha \eqdef (i, j, A, B)$ and~$\alpha' \eqdef (i', j', A', B')$ be two exchangeable wiggly arcs.
Then any wiggly arc compatible with both~$\alpha$ and~$\alpha'$ is also compatible with the wiggly~arcs:
\begin{itemize}
\item $\beta \eqdef (i, j', A \cup A', B \cup B' \cup \{j\})$ and~$\beta' \eqdef (i, j', A \cup A' \cup \{j\}, B \cup B')$ if~$\alpha$ and~$\alpha'$ are non pointed with~$j = i'$,
(In other words, ~$\beta$ and~$\beta'$ are the two wiggly arcs starting at~$\min(i,i')$ and ending at~$\max(j,j')$ which follow $\alpha$ on~$]i,j[$ and~$\alpha'$ on~$]i',j'[$.)
\item $\beta \eqdef \big( i', j, {]i,j'[} \ssm (B \cup B'), {]i,j'[} \cap (B \cup B') \big)$ and~$\beta' \eqdef \big( i, j', {]i',j[} \cap (A \cup A'), {]i',j[} \ssm (A \cup A') \big)$ if~$\alpha$ and~$\alpha'$ are crossing and $\alpha$ crosses~$\alpha'$ from north-west to south-east.
(In other words, $\beta$ is the wiggly arc starting at~$i'$ and ending at~$j$ which follows the lower hull of~$\alpha$ and~$\alpha'$, and~$\beta'$ is the wiggly arc starting at~$i$ and ending~at~$j'$ which follows the upper hull of~$\alpha$ and~$\alpha'$.)
\end{itemize}
(By exchanging~$\alpha$ and~$\alpha'$, similar statements hold if~$\alpha$ and~$\alpha'$ are non pointed with~$j = i'$, or if~$\alpha$ crosses~$\alpha'$ from south-east to north-west.)
\begin{figure}
\centerline{\includegraphics[scale=1.3]{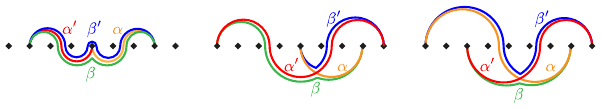}}
\caption{The wiggly arcs~$\beta$ and~$\beta'$ of \cref{prop:uerp}.}
\label{fig:incompatible3}
\end{figure}
\end{proposition}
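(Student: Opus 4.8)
The plan is to reduce everything to the crossing criterion of \cref{def:compatible} and then run a direct case analysis, the delicate part of which is localized at a single ``transition point''. First I would record the set-up. By \cref{prop:diagonalsPseudoquadrangle}, two exchangeable wiggly arcs are the two wiggly diagonals of a wiggly pseudoquadrangle, hence they are either non pointed in the stated way (the pseudoquadrangle of type~(1)) or cross exactly once (the pseudoquadrangles of types~(2a)--(2c)); this is exactly the case split of the statement, and in the crossing case it already supplies the three sub-types to be considered. Moreover the statement carries two built-in symmetries: the global reflection across the horizontal axis, which swaps $A\leftrightarrow B$, $A'\leftrightarrow B'$, $X\leftrightarrow Y$, interchanges $\beta$ with $\beta'$, and flips the crossing direction; and the exchange of $\alpha$ with $\alpha'$, which swaps the two endpoint conditions $j=i'$ and $i=j'$ and the two crossing directions. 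So it suffices to fix a wiggly arc $\gamma\eqdef(k,\ell,X,Y)$ compatible with both $\alpha$ and $\alpha'$ and to prove that $\gamma$ is compatible with $\beta$ in the two displayed configurations. The pointedness half is then immediate: in each configuration the left endpoint of $\beta$ lies in $\{i,i'\}$ and its right endpoint in $\{j,j'\}$, and the hypothesis that $\gamma$ be pointed with both $\alpha$ and $\alpha'$ already forbids $k$ from being the right endpoint of $\beta$ and $\ell$ from being its left endpoint.

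The substance is that $\gamma$ does not cross $\beta$, and I would prove this by contradiction with the help of a \emph{transition abscissa} $m$: namely $m=j=i'$ in the non pointed case, and $m$ the unique abscissa at which $\alpha$ and $\alpha'$ cross in the crossing case. The key fact is that $\beta$ passes above (resp.\ below) exactly the same marked points as $\alpha$ at abscissae on one side of $m$, and exactly the same marked points as $\alpha'$ on the other side of $m$ --- this is precisely what the displayed formulas for $\beta$ encode, and verifying it in the crossing case is where the three sub-types (2a)--(2c) must be inspected. Now suppose $\gamma$ crosses $\beta$; the criterion of \cref{def:compatible} produces a marked point (possibly a boundary point $i,j'$ of $\beta$ or $k,\ell$ of $\gamma$) at which $\gamma$ runs above $\beta$, and another at which it runs below. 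If all such witnesses lay at abscissae on the $\alpha$-side of $m$, transporting them through the agreement of $\beta$ with $\alpha$ would show that $\gamma$ crosses $\alpha$ (after a one-line check at the shared endpoint), and symmetrically for the $\alpha'$-side. Since $\gamma$ is compatible with both, there must be witnesses strictly on both sides of $m$, so abscissa monotonicity of $\gamma$ forces $m$ to lie strictly between $k$ and $\ell$, \ie $m\in X$ or $m\in Y$. Finally, since $m$ is an endpoint of both $\alpha$ and $\alpha'$ in the non pointed case, and a crossing point of $\alpha$ and $\alpha'$ in the crossing case, the crossing criteria of $\gamma$ with $\alpha$ and with $\alpha'$ each become one-sided once we know which of $X,Y$ contains $m$; feeding this back in, the non-crossing of $\gamma$ with $\alpha$ and with $\alpha'$ forces all of the above-$\beta$ witnesses (or all of the below-$\beta$ witnesses) of $\gamma$ to vanish, which is the required contradiction. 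The statements for $\beta'$ and for the opposite orientations then follow by combining the two symmetries.

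I expect the main obstacle to be exactly this book-keeping at the transition abscissa $m$. Away from $m$ the compatibility of $\gamma$ with $\beta$ is essentially soft: $\beta$ can be isotoped into an arbitrarily small neighborhood of $\alpha\cup\alpha'$, so a curve $\gamma$ disjoint from $\alpha$ and $\alpha'$ is automatically disjoint from that part of $\beta$. Near $m$, however, $\gamma$ is genuinely allowed to hug $\alpha\cup\alpha'$ --- for instance to pass just below the point $j$ in the non pointed case --- without crossing either $\alpha$ or $\alpha'$, so one cannot invoke the neighborhood picture there and must instead exploit, by hand, that $m$ is a boundary point of (or a crossing of) $\alpha$ and $\alpha'$. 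A secondary, purely bureaucratic obstacle is assembling the correct above/below description of $\beta$ and $\beta'$ across the three sub-types of wiggly pseudoquadrangle; once that is in place the contradiction argument above applies uniformly. This is, in sharper form, the compatibility assertion already used (without detailed proof) in the proof of \cref{prop:wigglyPseudotriangulations}\,(iv)$\Rightarrow$(i).
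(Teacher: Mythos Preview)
Your proposal is correct and follows essentially the same approach as the paper: the pointedness half is handled by noting that the endpoints of~$\beta$ lie among the endpoints of~$\alpha$ and~$\alpha'$, and the non-crossing half by the observation that~$\beta$ coincides with~$\alpha$ on one side of a transition abscissa and with~$\alpha'$ on the other, so a putative crossing of~$\gamma$ with~$\beta$ transfers to an incompatibility with~$\alpha$ or~$\alpha'$. The paper's proof is two sentences and does not distinguish the pseudoquadrangle sub-types~(2a)--(2c); your more detailed contradiction argument at the transition point~$m$ is a legitimate fleshing-out of the paper's phrase ``except at the point~$j=i'$ in the first situation'', but the extra sub-type case analysis is not actually needed once you have the uniform description of~$\beta$ as following the lower hull of~$\alpha\cup\alpha'$.
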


\begin{proof}
Any wiggly arc non pointed with~$\beta$ would be non pointed with~$\alpha$ or~$\alpha'$ (as the endpoints of~$\beta$ are endpoints of~$\alpha$ or~$\alpha'$).
Any wiggly arc crossing~$\beta$ would be either non pointed or crossing~$\alpha$ or~$\alpha'$ (as~$\beta$ follows~$\alpha$ or~$\alpha'$, except at the point~$j = i'$ in the first situation).
\end{proof}

Finally, we define the wiggly complex~$\wigglyComplex_n$, illustrated in \cref{fig:wigglyComplex}\,(left) when~$n = 2$.

\begin{definition}
\label{def:wigglyComplex}
The \defn{wiggly complex}~$\wigglyComplex_n$ is the simplicial complex of pairwise pointed and non-crossing subsets of internal wiggly arcs.
In other words, it is the clique complex of the compatibility graph on internal wiggly arcs.
\end{definition}

\begin{proposition}
The wiggly complex~$\wigglyComplex_n$ is a flag $(2n-1)$-dimensional pseudomanifold without boundary.
\end{proposition}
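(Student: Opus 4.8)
The plan is to verify the three claims—flagness, pure dimensionality $2n-1$, and the pseudomanifold-without-boundary property—in that order, leveraging the structural results already established for wiggly pseudotriangulations.

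First, flagness is immediate from the definition: \cref{def:wigglyComplex} already states that $\wigglyComplex_n$ is the clique complex of the compatibility graph on internal wiggly arcs, and clique complexes are flag by construction. So the only content here is to note that a subset of internal wiggly arcs is a face precisely when it is pairwise compatible, which is exactly how wiggly pseudodissections restrict to $D^\circ$. Next, pure dimensionality: the facets of $\wigglyComplex_n$ are the inclusion-maximal pairwise-compatible sets of internal wiggly arcs, which by \cref{prop:wigglyPseudotriangulations} (equivalence of (iii) and (iv), together with the fact that adjoining $\alpha_\mathrm{top}, \alpha_\mathrm{bot}$ is harmless since they are compatible with everything) are exactly the sets $T^\circ$ for $T$ a wiggly pseudotriangulation, each of cardinality $2n-1$. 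Hence every facet has $2n-1$ vertices, so $\wigglyComplex_n$ is pure of dimension $2n-1$.

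For the pseudomanifold-without-boundary property I must show that every codimension-one face (a ridge) is contained in exactly two facets. A ridge is a pairwise-compatible set $R$ of $2n-2$ internal wiggly arcs that is contained in some facet $T^\circ$; then $R = T^\circ \ssm \{\alpha\}$ for an internal arc $\alpha \in T^\circ$. Adding back the external arcs, $R \cup \{\alpha_\mathrm{top}, \alpha_\mathrm{bot}\} = T \ssm \{\alpha\}$ is a wiggly pseudodissection with $2n-2$ internal arcs, hence by \cref{prop:wigglyPseudotriangulations} it has exactly $n-1$ cells, all wiggly pseudotriangles except one wiggly pseudoquadrangle $q$ (the cell incident to $\alpha$ on both sides in $T$). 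By \cref{prop:diagonalsPseudoquadrangle}, $q$ has exactly two wiggly diagonals $\alpha$ and $\alpha'$, and each of $T^\circ$ and $(T \ssm \{\alpha\} \cup \{\alpha'\})^\circ$ is a facet containing $R$. Conversely, any facet $T'^\circ \supseteq R$ gives a wiggly pseudotriangulation $T'$ containing $R \cup \{\alpha_\mathrm{top}, \alpha_\mathrm{bot}\}$, and the single extra internal arc of $T'$ must be a wiggly diagonal of $q$ (it lies inside $q$ and is compatible with the rest), hence equals $\alpha$ or $\alpha'$. So $R$ lies in exactly two facets, establishing the pseudomanifold-without-boundary condition. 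One should also note that $\wigglyComplex_n$ is connected, which follows from \cref{prop:wigglyFlipGraph} (connectedness of the wiggly flip graph), so that it is a genuine pseudomanifold in the connected sense if that is intended.

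The main obstacle, and the only place requiring genuine care, is justifying that $R \cup \{\alpha_\mathrm{top}, \alpha_\mathrm{bot}\}$ has exactly one non-triangular cell, namely a pseudoquadrangle: this uses the counting identity from the remark after \cref{prop:wigglyPseudotriangulations}, $\sum_{c}(\delta_c - 3) = 2n-1-\iota$, which gives $1$ when $\iota = 2n-2$, forcing exactly one cell of degree $4$ and all others of degree $3$ (a priori the deficiency could be realized by a single degree-$4$ cell or not at all, but $\iota = 2n-2 < 2n-1$ rules out the all-triangle case). Everything else is a direct appeal to \cref{prop:wigglyPseudotriangulations}, \cref{prop:diagonalsPseudoquadrangle}, and \cref{prop:wigglyFlipGraph}.
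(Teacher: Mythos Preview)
Your proof is correct and follows essentially the same approach as the paper: flagness from the clique-complex definition, pure dimension $2n-1$ from the equivalence (iii)$\iff$(iv) in \cref{prop:wigglyPseudotriangulations}, and the pseudomanifold-without-boundary property from the flip argument. The paper's own proof simply cites \cref{prop:wigglyFlipGraph} for the last step, whereas you unpack that citation explicitly (exactly as in the proof of \cref{prop:wigglyFlipGraph}) and further justify the ``exactly one pseudoquadrangle'' claim via the degree-sum identity $\sum_c(\delta_c-3)=2n-1-\iota$; this extra care is fine but not strictly needed given the earlier results.
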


\begin{proof}
It is a flag simplicial complex because it is a clique complex.
It is pure of dimension $2n-1$ by the implication (iv) $\Rightarrow$ (iii) of \cref{prop:wigglyPseudotriangulations}.
Finally, it is a pseudomanifold without boundary by \cref{prop:wigglyFlipGraph}.
\end{proof}

\begin{remark}
$\wigglyComplex_1$ contains two isolated points.
$\wigglyComplex_2$ is a simplicial $3$-dimensional associahedron (this coincidence fails for~$n > 2$).
Here are the first few $f$-vectors:
\begin{align*}
f(\wigglyComplex_1) & = (1, 2), \\
f(\wigglyComplex_2) & = (1, 9, 21, 14), \\
f(\wigglyComplex_3) & = (1, 24, 154, 396, 440, 176), \\
f(\wigglyComplex_4) & = (1, 55, 729, 4002, 10930, 15684, 11312, 3232), \\
f(\wigglyComplex_5) & = (1, 118, 2868, 28110, 140782, 400374, 673274, 662668, 352728, 78384),
\intertext{and the first few $h$-vectors:}
h(\wigglyComplex_1) & = (1, 1), \\
h(\wigglyComplex_2) & = (1, 6, 6, 1), \\
h(\wigglyComplex_3) & = (1, 19, 68, 68, 19, 1), \\
h(\wigglyComplex_4) & = (1, 48, 420, 1147, 1147, 420, 48, 1), \\
h(\wigglyComplex_5) & = (1, 109, 1960, 11254, 25868, 25868, 11254, 1960, 109, 1).
\end{align*}
Note that the number of vertices and facets of~$\wigglyComplex_n$ were already discussed in \cref{rem:numberWigglyArcs,rem:numberWigglyPseudotriangulations}.
\end{remark}

\begin{remark}
\label{rem:wigglyComplexAutomorphism}
The vertical and horizontal reflections provide two obvious automorphisms of the wiggly complex~$\wigglyComplex_n$ (and thus of the wiggly flip graph~$\wigglyFlipGraph_n$).
Moreover, the vertical reflection is an anti-automorphism of the wiggly increasing flip graph~$\wigglyIncreasingFlipGraph_n$ (while the horizontal reflection is not).
\end{remark}


\subsection{Wiggly permutations and the wiggly lattice}
\label{subsec:wigglyPermutations}

We now consider wiggly permutations of~$[2n]$, defined as follows.

\begin{definition}
\label{def:wigglyPermutation}
A \defn{wiggly permutation} is a permutation of~$[2n]$ which avoids the patterns
\begin{itemize}
\item $(2j-1) \cdots i \cdots (2j)$ for~$j \in [n]$ and~$i < 2j-1$,
\item $(2j) \cdots k \cdots (2j-1)$ for~$j \in [n]$ and~$k > 2j$.
\end{itemize}
\end{definition}

\begin{example}
\label{exm:allSmallWigglyPermutations}
For~$n = 1$, the $2$ wiggly permutations of~$[2]$ are~$12$ and~$21$.
For~$n = 2$, the $14$ wiggly permutations of~$[4]$ are
\[
1234, 1243, 1342, 1423, 1432, 2134, 2143, 3412, 3421, 4123, 4132, 4213, 4312, 4321.
\]
\end{example}

\begin{example}
\label{exm:specialWigglyPermutations}
The permutations
\[
\sigma_\downarrow \eqdef 1 2 \cdots (2n),
\quad
\sigma_\uparrow \eqdef (2n) \cdots 2 1,
\quad
\sigma_\leftarrow \eqdef (2n) \cdots 2 1 \cdots (2n-1),
\quad
\sigma_\rightarrow \eqdef 1 \cdots (2n-1) (2n) \cdots 2
\]
are wiggly permutations of~$[2n]$.
\end{example}

\begin{remark}
\label{rem:numberWigglyPermutations}
The number~$wp_n$ of wiggly permutations of~$[2n]$ is given by
\[
\begin{array}{c|ccccccccc}
n & 1 & 2 & 3 & 4 & 5 & 6 & 7 & 8 & \dots \\
\hline
wp_n & 2 & 14 & 176 & 3232 & 78384 & 2366248 & 85534176 & 3602770400 & \dots
\end{array}
\]
To compute~$wp_n$, denote by $wp_n(x)$ the polynomial where the coefficient of~$x^i$ is the number of wiggly permutations of~$[2n]$ with $i$ \defn{admissible gaps} (\ie gaps~$\gamma$ between two consecutive positions such that there is no~$j \in [n]$ such that the value~$2j$ appears before the gap~$\gamma$ while the value~$2j-1$ appears after the gap~$\gamma$).
For instance,
\begin{align*}
	wp_1(x) & = x + 1, \\
	wp_2(x) & = 3 x^3 + 5 x^2 + 4 x + 2, \\
	wp_3(x) & = 15 x^5 + 35 x^4 + 44 x^3 + 40 x^2 + 28 x + 14, \\
	wp_4(x) & = 105 x^7 + 315 x^6 + 520 x^5 + 630 x^4 + 620 x^3 + 514 x^2 + 352 x + 176.
\end{align*}
We invite the reader to check the expressions of~$wp_1(x)$ and~$wp_2(x)$ with \cref{exm:allSmallWigglyPermutations}.
Any wiggly permutation of~$[2n]$ can be obtained from a wiggly permutation of~$[2n-2]$ by inserting the last values~$2n-1$ and~$2n$.
Each wiggly permutation of~$[2n-2]$ with $i$ admissible gaps contributes to:
\begin{itemize}
\item $i+2$ wiggly permutations of~$[2n]$ with $i+2$ admissible gaps and where~$2n-1$ appears before~$2n$ (they must appear consecutively),
\item $j+1$ wiggly permutations of~$[2n]$ with $j$ admissible gaps and where $2n$ appears before~$2n-1$, for all~$0 \le j \le i+1$.
\end{itemize}
For instance, the wiggly permutation~$78621354$ of~$[8]$ has admissible gaps~$\bullet 7 \bullet 8 \bullet 62135 \bullet 4 \bullet$ and thus gives rise to the wiggly permutations
\[
9\mathrm{X}78621354, \quad 79\mathrm{X}8621354, \quad 789\mathrm{X}621354, \quad 78621359\mathrm{X}4, \quad 786213549\mathrm{X}
\]
of~$[10]$ where the $9$ appears before~$10$, and the wiggly permutations
\begin{align*}
& \mathrm{X}978621354, \quad \mathrm{X}798621354, \quad \mathrm{X}789621354, \quad \mathrm{X}786213594, \quad \mathrm{X}786213549, \\
& 7\mathrm{X}98621354, \quad 7\mathrm{X}89621354, \quad 7\mathrm{X}86213594, \quad 7\mathrm{X}86213549, \quad 78\mathrm{X}9621354, \\
& 78\mathrm{X}6213594, \quad 78\mathrm{X}6213549, \quad 7862135\mathrm{X}94, \quad 7862135\mathrm{X}49, \quad 78621354\mathrm{X}9
\end{align*}
of~$[10]$ where the $10$ appears before~$9$ (for brievety, we have replaced $10$ by $\mathrm{X}$ in the permutations).
We thus obtain the same recursive equation on~$wp_n(x)$ as in \cref{rem:numberWigglyPseudotriangulations}.
\end{remark}

Recall that the \defn{inversion set} and \defn{non-inversion set} of a permutation~$\sigma$ of~$[2n]$ are the sets
\begin{align*}
\inv(\sigma) & \eqdef \set{\big( \sigma(i), \sigma(j) \big)}{1 \le i < j \le 2n \text{ and } \sigma(i) > \sigma(j)}, \\
\ninv(\sigma) & \eqdef \set{\big( \sigma(i), \sigma(j) \big)}{1 \le i < j \le 2n \text{ and } \sigma(i) < \sigma(j)}.
\end{align*}
Note that~$\inv(\sigma)$ and~$\ninv(\sigma)$ are transitive: $\{(c,b), (b,a)\} \subseteq \inv(\sigma)$ implies~$(c,a) \in \inv(\sigma)$ for all~${1 \le a < b < c \le 2n}$, and similarly for~$\ninv$.
The inversion sets of wiggly permutations are characterized as follows.

\begin{lemma}
\label{lem:inversionSetsWigglyPermutations}
A permutation~$\sigma$ of~$[2n]$ is a wiggly permutation if and only if for all~$j \in [n]$,
\begin{itemize}
\item $(2j-1, i) \in \inv(\sigma)$ implies $(2j, i) \in \inv(\sigma)$ for all~$i < 2j-1$, and
\item $(k, 2j-1) \in \inv(\sigma)$ implies $(k, 2j) \in \inv(\sigma)$ for all~$k > 2j$.
\end{itemize}
A similar characterization holds for~$\ninv(\sigma)$ by exchanging~$2j-1$ and~$2j$.
\end{lemma}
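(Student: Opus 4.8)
The plan is a straightforward chase through the definitions, translating pattern containment into statements about the left-to-right order of values in the one-line word of~$\sigma$, and then translating those into membership in~$\inv(\sigma)$. First I would record the elementary dictionary: for distinct values~$a,b\in[2n]$, the value~$a$ stands to the left of the value~$b$ in the one-line notation of~$\sigma$ if and only if either~$a>b$ and~$(a,b)\in\inv(\sigma)$, or~$a<b$ and~$(a,b)\in\ninv(\sigma)$. Since~$i<2j-1<2j<k$ in the statement, this specializes to: ``$2j-1$ (resp.~$2j$) stands to the left of~$i$'' holds if and only if~$(2j-1,i)\in\inv(\sigma)$ (resp.~$(2j,i)\in\inv(\sigma)$), and ``$k$ stands to the left of~$2j-1$ (resp.~$2j$)'' holds if and only if~$(k,2j-1)\in\inv(\sigma)$ (resp.~$(k,2j)\in\inv(\sigma)$).

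Next I would unfold \cref{def:wigglyPermutation}: the permutation~$\sigma$ is wiggly if and only if, for every~$j\in[n]$, (a)~for every~$i<2j-1$ the values~$2j-1$, $i$, $2j$ do not appear in this left-to-right order in~$\sigma$, and (b)~for every~$k>2j$ the values~$2j$, $k$, $2j-1$ do not appear in this left-to-right order. The key \emph{purely propositional} observation is that forbidding the three-term order~$2j-1,i,2j$ is equivalent to requiring that ``$2j-1$ left of~$i$'' forces ``not ($i$ left of~$2j$)'', that is, ``$2j$ left of~$i$''; and that forbidding~$2j,k,2j-1$ is equivalent to requiring that ``$k$ left of~$2j-1$'' forces ``$k$ left of~$2j$''. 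Feeding these through the dictionary of the previous paragraph rewrites (a) as ``$(2j-1,i)\in\inv(\sigma)\implies(2j,i)\in\inv(\sigma)$ for all~$i<2j-1$'' and (b) as ``$(k,2j-1)\in\inv(\sigma)\implies(k,2j)\in\inv(\sigma)$ for all~$k>2j$''; reading the same equivalences backwards recovers (a) and (b), which establishes the~$\inv(\sigma)$ characterization.

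Finally, for the~$\ninv(\sigma)$ version I would rerun the same argument but take the contrapositive on the other end: forbidding~$2j-1,i,2j$ is equally the requirement that ``$i$ left of~$2j$'' forces ``$i$ left of~$2j-1$'', and forbidding~$2j,k,2j-1$ is the requirement that ``$2j$ left of~$k$'' forces ``$2j-1$ left of~$k$''; the dictionary turns these into~$(i,2j)\in\ninv(\sigma)\implies(i,2j-1)\in\ninv(\sigma)$ and~$(2j,k)\in\ninv(\sigma)\implies(2j-1,k)\in\ninv(\sigma)$, which are precisely the~$\inv(\sigma)$ conditions with the roles of~$2j-1$ and~$2j$ interchanged. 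I do not expect a genuine obstacle here; the only points requiring care are the bookkeeping of which entry of a pair carries the larger value (so that ``stands to the left of'' is routed to~$\inv$ or to~$\ninv$ correctly) and the elementary step that turns a forbidden three-term order into an implication between two-term orders.
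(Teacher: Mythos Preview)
Your proposal is correct and follows exactly the approach the paper takes: the paper's proof is the single line ``Immediate as it corresponds to the pattern avoidance description of \cref{def:wigglyPermutation},'' and your argument is precisely the careful unpacking of that immediacy via the dictionary between left-to-right order of values and membership in~$\inv(\sigma)$ or~$\ninv(\sigma)$.
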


\begin{proof}
Immediate as it corresponds to the pattern avoidance description of \cref{def:wigglyPermutation}.
\end{proof}

The (left) \defn{weak order} on permutations of~$[2n]$ is defined as the inclusion order of their inversion sets (or equivalently, the reverse inclusion of their non-inversion sets).
Its cover relations are given by the exchanges of two entries at consecutive positions.
It is a (congruence uniform) lattice, and the join and meet satisfy
\[
\inv(\sigma \join \tau) = \big( \inv(\sigma) \cup \inv(\tau) \big)^\textrm{tc}
\qquad\text{and}\qquad
\ninv(\sigma \meet \tau) = \big( \ninv(\sigma) \cup \ninv(\tau) \big)^\textrm{tc}
\]
where~$X^\mathrm{tc}$ denotes the transitive closure of~$X$.

The following statements are illustrated on \cref{fig:wigglyLattice}\,(right) when~${n = 2}$.
\begin{figure}
\centerline{\includegraphics[scale=1.1]{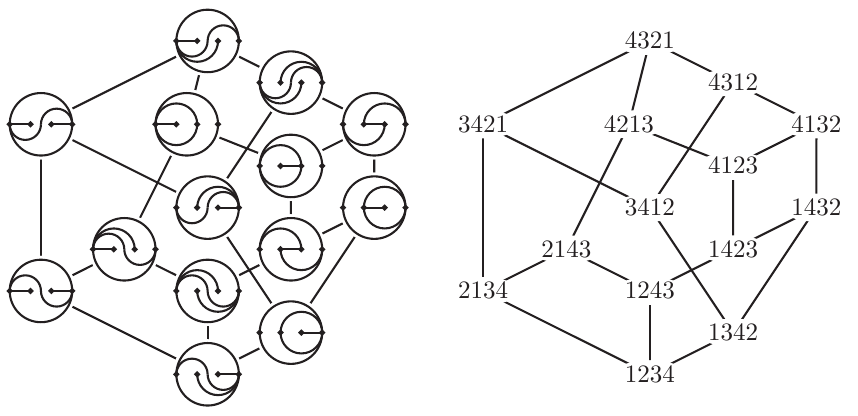}}
\caption{The wiggly lattice~$\wigglyLattice_2$ on wiggly pseudotriangulations (left) and on wiggly permutations (right).}
\label{fig:wigglyLattice}
\end{figure}

\begin{proposition}
The wiggly permutations induce a sublattice of the weak order on permutations of~$[2n]$, that we call the \defn{wiggly lattice}~$\wigglyLattice_n$.
\end{proposition}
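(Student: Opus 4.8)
The plan is to use the standard criterion for when a subset of a congruence-uniform lattice (here, the weak order on $[2n]$) closed under taking elements is a \emph{sublattice}: it suffices to check that the set of wiggly permutations is closed under the join and meet operations of the weak order. Concretely, by the formulas $\inv(\sigma \join \tau) = (\inv(\sigma) \cup \inv(\tau))^{\mathrm{tc}}$ and $\ninv(\sigma \meet \tau) = (\ninv(\sigma) \cup \ninv(\tau))^{\mathrm{tc}}$ recalled just above, and by the characterization of wiggly permutations via their inversion sets in \cref{lem:inversionSetsWigglyPermutations}, what I must show is: if $\inv(\sigma)$ and $\inv(\tau)$ both satisfy the two implications of \cref{lem:inversionSetsWigglyPermutations}, then so does $(\inv(\sigma) \cup \inv(\tau))^{\mathrm{tc}}$; and dually for $\ninv$ and $\meet$. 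By the left-right symmetry in \cref{lem:inversionSetsWigglyPermutations} (exchanging $2j-1 \leftrightarrow 2j$), the meet statement follows from the join statement, so I would concentrate on the join.

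The key step is therefore a closure lemma purely about subsets of the pair poset: I would prove that the family of transitive subsets $R$ of $\{(b,a) : 1 \le a < b \le 2n\}$ satisfying, for every $j \in [n]$, the two implications ``$(2j-1,i) \in R \implies (2j,i) \in R$ for $i < 2j-1$'' and ``$(k,2j-1) \in R \implies (k,2j) \in R$ for $k > 2j$'' is closed under the operation $R \mapsto R^{\mathrm{tc}}$ of adding all pairs forced by transitivity (after a union). The cleanest way is to reformulate both conditions as \emph{forcing rules} and argue that the two wiggly rules commute with transitive closure. For the first rule, suppose $(2j-1,i)$ lies in the transitive closure of $\inv(\sigma) \cup \inv(\tau)$: then there is a chain $2j-1 > c_1 > \cdots > c_r = i$ with each consecutive pair in $\inv(\sigma) \cup \inv(\tau)$. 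I want to produce a chain certifying $(2j,i)$. The first link $(2j-1,c_1)$ lies in $\inv(\sigma)$ or $\inv(\tau)$, say $\inv(\sigma)$; if $c_1 < 2j-1$ then by the wiggly condition on $\sigma$ also $(2j,c_1) \in \inv(\sigma)$, and concatenating with the rest of the chain gives $(2j,i)$ in the transitive closure. The only subtlety is the boundary case $c_1 = 2j-1$… which cannot happen since $c_1 < 2j-1$; more care is needed when $i$ itself could equal $2j-2$ or when $2j$ appears in the chain, so I would handle separately the cases according to where the value $2j$ sits relative to $i$ and $2j-1$. The dual argument (building a chain below rather than above) handles the second rule.

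The main obstacle I anticipate is exactly this chain-surgery bookkeeping: a transitive-closure chain for $(2j-1,i)$ need not have $2j$ as an available intermediate value, so one cannot simply ``insert'' $2j$; instead one must replace the \emph{top} link $(2j-1,c_1)$ of the chain by $(2j,c_1)$ using the single-permutation wiggly condition, and then check that the replacement link is compatible (i.e.\ $2j > c_1$, which holds since $c_1 \le 2j-1 < 2j$) and that the resulting sequence $2j > c_1 > \cdots > i$ is still strictly decreasing — which it is. A second, easier obstacle is verifying that the base cases $\inv(\sigma) \cup \inv(\tau)$ already satisfy the wiggly implications before closure: here if $(2j-1,i)$ is in the union it is in one of $\inv(\sigma)$ or $\inv(\tau)$, which individually force $(2j,i)$, so the union satisfies the implication trivially, and only the transitive-closure step carries real content. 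Once the closure lemma is established, the proposition follows immediately: $\inv(\sigma \join \tau)$ satisfies the implications of \cref{lem:inversionSetsWigglyPermutations}, hence $\sigma \join \tau$ is a wiggly permutation, and symmetrically for $\sigma \meet \tau$, so wiggly permutations form a sublattice of the weak order.
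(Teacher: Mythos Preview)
Your proposal is correct and follows essentially the same approach as the paper: both reduce to checking the inversion-set characterization of \cref{lem:inversionSetsWigglyPermutations} on the transitive closure of $\inv(\sigma)\cup\inv(\tau)$, and both do so by isolating the top link $(2j-1,c_1)$ of a witnessing chain, applying the wiggly rule in whichever of the two permutations contributes that link to obtain $(2j,c_1)$, and then finishing by transitivity. The paper's writeup is slightly slicker only in that it uses transitivity of $\inv(\rho\join\sigma)$ itself rather than tracking an explicit chain, so the ``chain-surgery bookkeeping'' you worry about collapses to a single step; your case analysis is unnecessary but not wrong.
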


\begin{proof}
Consider two wiggly permutations~$\rho$ and~$\sigma$ of~$[2n]$, and let~$\tau = \rho \join \sigma$.
Assume that there is~$1 \le i < 2j-1 < 2n$ such that~$(2j-1, i) \in \inv(\tau)$.
As $\inv(\tau) = \big( \inv(\rho) \cup \inv(\sigma) \big)^\textrm{tc}$, there exist~$i \le i' < 2j-1$ such that~$(i', i) \in \inv(\tau)$ and~$(2j-1, i') \in \inv(\rho) \cup \inv(\sigma)$.
Since~$\rho$ and~$\sigma$ are wiggly permutations, we obtain by \cref{lem:inversionSetsWigglyPermutations} that~$(2j, i') \in \inv(\rho) \cup \inv(\sigma) \subseteq \inv(\tau)$.
As~$(i', i) \in \inv(\tau)$ and~$(2j, i') \in \inv(\tau)$ and~$\inv(\tau)$ is transitive, we conclude that~$(2j,i) \in \inv(\tau)$.
A similar argument shows that~$(k, 2j-1) \in \inv(\tau)$ implies~$(k, 2j) \in \inv(\tau)$ for all~$1 < 2j < k \le 2n$.
By \cref{lem:inversionSetsWigglyPermutations}, we conclude that~$\tau = \rho \join \sigma$ is a wiggly permutation.
The proof is similar for~$\rho \meet \sigma$, using $\ninv$ instead of~$\inv$.
\end{proof}

We now aim at describing the cover relations of the wiggly lattice.
Recall that an \defn{ascent} (resp.~\defn{descent}) in a permutation~$\sigma$ is a position~$j$ such that~$\sigma(j) < \sigma(j+1)$ (resp.~$\sigma(j) > \sigma(j+1)$).

\begin{lemma}\label{lem:wigglyCoverRelation}
Consider an ascent~$j$ of a wiggly permutation~$\sigma$ of~$[2n]$.
Let~$i \eqdef \min \big( j, \sigma^{-1} \big( \sigma(j)+1 \big) \big)$ if~$\sigma(j)$ is odd, and $i \eqdef j$ otherwise.
Let~$k \eqdef \max \big( j+1, \sigma^{-1} \big( \sigma(j+1)+1 \big) \big)$ if~$\sigma(j+1)$ is odd, and $k \eqdef j+1$ otherwise.
Then
\[
\sigma^j \eqdef \sigma(1) \dots \sigma(i-1) \sigma(j+1) \dots \sigma(k) \sigma(i) \dots \sigma(j) \sigma(k+1) \dots \sigma(2n).
\]
is the minimal wiggly permutation such that~$\inv(\sigma) \cup \big\{ \big( \sigma(j+1), \sigma(j) \big) \big\} \subseteq \inv(\sigma^j)$.
\end{lemma}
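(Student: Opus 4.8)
The plan is to read $\sigma^j$ as the result of swapping two adjacent blocks of positions of $\sigma$, to write $\inv(\sigma^j)$ down explicitly, and then to check wigglyness and minimality separately. First I would fix notation: let $B_1\eqdef\{\sigma(i),\dots,\sigma(j)\}$ and $B_2\eqdef\{\sigma(j+1),\dots,\sigma(k)\}$, occupying the position-intervals $[i,j]$ and $[j+1,k]$, so that $\sigma^j$ swaps these two adjacent blocks. The structure of $B_1$ is: if $\sigma(j)$ is even, or is odd but $\sigma(j)+1$ lies to the right of position $j$, then $i=j$ and $B_1=\{\sigma(j)\}$; otherwise $\sigma(j)=2a-1$, $\sigma(i)=2a$, and — applying to $\sigma$ the avoidance of $(2a)\cdots m\cdots(2a-1)$ with $m>2a$, which is forced because $2a$ precedes $2a-1$ in $\sigma$ — the entries $\sigma(i+1),\dots,\sigma(j-1)$ are all $<2a-1$, so $\{2a-1,2a\}\subseteq B_1$ and $\max B_1=2a$. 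Symmetrically for $B_2$ and $\sigma(j+1)$, with $\min B_2=\sigma(j+1)$ in every case. Since $j$ is an ascent, $\sigma(j)<\sigma(j+1)$, and together with these descriptions and a parity check this gives $\max B_1<\min B_2$.

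Consequently every pair $(y,x)$ with $x\in B_1$ and $y\in B_2$ has $x<y$; it is a non-inversion of $\sigma$ (as $x$ precedes $y$ there) but an inversion of $\sigma^j$, and no other pair changes relative order. Hence
\[
\inv(\sigma^j)\;=\;\inv(\sigma)\;\sqcup\;N,\qquad N\eqdef\set{(y,x)}{x\in B_1,\ y\in B_2}.
\]
In particular $\big(\sigma(j+1),\sigma(j)\big)\in N$, so $\sigma^j$ is a permutation of $[2n]$ with $\inv(\sigma)\cup\{(\sigma(j+1),\sigma(j))\}\subseteq\inv(\sigma^j)$.

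To see $\sigma^j$ is wiggly I would verify the two implications of \cref{lem:inversionSetsWigglyPermutations} for $\inv(\sigma^j)$; since $\sigma$ is wiggly, a failure requires the hypothesis of one implication to lie in $N$. So suppose $(2c-1,m)\in N$ with $m<2c-1$; then $2c-1\in B_2$, $m\in B_1$, and because $2c-1\ge\min B_2>\max B_1$ one gets $2c\notin B_1$, hence $(2c,m)\notin N$. If $2c\in B_2$, then $(2c,m)\in N\subseteq\inv(\sigma^j)$; if $2c$ lies to the left of $B_1$ in $\sigma$, it still precedes $m$ in $\sigma^j$ and $2c>m$, so $(2c,m)\in\inv(\sigma)\subseteq\inv(\sigma^j)$; and the remaining possibility — $2c$ to the right of $B_2$ in $\sigma$ — cannot occur, since then $B_2$ is necessarily non-singleton and its last (even) entry $\sigma(k)$, which satisfies $\sigma(k)<2c-1$, would witness the forbidden pattern $(2c-1)\cdots\sigma(k)\cdots(2c)$ in $\sigma$. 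This settles the first implication; the second is symmetric, a hypothesis $(\ell,2c-1)\in N$ now forcing $2c-1\in B_1$, $\ell\in B_2$, and the bad configuration being excluded using $\sigma(i)$ as a witness for $(2c)\cdots\sigma(i)\cdots(2c-1)$ in $\sigma$. For minimality, let $\tau$ be any wiggly permutation with $\inv(\sigma)\cup\{(\sigma(j+1),\sigma(j))\}\subseteq\inv(\tau)$; I must show $N\subseteq\inv(\tau)$. Starting from $(\sigma(j+1),\sigma(j))\in\inv(\tau)$, the implications of \cref{lem:inversionSetsWigglyPermutations} applied to $\tau$ add, in the nontrivial case $\sigma(j)=2a-1$, $\sigma(i)=2a$, $\sigma(j+1)=2b-1$, $\sigma(k)=2b$ (with $2a<2b-1$), successively $(2b,2a-1)$, $(2b-1,2a)$, $(2b,2a)$, giving $\{2b-1,2b\}\times\{2a-1,2a\}\subseteq\inv(\tau)$. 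A non-extreme entry $x$ of $B_1$ has $x<2a$ and $(\sigma(i),x)=(2a,x)\in\inv(\sigma)\subseteq\inv(\tau)$, so transitivity of $\inv(\tau)$ yields $(2b-1,x),(2b,x)\in\inv(\tau)$; a non-extreme entry $y$ of $B_2$ has $y>2b$ and $(y,\sigma(k))=(y,2b)\in\inv(\sigma)\subseteq\inv(\tau)$, so transitivity again yields $(y,x)\in\inv(\tau)$ for all $x\in B_1$. Hence $N=B_2\times B_1\subseteq\inv(\tau)$, i.e.\ $\sigma^j\le\tau$ in the weak order; the cases where $B_1$ or $B_2$ is a singleton are sub-arguments of this one.

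I expect the genuinely delicate step to be the verification that $\sigma^j$ is wiggly: although each implication reduces to a short analysis of where a single entry $2c$ sits relative to $B_1$ and $B_2$, keeping the bookkeeping straight across the singleton-versus-non-singleton cases for $B_1$ and $B_2$, and in particular pinning down the correct witness entry of $\sigma$ that rules out the bad configuration, is where the actual work lies; by contrast the explicit formula for $\inv(\sigma^j)$ and the minimality propagation are routine once the block structure is in hand.
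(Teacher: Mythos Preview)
Your proof is correct and follows essentially the same route as the paper's: both identify the block structure (your $B_1,B_2$ versus the paper's $\sigma(p),\sigma(q)$ with $i\le p\le j<q\le k$), compute $\inv(\sigma^j)=\inv(\sigma)\sqcup N$, verify wigglyness via \cref{lem:inversionSetsWigglyPermutations} by a position case analysis on $2c$, and prove minimality by propagating from $(\sigma(j+1),\sigma(j))$ through \cref{lem:inversionSetsWigglyPermutations} and then transitivity. One small slip: the clause ``$2c\notin B_1$, hence $(2c,m)\notin N$'' does not follow (membership in $N$ needs $2c\in B_2$, not $2c\in B_1$), but this is harmless since your subsequent case split on the position of $2c$ is exhaustive and correct.
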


\begin{proof}
We start with a few observations.
Let~$i < p \le j$ (resp.~$j+1 \le q < k$).
As~$\sigma$ is a wiggly permutation, we have~$\sigma(p) < \sigma(i)$ (resp.~$\sigma(q) > \sigma(k)$).
Hence~$\big( \sigma(i), \sigma(p) \big)$ (resp.~$\big( \sigma(q), \sigma(k) \big)$) belongs to~$\inv(\sigma)$.
Moreover, since~$j$ is an ascent of~$\sigma$, we obtain that~$\sigma(p) < \sigma(q)$ for all~${i \le p \le j < q \le k}$.
Hence, $\inv(\sigma^j) = \inv(\sigma) \cup \bigset{ \big( \sigma(q), \sigma(p) \big) }{i \le p \le j < q \le k}$.

We now prove that~$\sigma^j$ is indeed a wiggly permutation of~$[2n]$.
We consider~$v \in [n]$ and~$u < 2v-1$ such that~$(2v-1, u) \in \inv(\sigma^j)$, and we show that~$(2v, u) \in \inv(\sigma^j)$.
If~$(2v, u) \in \inv(\sigma)$, we are done as~$\inv(\sigma) \subset \inv(\sigma^j)$.
If~$(2v, u) \not\in \inv(\sigma)$, then~$(2v-1, u) \not\in \inv(\sigma)$ by \cref{lem:inversionSetsWigglyPermutations}, and~$i \le \sigma^{-1}(u) \le j < \sigma^{-1}(2v-1) \le k$ since~${\inv(\sigma^j) \ssm \inv(\sigma) = \bigset{ \big( \sigma(q), \sigma(p) \big) }{i \le p \le j < q \le k}}$.
We have~$\sigma^{-1}(2v) \le k$ since~$\sigma$ is a wiggly permutation.
As~$(2v, u) \not\in \inv(\sigma)$ and~$\sigma(p) < \sigma(q)$ for all~$i \le p \le j < q \le k$, we conclude that~$j < \sigma^{-1}(2v) < k$.
Hence, we obtain that ${i \le \sigma^{-1}(u) \le j < \sigma^{-1}(2v) \le k}$, so that~$(2v,u) \in \inv(\sigma^j)$.
Similarly~${(w, 2v-1) \in \inv(\sigma^j)}$ implies~$(w, 2v) \in \inv(\sigma^j)$ for all~$v \in [n]$ and~$w > 2v$.
We conclude that~$\sigma^j$ is indeed a wiggly permutation of~$[2n]$ by \cref{lem:inversionSetsWigglyPermutations}.

Consider now any wiggly transposition~$\tau$ such that~$\inv(\sigma) \cup \big\{ \big( \sigma(j+1), \sigma(j) \big) \big\} \subseteq \inv(\tau)$.
By \cref{lem:inversionSetsWigglyPermutations}, we obtain that~$\inv(\tau)$ also contains 
$\big( \sigma(k), \sigma(i) \big)$.
Since~$\inv(\sigma) \subset \inv(\tau)$ and~$\inv(\sigma)$ contains~$\big( \sigma(i), \sigma(p) \big)$ and~$\big( \sigma(q), \sigma(k) \big)$ for all~$i < p \le j$ and~$j+1 \le q < k$, we conclude by transitivity that~$\inv(\tau)$ contains~$\big( \sigma(q), \sigma(p) \big)$ for all~$i \le p \le j < q \le k$.
\end{proof}

\begin{proposition}
Each wiggly permutation~$\sigma$ covers (resp.~is covered by) as many wiggly permutations as its number of descents (resp.~ascents).
Hence, the cover graph of~$\wigglyLattice_n$ is regular of degree~$2n-1$ and connected.
\end{proposition}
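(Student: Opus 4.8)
The plan is to analyze the local structure of the weak order restricted to wiggly permutations, using the fact (already established) that $\wigglyLattice_n$ is a sublattice of the weak order, together with the explicit cover-relation description of \cref{lem:wigglyCoverRelation}. The key observation is that every cover relation in a sublattice of the weak order is also a cover relation of the weak order itself (since intervals in a sublattice are contained in intervals of the ambient lattice, and a length-one interval cannot be subdivided). Consequently, any element $\sigma^{\prime}$ covering $\sigma$ in $\wigglyLattice_n$ must already cover it in the weak order, hence $\inv(\sigma^{\prime}) = \inv(\sigma) \cup \{(\sigma(j+1), \sigma(j))\}$ for some ascent $j$ of $\sigma$.

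First I would show that distinct ascents give distinct covers. Suppose $j \ne j'$ are ascents of $\sigma$ and $\sigma^j = \sigma^{j'}$; then $\inv(\sigma^j) \setminus \inv(\sigma)$ would have to contain both $(\sigma(j+1),\sigma(j))$ and $(\sigma(j'+1),\sigma(j'))$, contradicting that this difference set has a single element in a cover relation of the weak order. So the map $j \mapsto \sigma^j$ from ascents of $\sigma$ to elements covering $\sigma$ in $\wigglyLattice_n$ is injective. Next I would show it is surjective: if $\sigma'$ covers $\sigma$ in $\wigglyLattice_n$, then as noted $\inv(\sigma') = \inv(\sigma) \cup \{(\sigma(j+1),\sigma(j))\}$ for some ascent $j$. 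By \cref{lem:wigglyCoverRelation}, $\sigma^j$ is the \emph{minimal} wiggly permutation whose inversion set contains $\inv(\sigma) \cup \{(\sigma(j+1),\sigma(j))\}$, so $\sigma \le_{\wigglyLattice_n} \sigma^j \le_{\wigglyLattice_n} \sigma'$; since $\sigma'$ covers $\sigma$, and $\sigma^j \ne \sigma$ (its inversion set is strictly larger), we get $\sigma^j = \sigma'$. This establishes the bijection between ascents of $\sigma$ and upper covers of $\sigma$ in $\wigglyLattice_n$.

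The statement about descents and lower covers follows by the dual argument, using the $\ninv$ characterization from \cref{lem:inversionSetsWigglyPermutations} and the meet formula — or more simply by applying the anti-isomorphism $\sigma_1 \cdots \sigma_{2n} \mapsto (2n+1-\sigma_1) \cdots (2n+1-\sigma_{2n})$ of \cref{rem:wigglyLatticeAntiIsomorphism}, which swaps ascents with descents and upper covers with lower covers. Finally, since every permutation of $[2n]$ other than the identity and its reverse has both ascents and descents, and since the number of ascents plus the number of descents of any permutation of $[2n]$ equals $2n-1$, each vertex of the cover graph has total degree $2n-1$; regularity follows. Connectedness of the cover graph is immediate because the Hasse diagram of any finite lattice is connected (every element is joined to the minimum by a saturated chain).

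The step I expect to be the main obstacle is the surjectivity argument, specifically the claim that a cover relation in the sublattice $\wigglyLattice_n$ is forced to be a cover relation of the weak order with inversion-set difference of size exactly one. This requires care: one must argue that if $\inv(\sigma') \supsetneq \inv(\sigma)$ with $\sigma, \sigma'$ wiggly and no wiggly permutation strictly between them, then in fact $|\inv(\sigma') \setminus \inv(\sigma)| = 1$. This is where \cref{lem:wigglyCoverRelation} does the real work — it exhibits, for each ascent, an explicit wiggly permutation realizing the minimal enlargement, so that any strictly larger wiggly $\sigma'$ admits $\sigma^j$ strictly between $\sigma$ and itself unless $\sigma' = \sigma^j$; combined with the lattice property this pins down all covers. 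The remaining bookkeeping (degree count, the ascent/descent swap under the anti-isomorphism) is routine.
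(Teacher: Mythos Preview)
Your overall strategy matches the paper's: use \cref{lem:wigglyCoverRelation} to show that~$j \mapsto \sigma^j$ is a bijection from ascents of~$\sigma$ to wiggly permutations covering~$\sigma$, then dualize. However, your first paragraph contains a genuine error that breaks your injectivity argument. The claim ``every cover relation in a sublattice of the weak order is also a cover relation of the weak order itself'' is false. Sublattices need not be rank-preserving: already for~$n=2$, the wiggly permutation~$1342$ covers~$1234$ in~$\wigglyLattice_2$, yet~$\inv(1342) \ssm \inv(1234) = \{(3,2),(4,2)\}$ has two elements. Indeed, the explicit formula in \cref{lem:wigglyCoverRelation} shows that~$\inv(\sigma^j) \ssm \inv(\sigma) = \bigset{(\sigma(q),\sigma(p))}{i \le p \le j < q \le k}$, which is generically much larger than a singleton. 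Your injectivity argument (``contradicting that this difference set has a single element'') therefore does not go through.

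Your surjectivity argument is essentially correct once the false premise is dropped: the right statement is that any wiggly~$\sigma' > \sigma$ has~$(\sigma(j+1),\sigma(j)) \in \inv(\sigma')$ for \emph{some} ascent~$j$ of~$\sigma$ (simply because some weak-order cover of~$\sigma$ lies below~$\sigma'$), and then the minimality in \cref{lem:wigglyCoverRelation} forces~$\sigma^j \le \sigma'$, hence~$\sigma^j = \sigma'$ when~$\sigma'$ is a cover. You articulate this correctly in your final paragraph. For injectivity, you need a separate observation: from the description of~$\inv(\sigma^j) \ssm \inv(\sigma)$ above, the pair~$(\sigma(j'+1),\sigma(j'))$ with~$j'$ an ascent of~$\sigma$ lies in this set if and only if~$i \le j' \le j < j'+1 \le k$, \ie if and only if~$j' = j$. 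Hence~$\sigma^j$ determines~$j$. The remaining steps (dualization via \cref{rem:wigglyLatticeAntiIsomorphism}, degree~$2n-1$, connectedness via the lattice structure) are fine.
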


\begin{proof}
Observe that~$\sigma^j \ne \sigma^{j'}$ for distinct ascents~$j \ne j'$.
As any permutation~$\sigma'$ larger than~$\sigma$ satisfies~${\inv(\sigma') \supseteq \inv(\sigma) \cup \{\sigma(j+1), \sigma(j)\}}$ for some ascent~$j$ of~$\sigma$, we thus obtain that~$j \to \sigma^j$ is a bijection from the ascents of~$\sigma$ to the wiggly permutations covering~$\sigma$.
The proof is symmetrical for the bijection from descents of~$\sigma$ to wiggly permutations covered by~$\sigma$.
\end{proof}

\begin{remark}
\label{rem:wigglyLatticeAntiIsomorphism}
Let~$\varepsilon$ be the permutation of~$[2n]$ exchanging~$2j-1$ with~$2j$ for all~$j \in [n]$.
Then the maps~$\sigma_1 \dots \sigma_{2n} \mapsto (2n+1-\sigma_1) \dots (2n+1-\sigma_{2n})$ and~$\sigma_1 \dots \sigma_{2n} \mapsto \varepsilon(\sigma_{2n}) \dots \varepsilon(\sigma_1)$ are two obvious bijections on wiggly permutations of~$[2n]$. Moreover, the first map is an anti-isomorphism of the wiggly lattice~$\wigglyLattice_n$ (while the second map is not).
\end{remark}


\subsection{Bijection}
\label{subsec:bijection}

We now prove that wiggly pseudotriangulations and wiggly permutations are in bijection, and that this bijection induces an isomorphism from the wiggly flip graph to the cover graph of the wiggly lattice.
The following two definitions are illustrated in \cref{fig:wigglyLattice,fig:bijection}.

\begin{figure}
\centerline{\includegraphics[scale=1.7]{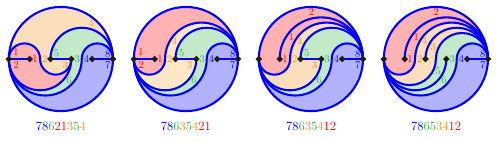}}
\caption{The bijection between wiggly pseudotriangulations and wiggly permutations.}
\label{fig:bijection}
\end{figure}

\begin{definition}
\label{def:bijection1}
Consider a wiggly pseudotriangulation~$T$.
Following each wiggly pseudotriangle of~$T$ in counterclockwise direction, we label by~$2h-1$ (resp.~$2h$) the corner immediately preceding (resp.~following) the hinge~$h$ (the remaining corner remains unlabelled).
We define~$\Phi(T)$ as the permutation obtained by reading these labels from bottom to top, meaning that for each internal wiggly arc~$\alpha$ of~$T$, the label incident to~$\alpha$ and below~$\alpha$ appears just before the label incident to~$\alpha$ and above~$\alpha$.
\end{definition}

\begin{example}
The map~$\Phi$ sends the wiggly pseudotriangulations of \cref{exm:specialWigglyPseudotriangulations} to the wiggly permutations of \cref{exm:specialWigglyPermutations}.
Namely, $\Phi(T_\downarrow) = \sigma_\downarrow$, $\Phi(T_\uparrow) = \sigma_\uparrow$, $\Phi(T_\leftarrow) = \sigma_\leftarrow$, and~$\Phi(T_\rightarrow) = \sigma_\rightarrow$.
\end{example}

\begin{definition}
\label{def:bijection2}
For a wiggly permutation~$\sigma$ of~$[2n]$, we define~$\Psi(\sigma) \eqdef \set{\alpha(\sigma, k)}{k \in [2n-1]}$, where for~$k \in [2n-1]$, we have~$\alpha(\sigma, k) \eqdef ( i, j, A, B)$ with
\begin{align*}
i & \eqdef \max \{0\} \cup \set{i \in [n]}{2i-1 \in \sigma([k]) \not\ni 2i}, \\
j & \eqdef \min \{n+1\} \cup \set{j \in [n]}{2j \in \sigma([k]) \not\ni 2j-1}, \\
A & \eqdef \set{\ell \in {]i,j[}}{\{2\ell-1, 2\ell\} \subseteq \sigma([k])}, \\
B & \eqdef \set{\ell \in {]i,j[}}{\{2\ell-1, 2\ell\} \cap \sigma([k]) = \varnothing}.
\end{align*}
\end{definition}

\begin{example}
The map~$\Psi$ sends the wiggly permutations of \cref{exm:specialWigglyPermutations} to the wiggly pseudotriangulations of \cref{exm:specialWigglyPseudotriangulations}.
Namely, $\Psi(\sigma_\downarrow) = T_\downarrow$, $\Psi(\sigma_\uparrow) = T_\uparrow$, $\Psi(\sigma_\leftarrow) = T_\leftarrow$, and~$\Psi(\sigma_\rightarrow) = T_\rightarrow$.
\end{example}

\begin{proposition}
\label{prop:bijection}
The maps~$\Phi$ of \cref{def:bijection1} and $\Psi$ of \cref{def:bijection2} define inverse bijections between the wiggly pseudotriangulations and the wiggly permutations, which induce a directed graph isomorphism between the wiggly increasing flip graph~$\wigglyIncreasingFlipGraph_n$ and the Hasse diagram of the wiggly lattice~$\wigglyLattice_n$.
\end{proposition}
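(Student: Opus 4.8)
The plan is to establish four things in sequence: (1) that $\Psi(\sigma)$ is a genuine wiggly pseudotriangulation for every wiggly permutation $\sigma$; (2) that $\Phi(T)$ is a genuine wiggly permutation for every wiggly pseudotriangulation $T$; (3) that $\Phi$ and $\Psi$ are mutually inverse; and (4) that they intertwine increasing flips with cover relations of the weak order restricted to wiggly permutations. The unifying device is the chain of subsets $\varnothing = \sigma([0]) \subsetneq \sigma([1]) \subsetneq \dots \subsetneq \sigma([2n]) = [2n]$ attached to a permutation $\sigma$: for each $k$, the set $\sigma([k])$ records which of the values $1,\dots,2n$ have already been read. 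The key bookkeeping observation is that a value $\ell\in[n]$ is "above" (both $2\ell-1,2\ell\in\sigma([k])$), "below" (neither in $\sigma([k])$), or "straddled" (exactly one in $\sigma([k])$), and that the wiggly-permutation pattern-avoidance conditions of \cref{lem:inversionSetsWigglyPermutations} say precisely that the straddled indices behave coherently — namely if $2j-1\in\sigma([k])\not\ni 2j$ then no $i<j$ is below, and dually if $2j\in\sigma([k])\not\ni 2j-1$ then no $i>j$ is above. This is exactly what makes $\alpha(\sigma,k)$ of \cref{def:bijection2} a well-defined wiggly arc: $i$ is the largest "open bottom", $j$ is the smallest "open top", and $A,B$ partition $]i,j[$ because every $\ell\in{]i,j[}$ is either above or below (never straddled, by maximality of $i$ and minimality of $j$ together with the avoidance conditions).

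For step (1), after checking $\alpha(\sigma,k)$ is a wiggly arc I would show the $2n-1$ arcs $\alpha(\sigma,1),\dots,\alpha(\sigma,2n-1)$ are pairwise compatible: given $k<k'$, compare the sets $\sigma([k])\subseteq\sigma([k'])$ and verify directly from the formulas that $\alpha(\sigma,k)$ and $\alpha(\sigma,k')$ are pointed and non-crossing (an index can only move from "below" to "straddled" to "above" as $k$ grows, which forbids the crossing configuration of \cref{def:compatible}). Together with \cref{prop:wigglyPseudotriangulations}\,(iii) — there are exactly $2n-1$ internal arcs — this forces $\Psi(\sigma)$ to be a wiggly pseudotriangulation. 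For step (2), I would argue geometrically with \cref{def:bijection1}: reading the corner labels of $T$ from bottom to top is well-defined because the wiggly arcs of $T$ linearly order the corners (each internal arc has exactly one labeled corner below and one above it, by the pseudotriangle structure), and the pattern $(2j-1)\cdots i\cdots(2j)$ with $i<2j-1$ would force a below-corner $i$ between the two corners $2j-1,2j$ of a single pseudotriangle — impossible since the hinge sits between them and all such intermediate corners belong to that pseudotriangle's two adjacent arcs, which point the correct way. The cleanest route here is to show that for each $k$, the "lower part" of $T$ already traversed after reading $k$ labels is exactly the wiggly cell picture encoded by $\sigma([k])$, so that the arc immediately crossed at step $k$ is $\alpha(\Phi(T),k)$; this simultaneously proves (2) and gives $\Psi(\Phi(T))=T$.

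Step (3) then follows: $\Phi(\Psi(\sigma))=\sigma$ because reading the labels of $\Psi(\sigma)$ recovers, at position $k$, precisely the value whose insertion changes $\sigma([k-1])$ into $\sigma([k])$ — this is visible from the definition of $\alpha(\sigma,k)$ versus $\alpha(\sigma,k-1)$, whose symmetric difference pins down $\sigma(k)$. For step (4), I would compare \cref{def:wigglyIncreasingFlipGraph} with \cref{lem:wigglyCoverRelation}: flipping the arc $\alpha=\alpha(\sigma,j)$ in $T=\Psi(\sigma)$ corresponds to swapping the contribution of step $j$, which is exactly the move $\sigma\mapsto\sigma^j$ at an ascent $j$ (when $\sigma(j)<\sigma(j+1)$), and the two cases "$\alpha$ starts where $\alpha'$ ends" versus "$\alpha$ crosses $\alpha'$ from north-west to south-east" match the two cases "$\sigma(j)$ odd" versus the parity adjustments in the definition of $i$ and $k$ in \cref{lem:wigglyCoverRelation}; in both cases the inversion set grows by the single pair $(\sigma(j+1),\sigma(j))$ after transitive/wiggly closure. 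The main obstacle I anticipate is step (2) and the precise matching in (4): making rigorous the claim that the bottom-to-top reading of corner labels is well-defined and tracks the subset chain requires careful case analysis of how a single flip (equivalently, a single pseudoquadrangle of \cref{prop:diagonalsPseudoquadrangle}, in its four types from \cref{rem:descriptionPseudotrianglesPseudoquadrangles}) alters the local labeled picture, and ensuring the two "increasing" conditions of \cref{def:wigglyIncreasingFlipGraph} are mutually exclusive and exhaustive in exactly the situations where $\sigma^j$ is defined. I would handle this by an induction on $n$ (inserting a pseudotriangle with hinge at $n$, exactly as in \cref{rem:numberWigglyPseudotriangulations} and \cref{rem:numberWigglyPermutations}, which already run the same recursion on both sides), reducing the verification to the behaviour of the last two values $2n-1,2n$ and the final arcs of $T$.
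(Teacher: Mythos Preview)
Your plan is correct and matches the paper's proof closely in steps~(1)--(3); the paper also explicitly checks that the arcs~$\alpha(\sigma,k)$ for~$k \in [2n-1]$ are pairwise \emph{distinct}, which you should add before invoking \cref{prop:wigglyPseudotriangulations}\,(iii).

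For step~(4) the paper takes a shorter route than the case analysis and induction you anticipate. Rather than matching the two cases of \cref{def:wigglyIncreasingFlipGraph} against the parity adjustments of \cref{lem:wigglyCoverRelation}, it observes directly (in the notation of \cref{lem:wigglyCoverRelation}) that~$\alpha(\sigma^j,\ell)$ equals~$\alpha(\sigma,\ell)$ for~$\ell < i$ or~$\ell > k$, equals~$\alpha(\sigma,\ell+k-j)$ for~$i \le \ell < j$, and equals~$\alpha(\sigma,\ell-j+i-1)$ for~$j < \ell \le k$; hence~$\Psi(\sigma) \ssm \{\alpha(\sigma,j)\} = \Psi(\sigma^j) \ssm \{\alpha(\sigma^j,j)\}$, so~$\Psi$ sends each cover relation to a flip. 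Since both the Hasse diagram of~$\wigglyLattice_n$ and~$\wigglyIncreasingFlipGraph_n$ are~$(2n-1)$-regular and~$\Psi$ is a bijection, this graph morphism is automatically an isomorphism --- no local analysis of the four pseudoquadrangle types is needed.
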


\begin{proof}
First, we show that \(\Phi(T)\) is a well-defined wiggly permutation for each wiggly pseudotriangulation \(T\).
As every~$j \in [n]$ is the hinge of precisely one pseudotriangle of~$T$, we obtain that~\(\Phi(T)\) is by construction a permutation~$\sigma$ of~$[2n]$.
For any \(j \in [n]\), let \(t_j\) be the pseudotriangle of \(T\) that has hinge \(j\).
Suppose that \(2j-1\) appears before \(2j\) in \(\sigma\) for some fixed \(j \in [n]\).
This means that the pseudotriangle~\(t_j\) has its unlabelled corner to the left of its labelled corners.
In other words, its hinge opens to the right (see \eg $t_2$ in~\cref{fig:bijection}).
Consider any label \(i\) that we encounter as we traverse from \(2j-1\) to \(2j\).
This label belongs to some pseudotriangle \(t_h\) with~$h > j$.
In other words, \(\sigma\) avoids the pattern \(2j-1 \cdots i \cdots 2j\) for \(i < 2j-1\).
The other case (in which \(2j\) appears before \(2j-1\) in \(\sigma\)) is symmetric.

Next, we show that \(\Psi(\sigma)\) is a wiggly pseudotriangulation for each wiggly permutation \(\sigma\).
Observe first that for any~$1 \le u < v \le n$, we cannot see~$\{2u, 2v-1\}$ before~$\{2u-1, 2v\}$ in~$\sigma$ as it contains no forbidden pattern of \cref{def:wigglyPermutation}.
Using the notations of \cref{def:bijection2}, this implies that~$i < j$ and that~${]i,j[} = A \sqcup B$, so that~$\alpha(\sigma,k) \eqdef (i, j, A, B)$ is indeed a wiggly arc for each~$k \in [2n-1]$.
By~\cref{prop:wigglyPseudotriangulations}, it thus suffices to check that for each \(k \neq k'\) in \([2n-1]\), the wiggly arcs 
\(\alpha(\sigma,k)\) and~\(\alpha(\sigma,k')\) are distinct and compatible.
\begin{enumerate}
\item \textbf{Distinctness.}
Suppose by contradiction that \(\alpha(\sigma,k) = (i,j,A,B) = \alpha(\sigma,k')\) for some~\({k < k'}\).
Let~${\ell \in \sigma([k'] \ssm [k])}$ and~$\bar\ell \eqdef \lceil \ell/2 \rceil$.
As~$\ell \notin \sigma([k])$, we have~$\bar\ell \notin A$.
As~$\ell \in \sigma([k'])$, we have~$\bar\ell \notin B$.
As~$\{2i-1, 2j\} \subseteq \sigma([k])$ and~$\{2i, 2j-1\} \cap \sigma([k']) = \varnothing$ and~$\sigma$ avoids the forbidden pattern of \cref{def:wigglyPermutation}, we have~$2i < \ell < 2j-1$, so that~$i < \bar\ell < j$.
This contradicts the fact that~${]i,j[} = A \sqcup B$.

\item \textbf{Compatibility.}
Let $\alpha(\sigma, k) = (i,j,A,B)$ and $\alpha(\sigma, k') = (i',j',A',B')$ for some \(k < k'\).
It is clear from the definition that \(i \neq j'\), as the sets over which we take the $\max$ and $\min$ to define \(i\) and \(j'\) respectively are disjoint.
Similarly, \(j \neq i'\).
Hence, the two wiggly arcs are pointed.
To check that the wiggly arcs are non-crossing, we show that~$(A \cap B') \cup (\{i,j\} \cap B') \cup (A \cap \{i',j'\}) = \varnothing$.
Indeed~$A \cap B' = \varnothing$ since~$A \subseteq A'$ because~$\sigma([k]) \subseteq \sigma([k'])$.
Moreover, \(\{i,j\} \cap B' = \emptyset\) since~\(\{2i - 1, 2j\} \subseteq \sigma([k]) \subseteq \sigma([k'])\).
Finally, \(A \cap \{i',j'\} = \emptyset\) since~\(\{2i', 2j'-1\} \cap \sigma([k]) \subseteq \{2i', 2j'-1\} \cap \sigma([k']) = \varnothing\).
\end{enumerate}

Next, we show that the maps \(\Phi\) and \(\Psi\) are inverse bijections.
Let \(\sigma\) be a wiggly permutation, so that \(\Psi(\sigma)\) is a wiggly pseudotriangulation.
Let~$k \in [2n-1]$ and~$m \eqdef \lceil \sigma(k)/2 \rceil$.
Then observe~that
\begin{itemize}
\item if $\{2m, 2m-1\} \subseteq \sigma([k])$, then~$k$ is minimal such that~$\alpha(\sigma,k)$ passes above the point~$m$,
\item if~$\sigma(k) = 2m$ and~$2m-1 \notin \sigma([k])$, then~$k$ is minimal such that~$\alpha(\sigma,k)$ has right endpoint~$m$,
\item if~$\sigma(k) = 2m-1$ and~$2m \notin \sigma([k])$, then~$k$ is minimal such that~$\alpha(\sigma,k)$ has left endpoint~$m$.
\end{itemize}
It follows that in the wiggly pseudotriangulation~\(\Psi(\sigma)\), the wiggly arc \(\alpha(\sigma, k)\) is precisely the wiggly arc lying immediately above the angle labelled by~\(\sigma(k)\).
Therefore, we have~\(\Phi(\Psi(\sigma)) = \sigma\).

Conversely, let \(T\) be a wiggly pseudotriangulation, so that \(\sigma = \Phi(T)\) is a wiggly permutation.
Then for any \(k \in [2n-1]\), the wiggly arc immediately above the angle of~$T$ labelled by~$\sigma(k)$ is precisely the one constructed as~\(\alpha(\sigma, k)\).
This shows that \(\Psi(\Phi(T)) = T\).

\pagebreak
Finally, consider a cover relation~$\sigma \lessdot \sigma^j$ in the wiggly lattice~$\wigglyLattice_n$ as described in~\cref{lem:wigglyCoverRelation}.
Using the notations of~\cref{lem:wigglyCoverRelation}, observe that
\begin{itemize}
\item $\alpha(\sigma^j, \ell) = \alpha(\sigma, \ell)$ for all~$\ell < i$ or~$\ell > k$,
\item $\alpha(\sigma^j, \ell) = \alpha(\sigma, \ell+k-j)$ for all~$i \le \ell < j$, and
\item $\alpha(\sigma^j, \ell) = \alpha(\sigma, \ell-j+i-1)$ for all~$j < \ell \le k$.
\end{itemize}
Hence, $\Psi(\sigma) \ssm \{\alpha(\sigma, j)\} = \Psi(\sigma^j) \ssm \{\alpha(\sigma^j, j)\}$.
This implies that~$\Psi$ is a directed graph morphism from the Hasse diagram of the wiggly lattice~$\wigglyLattice_n$ to the wiggly increasing flip graph~$\wigglyIncreasingFlipGraph_n$.
As the underlying undirected graphs are both regular of degree~$2n-1$, we conclude that~$\Phi$ and~$\Psi$ induce directed graph isomorphisms between the wiggly increasing flip graph~$\wigglyIncreasingFlipGraph_n$ and the Hasse diagram of the wiggly lattice~$\wigglyLattice_n$.
\end{proof}

\begin{remark}
Transporting the wiggly lattice~$\wigglyLattice_n$ via the map~$\Psi$ to the wiggly pseudotriangulation, note that the minimal wiggly pseudotriangulation is~$\Psi(\sigma_\downarrow) = T_\downarrow$ while the maximal wiggly pseudotriangulation is~$\Psi(\sigma_\uparrow) = T_\uparrow$. See \cref{exm:specialWigglyPseudotriangulations,fig:specialWigglyPseudotriangulations} for illustrations.
\end{remark}

\begin{remark}
The map~$\Phi$ sends the wiggly complex automorphisms of \cref{rem:wigglyComplexAutomorphism} to the bijections on wiggly permutations of \cref{rem:wigglyLatticeAntiIsomorphism}. In particular, it sends the vertical reflection on wiggly pseudotriangulations to the wiggly lattice anti-isomorphism on wiggly permutations given by~$\sigma_1 \dots \sigma_{2n} \mapsto (2n+1-\sigma_1) \dots (2n+1-\sigma_{2n})$.
\end{remark}

\begin{remark}
The map~$\Phi$ sends wiggly pseudotriangulations with $i$ final internal wiggly arcs to wiggly permutations with $i$ admissible gaps, explaining the similarities of \cref{rem:numberWigglyPseudotriangulations,rem:numberWigglyPermutations}.
For instance, the wiggly pseudotriangulations of \cref{fig:pseudotriangulationsCounting} illustrating the description of \cref{rem:numberWigglyPseudotriangulations} correspond to the wiggly permutations
\[
\begin{array}{cccc}
78621354 & 9\mathrm{X}78621354 & 789\mathrm{X}621354 & 78621359\mathrm{X}4 \\
\mathrm{X}786213594 & 7\mathrm{X}89621354 & 78\mathrm{X}6213549 & 7862135\mathrm{X}49
\end{array}
\]
illustrating the description of \cref{rem:numberWigglyPermutations}.
\end{remark}


\subsection{Graph properties}
\label{subsec:graphProperties}

We conclude this combinatorial section with two open questions and a conjecture on graphical properties of the wiggly flip graph~$\wigglyFlipGraph_n$ (or equivalently of the cover graph of the wiggly lattice~$\wigglyLattice_n$), motivated by the relevance of similar questions for the associahedron.

First, we are interested in the graph diameter~$\delta_n$ of~$\wigglyFlipGraph_n$.
It is clearly bounded below by~$2n-1$ (as~$T_\leftarrow \cap T_\rightarrow = \{\alpha_\mathrm{top}, \alpha_\mathrm{bot}\}$, all $2n-1$ internal wiggly arcs must sometimes be flipped at least once) and above by~$\binom{2n}{2}$ (as we have a sublattice of the weak order on permutations of~$[2n]$).
The first few values of~$\delta_n$ are
\[
\begin{array}{c|ccccccccc}
n & 1 & 2 & 3 & 4 & 5 & \dots \\
\hline
\delta_n & 1 & 4 & 8 & 14 & 20 & \dots
\end{array}
\]
but they might be misleading as for the associahedron~\cite{Pournin}.
This raises the following question.

\begin{question}
Compute (or estimate) the diameter~$\delta_n$ of the wiggly flip graph~$\wigglyFlipGraph_n$.
\end{question}

Related to the estimation of the diameter~$\delta_n$ is the study of the properties of the geodesic paths in~$\wigglyFlipGraph_n$.
We refer to~\cite{CeballosPilaud-diameterDAssociahedron} for a discussion on geodesic properties in a polytope, in connection to the revisiting path property and the complexity of the simplex algorithm~\cite{Santos-surveyHirsch,Santos-Hirsch}.

\begin{question}
Does the wiggly complex has the non-leaving face property of~\cite{CeballosPilaud-diameterDAssociahedron}?
\end{question}

Finally, we checked the following conjecture computationally up to~$n = 3$.

\begin{conjecture}
\label{conj:Hamiltonian}
The wiggly flip graph~$\wigglyFlipGraph_n$ admits an Hamiltonian path (or even cycle).
\end{conjecture}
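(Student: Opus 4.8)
The statement is a conjecture, so what follows is a plan of attack rather than a proof. The natural strategy is to transport the problem to wiggly permutations via the bijection~$\Psi$ of~\cref{prop:bijection} — so that one seeks a listing of all wiggly permutations of~$[2n]$ in which consecutive entries differ by a single cover relation of the wiggly lattice~$\wigglyLattice_n$ (equivalently, by a single wiggly flip) — and then to proceed by induction on~$n$, the cases~$n \le 3$ being the computer check.

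\emph{Fibre decomposition.} Deleting the two values~$2n-1$ and~$2n$ (geometrically, deleting the unique pseudotriangle with hinge at~$n$) defines a surjection~$\pi$ from the wiggly permutations of~$[2n]$ onto the wiggly permutations of~$[2n-2]$, whose fibres are precisely the families of insertions enumerated in~\cref{rem:numberWigglyPermutations}. For a wiggly permutation~$\tau$ of~$[2n-2]$, let~$G_\tau$ be the subgraph of~$\wigglyFlipGraph_n$ induced by~$\pi^{-1}(\tau)$. A short argument on inversion sets shows that~$\pi^{-1}(\tau)$ is a sublattice of~$\wigglyLattice_n$ (any wiggly permutation comparable to two elements of~$\pi^{-1}(\tau)$ lies again in~$\pi^{-1}(\tau)$, since a permutation sandwiched between two elements of the fibre has the same restriction to~$[2n-2]$) and that the induced subgraph~$G_\tau$ coincides with the Hasse diagram of this sublattice; in particular~$G_\tau$ is connected, with minimum~$\tau(2n-1)(2n)$ and maximum~$(2n)(2n-1)\tau$. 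Moreover, every cover relation of~$\wigglyLattice_{n-1}$ lifts to a cover relation of~$\wigglyLattice_n$ between the corresponding minima, and likewise between the corresponding maxima, so the fibres~$G_\tau$ are glued along these "vertical" inter-fibre edges.

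\emph{Assembly and the main obstacle.} One then fixes a Hamiltonian cycle of~$\wigglyFlipGraph_{n-1}$ (provided by the inductive hypothesis; note that its length~$wp_{n-1}$ is even, since the anti-isomorphism of~\cref{rem:wigglyLatticeAntiIsomorphism} is a fixed-point-free involution of the wiggly permutations of~$[2n-2]$), traverses each fibre~$G_\tau$ along a Hamiltonian path between two appropriately chosen \emph{ports}, alternates which port is used to enter and to leave, and crosses between consecutive fibres along the inter-fibre edges; when this can be done it closes up into a Hamiltonian cycle of~$\wigglyFlipGraph_n$, and the same bookkeeping applied to a Hamiltonian \emph{path} of~$\wigglyFlipGraph_{n-1}$ yields at least a Hamiltonian path. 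The serious difficulty — and, I expect, the real crux of the problem — is the choice of ports: they must be endpoints of the inter-fibre edges to the neighbouring fibres, and~$G_\tau$ must admit a Hamiltonian path between them. This is delicate because~$G_\tau$ is typically far from Hamiltonian-connected; already for~$n=2$ the fibre over~$\tau = 21$ is isomorphic to the $5$-cycle~$C_5$, in which a Hamiltonian path exists only between adjacent vertices, and there the minimum and maximum of~$G_\tau$ are \emph{not} adjacent, so the naive choice "minimum-to-maximum" fails outright. One therefore has to understand the lattices~$G_\tau$ in detail, most likely strengthening the inductive hypothesis to a statement of the form "$\wigglyFlipGraph_n$, and each fibre~$G_\tau$, admits a Hamiltonian path between any two vertices lying in a prescribed family of positions" and proving all of these simultaneously; making this bookkeeping close consistently across the two levels of induction is where the work lies.

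\emph{An alternative route.} It would be much cleaner to recognise the wiggly permutations as an instance of the combinatorial-generation framework of Hartung--Hoang--M\"utze--Williams, which produces Hamiltonian paths on the flip graphs of a wide range of permutation-defined polytopes, such as associahedra, permutahedra and quotientopes, by jump moves on "zigzag languages". The obstruction is the parity built into~\cref{def:wigglyPermutation}: deleting the largest value~$2n$ and re-inserting it at the rightmost position can create the forbidden pattern~$(2n-1)\cdots i \cdots (2n)$, so the family of wiggly permutations is not a zigzag language for the one-value recursion, and one is forced back to the two-value (pair) recursion above. Either adapting that machinery to the pair recursion, or exhibiting~$\wigglyLattice_n$ as a lattice quotient of some weak order (so that the lattice-congruence version of the framework applies directly), would be an attractive way to bypass the ad hoc analysis of the fibres~$G_\tau$, and as a bonus would produce an efficient algorithm enumerating all wiggly pseudotriangulations.
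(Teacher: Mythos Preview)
The paper does not attempt a proof: \cref{conj:Hamiltonian} is stated as an open conjecture, checked computationally only for~$n \le 3$, and the sole remark the authors make is precisely your ``alternative route'' observation --- wiggly permutations do \emph{not} form a zigzag language in the sense of Hartung--Hoang--M\"utze--Williams, so that machinery does not apply off the shelf. Your proposal therefore goes well beyond what the paper offers.

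Your plan is coherent and your sanity checks are correct. The fibre~$\pi^{-1}(\tau)$ is indeed the interval~$[\tau(2n{-}1)(2n),\,(2n)(2n{-}1)\tau]$ of~$\wigglyLattice_n$ (order-convexity follows since restriction to~$[2n-2]$ preserves weak order), and your example for~$n=2$, $\tau=21$ is right: the five elements~$2134,2143,4213,4321,3421$ form a $5$-cycle in which the minimum~$2134$ and maximum~$4321$ are non-adjacent, so a min-to-max Hamiltonian path in the fibre is impossible. This is exactly the obstruction to the naive inductive scheme, and you locate it correctly. The parity argument for~$wp_{n-1}$ via the fixed-point-free involution of \cref{rem:wigglyLatticeAntiIsomorphism} is also fine.

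Two small cautions if you pursue this. First, the inter-fibre edges are richer than just min-to-min and max-to-max: any cover relation of~$\wigglyLattice_n$ that moves neither~$2n-1$ nor~$2n$ connects two fibres, and there are typically many such edges for each pair of adjacent~$\tau$'s; exploiting this freedom is probably essential to make the port-matching close up. Second, your sublattice claim is slightly stronger than what your argument shows (order-convexity plus unique min and max gives an interval, which is automatically a sublattice, so this is fine --- but the phrasing ``any wiggly permutation comparable to two elements'' is not quite the right hypothesis). None of this is a genuine gap in a \emph{plan}; you have correctly identified both the natural strategy and its crux, and you are aligned with the paper's one substantive remark on the problem.
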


Note that unfortunately, wiggly permutations do not form a zigzag language in the sense of~\cite{HartungHoangMutzeWilliams} (which would have guarantied the existence of a Gray code).


\pagebreak
\section{Wiggly fan and wigglyhedron}
\label{sec:geometry}

In this section, we use $\b{g}$- and $\b{c}$-vectors (\cref{subsec:gcvectors}) to define the wiggly fan (\cref{subsec:wigglyFan}) and the wigglyhedron (\cref{subsec:wigglyhedron}), which provide geometric realizations of the wiggly complex and of the wiggly lattice.


\subsection{Polyhedral geometry}
\label{subsec:polyhedralGeometry}

We refer to \cite{Ziegler-polytopes} for a reference on polyhedral geometry, and only remind the basic notions needed later in the paper.

A (polyhedral) \defn{cone} is the positive span~$\R_{\ge 0}\b{R}$ of a finite set~$\b{R}$ of vectors of~$\R^d$ or equivalently, the intersection of finitely many closed linear half-spaces of~$\R^d.$ 
The \defn{faces} of a cone are its intersections with its supporting hyperplanes. 
The \defn{rays} (resp.~\defn{facets}) are the faces of dimension~$1$ (resp.~ codimension~$1$).
A cone is \defn{simplicial} if its rays are linearly independent.
A (polyhedral) \defn{fan}~$\Fan$ is a set of cones such that any face of a cone of~$\Fan$ belongs to~$\Fan$, and any two cones of~$\Fan$ intersect along a face of both. 
A fan is \defn{essential} if the intersection of its cones is the origin, \defn{complete} if the union of its cones covers~$\R^d$, and \defn{simplicial} if all its cones are simplicial.

Note that a simplicial fan defines a simplicial complex on its rays (the simplices of the simplicial complex are the subsets of rays which span a cone of the fan).
Conversely, given a simplicial complex~$\Delta$ with ground set~$V$, one can try to realize it geometrically by associating a ray~$\b{r}_v$ of~$\R^d$ to each~$v \in V$, and the cone~$\R_{\ge 0}\b{R}_\triangle$ generated by the set~$\b{R}_\triangle \eqdef \set{\b{r}_v}{v \in \triangle}$ to each~$\triangle \in \Delta$.
To show that the resulting cones indeed form a fan, we will need the following statement, which can be seen as a reformulation of~\cite[Coro.~4.5.20]{DeLoeraRambauSantos}.

\begin{proposition}
\label{prop:characterizationFan}
Consider a simplicial $(d-1)$-dimensional pseudomanifold~$\Delta$ without boundary on a ground set~$V$ and a set of vectors~$(\b{r}_v)_{v \in V}$ of~$\R^d$, and define~$\b{R}_\triangle \eqdef \set{\b{r}_v}{v \in \triangle}$ for any~$\triangle \in \Delta$.
Then the collection of cones~$\set{\R_{\ge 0}\b{R}_\triangle}{\triangle \in \Delta}$ forms a complete simplicial fan of~$\R^d$ if and~only~if
\begin{itemize}
\item there exists a vector~$\b{v}$ of~$\R^d$ contained in only one of the cones~$\R_{\ge 0}\b{R}_\triangle$ for~$\triangle \in \Delta$,
\item for any two adjacent facets~$\triangle, \triangle'$ of~$\Delta$ with~$\triangle \ssm \{v\} = \triangle' \ssm \{v'\}$, we have~$\lambda_v \lambda_{v'} > 0$~where
\[
\lambda_v \, \b{r}_v + \lambda_{v'} \, \b{r}_{v'} + \sum_{w \in \triangle \cap \triangle'} \lambda_w \, \b{r}_w = 0
\]
denotes the unique (up to rescaling) linear dependence on~$\b{R}_{\triangle \cup \triangle'}$.
\end{itemize}
\end{proposition}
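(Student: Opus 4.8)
The plan is to prove both implications, the forward one being short and the backward one carrying all the content; alternatively the whole statement can be deduced from \cite[Coro.~4.5.20]{DeLoeraRambauSantos} once its ``consistently oriented chamber'' hypothesis is matched with the sign condition $\lambda_v\lambda_{v'}>0$, but I will outline a direct argument. For the forward direction, assume the cones $\R_{\ge 0}\b{R}_\triangle$ form a complete simplicial fan. Each facet $\triangle$ of $\Delta$ has $d$ elements and $\R_{\ge 0}\b{R}_\triangle$ is full-dimensional and simplicial, so $\b{R}_\triangle$ is a linear basis of $\R^d$; consequently every cone on a proper face of $\Delta$ lies in one of the finitely many hyperplanes $\vect(\b{R}_F)$ indexed by ridges $F$ of $\Delta$, and any vector in the relative interior of one maximal cone and off all these hyperplanes witnesses the first bullet. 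For the second bullet, take adjacent facets $\triangle,\triangle'$ with $\triangle\ssm\{v\}=\triangle'\ssm\{v'\}$; the common face $\R_{\ge 0}\b{R}_{\triangle\cap\triangle'}$ spans a hyperplane $H$, and since the two distinct maximal cones meet exactly along this face they lie on opposite sides of $H$; applying to the linear dependence any linear form $\ell$ vanishing on $H$ kills the $\lambda_w$ terms and leaves $\lambda_v\,\ell(\b{r}_v)+\lambda_{v'}\,\ell(\b{r}_{v'})=0$ with $\ell(\b{r}_v),\ell(\b{r}_{v'})$ of opposite signs, hence $\lambda_v\lambda_{v'}>0$.

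For the backward direction, assume both bullets. First I would extract from the first bullet the unique face $\triangle_0$ whose cone contains the distinguished vector $\b{v}$: it must be a facet, since otherwise any facet containing it would contribute a second cone, and a simplex-method pivot shows that if $\b{R}_{\triangle_0}$ were linearly dependent then $\b{v}$ would also lie in the cone of some proper face of $\triangle_0$, again a second cone; hence $\b{R}_{\triangle_0}$ is a basis. Next I would propagate this along the dual graph of $\Delta$, which is connected because $\Delta$ is a pseudomanifold: if $\b{R}_\triangle$ is a basis then the dependence on $\b{R}_{\triangle\cup\triangle'}$ is genuinely unique up to scaling, the second bullet gives $\lambda_v,\lambda_{v'}\neq 0$, and the exchange lemma --- replacing $\b{r}_v$ by $\b{r}_{v'}$, whose coordinate along $\b{r}_v$ in the basis $\b{R}_\triangle$ is $-\lambda_v/\lambda_{v'}\neq 0$ --- shows that $\b{R}_{\triangle'}$ is again a basis. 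So every cone is full-dimensional and simplicial, and the boundary of each is contained in the finite union $\bigcup_F\vect(\b{R}_F)$ over ridges $F$.

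The crux is then a covering/degree argument. Off that finite union of hyperplanes, let $N(\b{x})$ be the number of facets $\triangle$ with $\b{x}\in\R_{\ge 0}\b{R}_\triangle$. Since every cone boundary lies inside the union, $N$ is constant on each chamber; and when crossing a generic wall point, which lies in the relative interior of some $\R_{\ge 0}\b{R}_F$, the only cones that can change membership are the facets containing $F$ --- there are exactly two, because $\Delta$ has no boundary --- and by the sign condition these two sit on opposite sides of $\vect(\b{R}_F)$, so $N$ simultaneously drops by one and rises by one. As the chamber adjacency graph of a hyperplane arrangement is connected, $N$ is globally constant, and the first bullet forces this constant to be $1$. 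From $N\equiv 1$ one reads off that the cones cover $\R^d$ and have pairwise disjoint interiors; and because $\Delta$ is a simplicial complex, the boundary of each simplicial cone is the union of the codimension-one cones $\R_{\ge 0}\b{R}_F$, each of which is a genuine common facet of the two cones flanking it (no hanging nodes), so a short induction on dimension shows that any two cones meet along $\R_{\ge 0}\b{R}_{\triangle\cap\triangle'}$, which is the fan axiom.

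I expect the main obstacle to be this covering step: making precise that $N$ is well defined and locally constant off a codimension-one set, that the chamber adjacency graph is connected, and above all that \emph{exactly two} maximal cones switch across each wall. This last point is precisely where the hypothesis ``pseudomanifold without boundary'' is used --- through ``every ridge lies in exactly two facets'' and the connectivity of the dual graph --- together with the sign condition, which places those two cones on opposite sides of the wall. Everything else (the simplex pivot, the exchange-lemma propagation, the final identification of common faces) is routine linear algebra and polyhedral bookkeeping, so an acceptable alternative is simply to invoke \cite[Coro.~4.5.20]{DeLoeraRambauSantos} and record the translation of hypotheses.
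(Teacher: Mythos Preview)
The paper does not prove this proposition at all: it is stated as a background fact and attributed to \cite[Coro.~4.5.20]{DeLoeraRambauSantos}, with the remark that the present formulation ``can be seen as a reformulation'' of that corollary. So your alternative of ``simply invoking \cite[Coro.~4.5.20]{DeLoeraRambauSantos} and recording the translation of hypotheses'' is precisely what the paper does, and nothing more.

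Your direct sketch goes well beyond this and is essentially correct. The forward implication is clean. In the backward implication, your degree argument is the standard one; the only place to be slightly more careful is the claim that at a generic wall point ``the only cones that can change membership are the facets containing $F$'': several ridges $F$ may span the same hyperplane and several ridge cones may have that wall point in their relative interior, but for each such ridge the pseudomanifold-without-boundary hypothesis still gives exactly two adjacent facets, and your sign condition still puts them on opposite sides, so the net change in $N$ remains zero. Your basis argument for $\b{R}_{\triangle_0}$ also works under the paper's implicit reading that $\triangle$ ranges over facets (as in \cref{lem:-+...-+}), since a simplex pivot lands $\b{v}$ in the cone of a proper face, hence of a ridge, hence of a second facet by the no-boundary condition. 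None of this is needed to match the paper, but your outline would serve as a self-contained proof if one wanted to avoid the citation.
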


A \defn{polytope} is the convex hull of finitely many points of~$\R^d$ or equivalently, a bounded intersection of finitely many closed affine half-spaces of~$\R^d$.
The \defn{faces} of a polytope are its intersections with its supporting hyperplanes.
The \defn{vertices} (resp.~\defn{edges}, resp.~\defn{facets}) are the faces of dimension~$0$ (resp.~dimension~$1$, resp.~codimension~$1$).

The \defn{normal cone} of a face~$\polytope{F}$ of a polytope~$\polytope{P}$ is the cone generated by the normal vectors to the supporting hyperplanes of~$\polytope{P}$ containing~$\polytope{F}$.
Said differently, it is the cone of vectors~$\b{c}$ of~$\R^d$ such that the linear form~$\b{x} \mapsto \dotprod{\b{c}}{\b{x}}$ on~$\polytope{P}$ is maximized by all points of the face~$\polytope{F}$.
The \defn{normal fan} of~$\polytope{P}$ is the set of normal cones of all its faces.

\enlargethispage{.6cm}
Consider now a complete simplicial fan~$\Fan$ of~$\R^d$ with rays~$(\b{r}_v)_{v \in V}$ and cones~$\R_{\ge 0} \b{R}_\triangle$ for~${\triangle \in \Delta}$, where~$\b{R}_\triangle \eqdef \set{\b{r}_v}{v \in \triangle}$ as in \cref{prop:characterizationFan}.
To realize the fan~$\Fan$, one can try to pick a height vector~$\b{h} \eqdef (h_v)_{v \in V} \in \R^V$ and consider the polytope
\(
\polytope{P}_{\b{h}} \eqdef \set{\b{x} \in \R^d}{\dotprod{\b{r}_v}{\b{x}} \le h_v \text{ for all } v \in V}.
\)
The following classical statement characterizes the height vectors~$\b{h}$ for which the fan~$\Fan$ is the normal fan of this polytope~$\polytope{P}_{\b{h}}$.
We borrow the formulation from~\cite[Lem.~2.1]{ChapotonFominZelevinsky}.

\begin{proposition}
\label{prop:characterizationPolytopalFan}
Let~$\Fan$ be an essential complete simplicial fan in~$\R^n$ with rays~$(\b{r}_v)_{v \in V}$ and cones~$\R_{\ge 0} \b{R}_\triangle$ for~$\triangle \in \Delta$.
Then the following are equivalent for any height vector~$\b{h} \in \R^V$:
\begin{itemize}
\item The fan~$\Fan$ is the normal fan of the polytope~$\polytope{P}_{\b{h}} \eqdef \set{\b{x} \in \R^d}{\dotprod{\b{r}_v}{\b{x}} \le h_v \text{ for all } v \in V}$.
\item For two adjacent facets~$\triangle, \triangle'$ of~$\Delta$ with~$\triangle \ssm \{v\} = \triangle' \ssm \{v'\}$, the height vector~$\b{h}$ satisfies the \defn{wall crossing inequality}
\[
\lambda_v \, h_v + \lambda_{v'} \, h_{v'} + \sum_{w \in \triangle \cap \triangle'} \lambda_w \, h_w > 0,
\]
where
\[
\lambda_v \, \b{r}_v + \lambda_{v'} \, \b{r}_{v'} + \sum_{w \in \triangle \cap \triangle'} \lambda_w \, \b{r}_w = 0
\]
denotes the unique linear dependence on~$\b{R}_{\triangle \cup \triangle'}$ such that~$\lambda_v + \lambda_{v'} = 2$.
\end{itemize}
\end{proposition}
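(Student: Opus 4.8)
The plan is to follow the classical argument of~\cite{ChapotonFominZelevinsky} and reduce the statement to the convexity of a single piecewise-linear function. Since $\Fan$ is simplicial, for each facet $\triangle$ of $\Delta$ the vectors $(\b{r}_v)_{v \in \triangle}$ form a basis of $\R^d$, so there is a unique point $\b{x}_\triangle \in \R^d$ with $\dotprod{\b{r}_v}{\b{x}_\triangle} = h_v$ for all $v \in \triangle$. Define $F \colon \R^d \to \R$ by $F(\b{c}) \eqdef \dotprod{\b{c}}{\b{x}_\triangle}$ for $\b{c} \in \R_{\ge 0} \b{R}_\triangle$. First I would check that $F$ is well defined: if $\b{c}$ lies in two cones $\R_{\ge 0} \b{R}_\triangle$ and $\R_{\ge 0} \b{R}_{\triangle'}$, then, since $\Fan$ is a fan, it lies in their common face, which is spanned by $(\b{r}_w)_{w \in \triangle \cap \triangle'}$, and both formulas for $F(\b{c})$ evaluate to $\sum_w c_w h_w$. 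By completeness, $F$ is defined on all of $\R^d$; it is positively homogeneous of degree one, linear on each cone of $\Fan$, and satisfies $F(\b{r}_v) = h_v$ for every $v \in V$.

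The core step is the local criterion for convexity of $F$: because $F$ is piecewise linear along the complete fan $\Fan$, it is convex if and only if it is ``convex across every wall'', meaning that for adjacent facets $\triangle, \triangle'$ with $\triangle \ssm \{v\} = \triangle' \ssm \{v'\}$, the linear piece $\b{c} \mapsto \dotprod{\b{c}}{\b{x}_{\triangle'}}$ dominates $\b{c} \mapsto \dotprod{\b{c}}{\b{x}_\triangle}$ on the cone $\R_{\ge 0} \b{R}_{\triangle'}$. Since $\b{x}_{\triangle'} - \b{x}_\triangle$ is orthogonal to $(\b{r}_w)_{w \in \triangle \cap \triangle'}$, this reduces to the single scalar condition $\dotprod{\b{r}_{v'}}{\b{x}_{\triangle'} - \b{x}_\triangle} \ge 0$, that is, $h_{v'} - \dotprod{\b{r}_{v'}}{\b{x}_\triangle} \ge 0$. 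Pairing the linear dependence $\lambda_v \b{r}_v + \lambda_{v'} \b{r}_{v'} + \sum_w \lambda_w \b{r}_w = 0$ with $\b{x}_\triangle$, and using that $\lambda_{v'} > 0$ (which holds because $\Fan$ is a fan, under the normalization $\lambda_v + \lambda_{v'} = 2$; compare \cref{prop:characterizationFan}), one gets $\lambda_{v'}\big(h_{v'} - \dotprod{\b{r}_{v'}}{\b{x}_\triangle}\big) = \lambda_v h_v + \lambda_{v'} h_{v'} + \sum_w \lambda_w h_w$. Hence the wall-crossing inequalities (with strict $>$, which additionally rules out merging of adjacent cones) are precisely equivalent to $F$ being strictly convex across every wall, that is, to $F$ being convex with the maximal cones of $\Fan$ as its maximal domains of linearity.

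It then remains to convert convexity of $F$ into the polytopality statement, in both directions. If all wall-crossing inequalities hold, then $F$, being a convex piecewise-linear function, equals the maximum of its linear pieces $\b{c} \mapsto \dotprod{\b{c}}{\b{x}_\triangle}$, hence is the support function of the polyhedron $\set{\b{x} \in \R^d}{\dotprod{\b{c}}{\b{x}} \le F(\b{c}) \text{ for all } \b{c} \in \R^d}$ (this polyhedron is nonempty since it contains each $\b{x}_\triangle$, as $F$ dominates its linear pieces); since $F$ is linear on cones, this polyhedron is cut out by the finitely many inequalities $\dotprod{\b{r}_v}{\b{x}} \le F(\b{r}_v) = h_v$, so it is exactly $\polytope{P}_{\b{h}}$, and essentiality of $\Fan$ makes it bounded. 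Its normal fan is then read off from the domains of linearity of its support function $F$, which are the cones of $\Fan$, so $\Fan$ is the normal fan of $\polytope{P}_{\b{h}}$. Conversely, if $\Fan$ is the normal fan of $\polytope{P}_{\b{h}}$, then each $\b{x}_\triangle$ is a vertex of $\polytope{P}_{\b{h}}$ whose normal cone is $\R_{\ge 0} \b{R}_\triangle$; as $\Fan$ is simplicial this vertex is simple, so $\dotprod{\b{r}_{v'}}{\b{x}_\triangle} < h_{v'}$ strictly for every $v' \notin \triangle$ adjacent to $\triangle$, and the identity of the previous paragraph turns this strict inequality into the strict wall-crossing inequality.

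I expect the only real difficulty to be expository rather than mathematical: stating the ``convexity across a wall'' criterion cleanly and fixing the orientation of the wall normal $\b{x}_{\triangle'} - \b{x}_\triangle$, so that the sign in the wall-crossing inequality comes out correctly (in particular, tracking that the normalization $\lambda_v + \lambda_{v'} = 2$ forces $\lambda_{v'} > 0$ once $\lambda_v \lambda_{v'} > 0$ is known from $\Fan$ being a fan). Once this local picture is set up, everything reduces to elementary linear algebra together with the standard dictionary between a polytope, its support function, and its normal fan.
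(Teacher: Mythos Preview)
The paper does not actually prove this proposition: it is stated as a known result, with the formulation borrowed from~\cite[Lem.~2.1]{ChapotonFominZelevinsky}, and no proof is given. Your proposal is a correct and complete rendering of the classical support-function argument underlying that reference: you construct the piecewise-linear function~$F$ whose domains of linearity are the maximal cones of~$\Fan$, identify strict convexity across each wall with the wall-crossing inequality via the identity $\lambda_{v'}\big(h_{v'} - \dotprod{\b{r}_{v'}}{\b{x}_\triangle}\big) = \lambda_v h_v + \lambda_{v'} h_{v'} + \sum_w \lambda_w h_w$, and then invoke the dictionary between polytopes, their support functions, and their normal fans. So there is nothing to compare against in the paper itself; your argument simply supplies what the paper outsources to the literature.
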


\pagebreak
We denote by~$(\b{e}_i)_{i \in [d]}$ the standard basis of~$\R^d$.
For~$I \subseteq [d]$, we define~$\one_I \eqdef \sum_{i \in I} \b{e}_i$, and we often shorten~$\one_{[d]}$ by~$\one_d$.
We denote by~$\HH_d$ the hyperplane of~$\R^d$ defined by the equation~$\dotprod{\b{x}}{\one_d} = 0$, and we denote by~$\pi : \R^d \to \HH_d$ the orthogonal projection on~$\HH_d$, that is~$\pi(\b{x}) \eqdef \b{x} - (\dotprod{\b{x}}{\one_d} / d) \one_d$.


\subsection{$\b{g}$-vectors and $\b{c}$-vectors}
\label{subsec:gcvectors}

We first define the $\b{g}$-vectors, illustrated in \cref{fig:ggMatrices}.
\begin{figure}
\centerline{\raisebox{-1.8cm}{\includegraphics[scale=2]{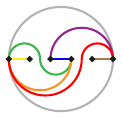}} \quad \input{figures/ggmatrices}}
\caption{The $\hat{\b{g}}$-matrix~$\hat{\b{g}}(T)$ and $\b{g}$-matrix~$\b{g}(T)$ of a wiggly pseudotriangulation~$T$. The $i$th column of these matrices are the $\hat{\b{g}}$-vector~$\hat{\b{g}}(\alpha)$ and the $\b{g}$-vector~$\b{g}(\alpha)$ of the $i$th wiggly arc~$\alpha$ of~$T$ (ordered from bottom to top). The colors of the columns match the colors of the wiggly arcs. The $\b{g}$-vectors are the orthogonal projections of the $\hat{\b{g}}$-vectors on the hyperplane~$\HH_{2n}$.}
\label{fig:ggMatrices}
\end{figure}

\begin{definition}
\label{def:gvectors}
The \defn{$\b{g}$-vector} of a wiggly arc~$\alpha \eqdef (i, j, A, B)$ is the vector~$\b{g}(\alpha)$ of~$\HH_{2n}$ defined as the projection~$\pi \big( \hat{\b{g}}(\alpha) \big)$ of the vector~$\hat{\b{g}}(\alpha) \eqdef \one_{\alpha^+} - \one_{\alpha^-}$ where
\begin{align*}
\alpha^+ & \eqdef \big(\{2i-1, 2j\} \ssm \{-1, 2n+2\} \big) \cup \set{2a-1}{a \in A} \cup \set{2a}{a \in A},
\\
\alpha^- & \eqdef \big(\{2i, 2j-1\} \ssm \{0, 2n+1\} \big) \cup \set{2b-1}{b \in B} \cup \set{2b}{b \in B}.
\end{align*}
If we place the coordinate~$2p-1$ on the left and the coordinate~$2p$ on the right of point~$p$, then~$\hat{\b{g}}(\alpha)$ has a $1$ outside its two endpoints and on both sides of points in~$A$, and a~$-1$ inside its two endpoints and on both sides of points in~$B$.
Note that ${\hat{\b{g}} \big( (0, n+1, [n], \varnothing) \big) = \one_{2n} = - \hat{\b{g}} \big( (0, n+1, \varnothing, [n]) \big)}$, so that~${\b{g} \big( (0, n+1, [n], \varnothing) \big) = \b{0} = \b{g} \big( (0, n+1, \varnothing, [n]) \big)}$.
We define~$\b{g}(D) \eqdef \set{\b{g}(\alpha)}{\alpha \in D^\circ}$ for a set~$D$ of wiggly arcs.
\end{definition}

\begin{example}
\label{exm:specialGMatrices}
\cref{table:specialGMatrices} gathers the $\hat{\b{g}}$-matrices (\ie matrices whose columns are the $\hat{\b{g}}$-vectors) of the wiggly pseudotriangulations of \cref{exm:specialWigglyPseudotriangulations} illustrated in \cref{fig:specialWigglyPseudotriangulations}.

\begin{table}[p]
	\begingroup
	\fontsize{10}{10}\selectfont
	\setlength\arraycolsep{2pt}
	\begin{alignat*}{3}
	\hat{\b{g}}(T_\downarrow) & = \begin{pmatrix}
		1 & 1 & 0 & 1 & \dots & 0 & 1 & 0 \\
		-1 & 1 & 0 & 1 & \dots & 0 & 1 & 0 \\
		-1 & -1 & 1 & 1 & \dots & 0 & 1 & 0 \\
		-1 & -1 & -1 & 1 & \dots & 0 & 1 & 0 \\
		-1 & -1 & -1 & -1 & \dots & 0 & 1 & 0 \\
		\vdots & \vdots & \vdots & \vdots & \ddots & \vdots & \vdots & \vdots \\
		-1 & -1 & -1 & -1 & \dots & 0 & 1 & 0 \\
		-1 & -1 & -1 & -1 & \dots & 1 & 1 & 0 \\
		-1 & -1 & -1 & -1 & \dots & -1 & 1 & 0 \\
		-1 & -1 & -1 & -1 & \dots & -1 & -1 & 1 \\
		-1 & -1 & -1 & -1 & \dots & -1 & -1 & -1 \\
	\end{pmatrix}
	\qquad &
	\hat{\b{g}}(T_\leftarrow) & = \begin{pmatrix}
		-1 & -1 & \dots & -1 & -1 & 1 & \dots & 1 & 1 \\
		-1 & -1 & \dots & -1 & 1 & 1 & \dots & 1 & 1 \\
		-1 & -1 & \dots & -1 & 0 & -1 & \dots & 1 & 1 \\
		-1 & -1 & \dots & 1 & 0 & 1 & \dots & 1 & 1 \\
		-1 & -1 & \dots & 0 & 0 & 0 & \dots & 1 & 1 \\
		\vdots & \vdots & \ddots & \vdots & \vdots & \vdots & \ddots & \vdots & \vdots \\
		-1 & -1 & \dots & 0 & 0 & 0 & \dots & 1 & 1 \\
		-1 & -1 & \dots & 0 & 0 & 0 & \dots & -1 & 1 \\
		-1 & 1 & \dots & 0 & 0 & 0 & \dots & 1 & 1 \\
		-1 & 0 & \dots & 0 & 0 & 0 & \dots & 0 & -1 \\
		1 & 0 & \dots & 0 & 0 & 0 & \dots & 0 & 1 \\
	\end{pmatrix}
	\\[.3cm]
	\hat{\b{g}}(T_\uparrow) & = \begin{pmatrix}
		-1 & -1 & -1 & -1 & \dots & -1 & -1 & -1 \\
		-1 & -1 & -1 & -1 & \dots & -1 & -1 & 1 \\
		-1 & -1 & -1 & -1 & \dots & -1 & 1 & 0 \\
		-1 & -1 & -1 & -1 & \dots & 1 & 1 & 0 \\
		-1 & -1 & -1 & -1 & \dots & 0 & 1 & 0 \\
		\vdots & \vdots & \vdots & \vdots & \ddots & \vdots & \vdots & \vdots \\
		-1 & -1 & -1 & -1 & \dots & 0 & 1 & 0 \\
		-1 & -1 & -1 & 1 & \dots & 0 & 1 & 0 \\
		-1 & -1 & 1 & 1 & \dots & 0 & 1 & 0 \\
		-1 & 1 & 0 & 1 & \dots & 0 & 1 & 0 \\
		1 & 1 & 0 & 1 & \dots & 0 & 1 & 0 \\
	\end{pmatrix}
	\qquad &
	\hat{\b{g}}(T_\rightarrow) & = \begin{pmatrix}
		1 & 0 & \dots & 0 & 0 & 0 & \dots & 0 & 1 \\
		-1 & 0 & \dots & 0 & 0 & 0 & \dots & 0 & -1 \\
		-1 & 1 & \dots & 0 & 0 & 0 & \dots & 1 & 1 \\
		-1 & -1 & \dots & 0 & 0 & 0 & \dots & -1 & 1 \\
		-1 & -1 & \dots & 0 & 0 & 0 & \dots & 1 & 1 \\
		\vdots & \vdots & \ddots & \vdots & \vdots & \vdots  & \ddots & \vdots & \vdots \\
		-1 & -1 & \dots & 0 & 0 & 0 & \dots & 1 & 1 \\
		-1 & -1 & \dots & 1 & 0 & 1 & \dots & 1 & 1 \\
		-1 & -1 & \dots & -1 & 0 & -1 & \dots & 1 & 1 \\
		-1 & -1 & \dots & -1 & 1 & 1 & \dots & 1 & 1 \\
		-1 & -1 & \dots & -1 & -1 & 1 & \dots & 1 & 1 \\
	\end{pmatrix}
	\end{alignat*}
	\endgroup
	\caption{The $\hat{\b{g}}$-matrices of the wiggly pseudotriangulations of \cref{exm:specialWigglyPseudotriangulations}. The $i$th column of~$\hat{\b{g}}(T)$ is the $\hat{\b{g}}$-vector~$\hat{\b{g}}(\alpha)$ of the $i$th wiggly arc~$\alpha$ of~$T$ (ordered from bottom to top). 
}
	\label{table:specialGMatrices}
\end{table}
\end{example}

We will see in \cref{coro:basis} that~$\b{g}(T)$ forms a basis of~$\HH_{2n}$ for any wiggly pseudotriangulation~$T$, which enables us to define its dual basis, illustrated in \cref{fig:gcMatrices}.
\begin{figure}
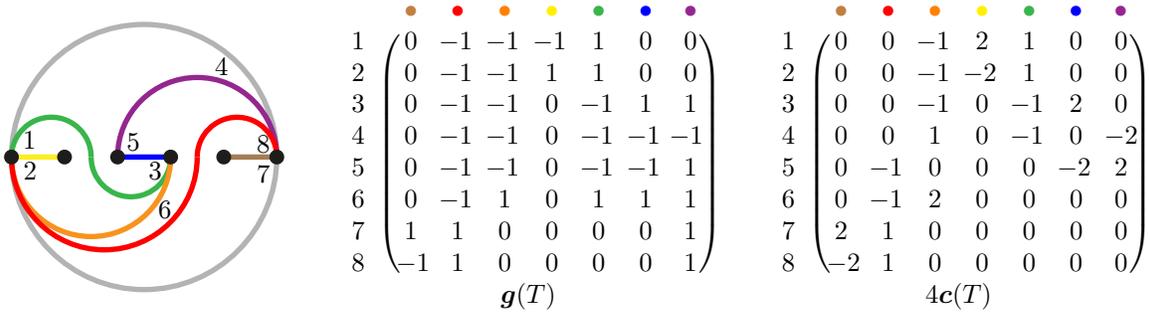

\centerline{\raisebox{-1.8cm}{\includegraphics[scale=2]{pseudotriangulationMatrices}} \quad \input{figures/gcmatrices}}
\caption{The $\b{g}$-matrix~$\b{g}(T)$ and $\b{c}$-matrix~$\b{c}(T)$ of a wiggly pseudotriangulation~$T$. The $i$th column of these matrices are the $\b{g}$-vector~$\b{g}(\alpha)$ and $\b{c}$-vector~$\b{c}(\alpha, T)$ of the $i$th wiggly arc~$\alpha$ of~$T$ (ordered from bottom to top). The colors of the columns match the colors of the wiggly arcs. The $\b{g}$-vectors and $\b{c}$-vectors form dual bases of~$\HH_{2n}$.}
\label{fig:gcMatrices}
\end{figure}

\begin{definition}
\label{def:cvectors}
The \defn{$\b{c}$-vector} of an interior wiggly arc~$\alpha$ in a wiggly pseudotriangulation~$T$ is the vector~$\b{c}(\alpha, T)$ of~$\HH_{2n}$ such that~$\dotprod{\b{c}(\alpha, T)}{\b{g}(\alpha)} = 1$ and~$\dotprod{\b{c}(\alpha, T)}{\b{g}(\alpha')} = 0$ for all interior wiggly arcs~$\alpha' \ne \alpha$ of~$T$.
In other words, $\b{c}(T) \eqdef \set{\b{c}(\alpha, T)}{\alpha \in T^\circ}$ is the dual basis of~$\b{g}(T)$~in~$\HH_{2n}$.
\end{definition}

\begin{remark}
The $\b{c}$-vector~$\b{c}(\alpha, T)$ can also be described combinatorially.
Namely, label the the corners of~$T$ as in \cref{def:bijection1}, and denote by~$u$ and~$v$ the labels incident to~$\alpha$ and respectively below and above~$\alpha$, and let~$\bar u \eqdef \lceil u/2 \rceil$ and~$\bar v \eqdef \lceil v/2 \rceil$.
Then the $\b{c}$-vector~$\b{c}(\alpha, T)$ has coordinates
\begin{itemize}
\item $\b{c}(\alpha, T)_{2w-1} = - \b{c}(\alpha, T)_{2w} = (-1)^{w \in A} (-1)^{u>v}/4$ for all~$w$ strictly between~$\bar u$ and~$\bar v$,
\item $\b{c}(\alpha, T)_u = 1/2$ if~$\bar u = \bar v$, or if~$\bar v$ and the wiggly arcs incident to~$\bar u$ are on the same side of~$\bar u$, and $\b{c}(\alpha, T)_{2\bar u-1} = \b{c}(\alpha, T)_{2\bar u} = 1/4$ otherwise,
\item $\b{c}(\alpha, T)_v = -1/2$ if~$\bar u = \bar v$, or if~$\bar u$ and the wiggly arcs incident to~$\bar v$ are on the same side of~$\bar v$, and $\b{c}(\alpha, T)_{2\bar v-1} = \b{c}(\alpha, T)_{2\bar v} = -1/4$ otherwise,
\item $\b{c}(\alpha, T)_w = 0$ for all other coordinates.
\end{itemize}
The tedious proof of this description is not needed here as we only use the dual basis property.
\end{remark}

\begin{example}
\label{exm:specialCMatrices}
\cref{table:specialCMatrices} gathers the $\b{c}$-matrices (\ie matrices whose columns are the $\b{c}$-vectors) of the wiggly pseudotriangulations of \cref{exm:specialWigglyPseudotriangulations} illustrated in \cref{fig:specialWigglyPseudotriangulations}.

\begin{table}
	\begingroup
	\fontsize{10}{10}\selectfont
	\setlength\arraycolsep{2pt}
	\begin{alignat*}{3}
	4\b{c}(T_\downarrow) & = \begin{pmatrix}
		2 & 0 & 0 & 0 & \dots & 0 & 0 & 0 \\
		-2 & 2 & 0 & 0 & \dots & 0 & 0 & 0 \\
		0 & -1 & 2 & 0 & \dots & 0 & 0 & 0 \\
		0 & -1 & -2 & 2 & \dots & 0 & 0 & 0 \\
		0 & 0 & 0 & -1 & \dots & 0 & 0 & 0 \\
		\vdots & \vdots & \vdots & \vdots & \ddots & \vdots & \vdots & \vdots \\
		0 & 0 & 0 & 0 & \dots & 0 & 0 & 0 \\
		0 & 0 & 0& 0 & \dots & 2 & 0 & 0 \\
		0 & 0 & 0 & 0 & \dots & -2 & 2 & 0 \\
		0 & 0 & 0 & 0 & \dots & 0 & -1 & 2 \\
		0 & 0 & 0 & 0 & \dots & 0 & -1 & -2 \\
	\end{pmatrix}
	\quad &
	4\b{c}(T_\leftarrow) & = \begin{pmatrix}
		0 & 0 & \dots & -1 & -2 & 1 & \dots & 0 & 0 \\
		0 & 0 & \dots & -1 & 2 & 1 & \dots & 0 & 0 \\
		0 & 0 & \dots & 0 & 0 & -2 & \dots & 0 & 0 \\
		0 & 0 & \dots & 2 & 0 & 0 & \dots & 0 & 0 \\
		0 & 0 & \dots & 0 & 0 & 0 & \dots & 0 & 0 \\
		\vdots & \vdots & \ddots & \vdots & \vdots & \vdots & \ddots & \vdots & \vdots \\
		0 & -1 & \dots & 0 & 0 & 0 & \dots & 1 & 0 \\
		-1 & 0 & \dots & 0 & 0 & 0 & \dots & -2 & 1 \\
		-1 & 2 & \dots & 0 & 0 & 0 & \dots & 0 & 1 \\
		0 & 0 & \dots & 0 & 0 & 0 & \dots & 0 & -2 \\
		2 & 0 & \dots & 0 & 0 & 0 & \dots & 0 & 0 \\
	\end{pmatrix}
	\quad
	\\[.3cm]
	4\b{c}(T_\uparrow) & = \begin{pmatrix}
		0 & 0 & 0 & 0 & \dots & 0 & -1 & -2 \\
		0 & 0 & 0 & 0 & \dots & 0 & -1 & 2 \\
		0 & 0 & 0 & 0 & \dots & -2 & 2 & 0 \\
		0 & 0 & 0 & 0 & \dots & 2 & 0 & 0 \\
		0 & 0 & 0 & 0 & \dots & 0 & 0 & 0 \\
		\vdots & \vdots & \vdots & \vdots & \ddots & \vdots & \vdots & \vdots \\
		0 & 0 & 0 & -1 & \dots & 0 & 0 & 0 \\
		0 & -1 & -2 & 2 & \dots & 0 & 0 & 0 \\
		0 & -1 & 2 & 0 & \dots & 0 & 0 & 0 \\
		-2 & 2 & 0 & 0 & \dots & 0 & 0 & 0 \\
		2 & 0 & 0 & 0 & \dots & 0 & 0 & 0 \\
	\end{pmatrix}
	\quad &
	4\b{c}(T_\rightarrow) & = \begin{pmatrix}
		2 & 0 & \dots & 0 & 0 & 0 & \dots & 0 & 0 \\
		0 & 0 & \dots & 0 & 0 & 0 & \dots & 0 & -2 \\
		-1 & 2 & \dots & 0 & 0 & 0 & \dots & 0 & 1 \\
		-1 & 0 & \dots & 0 & 0 & 0 & \dots & -2 & 1 \\
		0 & -1 & \dots & 0 & 0 & 0 & \dots & 1 & 0 \\
		\vdots & \vdots & \ddots & \vdots & \vdots & \vdots & \ddots & \vdots & \vdots \\
		0 & 0 & \dots & 0 & 0 & 0 & \dots & 0 & 0 \\
		0 & 0 & \dots & 2 & 0 & 0 & \dots & 0 & 0 \\
		0 & 0 & \dots & 0 & 0 & -2 & \dots & 0 & 0 \\
		0 & 0 & \dots & -1 & 2 & 1 & \dots & 0 & 0 \\
		0 & 0 & \dots & -1 & -2 & 1 & \dots & 0 & 0 \\
	\end{pmatrix}
	\end{alignat*}
	\endgroup
	\caption{$4$ times the $\b{c}$-matrices of the wiggly pseudotriangulations of \cref{exm:specialWigglyPseudotriangulations}. The $i$th column of~$\b{c}(T)$ is $4$ times the $\b{c}$-vector~$\b{c}(\alpha, T)$ of the $i$th wiggly arc~$\alpha$ of~$T$ (ordered from bottom to top).}
	\label{table:specialCMatrices}
\end{table}
\end{example}

\begin{remark}
\label{rem:gcvectorsSymmetries}
Following \cref{rem:wigglyComplexAutomorphism}, note that the vertical and horizontal symmetries on wiggly arcs translate on $\b{g}$- and $\b{c}$-vectors to the isometries sending~$(x_1, x_2, \dots, x_{2n-1}, x_{2n}) \in \HH_{2n}$ to~$(x_{2n}, x_{2n-1}, \dots, x_2, x_1)$ and to~$(-x_2, -x_1, \dots, -x_{2n}, -x_{2n-1})$ respectively.
\end{remark}


\subsection{Wiggly fan}
\label{subsec:wigglyFan}

We now show that the $\b{g}$-vectors of \cref{def:gvectors} support a polyhedral fan realization of the wiggly complex~$\wigglyComplex_n$ illustrated in \cref{fig:wigglyFan} when~$n = 2$.
\afterpage{
\begin{figure}
\centerline{\includegraphics[scale=1.1]{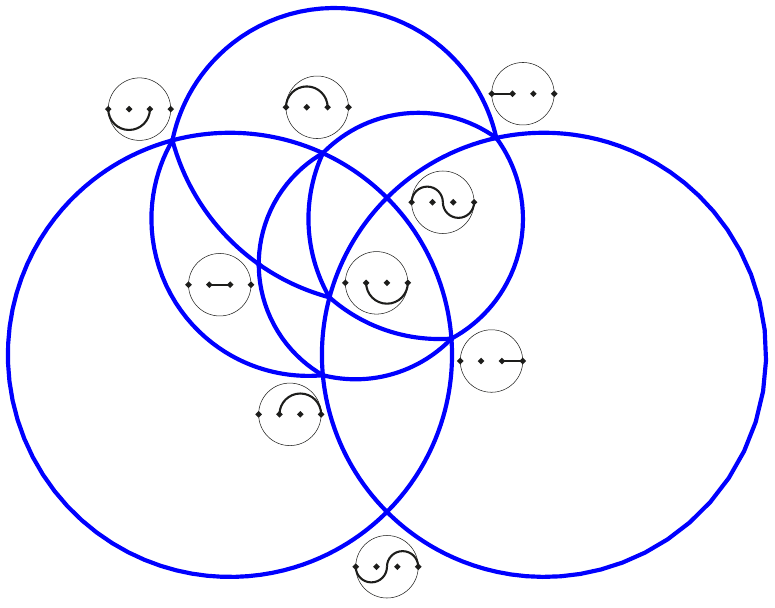}}
\caption{The wiggly fan~$\wigglyFan_2$, intersected with the unit sphere and projected stereographically to the plane. The vertices of the projection are rays of~$\wigglyFan_2$ and are labeled by the corresponding wiggly arcs. Compare with \cref{fig:wigglyComplex}\,(left).}
\label{fig:wigglyFan}
\end{figure}
}

\begin{theorem}
\label{thm:wigglyFan}
The collection of cones~$\R_{\ge 0} \b{g}(D) $ for all wiggly dissections~$D$ in the wiggly complex~$\wigglyComplex_n$ forms a complete simplicial fan of~$\HH_{2n}$, called the \defn{wiggly fan}~$\wigglyFan_n$.
\end{theorem}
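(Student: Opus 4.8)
The plan is to apply \cref{prop:characterizationFan} to the $(2n-1)$-dimensional pseudomanifold without boundary $\wigglyComplex_n$ together with the vectors $\b{g}(\alpha) \in \HH_{2n}$ attached to the internal wiggly arcs. This reduces the theorem to two verifications: (a) exhibiting a vector of $\HH_{2n}$ lying in exactly one cone $\R_{\ge 0}\b{g}(T)$ over all wiggly pseudotriangulations $T$, and (b) checking that for every flip between adjacent facets $T, T'$ with $T \ssm \{\alpha\} = T' \ssm \{\alpha'\}$, the unique linear dependence among $\b{g}(T \cup T')$ has coefficients of the same sign on $\b{g}(\alpha)$ and $\b{g}(\alpha')$. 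Along the way this also establishes that each $\b{g}(T)$ is a linearly independent set, hence a basis of $\HH_{2n}$ (the assertion labelled \cref{coro:basis} needed for the $\b{c}$-vectors).

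For (a), I would take the vector $\b{v} \eqdef \pi(\b{e}_1 + \b{e}_3 + \dots + \b{e}_{2n-1}) - \pi(\b{e}_2 + \b{e}_4 + \dots + \b{e}_{2n})$, or some similarly "generic" linear functional adapted to the left-to-right orientation, and argue that the only pseudotriangulation whose $\b{g}$-cone contains it is one of the distinguished ones from \cref{exm:specialWigglyPseudotriangulations} — most naturally $T_\downarrow$ or $T_\uparrow$, whose $\hat{\b{g}}$-matrices are computed explicitly in \cref{table:specialGMatrices} and are triangular, so the containment can be checked by hand. Concretely, one solves $\b{v} = \sum_{\alpha \in T^\circ} \lambda_\alpha \b{g}(\alpha)$ for that specific $T$, verifies all $\lambda_\alpha > 0$, and then shows no other $T$ works by a counting/parity argument on which coordinates of $\b{v}$ are positive. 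This step is bookkeeping but not conceptually hard.

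The main obstacle is (b): the sign condition on the flip dependences. Here the key input is \cref{prop:uerp}, which tells us that when $\alpha$ and $\alpha'$ are exchangeable, every arc of $T \cap T' = T \ssm \{\alpha\}$ is also compatible with the two "hull" arcs $\beta, \beta'$ it produces — and in fact $\beta, \beta'$ lie on the boundary of the pseudoquadrangle $q = T \ssm \{\alpha\}$, so $\{\beta, \beta'\} \subseteq T \cap T'$. The plan is to prove a $\b{g}$-vector identity of the form $\b{g}(\alpha) + \b{g}(\alpha') = \b{g}(\beta) + \b{g}(\beta')$ (up to a correction supported on the common arcs) by a direct computation with \cref{def:gvectors}: in the crossing case, $\hat{\b{g}}(\beta)$ has a $1$ wherever $\alpha$ or $\alpha'$ does on the upper hull, etc., and the "coordinate $2p-1$ on the left, $2p$ on the right" description of $\hat{\b{g}}$ makes the cancellation transparent. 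This yields a linear dependence $\b{g}(\alpha) + \b{g}(\alpha') - \b{g}(\beta) - \b{g}(\beta') = 0$ among arcs all lying in $T \cup T'$, and since $\b{g}(T \cup T')$ has a one-dimensional space of dependences (as $\b{g}(T)$ will be shown to be a basis), this \emph{is} the wall-crossing dependence, with coefficients $+1$ on both $\b{g}(\alpha)$ and $\b{g}(\alpha')$ — giving $\lambda_v \lambda_{v'} = 1 > 0$. The non-pointed (U-turn) case is handled the same way with the two arcs $\beta, \beta'$ of \cref{prop:uerp} that go from $\min(i,i')$ to $\max(j,j')$. The care needed is in the case analysis: the pseudoquadrangle $q$ has the four types of \cref{rem:descriptionPseudotrianglesPseudoquadrangles,prop:diagonalsPseudoquadrangle}, and one must check that in each type the two diagonals $\alpha, \alpha'$ are related to two \emph{boundary} arcs $\beta, \beta'$ of $q$ by the identity above. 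I expect type (1) (one interior point) and type (2c) (adjacent hinges) to require the most attention, since there the "hull" arcs degenerate; but in every case the identity is forced by \cref{prop:uerp} together with the fact that $\b{g}$ is determined by the combinatorial data $(i,j,A,B)$.

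Finally, to deduce that $\b{g}(T)$ is a basis of $\HH_{2n}$ — needed both for the uniqueness of the dependence in step (b) and for \cref{coro:basis} — I would note that $\dim \HH_{2n} = 2n-1 = |T^\circ|$, so it suffices to show linear independence, and this propagates through the flip graph: it holds for the triangular matrix $\hat{\b{g}}(T_\downarrow)$ of \cref{table:specialGMatrices} (whose projection to $\HH_{2n}$ is readily checked to be nonsingular), and the identity $\b{g}(\alpha) + \b{g}(\alpha') = \b{g}(\beta) + \b{g}(\beta')$ shows that independence is preserved under each flip since $\wigglyFlipGraph_n$ is connected by \cref{prop:wigglyFlipGraph}. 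With both bullets of \cref{prop:characterizationFan} in hand, the collection $\set{\R_{\ge 0}\b{g}(D)}{D \in \wigglyComplex_n}$ is a complete simplicial fan, which is the wiggly fan $\wigglyFan_n$.
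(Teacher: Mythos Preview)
Your strategy is exactly the paper's: apply \cref{prop:characterizationFan}, supply a vector in a unique maximal cone (\cref{lem:-+...-+}), and check the sign of the exchange dependence (\cref{lem:linearDependences}); the basis statement is then propagated through the flip graph just as you say (\cref{coro:basis}). Two details, however, are off and would trip you up if you carried them out as written.

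First, in step~(a) your vector $\sum_k (\b{e}_{2k-1}-\b{e}_{2k})$ does \emph{not} lie in the cone of~$T_\downarrow$ or~$T_\uparrow$ (already for~$n=2$ one coefficient comes out negative). The paper takes the opposite vector $\b{v}=\sum_k(\b{e}_{2k}-\b{e}_{2k-1})$ and shows it lies only in~$\R_{\ge0}\b{g}(T_\leftarrow)$; by the vertical symmetry your vector lies only in~$\R_{\ge0}\b{g}(T_\rightarrow)$. The uniqueness argument is also more than ``bookkeeping'': it is a short but genuine induction on~$i$ showing that any nonnegative combination summing to~$\b{v}$ must be supported on the arcs~$\alpha_0,\underline{\alpha}_i,\overline{\alpha}_i$ of~$T_\leftarrow$.

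Second, the exchange identity is not uniform: in the crossing case one has $\b{g}(\alpha)+\b{g}(\alpha')=\b{g}(\beta)+\b{g}(\beta')$, but in the non-pointed case the right-hand side acquires a factor~$1/2$, namely $\b{g}(\alpha)+\b{g}(\alpha')=\big(\b{g}(\beta)+\b{g}(\beta')\big)/2$. This is harmless for the sign condition here (both coefficients on~$\alpha,\alpha'$ stay~$+1$), but it matters for the wall-crossing inequalities in \cref{lem:wallCrossingInequalities}. Also, no case split over the four pseudoquadrangle types is needed: the identity is checked once and for all on~$\hat{\b{g}}$ directly from the formulas for~$\beta,\beta'$ in \cref{prop:uerp}, and there is no ``correction supported on the common arcs'' beyond~$\beta,\beta'$ themselves.
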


To prove \cref{thm:wigglyFan}, we just need to check the conditions of \cref{prop:characterizationFan}.
We first find a vector contained in the cone~$\R_{\ge 0} \b{g}(T)$ for a single wiggly pseudotriangulation~$T$.

\begin{lemma}
\label{lem:-+...-+}
Let~$\b{v} \eqdef \sum_{k \in [n]} (\b{e}_{2k} - \b{e}_{2k-1}) = (-1, 1, \dots, -1, 1)$, let~$\alpha_0 \eqdef (0, 1, \varnothing, \varnothing)$, and for~${i \in [n]}$, let~$\underline{\alpha}_i \eqdef (0, i+1, [i], \varnothing)$ and~$\overline{\alpha}_i \eqdef (0, i+1, \varnothing, [i])$.
Then~$T_\leftarrow \eqdef \{\alpha_0\} \cup \bigcup_{i \in [n]} \{\underline{\alpha}_i, \overline{\alpha}_i\}$ is the only wiggly pseudotriangulation~$T$ such that~$\b{v} \in \R_{\ge 0} \b{g}(T)$.
\end{lemma}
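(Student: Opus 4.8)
The plan is to verify directly that $\b{v}\in\R_{\ge0}\b{g}(T_\leftarrow)$, and then to prove uniqueness by induction on~$n$, reducing to the case~$n-1$ by ``forgetting the last point''.

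For membership, one computes from \cref{def:gvectors} that $\hat{\b{g}}(\alpha_0)=\b{e}_2-\b{e}_1$ and, for $i\in[n-1]$, that $\hat{\b{g}}(\underline{\alpha}_i)=\one_{[2i]}+\b{e}_{2i+2}-\b{e}_{2i+1}$ and $\hat{\b{g}}(\overline{\alpha}_i)=\b{e}_{2i+2}-\one_{[2i+1]}$, hence $\hat{\b{g}}(\underline{\alpha}_i)+\hat{\b{g}}(\overline{\alpha}_i)=2(\b{e}_{2i+2}-\b{e}_{2i+1})$. Adding these up gives $\hat{\b{g}}(\alpha_0)+\tfrac12\sum_{i\in[n-1]}\big(\hat{\b{g}}(\underline{\alpha}_i)+\hat{\b{g}}(\overline{\alpha}_i)\big)=\sum_{k\in[n]}(\b{e}_{2k}-\b{e}_{2k-1})=\b{v}$, and applying the projection~$\pi$ (which is linear and fixes $\b{v}\in\HH_{2n}$) turns this into $\b{v}=\b{g}(\alpha_0)+\tfrac12\sum_{i\in[n-1]}\big(\b{g}(\underline{\alpha}_i)+\b{g}(\overline{\alpha}_i)\big)$, a conic combination with strictly positive coefficients over the internal wiggly arcs of~$T_\leftarrow$.

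For uniqueness I would prove the stronger statement that the only pairwise compatible set~$S$ of internal wiggly arcs admitting a strictly positive weighting $(\lambda_\alpha)_{\alpha\in S}$ with $\sum_{\alpha\in S}\lambda_\alpha\,\b{g}(\alpha)=\b{v}$ is $S=T_\leftarrow^\circ$ (with $\lambda_{\alpha_0}=1$ and $\lambda_{\underline{\alpha}_i}=\lambda_{\overline{\alpha}_i}=\tfrac12$); this yields the lemma, since for any wiggly pseudotriangulation~$T$ with $\b{v}\in\R_{\ge0}\b{g}(T)$ the support of a conic representation of~$\b{v}$ is such an~$S$, forcing $T_\leftarrow^\circ\subseteq T^\circ$ and hence $T=T_\leftarrow$ by \cref{prop:wigglyPseudotriangulations}\,(iii). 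The first step, valid for any such~$S$, is to compare the two coordinates flanking each point: $\b{v}_{2m}-\b{v}_{2m-1}=2$, while $\hat{\b{g}}(\alpha)_{2m}-\hat{\b{g}}(\alpha)_{2m-1}$ is~$-2$ if~$m$ is the left endpoint of~$\alpha$, $+2$ if~$m$ is its right endpoint, and~$0$ otherwise; hence $\sum_{\alpha\in S,\,j_\alpha=m}\lambda_\alpha-\sum_{\alpha\in S,\,i_\alpha=m}\lambda_\alpha=1$ for every $m\in[n]$, and since~$S$ is pairwise pointed at most one of these two sums is nonempty, which forces every arc of~$S$ to start at~$0$ and $\sum_{\alpha\in S,\,j_\alpha=m}\lambda_\alpha=1$ for all $m\in[n]$. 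The base case $n=1$ is then immediate, as the only internal wiggly arc starting at~$0$ is~$\alpha_0$.

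The core is the induction step. Forgetting the point~$n$ sends a wiggly arc $(0,j,A,B)$ to $(0,\min(j,n),A\ssm\{n\},B\ssm\{n\})$; this operation preserves pointedness and non-crossing, and a bookkeeping of the $\one_{2n}$-correction shows that restricting the identity $\b{v}=\sum_\alpha\lambda_\alpha\b{g}(\alpha)$ to its first $2n-2$ coordinates and reprojecting onto~$\HH_{2n-2}$ yields the analogous identity for $n-1$ points, $\b{v}=\sum_{\beta}\lambda'_\beta\,\b{g}(\beta)$, where the sum runs over the (deduplicated) images of the arcs of~$S$ whose image is internal, the images of external arcs contributing only multiples of~$\one_{2n-2}$ which vanish after reprojection. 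By the induction hypothesis this image set equals the interior of the $(n-1)$-analogue of~$T_\leftarrow$, namely $\{\alpha_0\}\cup\{\underline{\alpha}_i,\overline{\alpha}_i:i\in[n-2]\}$ (the tuples do not depend on~$n$), with the prescribed weights. Since no interior arc of that pseudotriangulation ends at the last point, each such arc is the image of a unique wiggly arc of the $n$-setting starting at~$0$, namely itself; hence $S$ contains $\alpha_0$ and $\underline{\alpha}_i,\overline{\alpha}_i$ for $i\in[n-2]$ with the prescribed weights, and every remaining arc of~$S$ lies among the four arcs $\underline{\alpha}_{n-1}$, $\overline{\alpha}_{n-1}$, $(0,n+1,[n-1],\{n\})$, $(0,n+1,\{n\},[n-1])$. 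Reading off coordinates~$2n-1$ and~$2n$ of $\b{v}=\sum_\alpha\lambda_\alpha\b{g}(\alpha)$, and using that the two arcs ending at~$n+1$ are mutually incompatible and each incompatible with one of $\underline{\alpha}_{n-1},\overline{\alpha}_{n-1}$, forces the weights of those two arcs to vanish and $\lambda_{\underline{\alpha}_{n-1}}=\lambda_{\overline{\alpha}_{n-1}}=\tfrac12$, so $S=T_\leftarrow^\circ$ as claimed.

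I expect the main obstacle to be making the ``forget the last point'' reduction precise: checking that it transports the $\b{g}$-vector identity correctly through the projection onto~$\HH$, and carrying out the final finite check on the four candidate arcs incident to the last point.
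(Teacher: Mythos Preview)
Your argument is correct, but it takes a different route from the paper's. Both proofs rest on the same two coordinate observations: the difference $\hat{\b{g}}(\alpha)_{2m}-\hat{\b{g}}(\alpha)_{2m-1}$ detects endpoints, and the sum $\hat{\b{g}}(\alpha)_{2m-1}+\hat{\b{g}}(\alpha)_{2m}$ detects whether~$m$ lies in~$A$ or~$B$. From the difference relation you immediately conclude that every arc in the support starts at~$0$; this is slicker than the paper, which only proves it point by point. Where you then diverge is the induction scheme: you reduce from~$n$ to~$n-1$ by a ``forget the last point'' map and apply the full statement for~$n-1$, whereas the paper keeps~$n$ fixed and runs an internal induction on~$i\in[n-1]$, showing successively that $\lambda_{\underline{\alpha}_i}\cdot\lambda_{\overline{\alpha}_i}>0$ by comparing the quantities~$\underline{\Lambda}_i,\overline{\Lambda}_i,\underline{\Lambda}_{i+1},\overline{\Lambda}_{i+1}$ and using compatibility. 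The paper's argument is shorter because it avoids setting up and verifying the forget map (in particular the check that it preserves compatibility and transports the $\b{g}$-vector identity through the projection onto~$\HH$, and the final case analysis on the four candidate arcs near point~$n$, which you correctly flag as the main obstacles). Your approach, on the other hand, isolates the ``all arcs start at~$0$'' step cleanly and packages the remaining work into a single external induction; it would generalize more readily if one wanted to vary the ambient dimension.
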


\begin{proof}
Observe first that, for any wiggly arc~$\alpha \eqdef (i, j, A, B)$ and any~$k \in [n]$, we have
\[
\hat{\b{g}}(\alpha)_{2k-1} - \hat{\b{g}}(\alpha)_{2k} = \begin{cases} 2 & \text{if~$k = i$,} \\ -2 & \text{if~$k = j$,} \\ 0 & \text{otherwise}, \end{cases}
\qquad\text{and}\qquad
\hat{\b{g}}(\alpha)_{2k-1} + \hat{\b{g}}(\alpha)_{2k} = \begin{cases} 2 & \text{if~$k \in A$,} \\ -2 & \text{if~$k \in B$,} \\ 0 & \text{otherwise}. \end{cases}
\]

Assume now that~$\b{v} = \sum_\alpha \lambda_\alpha \, \hat{\b{g}}(\alpha)$ where the sum ranges over all wiggly arcs~$\alpha$ for some positive scalars~$\lambda_\alpha \ge 0$ such that~$\lambda_\alpha \cdot \lambda_{\alpha'} = 0$ for all incompatible wiggly arcs~$\alpha$ and~$\alpha'$.
For~$k \in [n]$, let~$\underline{\Lambda}_k$ (resp.~$\overline{\Lambda}_k$) denote the sum of~$\lambda_\alpha$ over the wiggly arcs~$\alpha \eqdef (i, j, A, B)$ with~$k \in A$ (resp.~$k \in B$).
Note that~$\underline{\Lambda}_k = \overline{\Lambda}_k$ since~$0 = \b{v}_{2k-1} + \b{v}_{2k} = \sum_\alpha \lambda_\alpha \big( \hat{\b{g}}(\alpha)_{2k-1} + \hat{\b{g}}(\alpha)_{2k} \big) = 2\underline{\Lambda}_k - 2\overline{\Lambda}_k$.

As~$\b{v}_1 - \b{v}_2 = -2$ and~$\hat{\b{g}}(\alpha)_1 - \hat{\b{g}}(\alpha)_2 \ge 0$ except if~$\alpha = \alpha_0$, we have~$\lambda_{\alpha_0} > 0$, and thus~$\lambda_{\alpha} = 0$ for all wiggly arcs~$\alpha$ starting at~$1$.
We then prove by induction on~$i \in [n-1]$ that~$\lambda_{\underline{\alpha}_i} \cdot \lambda_{\overline{\alpha}_i} > 0$ and that~$\lambda_\alpha = 0$ for all wiggly arcs starting at~$i+1$.
Consider~$i \in [n-1]$ and assume that we proved it for~$i-1$.
As~$\smash{\lambda_{\underline{\alpha}_{i-1}} \cdot \lambda_{\overline{\alpha}_{i-1}} > 0}$, we already have~$\lambda_\alpha = 0$ for all wiggly arcs~$\alpha$ ending at~$i+1$, except if~$\alpha \in \{\underline{\alpha}_i, \overline{\alpha}_i\}$.
As~$\b{v}_{2i+1} - \b{v}_{2i+2} = -2$, this implies that~$\smash{\lambda_{\underline{\alpha}_i} + \lambda_{\overline{\alpha}_i} = 1}$.
This already implies that~$\lambda_{\alpha} = 0$ for all wiggly arcs~$\alpha$ starting at~$i+1$.
Moreover, assume for instance that~$\lambda_{\overline{\alpha}_i} > 0$.
Hence, $\lambda_\alpha = 0$ for all wiggly arcs~$\alpha \eqdef (i, j, A, B)$ with~$i \in A$ and~$i+1 \in B$.
We thus obtain that~$\underline{\Lambda}_i - \lambda_{\underline{\alpha}_i} \le \underline{\Lambda}_{i+1}$ and~$\overline{\Lambda}_i - \lambda_{\overline{\alpha}_i} \ge \overline{\Lambda}_{i+1}$.
As~$\underline{\Lambda}_i = \overline{\Lambda}_i$ and~$\underline{\Lambda}_{i+1} = \overline{\Lambda}_{i+1}$, we thus obtain that~$\lambda_{\underline{\alpha}_i} \ge \lambda_{\overline{\alpha}_i} > 0$, hence that~$\lambda_{\underline{\alpha}_i} \cdot \lambda_{\overline{\alpha}_i} > 0$.
We conclude that~$\lambda_\alpha = 0$ for all wiggly arcs~$\alpha$ except if~$\alpha = \alpha_0$ or~$\alpha \in \{\underline{\alpha}_i, \overline{\alpha}_i\}$ for some~$i \in [n]$.

Consider now a wiggly pseudotriangulation~$T$ such that~$\b{v} \in \R_{\ge 0} \b{g}(T)$.
Write~$\b{v} = \sum_{\alpha \in T^\circ} \lambda_\alpha \, \b{g}(\alpha)$ with~$\lambda_\alpha \ge 0$ for all~$\alpha \in T^\circ$.
Let~$\mu \eqdef \sum_{\alpha \in T^\circ} \lambda_\alpha \dotprod{\hat{\b{g}}(\alpha)}{\one_{2n}}$.
As~$\b{v} = \sum_{\alpha \in T^\circ} \lambda_\alpha \, \hat{\b{g}}(\alpha) - \mu \, \one_{2n}$ and~$\one_{2n} = \hat{\b{g}} \big( (0, n+1, [n], \varnothing) \big) = - \hat{\b{g}} \big( (0, n+1, \varnothing, [n]) \big)$, we thus conclude from the previous paragraph that~$T = T_\leftarrow$. 
\end{proof}

We now describe the unique linear dependence between the $\b{g}$-vectors of two adjacent facets of the wiggly complex~$\wigglyComplex_n$.
The following statement is illustrated in \cref{fig:incompatible3}.

\begin{lemma}
\label{lem:linearDependences}
Let~$T$ and~$T'$ be two adjacent wiggly pseudotriangulations with~$T \ssm \{\alpha\} = T' \ssm \{\alpha'\}$, and let~$\beta$ and~$\beta'$ be the wiggly arcs defined in \cref{prop:uerp}.
Then~$\{\beta, \beta'\} \in T \cap T'$ and
\begin{itemize}
\item if $\alpha$ and~$\alpha'$ are not pointed, then~$\b{g}(\alpha) + \b{g}(\alpha') = \big( \b{g}(\beta) + \b{g}(\beta') \big) / 2$,
\item if~$\alpha$ and~$\alpha'$ are crossing, then~${\b{g}(\alpha) + \b{g}(\alpha') = \b{g}(\beta) + \b{g}(\beta')}$.
\end{itemize}
\end{lemma}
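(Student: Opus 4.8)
The plan is to first prove the containment $\{\beta,\beta'\} \subseteq T \cap T'$, then to reduce both linear identities to identities between the \emph{unprojected} vectors $\hat{\b{g}}$, and finally to verify the latter coordinate by coordinate using the explicit description of $\hat{\b{g}}$. For the containment, observe that $T \cap T' = T \ssm \{\alpha\} = T' \ssm \{\alpha'\}$ is a wiggly pseudodissection with $2n-2$ internal wiggly arcs (the pseudoquadrangle of the flip together with $n-2$ pseudotriangles, see the proof of \cref{prop:wigglyFlipGraph}), in which $\alpha$ and $\alpha'$ are the two diagonals of the pseudoquadrangle; by \cref{prop:diagonalsPseudoquadrangle} they are either non pointed or crossing exactly once, so \cref{prop:uerp} produces $\beta$ and $\beta'$. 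Every wiggly arc $\gamma \in T \cap T'$ is compatible with $\alpha$ (as $\gamma, \alpha \in T$) and with $\alpha'$ (as $\gamma, \alpha' \in T'$), hence with $\beta$ and $\beta'$ by \cref{prop:uerp}; thus $(T \cap T') \cup \{\beta\}$ and $(T \cap T') \cup \{\beta'\}$ are wiggly pseudodissections. A direct check of the endpoints — and of the partitions $A, B$ when two endpoints happen to coincide — shows that $\beta$ and $\beta'$ are distinct from both $\alpha$ and $\alpha'$. If $\beta \notin T \cap T'$, then $(T \cap T') \cup \{\beta\}$ would have $2n-1$ internal wiggly arcs, hence would be a wiggly pseudotriangulation by \cref{prop:wigglyPseudotriangulations}; containing $T \cap T'$, it would equal $T$ or $T'$ (again by the proof of \cref{prop:wigglyFlipGraph}), forcing $\beta \in \{\alpha, \alpha'\}$, a contradiction. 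Hence $\beta \in T \cap T'$, and likewise $\beta' \in T \cap T'$.

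For the identities, recall that $\b{g} = \pi \circ \hat{\b{g}}$ with $\pi$ linear and $\pi(\one_{2n}) = \b{0}$, so it suffices to prove that $\hat{\b{g}}(\alpha) + \hat{\b{g}}(\alpha') = \tfrac12 \big( \hat{\b{g}}(\beta) + \hat{\b{g}}(\beta') \big)$ in the non pointed case and $\hat{\b{g}}(\alpha) + \hat{\b{g}}(\alpha') = \hat{\b{g}}(\beta) + \hat{\b{g}}(\beta')$ in the crossing case (both in fact hold on the nose, not merely modulo $\one_{2n}$). I would check these entrywise, grouping the coordinates into the pairs $(2p-1, 2p)$ for $p \in [n]$ and using that for a wiggly arc $\gamma \eqdef (i,j,A,B)$ the pair $\big( \hat{\b{g}}(\gamma)_{2p-1}, \hat{\b{g}}(\gamma)_{2p} \big)$ equals $(1,-1)$ if $p = i$, $(-1,1)$ if $p = j$, $(1,1)$ if $p \in A$, $(-1,-1)$ if $p \in B$, and $(0,0)$ otherwise (equivalently, via the two formulas for $\hat{\b{g}}(\gamma)_{2p-1} \pm \hat{\b{g}}(\gamma)_{2p}$ already recorded in the proof of \cref{lem:-+...-+}). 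This writes $\hat{\b{g}}(\gamma)$ as an ``endpoint part'' supported at $p \in \{i,j\}$ plus an ``interior part'' supported at $p \in A \sqcup B$.

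For the endpoint parts: in the crossing case the left endpoints of $\alpha, \alpha'$ are $i, i'$ and of $\beta, \beta'$ are $i', i$, while the right endpoints of both pairs are $j$ and $j'$, so the endpoint parts of the two sides coincide; in the non pointed case the endpoint contributions of $\alpha$ and $\alpha'$ cancel at the shared point $j = i'$, leaving the left endpoint $i$ and the right endpoint $j'$ once each, whereas $\beta$ and $\beta'$ each contribute $i$ and $j'$, which is exactly what the factor $\tfrac12$ corrects. For the interior parts: the inclusion--exclusion identity $\chi_A + \chi_{A'} = \chi_{A \cap A'} + \chi_{A \cup A'}$ (and its analogue for $B$), together with the descriptions of $A_\beta, B_\beta, A_{\beta'}, B_{\beta'}$ in \cref{prop:uerp} as built from $A \cup A'$, $A \cap A'$, $B \cup B'$, $B \cap B'$, yields the identity at every point $p$ lying in the interior of both $\alpha$ and $\alpha'$; the points in the interior of exactly one of $\alpha, \alpha'$ are handled the same way, using that outside one arc's span the membership in $A_\beta$ (resp.\ $B_\beta$, and symmetrically for $\beta'$) reduces to membership in the other arc's partition set.

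The step I expect to be the main obstacle is the bookkeeping in this last part for the crossing case. One has to sort out the possible relative orders of $i, i', j, j'$ — reducing, after exchanging $\alpha \leftrightarrow \alpha'$ and applying the symmetries of \cref{rem:gcvectorsSymmetries} (the horizontal reflection reverses the order of the points and swaps left and right endpoints, the vertical reflection swaps $A \leftrightarrow B$), to the generic interleaved order $i' < i < j' < j$ as a representative — correctly match the intervals appearing in the definitions of $\beta$ and $\beta'$ in \cref{prop:uerp} with the spans $]i,j[$ and $]i',j'[$, and handle the degenerate configurations in which some endpoints coincide (for instance the pseudoquadrangle with an interior point of \cref{rem:descriptionPseudotrianglesPseudoquadrangles}, where $\alpha$ and $\alpha'$ share both endpoints) or lie at $0$ or $n+1$ (where that endpoint contributes no coordinate). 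Each of these is routine once the endpoint/interior splitting is in place, but together they require care.
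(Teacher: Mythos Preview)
Your approach is correct and essentially the same as the paper's: reduce to the unprojected vectors~$\hat{\b{g}}$ and verify the identity directly. The paper organizes the verification slightly more cleanly by working with the index sets~$\alpha^\pm$, $\alpha'^\pm$, $\beta^\pm$, $\beta'^\pm$ of \cref{def:gvectors} and checking set identities such as $\alpha^+ \cup \alpha'^+ = \beta^+ \cup \beta'^+$ and $\alpha^+ \cap \alpha'^+ = \beta^+ \cap \beta'^+$ (then applying inclusion--exclusion on indicator functions), which avoids your endpoint/interior case split and the order analysis on~$i,i',j,j'$; but the content is the same. You also supply the containment argument $\{\beta,\beta'\} \subseteq T \cap T'$, which the paper states but does not prove.
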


\begin{proof}
In both situations, we will just prove that the same linear dependence holds on the vectors~$\hat{\b{g}}(\alpha), \hat{\b{g}}(\alpha'), \hat{\b{g}}(\beta), \hat{\b{g}}(\beta')$, which will imply the statement as~$\pi$ is a linear map.

If~$\alpha$ and~$\alpha'$ are non pointed, then we have
\[
\alpha^+ \sqcup \alpha'^+ = \beta^+ \sqcup \{2j-1, 2j\} = \beta'^+
\qquad\text{and}\qquad 
\alpha^- \sqcup \alpha'^- = \beta^- = \beta'^- \sqcup \{2j-1, 2j\}.
\]
We thus obtain that
\[
\one_{\alpha^+} + \one_{\alpha'^+} = \one_{\beta^+} + \b{e}_{2j-1} + \b{e}_{2j} = \one_{\beta'^+}
\qquad\text{and}\qquad 
\one_{\alpha^-} + \one_{\alpha'^-} = \one_{\beta^-} = \one_{\beta'^-} + \b{e}_{2j-1} + \b{e}_{2j}.
\]
hence
\begin{align*}
2 \big( \hat{\b{g}}(\alpha) + \hat{\b{g}}(\alpha') \big)
& = 2(\one_{\alpha^+} - \one_{\alpha^-}) + 2(\one_{\alpha'^+} - \one_{\alpha'^-})
= 2(\one_{\alpha^+} + \one_{\alpha'^+}) - 2(\one_{\alpha^-} + \one_{\alpha'^-}) \\
& = (\one_{\beta^+} + \one_{\beta'^+} + \b{e}_{2j-1} + \b{e}_{2j}) - (\one_{\beta^-} + \one_{\beta'^-} + \b{e}_{2j-1} + \b{e}_{2j}) \\
& = (\one_{\beta^+} + \one_{\beta'^+}) - (\one_{\beta^-} + \one_{\beta'^-})
= (\one_{\beta^+} - \one_{\beta^-}) + (\one_{\beta^-} + \one_{\beta'^-})
= \hat{\b{g}}(\beta) + \hat{\b{g}}(\beta').
\end{align*}

If~$\alpha$ and~$\alpha'$ cross, then we have
\begin{gather*}
\alpha^+ \cup \alpha'^+ = \beta^+ \cup \beta'^+ \qquad\text{and}\qquad \alpha^+ \cap \alpha'^+ = \beta^+ \cap \beta'^+, \\
\alpha^- \cup \alpha'^- = \beta^- \cup \beta'^- \qquad\text{and}\qquad \alpha^- \cap \alpha'^- = \beta^- \cap \beta'^-.
\end{gather*}
We thus obtain that
\[
\one_{\alpha^+} + \one_{\alpha'^+} = \one_{\alpha^+ \cup \alpha'^+} + \one_{\alpha^+ \cap \alpha'^+} = \one_{\beta^+ \cup \beta'^+} + \one_{\beta^+ \cap \beta'^+} = \one_{\beta^+} + \one_{\beta'^+}
\]
and similarly~$\one_{\alpha^-} + \one_{\alpha'^-} = \one_{\beta^-} + \one_{\beta'^-}$, so that~$\hat{\b{g}}(\alpha) + \hat{\b{g}}(\alpha') =\hat{\b{g}}(\beta) + \hat{\b{g}}(\beta')$.
\end{proof}

\begin{corollary}
\label{coro:basis}
For any wiggly pseudotriangulation~$T$, the set~$\b{g}(T) \eqdef \set{\b{g}(\alpha)}{\alpha \in T^\circ}$ is a basis of~$\HH_{2n}$.
\end{corollary}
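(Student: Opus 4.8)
The plan is to deduce this from \cref{lem:linearDependences} together with the fan characterization in \cref{prop:characterizationFan}, essentially as a bookkeeping consequence of connectivity of the flip graph. Since $\b{g}(T)$ has exactly $2n-1$ elements, which equals $\dim \HH_{2n}$, it suffices to prove that $\b{g}(T)$ is linearly independent, or equivalently that it spans $\HH_{2n}$; either formulation works.

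First I would establish the base case: the set $\b{g}(T_\leftarrow)$ is a basis of $\HH_{2n}$. This can be read off directly from the explicit $\hat{\b{g}}$-matrix $\hat{\b{g}}(T_\leftarrow)$ recorded in \cref{table:specialGMatrices} --- it is visibly of full rank $2n-1$ after projecting to $\HH_{2n}$, since the columns can be triangulated by the position of their first sign change. (Alternatively, \cref{lem:-+...-+} already tells us $\b{v} = (-1,1,\dots,-1,1)$ lies in the interior of the single cone $\R_{\ge 0}\b{g}(T_\leftarrow)$, which forces that cone to be full-dimensional, hence its $2n-1$ generators are linearly independent.)

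Next I would propagate the basis property along flips. Let $T$ and $T'$ be adjacent wiggly pseudotriangulations with $T \ssm \{\alpha\} = T' \ssm \{\alpha'\}$, and suppose $\b{g}(T)$ is a basis. By \cref{lem:linearDependences}, the vectors $\beta, \beta'$ of \cref{prop:uerp} both lie in $T \cap T'$, and $\b{g}(\alpha) + \b{g}(\alpha')$ equals $\tfrac12\big(\b{g}(\beta) + \b{g}(\beta')\big)$ or $\b{g}(\beta) + \b{g}(\beta')$; in particular $\b{g}(\alpha')$ lies in the span of $\b{g}(\alpha)$ together with $\{\b{g}(\gamma) : \gamma \in T \cap T'\} \subseteq \b{g}(T)$, and the coefficient of $\b{g}(\alpha)$ in this expression is $-1 \ne 0$. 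Hence $\b{g}(T') = \big(\b{g}(T) \ssm \{\b{g}(\alpha)\}\big) \cup \{\b{g}(\alpha')\}$ is again a spanning set of $\HH_{2n}$ of cardinality $2n-1 = \dim\HH_{2n}$, so it is a basis. Since the wiggly flip graph $\wigglyFlipGraph_n$ is connected by \cref{prop:wigglyFlipGraph}, every wiggly pseudotriangulation is reachable from $T_\leftarrow$ by a sequence of flips, and the basis property is preserved at each step. Therefore $\b{g}(T)$ is a basis of $\HH_{2n}$ for every wiggly pseudotriangulation $T$.

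I do not anticipate a serious obstacle here: the content is entirely in \cref{lem:linearDependences} (the identification of the linear dependence across a flip) and in the connectivity of the flip graph, both already available. The only mild subtlety is making sure that in the non-pointed case one has genuinely recorded that the coefficient of $\b{g}(\alpha)$ in the dependence is nonzero --- but that is immediate from the stated identity $\b{g}(\alpha) + \b{g}(\alpha') = \big(\b{g}(\beta) + \b{g}(\beta')\big)/2$, since $\b{g}(\alpha)$ appears with coefficient $1$.
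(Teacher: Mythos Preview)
Your proof is correct and follows essentially the same approach as the paper: verify that $\b{g}(T_\leftarrow)$ has full rank directly from the explicit matrix in \cref{table:specialGMatrices}, then propagate the basis property along flips using the linear dependence of \cref{lem:linearDependences} and the connectedness of the flip graph. One small caveat: your parenthetical alternative base case via \cref{lem:-+...-+} is not quite self-contained as stated, since that lemma only places $\b{v}$ in the cone $\R_{\ge 0}\b{g}(T_\leftarrow)$ rather than in its interior, and containment alone does not force full-dimensionality---but your primary argument does not depend on this.
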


\begin{proof}
Observe that the $\b{g}$-matrix~$\b{g}(T_\leftarrow)$ presented in \cref{exm:specialGMatrices} has full rank (it is very easy to check for~$\b{g}(T_\leftarrow)$, but any other would do).
The statement thus follows from \cref{lem:linearDependences} and the connectedness of the flip graph on wiggly pseudotriangulations.
\end{proof}

Finally, we wrap up \cref{thm:wigglyFan}.

\begin{proof}[Proof of \cref{thm:wigglyFan}]
Directly follows from \cref{prop:characterizationFan,lem:-+...-+,lem:linearDependences}.
\end{proof}

\begin{remark}
\label{rem:wigglyFanSymmetries}
Following \cref{rem:wigglyComplexAutomorphism,rem:gcvectorsSymmetries}, note that the vertical and horizontal symmetries on wiggly arcs define linear automorphisms of the wiggly fan~$\wigglyFan_n$.
\end{remark}

\begin{remark}
By definition, each wall of the wiggly fan is normal to a $\b{c}$-vector.
Hence, the wiggly fan coarsens the \defn{$\b{c}$-vector fan}, defined by the arrangement of hyperplanes~$\set{\b{x} \in \HH_{2n}}{\dotprod{\b{x}}{\b{c}(\alpha, T}}$ for all wiggly arcs~$\alpha$ and wiggly pseudotriangulations~$T$ with~$\alpha \in T$.
\end{remark}


\subsection{Wigglyhedron}
\label{subsec:wigglyhedron}

We now construct the wigglyhedron illustrated in \cref{fig:wigglyhedron} when~${n = 2}$.
\afterpage{
\begin{figure}
\centerline{\includegraphics[scale=1.1]{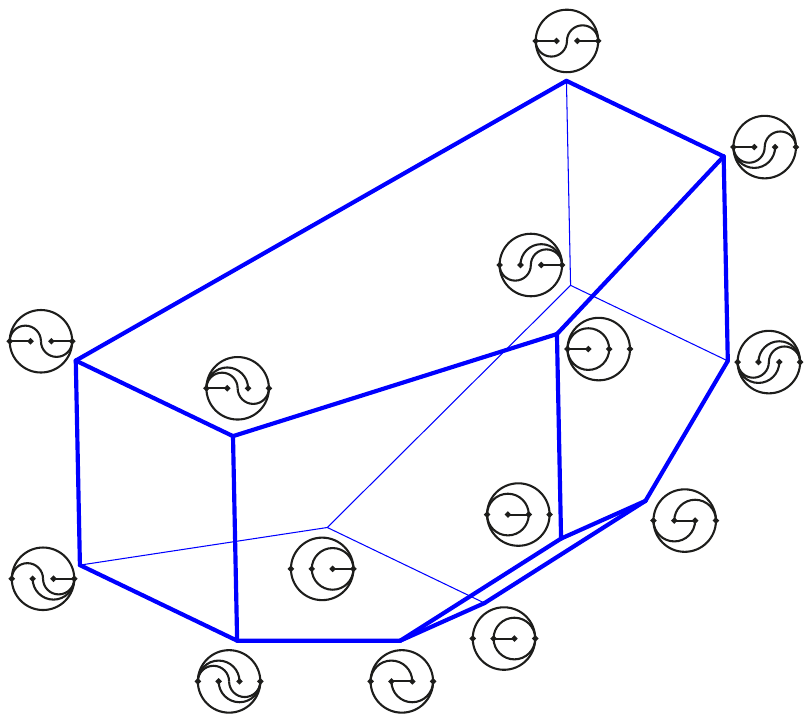}}
\caption{The wigglyhedron~$\wigglyhedron_2$. The vertices are labeled by the corresponding wiggly pseudotriangulations. Compare to \cref{fig:wigglyComplex}\,(right) and \cref{fig:wigglyLattice}\,(left).}
\label{fig:wigglyhedron}
\end{figure}
}

\begin{definition}
\label{def:incompatibilityDegree}
The \defn{incompatibility degree}~$\delta(\alpha, \alpha')$ of two wiggly arcs~$\alpha \eqdef (i, j, A, B)$ and~$\alpha' \eqdef (i', j', A', B')$ is given by
\begin{itemize}
\item $0$ if~$\alpha$ and~$\alpha'$ are pointed and non-crossing,
\item $1$ is~$\alpha$ and~$\alpha'$ are not pointed (\ie $i = j'$ or~$i' = j$),
\item the number of crossings of~$\alpha$ and~$\alpha'$ if they are crossing (\ie the number of~$p \in \{0, \dots, n\}$ with $p \in (A \cap B') \cup (\{i,j\} \cap B') \cup (A \cap \{i',j'\})$ and~$p+1 \in (A' \cap B) \cup (\{i',j'\} \cap B) \cup (A' \cap \{i,j\})$, or the opposite).
\end{itemize}
The \defn{incompatibility number} of a wiggly arc~$\alpha$ is~$\kappa(\alpha) \eqdef \sum_{\alpha'} \delta(\alpha, \alpha')$.
\end{definition}

\begin{remark}\label{rem:incompatibility-deg}
Note that~$\delta$ is indeed an incompatibility degree in the sense that
\begin{itemize}
\item $\alpha$ and~$\alpha'$ are compatible if and only if~$\delta(\alpha, \alpha') = 0$,
\item $\alpha$ and~$\alpha'$ are exchangeable if and only if~$\delta(\alpha, \alpha') = 1$.
\end{itemize}
Observe also that~$\delta$ (and thus~$\kappa$) is preserved by the automorphisms of \cref{rem:wigglyComplexAutomorphism}.
\end{remark}


\begin{example}
Observe that
$\kappa \big( (0, j, {[1,j[}, \varnothing) \big) 
= \kappa \big( (0, j, \varnothing, {[1,j[}) \big) 
= (2^j-1)(2^{n+1-j}-1)$.
\end{example}

\begin{lemma}
\label{lem:wallCrossingInequalities}
Let~$T$ and~$T'$ be two adjacent wiggly pseudotriangulations with~$T \ssm \{\alpha\} = T' \ssm \{\alpha'\}$, and let~$\beta$ and~$\beta'$ be the wiggly arcs defined in \cref{prop:uerp}.
Then
\begin{itemize}
\item if $\alpha$ and~$\alpha'$ are not pointed, then~$\kappa(\alpha) + \kappa(\alpha') > \big( \kappa(\beta) + \kappa(\beta') \big) / 2$,
\item if~$\alpha$ and~$\alpha'$ are crossing, then~${\kappa(\alpha) + \kappa(\alpha') > \kappa(\beta) + \kappa(\beta')}$.
\end{itemize}
\end{lemma}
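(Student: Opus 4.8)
The plan is to reduce both inequalities to a single pointwise estimate and then sum it over all wiggly arcs. Recall from \cref{def:incompatibilityDegree} that $\kappa(\mu) = \sum_{\nu} \delta(\mu, \nu)$, where the sum ranges over all wiggly arcs~$\nu$ (note that the two external arcs contribute~$0$ to every such sum). Hence it suffices to prove, for every wiggly arc~$\nu$, the \emph{local inequality} $\delta(\alpha, \nu) + \delta(\alpha', \nu) \ge c \cdot \big( \delta(\beta, \nu) + \delta(\beta', \nu) \big)$, with $c = \tfrac12$ in the non-pointed case and $c = 1$ in the crossing case, together with the fact that it is strict for at least one~$\nu$. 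For the strictness I would take $\nu \eqdef \alpha$: then $\delta(\alpha, \alpha) = 0$, while $\delta(\alpha', \alpha) = \delta(\alpha, \alpha') \ge 1$ since $\alpha$ and~$\alpha'$ are exchangeable (\cref{rem:incompatibility-deg}), and $\delta(\beta, \alpha) = \delta(\beta', \alpha) = 0$ because $\{\beta, \beta'\} \in T \cap T'$ by \cref{lem:linearDependences}, so $\beta, \beta'$ are compatible with $\alpha \in T$. Thus at $\nu = \alpha$ the left-hand side is at least~$1$ and the right-hand side vanishes.

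For the local inequality itself, first suppose $\nu$ is compatible with both $\alpha$ and~$\alpha'$. Then $\nu$ is compatible with $\beta$ and~$\beta'$ by \cref{prop:uerp}, so all four incompatibility degrees vanish and there is nothing to prove. We may therefore assume $\nu$ is incompatible with $\alpha$ or with~$\alpha'$, so that $\delta(\alpha, \nu) + \delta(\alpha', \nu) \ge 1$. The next step is the estimate $\delta(\beta, \nu) \le \delta(\alpha, \nu) + \delta(\alpha', \nu)$ and, symmetrically, $\delta(\beta', \nu) \le \delta(\alpha, \nu) + \delta(\alpha', \nu)$, which already settles the non-pointed case ($c = \tfrac12$) after averaging. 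This relies on the explicit descriptions in \cref{prop:uerp}: in both cases each of $\beta, \beta'$ coincides with $\alpha$ along one portion of the horizontal axis and with $\alpha'$ along the complementary portion (with $\alpha$ on $]i,j[$ and $\alpha'$ on $]j,j'[$ in the non-pointed case, differing only in the passage at the gluing point~$j$; with $\beta$ following the lower hull and $\beta'$ the upper hull of $\alpha \cup \alpha'$ in the crossing case). Consequently every crossing of $\nu$ with $\beta$ occurs over an interval on which $\beta$ coincides with a piece of $\alpha$ or of~$\alpha'$ — the only exceptions being the bounded bookkeeping near the single gluing point, resp. near the two hull switches, which is checked directly since $\alpha$ and $\alpha'$ are themselves defined there — and is thus charged to a crossing of $\nu$ with $\alpha$ or with~$\alpha'$; moreover if $\nu$ is non-pointed with~$\beta$, then an endpoint of~$\nu$ is an endpoint of~$\beta$, hence of~$\alpha$ or of~$\alpha'$, so $\nu$ is non-pointed with~$\alpha$ or~$\alpha'$ and $\delta(\beta, \nu) = 1 \le \delta(\alpha, \nu) + \delta(\alpha', \nu)$. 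The argument for~$\beta'$ is identical.

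It remains to improve this to $\delta(\beta, \nu) + \delta(\beta', \nu) \le \delta(\alpha, \nu) + \delta(\alpha', \nu)$ in the crossing case. The crucial input is that, at every marked point, the pair $\{\beta, \beta'\}$ exhibits exactly the same above/below pattern and the same endpoint incidences as the pair $\{\alpha, \alpha'\}$ — this is precisely the content of the identities $\one_{\alpha^+} + \one_{\alpha'^+} = \one_{\beta^+} + \one_{\beta'^+}$ and $\one_{\alpha^-} + \one_{\alpha'^-} = \one_{\beta^-} + \one_{\beta'^-}$ recorded in the proof of \cref{lem:linearDependences} — whereas $\beta$ and $\beta'$ are pairwise non-crossing ($\beta$ lies weakly below $\beta'$ on their common range). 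Using this hull structure one compares crossing numbers interval by interval, showing that each elementary interval $[p,p+1]$ contributes at most as much to $\delta(\beta, \nu) + \delta(\beta', \nu)$ as to $\delta(\alpha, \nu) + \delta(\alpha', \nu)$ (intuitively, forcing $\beta, \beta'$ not to cross one another can only remove crossings with the third curve~$\nu$); adding the non-pointed contribution of~$\nu$ as in the previous paragraph, and invoking the symmetries of \cref{rem:incompatibility-deg} to dispense with the mirror configurations, gives the inequality. Alternatively one can simply run a (longer) case distinction on the relative position of the two endpoints of~$\nu$ among $i, i', j, j'$. I expect this interval-by-interval comparison of crossings, together with the careful treatment of the endpoints of~$\nu$ and of the hull switches, to be the main technical obstacle; once the pointwise estimate is in place, summing over all~$\nu$ and using the strict instance at $\nu = \alpha$ completes the proof.
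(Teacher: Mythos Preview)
Your proposal is correct and follows essentially the same approach as the paper: prove the local inequality $\delta(\alpha,\nu)+\delta(\alpha',\nu)\ge c\big(\delta(\beta,\nu)+\delta(\beta',\nu)\big)$ for every wiggly arc~$\nu$ by charging each crossing of~$\nu$ with~$\beta$ or~$\beta'$ to a crossing with~$\alpha$ or~$\alpha'$, observe strictness at a single~$\nu$ (you use~$\nu=\alpha$, the paper uses~$\gamma=\alpha'$; both work), and sum. Your write-up is more explicit in places --- invoking \cref{prop:uerp} for the compatible case, handling non-pointedness separately, and appealing to the identities from the proof of \cref{lem:linearDependences} for the crossing case --- but the underlying argument is the same as the paper's (rather terse) proof.
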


\begin{proof}
Any crossing between a wiggly arc~$\gamma$ and~$\beta$ or~$\beta'$ translates to a crossing with~$\alpha$ or~$\alpha'$.
When~$\alpha$ and~$\alpha'$ are not pointed, a crossing with~$\alpha$ or~$\alpha'$ can correspond to a crossing with~$\beta$ and a crossing with~$\beta'$, so that~$\delta(\alpha, \gamma) + \delta(\alpha', \gamma) \ge \big( \delta(\beta, \gamma) + \delta(\beta', \gamma) \big) / 2$.
When~$\alpha$ and~$\alpha'$ are crossing, then there is a correspondence between the crossings with~$\alpha$ or~$\alpha'$ and the crossings with~$\beta$ or~$\beta'$, so that~$\delta(\alpha, \gamma) + \delta(\alpha', \gamma) \ge \delta(\beta, \gamma) + \delta(\beta', \gamma)$.
Moreover, both inequalities are strict for~$\gamma = \alpha'$ since~$\delta(\alpha,\alpha') = 1$ while~$\delta(\beta, \alpha') = \delta(\beta, \alpha') = 0$.
The result thus follows by summation.
\end{proof}

\begin{theorem}
\label{thm:wigglyhedron}
The wiggly fan~$\wigglyFan_n$ is the normal fan of a simplicial $(2n-1)$-dimensional polytope, called the \defn{wigglyhedron}~$\wigglyhedron_n$, and defined equivalently~as
\begin{itemize}
\item the intersection of the halfspaces~$\set{\b{x} \!\in\! \HH_{2n}\!}{\!\dotprod{\b{g}(\alpha)}{\b{x}} \! \le \! \kappa(\alpha)}$ for all internal wiggly~arcs~$\alpha$,
\item the convex hull of the points~$\b{p}(T) \eqdef \sum\limits_{\alpha \in T} \kappa(\alpha) \, \b{c}(\alpha, T)$ for all wiggly pseudotriangulations~$T$.
\end{itemize}
\end{theorem}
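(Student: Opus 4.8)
The plan is to deduce everything from the criterion of \cref{prop:characterizationPolytopalFan}, applied to the complete simplicial fan $\wigglyFan_n$ produced by \cref{thm:wigglyFan}. First I would record that $\wigglyFan_n$ is \emph{essential}: it is a full-dimensional complete simplicial fan of $\HH_{2n}$, and any of its maximal cones $\R_{\ge 0}\b{g}(T)$ is simplicial hence pointed, so the lineality space of $\wigglyFan_n$ (the intersection of all its cones) is $\{\b{0}\}$. Thus \cref{prop:characterizationPolytopalFan} applies with ground set $V$ the set of internal wiggly arcs and rays $(\b{g}(\alpha))_{\alpha \in V}$, and it only remains to exhibit a height vector satisfying every wall-crossing inequality.

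The candidate is $\b{h} \eqdef (\kappa(\alpha))_\alpha$. Fix two adjacent facets $T,T'$ of $\wigglyComplex_n$ with $T \ssm \{\alpha\} = T' \ssm \{\alpha'\}$, and let $\beta,\beta'$ be the wiggly arcs of \cref{prop:uerp}, so that $\{\beta,\beta'\} \subseteq T \cap T'$. By \cref{lem:linearDependences}, the unique linear dependence on $\b{g}(T \cup T')$ is $\b{g}(\alpha)+\b{g}(\alpha') = \tfrac12\big(\b{g}(\beta)+\b{g}(\beta')\big)$ if $\alpha,\alpha'$ are non pointed, and $\b{g}(\alpha)+\b{g}(\alpha') = \b{g}(\beta)+\b{g}(\beta')$ if they are crossing. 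In both cases the coefficients of $\b{g}(\alpha)$ and of $\b{g}(\alpha')$ in this dependence equal $1$, so it is already written in the normalization $\lambda_\alpha+\lambda_{\alpha'}=2$ required by \cref{prop:characterizationPolytopalFan}. The associated wall-crossing inequality is therefore exactly $\kappa(\alpha)+\kappa(\alpha') > \tfrac12\big(\kappa(\beta)+\kappa(\beta')\big)$ (non pointed case), resp. $\kappa(\alpha)+\kappa(\alpha') > \kappa(\beta)+\kappa(\beta')$ (crossing case), which is the content of \cref{lem:wallCrossingInequalities}. Hence $\wigglyFan_n$ is the normal fan of the polytope
\[
\wigglyhedron_n \eqdef \set{\b{x} \in \HH_{2n}}{\dotprod{\b{g}(\alpha)}{\b{x}} \le \kappa(\alpha) \text{ for all internal wiggly arcs } \alpha},
\]
which is $(2n-1)$-dimensional because $\wigglyFan_n$ is full-dimensional in $\HH_{2n}$ (and necessarily simple, since its normal fan is simplicial). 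Adjoining the halfspaces of the two external arcs changes nothing, as $\b{g}(\alpha_\mathrm{top}) = \b{g}(\alpha_\mathrm{bot}) = \b{0}$ while $\kappa(\alpha_\mathrm{top}) = \kappa(\alpha_\mathrm{bot}) = 0$; this yields the first description.

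For the second description, I would invoke the standard correspondence between the faces of a polytope and the cones of its normal fan: the vertices of $\wigglyhedron_n$ are in bijection with the maximal cones $\R_{\ge 0}\b{g}(T)$, and the vertex attached to $T$ is the unique point of $\HH_{2n}$ lying on all the facet hyperplanes $\set{\b{x}}{\dotprod{\b{g}(\alpha)}{\b{x}} = \kappa(\alpha)}$ with $\alpha \in T^\circ$ (these being the facets whose outer normal rays generate that cone). Since $\b{g}(T) = \set{\b{g}(\alpha)}{\alpha \in T^\circ}$ is a basis of $\HH_{2n}$ by \cref{coro:basis}, with dual basis $\b{c}(T) = \set{\b{c}(\alpha,T)}{\alpha \in T^\circ}$ by \cref{def:cvectors}, that unique point is $\b{p}(T) = \sum_{\alpha \in T^\circ}\kappa(\alpha)\,\b{c}(\alpha,T)$, which equals $\sum_{\alpha \in T}\kappa(\alpha)\,\b{c}(\alpha,T)$ because $\kappa$ vanishes on $\alpha_\mathrm{top}$ and $\alpha_\mathrm{bot}$. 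As a polytope is the convex hull of its vertices, $\wigglyhedron_n = \conv\set{\b{p}(T)}{T \text{ a wiggly pseudotriangulation}}$.

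Once \cref{lem:linearDependences} and \cref{lem:wallCrossingInequalities} are available, the argument is essentially bookkeeping, and I expect the only genuinely delicate point to be the compatibility of normalizations: one must make sure the linear dependence of \cref{lem:linearDependences} is presented with $\lambda_\alpha+\lambda_{\alpha'}=2$ (it is, since both coefficients are $1$), so that the strict inequality of \cref{lem:wallCrossingInequalities} is literally the wall-crossing inequality of \cref{prop:characterizationPolytopalFan} and not merely a positive multiple of it. If one preferred to bypass the normal-fan dictionary for the vertex description, the remaining obstacle would be to prove directly that, for each wiggly pseudotriangulation $T$, the $2n-1$ hyperplanes $\dotprod{\b{g}(\alpha)}{\b{x}} = \kappa(\alpha)$ with $\alpha \in T^\circ$ meet $\wigglyhedron_n$ in a common vertex — which is exactly what \cref{prop:characterizationPolytopalFan} packages for us.
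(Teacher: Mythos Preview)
Your argument is correct and follows exactly the paper's approach: apply \cref{prop:characterizationPolytopalFan} to the complete simplicial fan of \cref{thm:wigglyFan}, verify the wall-crossing inequalities by combining \cref{lem:linearDependences} with \cref{lem:wallCrossingInequalities}, and deduce the vertex formula from the dual-basis property of \cref{def:cvectors}. Your extra care about essentiality, the normalization $\lambda_\alpha+\lambda_{\alpha'}=2$, and the vanishing of $\kappa$ on the external arcs only makes explicit what the paper leaves implicit.
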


\begin{proof}
By \cref{prop:characterizationPolytopalFan,lem:linearDependences,lem:wallCrossingInequalities}, the wiggly fan is indeed the normal fan of the polytope~$\wigglyhedron_n$ defined by the inequalities~$\dotprod{\b{g}(\alpha)}{\b{x}} \le \kappa(\alpha)$ for all internal wiggly arcs~$\alpha$.
For any wiggly pseudotriangulation~$T$ and any wiggly arc~$\alpha$ of~$T$, we have~$\dotprod{\b{g}(\alpha)}{\b{p}(T)} = \sum_{\alpha' \in T} \kappa(\beta) \dotprod{\b{g}(\alpha)}{\b{c}(\alpha', T)} = \kappa(\alpha)$ since~$\b{g}(T)$ and~$\b{c}(T)$ are dual bases by \cref{def:cvectors}.
Hence, $\b{p}(T)$ is indeed the point of~$\wigglyhedron_n$ corresponding to~$T$, which is located at the intersection of the hyperplanes~$\dotprod{\b{g}(\alpha)}{\b{x}} = \kappa(\alpha)$ for all internal wiggly arcs~$\alpha$ in~$T$.
\end{proof}

In fact, the wigglyhedron even recovers the wiggly lattice of \cref{subsec:wigglyPermutations}.

\begin{proposition}
The Hasse diagram of the wiggly lattice~$\wigglyLattice_n$ is isomorphic to the graph of the wigglyhedron~$\wigglyhedron_n$ oriented in the direction
\(
\b{\omega} \eqdef \sum_{i \in [n]} 4ni (\b{e}_{2i-1} + \b{e}_{2i}) + \sum_{j \in [2n]} j \b{e}_j.
\)
\end{proposition}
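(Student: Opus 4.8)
The plan is to combine the polytopal model of \cref{thm:wigglyhedron} with the flip/lattice dictionary of \cref{prop:bijection}. By \cref{thm:wigglyhedron}, $\wigglyhedron_n$ is simplicial with normal fan $\wigglyFan_n$ (which realizes $\wigglyComplex_n$), so its graph has a vertex $\b{p}(T)$ for each wiggly pseudotriangulation $T$ and an edge $\b{p}(T)\,\b{p}(T')$ for each flip; hence it is the wiggly flip graph $\wigglyFlipGraph_n$. By \cref{prop:bijection}, $\wigglyIncreasingFlipGraph_n$ is isomorphic to the Hasse diagram of $\wigglyLattice_n$. So it suffices to prove that, for any flip with $T \ssm \{\alpha\} = T' \ssm \{\alpha'\}$ and $\alpha \in T^\circ$, $\alpha' \in T'^\circ$, the flip $T \to T'$ is increasing (\cref{def:wigglyIncreasingFlipGraph}) if and only if $\dotprod{\b{\omega}}{\b{p}(T)} < \dotprod{\b{\omega}}{\b{p}(T')}$; in particular this will show $\b{\omega}$ is not orthogonal to any edge, so that the orientation is well defined.

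First I would identify the direction of the edge. Both $\b{p}(T)$ and $\b{p}(T')$ satisfy $\dotprod{\b{g}(\gamma)}{\b{x}} = \kappa(\gamma)$ for every $\gamma \in T^\circ \cap T'^\circ$, so $\b{p}(T') - \b{p}(T)$ is orthogonal to all these $\b{g}(\gamma)$; by \cref{coro:basis} they span a hyperplane of $\HH_{2n}$ whose orthogonal line is $\R\,\b{c}(\alpha, T)$, so $\b{p}(T') - \b{p}(T) = \mu\,\b{c}(\alpha, T)$ with $\mu = \dotprod{\b{g}(\alpha)}{\b{p}(T')} - \kappa(\alpha)$. Since $\alpha \notin T'$, the vertex $\b{p}(T')$ lies strictly inside the facet-defining halfspace $\dotprod{\b{g}(\alpha)}{\b{x}} \le \kappa(\alpha)$, so $\mu < 0$. (In fact $\b{c}(\alpha', T') = -\b{c}(\alpha, T)$, since $\lambda_\alpha = \lambda_{\alpha'}$ in the dependence of \cref{lem:linearDependences}.) Hence $\dotprod{\b{\omega}}{\b{p}(T)} < \dotprod{\b{\omega}}{\b{p}(T')}$ if and only if $\dotprod{\b{\omega}}{\b{c}(\alpha, T)} < 0$.

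Next I would compute the sign of $\dotprod{\b{\omega}}{\b{c}(\alpha, T)}$. Let $u$ and $v$ be the corner labels of $T$ lying immediately below and above $\alpha$ (as in \cref{def:bijection1}), and put $\bar{u} \eqdef \lceil u/2 \rceil$, $\bar{v} \eqdef \lceil v/2 \rceil$. From the explicit form of $\b{c}(\alpha, T)$ recorded after \cref{def:cvectors}, its nonzero coordinates are a positive contribution ($1/2$, or $1/4$ in both slots) at the point $\bar{u}$, a mirror negative one at the point $\bar{v}$, and values $\pm 1/4$ at the slots of the points strictly between $\bar{u}$ and $\bar{v}$. Since $\dotprod{\b{\omega}}{\b{e}_{2p-1}+\b{e}_{2p}} = 8np + 4p - 1$ while $\dotprod{\b{\omega}}{\b{e}_{2p} - \b{e}_{2p-1}} = 1$, the contributions of the points $\bar u$ and $\bar v$ to $\dotprod{\b{\omega}}{\b{c}(\alpha, T)}$ are of order $2n\bar u$ and $-2n\bar v$, which dominate the total contribution (of absolute value at most $(n-1)/4$) of the at most $n-1$ intermediate slots; so $\dotprod{\b{\omega}}{\b{c}(\alpha, T)}$ has the sign of $\bar u - \bar v$ when $\bar u \neq \bar v$, which is then also the sign of $u-v$. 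When $\bar u = \bar v$ we have $\{u,v\} = \{2\bar u-1, 2\bar u\}$, no intermediate slots, $\b{c}(\alpha, T) = (\b{e}_u - \b{e}_v)/2$, and $\dotprod{\b{\omega}}{\b{c}(\alpha, T)}$ again has the sign of $u-v$. Thus $\dotprod{\b{\omega}}{\b{c}(\alpha, T)} \neq 0$ always, and it is negative exactly when $u < v$.

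Finally I would match ``$u<v$'' with ``increasing flip''. By the proof of \cref{prop:bijection}, the arc $\alpha$ sits between the labels $u$ and $v$ in $\Phi(T)$, so these occupy two consecutive positions $j, j+1$; hence $u < v$ iff $j$ is an ascent of $\Phi(T)$, and by \cref{lem:wigglyCoverRelation} together with \cref{prop:bijection} the flip of $\alpha$ in $T$ is precisely the increasing flip realizing the cover relation $\Phi(T) \lessdot \Phi(T)^j$ exactly in that case. Chaining the equivalences yields that $T \to T'$ is increasing iff $\dotprod{\b{\omega}}{\b{p}(T)} < \dotprod{\b{\omega}}{\b{p}(T')}$, so the $\b{\omega}$-oriented graph of $\wigglyhedron_n$ agrees, under the identification $\b{p}(T) \leftrightarrow T$, with $\wigglyIncreasingFlipGraph_n$, which is isomorphic to $\mathrm{Hasse}(\wigglyLattice_n)$ by \cref{prop:bijection}. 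I expect the sign estimate of the third paragraph to be the main obstacle: one must check that the weights $4ni$ really are large enough for the endpoint coordinates of $\b{c}(\alpha, T)$ to outweigh the intermediate ones, and that the secondary term $\sum_j j\,\b{e}_j$ correctly breaks the tie when $\bar u = \bar v$. (One can sidestep the combinatorial $\b{c}$-vector formula by instead expanding $\b{p}(T) = \sum_{\alpha \in T}\kappa(\alpha)\,\b{c}(\alpha, T)$ and tracking how the $\b{c}$-vectors mutate along the flip via \cref{lem:linearDependences}, but the bookkeeping is comparable.)
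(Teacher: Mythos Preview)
Your proposal is correct and follows essentially the same approach as the paper: identify the edge direction $\b{p}(T')-\b{p}(T)$ as a negative multiple of $\b{c}(\alpha,T)$, then show $\dotprod{\b{\omega}}{\b{c}(\alpha,T)}$ has the sign of $u-v$ using the description of the $\b{c}$-vector, and finally match $u<v$ with the increasing flip via \cref{prop:bijection}. The only cosmetic difference is that the paper extracts just two consequences of the $\b{c}$-vector description (the pair sums $\dotprod{\b{c}(\alpha,T)}{\b{e}_{2i-1}+\b{e}_{2i}}\in\{0,\pm 1/2\}$ and the bound $|\b{c}(\alpha,T)_j|\le 1/2$) rather than the full explicit formula you invoke, but the resulting estimate is the same.
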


\begin{proof}
Consider an increasing flip~$T \to T'$ between two wiggly pseudotriangulations~$T$ and~$T'$ with~$T \ssm \{\alpha\} = T' \ssm \{\alpha'\}$.
Denote by~$u$ and~$v$ the labels incident to~$\alpha$ and respectively below and above~$\alpha$ in the corner labeling of~$T$ described in \cref{def:bijection1}, and let~$\bar u \eqdef \lceil u/2 \rceil$ and~$\bar v \eqdef \lceil v/2 \rceil$.
We have~$u < v$ because the flip~$T \to T'$ is increasing, so that~$\bar u \le \bar v$.

Observe now that~${\b{p}(T') - \b{p}(T)}$ is orthogonal to the $(2n-2)$-dimensional cone of the wiggly fan generated by the $\b{g}$-vectors of the wiggly arcs in~$T \cap T'$, and points from the side containing~$\b{g}(\alpha)$ to the side containing~$\b{g}(\alpha')$.
Hence, $\b{p}(T') - \b{p}(T)$ is a positive multiple of~$-\b{c}(\alpha, T)$.
From \cref{def:cvectors}, we observe that
\begin{itemize}
\item $\dotprod{\b{c}(\alpha, T)}{\b{e}_{2i-1} + \b{e}_{2i}} = 1/2$ if~$i = \bar u \ne \bar v$, $-1/2$ if~$i = \bar v \ne \bar u$, and $0$ otherwise,
\item $|\dotprod{\b{c}(\alpha, T)}{\b{e}_j}| \le 1/2$ for all~$j \in [2n]$.
\end{itemize}
We now distinguish two situations.
If~$\bar u \ne \bar v$, then
\[
\dotprod{-\b{c}(\alpha, T)}{\b{\omega}} = \sum_{i \in [n]} 4ni \dotprod{-\b{c}(\alpha, T)}{\b{e}_{2i-1} + \b{e}_{2i}} + \sum_{j \in [2n]} j \dotprod{-\b{c}(\alpha, T)}{\b{e}_j} \ge 2n(\bar v - \bar u) - n > 0.
\]
If~$\bar u = \bar v$, then~$u = 2k-1$ and~$v = 2k$ for some~$k \in [n]$, and we have
\[
\dotprod{\b{c}(\alpha, T)}{\b{\omega}} = \!\! \sum_{i \in [n]} 4ni \dotprod{-\b{c}(\alpha, T)}{\b{e}_{2i-1} + \b{e}_{2i}} + \!\! \sum_{j \in [2n]} j \dotprod{-\b{c}(\alpha, T)}{\b{e}_j} = \frac{2k}{2}-\frac{2k-1}{2} = 1/2 > 0.
\]
We conclude in both cases that~$\dotprod{\b{c}(\alpha, T)}{\b{\omega}} > 0$, so that we recover the increasing flip graph by orienting the graph of the wigglyhedron~$\wigglyhedron_n$ in direction~$\b{\omega}$.
\end{proof}

\begin{remark}
\label{rem:wigglyhedronSymmetries}
Following \cref{rem:wigglyComplexAutomorphism,rem:gcvectorsSymmetries,rem:wigglyFanSymmetries}, note that the vertical and horizontal symmetries on wiggly arcs define isometries of the wigglyhedron~$\wigglyhedron_n$.
\end{remark}


\section{Further topics}
\label{sec:furtherTopics}

In this section, we explore two further topics on the wigglyhedron.
We first observe that any (type~$A$) Cambrian associahedron~\cite{Reading-CambrianLattices,HohlwegLange} is normally equivalent to a face of the wigglyhedron~$\wigglyhedron_n$ (\cref{subsec:CambrianConsiderations}).
We then raise the question to extend our construction to wiggly complexes of arbitrary planar point sets (\cref{subsec:planarPointSets}).


\subsection{Cambrian considerations}
\label{subsec:CambrianConsiderations}

In this section, we fix a signature~$\delta \eqdef \delta_1 \dots \delta_n \in \{+,-\}^n$. By convention, we set~$\delta_0 \eqdef \delta_{n+1} \eqdef 0$.
We connect the $\delta$-Cambrian lattice~\cite{Reading-CambrianLattices} to the wiggly lattice of \cref{sec:combinatorics} and the $\delta$-associahedron of~\cite{HohlwegLange} to the wigglyhedron of \cref{sec:geometry}.
\cref{def:CambrianTriangulations,def:CambrianPermutations,def:CambrianWigglyPseudotriangulations,def:CambrianWigglyPermutations,prop:CambrianLattice} are illustrated in \cref{fig:wigglyCambrian}.

\begin{figure}
\centerline{\includegraphics[scale=1.1]{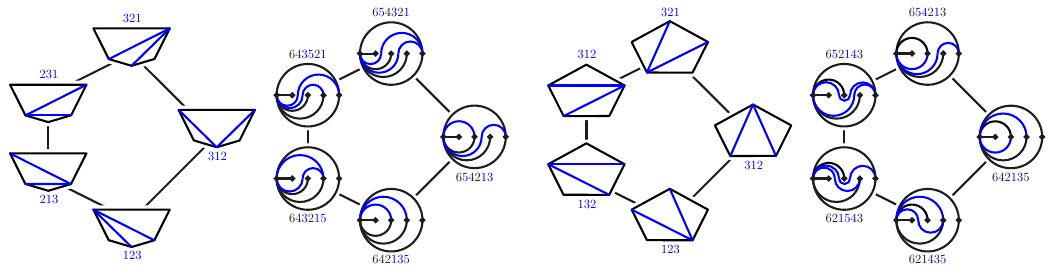}}
\vspace{-.3cm}
\centerline{\includegraphics[scale=1.1]{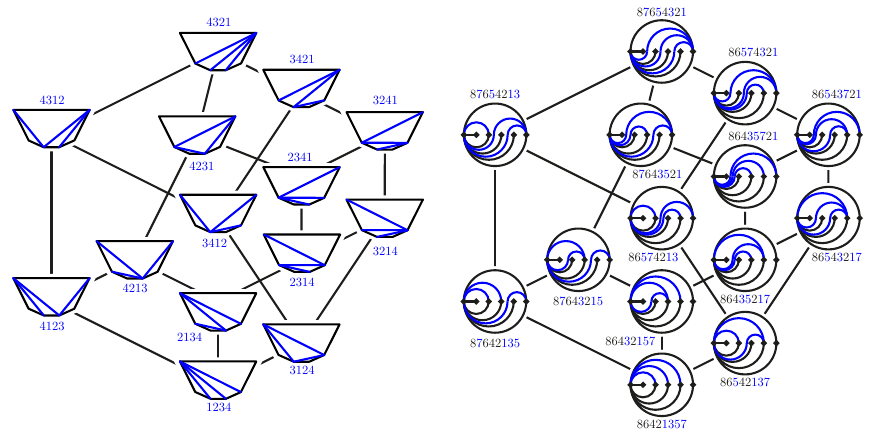}}
\vspace{-.5cm}
\centerline{\includegraphics[scale=1.1]{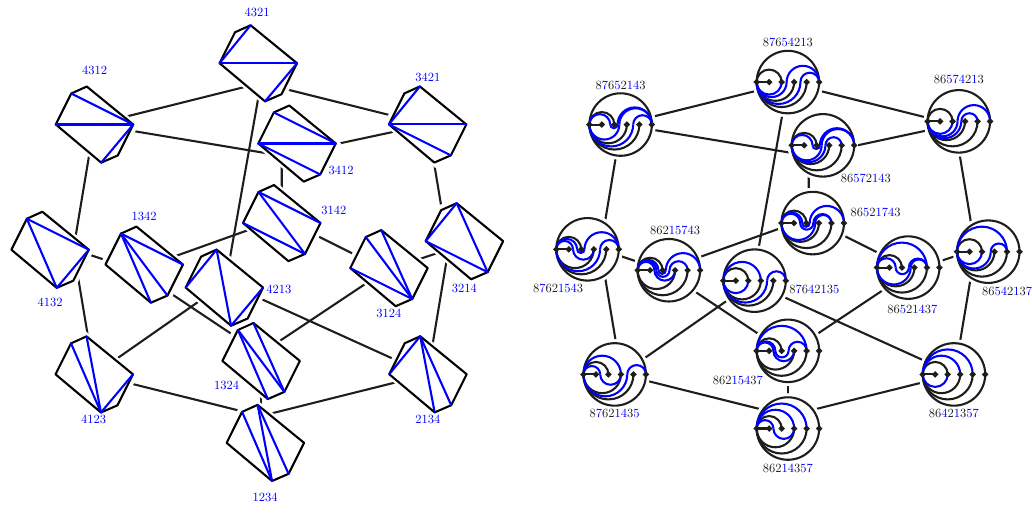}}
\caption{Cambrian lattices as intervals of wiggly lattices.}
\label{fig:wigglyCambrian}
\end{figure}

\begin{definition}
\label{def:CambrianTriangulations}
The \defn{$\delta$-gon} is a convex $(n+2)$-gon with a point with abscissa~$i$ and with ordinate of the same sign as~$\delta_i$  for all~$0 \le i \le n+1$.
A \defn{$\delta$-triangulation} is a triangulation of the $\delta$-gon.
The \defn{$\delta$-triangulation lattice} is the transitive closure of the slope increasing flip graph on $\delta$-triangulations.
\end{definition}

\begin{definition}
\label{def:CambrianPermutations}
A \defn{$\delta$-permutation} is a permutation of~$[n]$ which avoids~$\cdots ik \cdots j \cdots$ with~$i < j < k$ and~$\delta_j = -$, and~$\cdots j \cdots ki \cdots$ with~$i < j < k$ and~$\delta_j = +$.
The \defn{$\delta$-permutation lattice} is the sublattice of the weak order induced by $\delta$-permutations.
\end{definition}

\begin{definition}
\label{def:CambrianWigglyPseudotriangulations}
The \defn{$\delta$-wiggly arcs} are the arcs~$(0, j, {[1,j[}, \varnothing)$ for~$\delta_j = {+}$ and~$(0, j, \varnothing, {[1,j[})$ for~$\delta_j = {-}$.
A \defn{$\delta$-wiggly pseudotriangulation} is a wiggly pseudotriangulation containing all $\delta$-wiggly arcs.
The \defn{$\delta$-wiggly pseudotriangulation lattice} is the transitive closure of the increasing flip graph on $\delta$-wiggly pseudotriangulations.
\end{definition}

\begin{definition}
\label{def:CambrianWigglyPermutations}
A \defn{$\delta$-wiggly permutation} is a wiggly permutation~$\sigma$ of~$[2n]$ such that
\[
\delta_j = {+} \quad \Longrightarrow \quad \sigma^{-1}(i) \le \sigma^{-1}(2j-1)
\qquad\text{and}\qquad
\delta_j = {-} \quad \Longrightarrow \quad \sigma^{-1}(2j) \le \sigma^{-1}(i)
\]
for all~$1 \le i \le 2j \le 2n$.
The \defn{$\delta$-wiggly permutation lattice} is the interval of the wiggly lattice~$\wigglyLattice_n$ given by $\delta$-wiggly permutations.
\end{definition}

\begin{remark}
We had to make a choice in \cref{def:CambrianWigglyPseudotriangulations,def:CambrianWigglyPermutations}.
For~$\delta_j = {+}$, we could replace ``$(0, j, {[1,j[}, \varnothing)$'' by~``$(j, n+1, {]j,n]}, \varnothing)$'' in \cref{def:CambrianWigglyPseudotriangulations}, and ``$\sigma^{-1}(i) \le \sigma^{-1}(2j-1)$ for~$1 \le i \le 2j$'' by ``$\sigma^{-1}(i) \le \sigma^{-1}(2j-1)$ for~$2j-1 \le i \le 2n$'' in \cref{def:CambrianWigglyPermutations}.
Independently, for~$\delta_j = {-}$, we could replace ``$(0, j, \varnothing, {[1,j[})$'' by~``$(j, n+1, \varnothing, {]j,n]})$'' in \cref{def:CambrianWigglyPseudotriangulations}, and ``$\sigma^{-1}(2j) \le \sigma^{-1}(i)$ for~$1 \le i \le 2j$'' by ``$\sigma^{-1}(2j-1) \le \sigma^{-1}(i)$ for~$2j-1 \le i \le 2n$'' in \cref{def:CambrianWigglyPermutations}.
\cref{prop:CambrianLattice,prop:CambrianFan} would still hold.
\end{remark}

\begin{remark}
\label{rem:CambrianSymmetries}
Following \cref{rem:wigglyComplexAutomorphism,rem:gcvectorsSymmetries,rem:wigglyFanSymmetries,rem:wigglyhedronSymmetries}, note that the vertical and horizontal symmetries send $\delta \eqdef \delta_1 \dots \delta_n$ to~$\delta_n \cdots \delta_1$ and $(-\delta_1) \cdots (-\delta_n)$ respectively.
\end{remark}

\begin{proposition}
\label{prop:CambrianLattice}
For any signature~$\delta \in \{+,-\}^n$, the $\delta$-triangulation lattice, $\delta$-permutation lattice, \mbox{$\delta$-wiggly} pseudotriangulation lattice, and $\delta$-wiggly permutation lattice are all isomorphic, and are known as the \defn{$\delta$-Cambrian lattice}~\cite{Reading-CambrianLattices}.
\end{proposition}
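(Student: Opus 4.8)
The plan is to deduce \cref{prop:CambrianLattice} from the combinatorics of \cref{sec:combinatorics} by a chain of three order isomorphisms. The isomorphism between the $\delta$-triangulation lattice and the $\delta$-permutation lattice, and the fact that this common lattice is what is classically called the $\delta$-Cambrian lattice, is due to Reading~\cite{Reading-CambrianLattices}; we take it as known, so that it only remains to attach the two wiggly families to this picture.

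The first new link uses the bijection $\Phi$ of \cref{def:bijection1}. I would prove the characterization: a wiggly pseudotriangulation $T$ is a $\delta$-wiggly pseudotriangulation if and only if $\Phi(T)$ is a $\delta$-wiggly permutation. This is a direct unwinding of \cref{def:bijection1,def:bijection2,def:CambrianWigglyPseudotriangulations,def:CambrianWigglyPermutations}: by the description given in the proof of \cref{prop:bijection} of which arcs $\alpha(\sigma,k)$ occur in $\Psi(\sigma)$, the arc with left endpoint $0$ that ends at $j$ and passes above (resp.\ below) all of $1, \dots, j-1$ belongs to $\Psi(\sigma)$ precisely when $2j-1$ is the last value of $[2j]$ to occur in $\sigma$ (resp.\ $2j$ is the first value of $[2j]$ to occur in $\sigma$), which is exactly the $\delta$-wiggly condition at a position $j$ with $\delta_j = +$ (resp.\ $\delta_j = -$). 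Granting this, and granting the assertion of \cref{def:CambrianWigglyPermutations} that the $\delta$-wiggly permutations form an interval of $\wigglyLattice_n$, the induced subgraph of $\mathrm{Hasse}(\wigglyLattice_n)$ on $\delta$-wiggly permutations is the Hasse diagram of that interval (any cover inside an interval of a lattice is a cover of the ambient lattice); transporting it through the graph isomorphism $\Phi \colon \wigglyIncreasingFlipGraph_n \to \mathrm{Hasse}(\wigglyLattice_n)$ of \cref{prop:bijection} shows that the increasing flip graph on $\delta$-wiggly pseudotriangulations is this same Hasse diagram. In particular, its transitive closure, which is the $\delta$-wiggly pseudotriangulation lattice, is a lattice and is isomorphic to the $\delta$-wiggly permutation lattice.

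The second new link identifies the $\delta$-wiggly permutation lattice with the $\delta$-permutation lattice. I would introduce a projection $\eta$ from permutations of $[2n]$ to permutations of $[n]$ that collapses each pair $\{2j-1, 2j\}$ to a single symbol, reading off the relative order in $\sigma$ of a $\delta$-adapted representative of each pair (say $2j$ when $\delta_j = +$ and $2j-1$ when $\delta_j = -$). The crux is the following reconstruction statement: for a $\delta$-wiggly permutation $\sigma$, the pattern avoidance of \cref{def:wigglyPermutation} together with the conditions of \cref{def:CambrianWigglyPermutations} force the relative order of any two of $2i-1, 2i, 2j-1, 2j$ (for $i \neq j$) to be determined by the single bit comparing the representatives of the pairs $\{2i-1,2i\}$ and $\{2j-1,2j\}$; consequently $\inv(\sigma)$ is the disjoint union of the fixed set $\set{(2j, 2j-1)}{j \in [n]}$ and the image of $\inv(\eta(\sigma))$ under a fixed injection depending only on $\delta$. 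From this description, $\eta$ restricts to a bijection from $\delta$-wiggly permutations to $\delta$-permutations — the forbidden patterns of \cref{def:wigglyPermutation,def:CambrianWigglyPermutations} translating into those of \cref{def:CambrianPermutations} — and $\sigma \le \tau$ in the weak order on permutations of $[2n]$ if and only if $\eta(\sigma) \le \eta(\tau)$ in the weak order on permutations of $[n]$. Since an order isomorphism between two lattices is a lattice isomorphism, $\eta$ identifies the $\delta$-wiggly permutation lattice with the $\delta$-permutation lattice. Composing this with the isomorphism of the previous paragraph and Reading's isomorphisms yields \cref{prop:CambrianLattice}.

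I expect the main obstacles to be the two bookkeeping lemmas. The first is the characterization that $T$ is $\delta$-wiggly if and only if $\Phi(T)$ is $\delta$-wiggly, together with the accompanying verification that the $\delta$-wiggly permutations really do form an interval of $\wigglyLattice_n$ — for which one should exhibit its minimum and maximum (the images under $\Psi$ of the $\delta$-analogues of $T_\downarrow$ and $T_\uparrow$) and check that every $\delta$-wiggly permutation lies between them. The second is the explicit reconstruction of $\inv(\sigma)$ from $\inv(\eta(\sigma))$, which is exactly what upgrades $\eta$ from an order-preserving bijection to an order isomorphism. As an alternative to routing through $\delta$-permutations, one could instead give a direct geometric bijection between $\delta$-wiggly pseudotriangulations and $\delta$-triangulations of the $\delta$-gon: the external arcs $\alpha_\mathrm{top}$, $\alpha_\mathrm{bot}$ and the $\delta$-wiggly arcs cut the digon into a single wiggly cell of degree $n+2$, whose refinements into wiggly pseudotriangles are in natural correspondence with the triangulations of the $\delta$-gon; this is closer in spirit to the polyhedral statements but requires a comparable case analysis to see that it intertwines the slope increasing and increasing flip orders.
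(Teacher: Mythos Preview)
Your proposal is correct and follows essentially the same route as the paper. The paper organizes the argument as a cycle of four bijections (including the direct geometric map from $\delta$-triangulations to $\delta$-wiggly pseudotriangulations that you mention as an alternative), while you organize it as a chain built on Reading's isomorphism together with the restriction of $\Phi$ and your projection $\eta$; but your $\eta$ is exactly the paper's erasure map (delete $2j-1$ and relabel $2j$ as $j$ when $\delta_j = +$, and symmetrically when $\delta_j = -$), and your restriction of $\Phi$ is exactly the paper's arrow from $\delta$-wiggly pseudotriangulations to $\delta$-wiggly permutations. The paper in fact leaves all the verifications you flag (well-definedness, that the maps compose to the identity, and that they preserve the flip/cover structure) to the reader, so your bookkeeping lemmas on the characterization of $\delta$-wiggly $T$ via $\Phi(T)$ and on the reconstruction of $\inv(\sigma)$ from $\inv(\eta(\sigma))$ are more detailed than what the paper provides.
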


\begin{proof}
We only describe the following bijections
\begin{center}
\begin{tikzcd}[column sep=3cm]
\text{$\delta$-triangulations} \arrow[d] & \text{$\delta$-permutations} \arrow[l] \\
\text{$\delta$-wiggly pseudotriangulations} \arrow[r] & \text{$\delta$-wiggly permutations} \arrow[u]
\end{tikzcd}
\end{center}
illustrated in \cref{fig:wigglyCambrian}.
We let the readers convince themselves that they are well-defined, that they compose to the identity, and that they preserve the flip structure. We also invite the reader to work out the explicit description of the 8 missing arrows.
For~$I \subseteq [n]$, we set~$I^+ \eqdef \set{i \in I}{\delta_i = {+}}$ and~$I^- \eqdef \set{i \in I}{\delta_i = {-}}$.

\para{From $\delta$-triangulations to $\delta$-wiggly pseudotriangulations}
Replace each diagonal~$(i,j)$ of the $\delta$-gon by the wiggly arc~$(0, j, [i] \cup {]i,j[}^-, {]i,j[}^+)$ if~$\delta_i \ge 0$ and~$(0, j, {]i,j[}^-, [i] \cup {]i,j[}^+)$ if~$\delta_i \le 0$.

\para{From $\delta$-wiggly pseudotriangulations to $\delta$-wiggly permutations}
Specialization of \cref{def:bijection1}.

\para{From $\delta$-wiggly permutations to $\delta$-permutations}
For all~$j \in [n]$, erase~$2j-1$ and replace~$2j$ by~$j$ if~$\delta_j = {+}$, and erase~$2j$ and replace~$2j-1$ by~$j$ if~$\delta_j = {-}$.

\para{From $\delta$-permutations to $\delta$-triangulations}
Already described in~\cite{Reading-CambrianLattices}.
Given a permutation~$\pi$, construct the union of the upper hulls of the point sets~$[n]^- \symdif \pi([j])$ for all~$j \in \{0, \dots, n\}$.
\end{proof}

Recall that the $\delta$-Cambrian lattice can be realized by the $\delta$-Cambrian fan~\cite{ReadingSpeyer} and the $\delta$-associahedron of~\cite{HohlwegLange}.
Namely, to each diagonal~$(i,j)$ of the $\delta$-gon, we associate the subset~$X(\delta)$ of points of the $\delta$-gon distinct from~$0$ and~$n+1$, and located below the line~$(i,j)$, including~$i$ if and only if~$\delta_i = {+}$ and~$j$ if and only if~$\delta_j = {+}$.
The \defn{$\delta$-Cambrian fan} is the fan with a ray~$\b{g}(d) \eqdef \pi(\one_{X(d)})$ for each internal diagonal~$d$ of the $\delta$-gon and a cone~$\R_{\ge0} \setangle{\b{g}(d)}{d \in D^\circ}$ for each dissection~$D$ of the $\delta$-gon.
The \defn{$\delta$-associahedron}~$\Asso_\delta$ is the polytope defined by the equality~$\dotprod{\one_n}{\b{x}} = \binom{n+1}{2}$ and the inequalities~$\dotprod{\b{g}(d)}{\b{x}} \ge \binom{|X(d)|+1}{2}$ for all internal diagonals~$d$ of the $\delta$-gon.

\begin{proposition}
\label{prop:CambrianFan}
The $\delta$-associahedron~$\Asso_\delta$ is normally equivalent to the face of the wigglyhedron~$\wigglyhedron_n$ corresponding to the wiggly pseudodissection formed by the $\delta$-wiggly arcs.
\end{proposition}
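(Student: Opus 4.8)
The plan is to realize $\Asso_\delta$, via its normal fan, as the normal-fan picture of a face of $\wigglyhedron_n$, and then match the two normal fans through an explicit linear map.

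First I would note that the $n$ $\delta$-wiggly arcs are pairwise compatible: they all start at the point $0$ and each has all of its intermediate points on a single side, so the crossing criterion of \cref{def:compatible} is never met; hence $D_\delta \eqdef \{\alpha_\mathrm{top}, \alpha_\mathrm{bot}\} \cup \{\text{$\delta$-wiggly arcs}\}$ is a wiggly pseudodissection with $|D_\delta^\circ| = n$. Since $\wigglyFan_n$ is the normal fan of $\wigglyhedron_n$ by \cref{thm:wigglyhedron}, the simplicial cone $C_\delta \eqdef \R_{\ge 0}\,\b{g}(D_\delta)$ is the normal cone of a face $\polytope{F}_\delta$ of $\wigglyhedron_n$, with $\dim \polytope{F}_\delta = (2n-1)-n = n-1 = \dim \Asso_\delta$. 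By the standard description of the normal fan of a face of a polytope (see \cite{Ziegler-polytopes}), the normal fan of $\polytope{F}_\delta$ is the link of $C_\delta$ in $\wigglyFan_n$: letting $q \colon \HH_{2n} \to \HH_{2n}/\vect(C_\delta)$ be the quotient map, its rays are the $q(\b{g}(\alpha))$ for internal wiggly arcs $\alpha$ compatible with all $\delta$-wiggly arcs, and its cones are the $q\big(\R_{\ge 0}\,\b{g}(D)\big)$ for wiggly dissections $D \supseteq D_\delta$. Since the $\delta$-Cambrian fan is the normal fan of $\Asso_\delta$, it then suffices to build a linear isomorphism $\bar M \colon \HH_{2n}/\vect(C_\delta) \xrightarrow{\sim} \HH_n$ carrying the link of $C_\delta$ in $\wigglyFan_n$ onto the $\delta$-Cambrian fan.

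For the combinatorial half, I would use the bijection $\phi_\delta$ of \cref{prop:CambrianLattice} from diagonals of the $\delta$-gon to wiggly arcs. It restricts to a bijection from the internal diagonals onto the internal wiggly arcs compatible with all $\delta$-wiggly arcs, that is, onto the vertices of the link $\mathrm{lk}_{\wigglyComplex_n}(D_\delta^\circ)$: surjectivity holds because any such arc extends $D_\delta^\circ$ to a wiggly pseudotriangulation, which by \cref{prop:CambrianLattice} is the image of a $\delta$-triangulation and therefore contains the arc as the image of one of its diagonals; injectivity is part of \cref{prop:CambrianLattice}. Both the complex of dissections of the $\delta$-gon and $\mathrm{lk}_{\wigglyComplex_n}(D_\delta^\circ)$ are flag, pure of dimension $n-2$, and $\phi_\delta$ carries $\delta$-triangulations to $\delta$-wiggly pseudotriangulations, hence facets to facets; a vertex bijection between flag pure complexes of the same dimension matching facets is a simplicial isomorphism, so $\phi_\delta$ identifies the two face posets.

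For the geometric half --- which I expect to be the real work --- I would exhibit an explicit linear map $M \colon \R^{2n} \to \R^n$ with $M(\one_{2n}) \in \R\one_n$, with $M(\hat{\b{g}}(\gamma)) \in \R\one_n$ for every $\delta$-wiggly arc $\gamma$, and with $M(\hat{\b{g}}(\phi_\delta(d))) \in \one_{X(d)} + \R\one_n$ for every internal diagonal $d$ of the $\delta$-gon, where $X(d)$ is the set appearing in the definition of the $\delta$-Cambrian fan. The first two properties make $\b{x} \mapsto \pi(M(\b{x}))$ (with $\pi$ the orthogonal projection onto $\HH_n$) vanish on $\vect(C_\delta)$, so it descends to a linear map $\bar M \colon \HH_{2n}/\vect(C_\delta) \to \HH_n$; the third gives $\bar M\big(q(\b{g}(\phi_\delta(d)))\big) = \pi(\one_{X(d)}) = \b{g}(d)$ for every internal diagonal $d$. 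As the vectors $\b{g}(d)$ span $\HH_n$, the map $\bar M$ is surjective, and since both sides have dimension $n-1$ it is an isomorphism. Combined with the combinatorial half, $\bar M$ then carries the link of $C_\delta$ in $\wigglyFan_n$ ray-by-ray and cone-by-cone onto the $\delta$-Cambrian fan, so these two complete simplicial fans coincide under $\bar M$; being the normal fans of $\polytope{F}_\delta$ and $\Asso_\delta$, this yields the desired normal equivalence. The bulk of the effort is in writing down $M$ and checking the third property: this is a case analysis comparing the combinatorial recipe for $\phi_\delta((i,j))$, namely the wiggly arc $(0, j, [i] \cup {]i,j[}^-, {]i,j[}^+)$ (with the two sides swapped when $\delta_i = {-}$), against the points of the $\delta$-gon below the diagonal $(i,j)$, with care that every ray maps to a positive multiple of its counterpart.
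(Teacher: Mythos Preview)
Your outline is sound and would work, but it takes a different and somewhat heavier route than the paper. You propose to construct an explicit linear map $M \colon \R^{2n} \to \R^n$ (descending to $\bar M \colon \HH_{2n}/\vect(C_\delta) \to \HH_n$) and then verify diagonal-by-diagonal that $\bar M$ sends $q(\b{g}(\phi_\delta(d)))$ to $\b{g}(d)$. As you acknowledge, this verification is the bulk of the work, and you have not actually carried it out; the case analysis is genuine and must track the sign of~$\delta_i$, the relative positions of~$i,j$, and the description of~$X(d)$.

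The paper instead constructs the linear map in the opposite direction, $Z \colon \HH_n \to \HH_{2n}$, and avoids any diagonal-by-diagonal check. The key observation is that the bijection $\zeta = \phi_\delta$ preserves the \emph{exchange relations}: if $a,a'$ are crossing diagonals of the $\delta$-gon with $\b{g}(a) + \b{g}(a') = \b{g}(b) + \b{g}(b')$, then the wiggly arcs $\zeta(a), \zeta(a')$ are crossing (never non-pointed, since all start at~$0$), and \cref{lem:linearDependences} gives $\b{g}(\zeta(a)) + \b{g}(\zeta(a')) = \b{g}(\zeta(b)) + \b{g}(\zeta(b'))$. Since the $\b{g}$-vectors of any $\delta$-triangulation form a basis of~$\HH_n$, one defines $Z$ on that basis and then propagates through the flip graph using these preserved relations to get $Z(\b{g}(d)) = \b{g}(\zeta(d))$ for \emph{all} diagonals~$d$ at once. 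This replaces your global case analysis by a single local check plus connectivity, and is what makes the paper's argument short. Your combinatorial half (the simplicial isomorphism via~$\phi_\delta$) is correct and essentially shared; the geometric half would be completed more efficiently by adopting the exchange-relation argument rather than hunting for an explicit~$M$.
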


\begin{proof}
Following the proof of \cref{prop:CambrianLattice}, consider the map~$\zeta$ sending a diagonal~$(i,j)$ of the $\delta$-gon to the wiggly arc $(0, j, [i] \cup {]i,j[}^-, {]i,j[}^+)$ if~$\delta_i \ge 0$ and~$(0, j, {]i,j[}^-, [i] \cup {]i,j[}^+)$ if~$\delta_i \le 0$.
Let~$a \eqdef (i,j)$ and~$a' \eqdef (i',j')$ be two crossing diagonals of the~$\delta$-gon.
Then any $\delta$-triangulations~$T$ and~$T'$ with~$T \ssm \{a\} = T' \ssm \{a'\}$ contain the diagonals~$b \eqdef (i',j)$ and~$b' \eqdef (i,j')$ of the $\delta$-gon, and~$\b{g}(a) + \b{g}(a') = \b{g}(b) + \b{g}(b')$.
Moreover, if~$\alpha \eqdef \zeta(a)$, $\alpha' \eqdef \zeta(a')$, $\beta \eqdef \zeta(b)$ and~$\beta' \eqdef \zeta(b')$, then~$\b{g}(\alpha) + \b{g}(\alpha') = \b{g}(\beta) + \b{g}(\beta')$.
We thus conclude that $\zeta$ preserves the linear dependences among $\b{g}$-vectors.
Hence, there is a linear map~$Z : \HH_n \to \HH_{2n}$ such that~$Z \big( \b{g}(d) \big) = \b{g} \big( \zeta(d) \big)$ for all diagonals~$d$ of the $\delta$-gon.
This map is a normal equivalence from the $\delta$-associahedron~$\Asso_\delta$ to the face of the wigglyhedron~$\wigglyhedron_n$ corresponding to the wiggly pseudodissection formed by the $\delta$-wiggly~arcs.
\end{proof}


\begin{remark}
In their work on polytopal realizations of $\b{g}$-vector fans of finite type cluster algebras~\cite{HohlwegPilaudStella}, C.~Hohlweg, V.~Pilaud and S.~Stella already constructed a \defn{universal associahedron}~$\polytope{U}_n$ which somehow contains all Cambrian fans (actually all $\b{g}$-vector fans from any initial cluster seed).
More precisely, the $\delta$-Cambrian fan for each~$\delta \in \{+,-\}^n$  appears as a section of the normal fan of the universal associahedron~$\polytope{U}_n$, which is a $(n^2+n-4)/2$-dimensional polytope.
In contrast, \cref{prop:CambrianFan} states that the $\delta$-Cambrian fan for each~$\delta \in \{+,-\}^n$ appears as a link of the normal fan of the wigglyhedron~$\wigglyhedron_n$, which is a $(2n-1)$-dimensional polytope.
There seem to be no connection between these two universal polytopes.
However, this certainly raises the question to construct analogues of the wigglyhedron for all finite Coxeter groups.
\end{remark}


\subsection{Wiggly pseudotriangulations of planar point sets}
\label{subsec:planarPointSets}

We now consider \cref{thm:wigglyhedron,conj:Hamiltonian} for wiggly pseudotriangulations of arbitrary point sets in the plane (neither necessarily aligned, nor necessarily in general position).

\begin{definition}
\label{def:wigglyComplexPointSet}
Fix a point set~$P$ of the plane, and an arbitrary total order~$<$ on~$P$.
A \defn{wiggly arc} is a quadruple~$(p,q,R,S)$ where~$p < q \in P$ and the sets~$R$ and~$S$ form a partition of the points of~$P$ located in the open segment joining~$p$ to~$q$.
Two wiggly arcs~$(p,q,R,S)$ and~$(p',q',R',S')$ are \defn{crossing} if 
\begin{itemize}
\item either the segments~$[p,q]$ and~$[p',q']$ cross, 
\item or~$(R \cap S') \cup (\{p,q\} \cap S') \cup (R \cap \{p',q'\}) \ne \varnothing \ne (R' \cap S) \cup (\{p',q'\} \cap S) \cup (R' \cap \{p,q\})$.
\end{itemize}
A set~$X$ of wiggly arcs is \defn{pointed} if for any~$p \in P$, the wiggly arcs of~$X$ with an endpoint at~$p$ generate a pointed cone.
The \defn{wiggly complex}~$\wigglyComplex_P$ is the simplicial complex of pairwise pointed and non-crossing subsets of wiggly arcs.
Note that the boundary wiggly arcs are irrelevant, which allows us to consider a reduced wiggly complex~$\wigglyComplex_P$ induced by internal wiggly arcs.
A \defn{wiggly pseudotriangulation} of~$P$ is a facet of~$\wigglyComplex_P$.
The \defn{wiggly flip graph}~$\wigglyFlipGraph_P$ is the adjacency graph of the facets of~$\wigglyComplex_P$.
See \cref{fig:wigglyComplexSquarre} for an illustration.
\begin{figure}[!h]
\centerline{\includegraphics[scale=1]{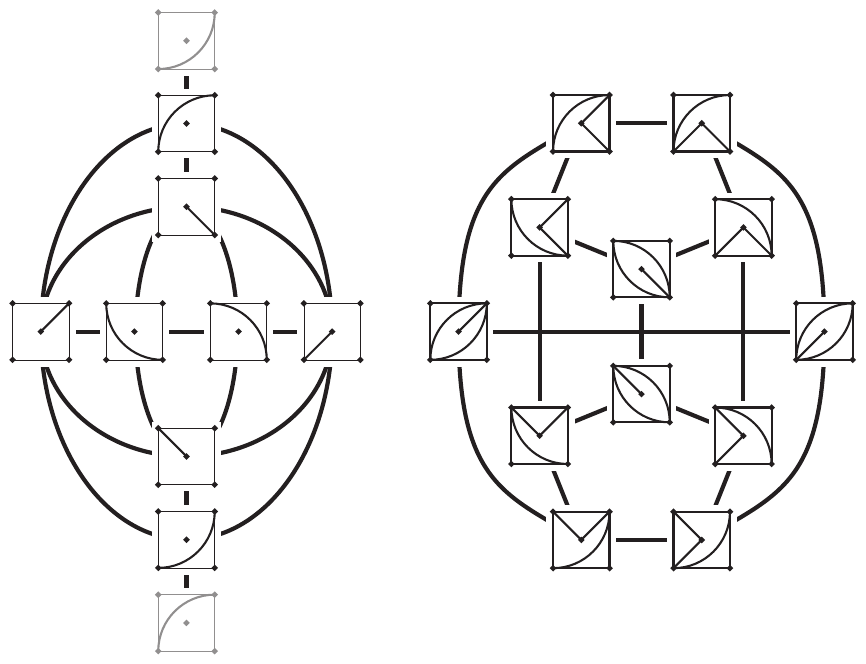}}
\caption{The wiggly complex~$\wigglyComplex_P$ (left) and the wiggly flip graph~$\wigglyFlipGraph_P$ (right) of a point set~$P$.}
\label{fig:wigglyComplexSquarre}
\end{figure}
\end{definition}

Note that \cref{def:wigglyComplexPointSet} recovers two specific situations, namely
\begin{itemize}
\item \cref{def:wigglyComplex,def:wigglyFlipGraph} in the case of aligned points, and
\item the pseudodissection complex of~$P$ and pseudotriangulation flip graph of~$P$ in the case of points~$P$ in general position. We refer to the original article of M.~Pocchiola and G.~Vegter~\cite{PocchiolaVegter} and to the nice survey of G.~Rote, F.~Santos and I.~Streinu~\cite{RoteSantosStreinu-pseudotriangulations}.
\end{itemize}

Observe that it is not even clear from \cref{def:wigglyComplexPointSet} that the wiggly complex~$\wigglyComplex_P$ is a pure pseudomanifold (hence that the wiggly flip graph~$\wigglyFlipGraph_P$ is well-defined).
However, we make the following ambitious conjecture.

\begin{conjecture}
\label{conj:polytopality}
For any point set~$P$ in the plane, the wiggly complex~$\wigglyComplex_P$ is the boundary complex of a simplicial polytope.
\end{conjecture}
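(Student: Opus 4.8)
Since this is a conjecture, I describe the line of attack I would follow rather than a complete proof; it mirrors the two cases already understood, namely the collinear case realized by the wigglyhedron~$\wigglyhedron_n$ in \cref{thm:wigglyhedron} and the general-position case realized by the pseudotriangulation polytope of~\cite{RoteSantosStreinu-polytope}. The overall plan has three steps. First, one must show that $\wigglyComplex_P$ is a pure pseudomanifold without boundary, so that its wiggly flip graph $\wigglyFlipGraph_P$ is well defined and every ridge lies in exactly two facets; this is what puts us in a position to apply \cref{prop:characterizationFan}. Second, one attaches to each wiggly arc $(p,q,R,S)$ of $P$ a $\b{g}$-vector in a suitable real vector space, so that the cones spanned by the $\b{g}$-vectors of the faces of $\wigglyComplex_P$ satisfy the two hypotheses of \cref{prop:characterizationFan} (some vector lies in a unique cone, and every flip produces a sign-coherent linear dependence), yielding a complete simplicial \emph{wiggly fan} $\wigglyFan_P$ realizing $\wigglyComplex_P$. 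Third, one produces a height function $\kappa_P$ on wiggly arcs satisfying all wall-crossing inequalities of \cref{prop:characterizationPolytopalFan}, so that $\wigglyFan_P$ is the normal fan of the desired simplicial polytope.

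For the first step, Euler's formula applies verbatim, so the counting argument of \cref{prop:wigglyPseudotriangulations} still bounds the number of internal arcs and cells; what must be redone is the local classification of wiggly cells and of their diagonals (the analogue of \cref{rem:descriptionPseudotrianglesPseudoquadrangles,prop:diagonalsPseudoquadrangle}), now in the presence of genuine planar geometry and of the pointedness condition on the cone of arcs at each point of $P$, together with the flip argument of \cref{prop:wigglyFlipGraph}. For the second step, the delicate point is to \emph{guess the right $\b{g}$-vectors}: in the collinear case they are the explicit $\pm1$ vectors of \cref{def:gvectors} indexed by a doubled point set, whereas in the general-position case the analogous role is played by the rigidity-theoretic data underlying the expansive-motion cone of Rote--Santos--Streinu. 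A natural candidate is to combine the two, using coordinates recording both an ``expansive motion'' part attached to the segments $[p,q]$ and a ``wiggling'' part attached to the above/below partition $(R,S)$; one would then need the analogue of \cref{lem:-+...-+} (a vector in a single cone, say by generalizing the configuration $T_\leftarrow$) and the analogue of \cref{lem:linearDependences}, namely that along every flip the unique linear dependence between the $\b{g}$-vectors of the two adjacent wiggly pseudotriangulations has all coefficients of the same sign, with the case split governed by \cref{prop:uerp} (non-pointed versus crossing exchange). For the third step I would imitate \cref{def:incompatibilityDegree,lem:wallCrossingInequalities}: define an incompatibility degree $\delta_P(\alpha,\alpha')$, equal to $0$ when $\alpha,\alpha'$ are compatible, to $1$ when they are exchangeable, and to the number of crossings otherwise, set $\kappa_P(\alpha) \eqdef \sum_{\alpha'} \delta_P(\alpha,\alpha')$, and then check that the strict inequalities of \cref{lem:wallCrossingInequalities} survive the sign-coherent linear dependences found in the second step.

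The main obstacle is the second step, and within it the very choice of $\b{g}$-vectors: the two known constructions are built from genuinely different machinery (a purely combinatorial $\pm1$ recipe on one side, rigidity and expansive motions on the other), and it is unclear whether a single closed formula unifies them while preserving the sign-coherence of \cref{lem:linearDependences}, which in the collinear case relied heavily on the rigid case analysis of \cref{rem:descriptionPseudotrianglesPseudoquadrangles} and \cref{prop:uerp}. A possible alternative that avoids an explicit formula would be to realize $\wigglyComplex_P$ as a link, or a section in the spirit of the Cambrian picture of \cref{prop:CambrianFan}, of a wiggly complex attached to a larger and more generic configuration, or to interpret wiggly arcs as pseudolines of a dual arrangement and invoke known polytopality results for such arrangements; but turning either idea into a proof is precisely what remains open.
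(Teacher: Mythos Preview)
The statement is a \emph{conjecture}, and the paper offers no proof of it: the authors explicitly call it ``ambitious'' and note that even the preliminary fact that $\wigglyComplex_P$ is a pure pseudomanifold is ``not even clear from \cref{def:wigglyComplexPointSet}''. The only evidence given is the two extreme cases (collinear points via \cref{thm:wigglyhedron}, and general position via the pseudotriangulation polytope of Rote--Santos--Streinu), together with a list of open questions. There is thus no ``paper's own proof'' to compare against.

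Your write-up correctly identifies the statement as open and presents a research programme rather than a proof, which is the honest thing to do. The three-step strategy you outline (pseudomanifold $\Rightarrow$ fan via \cref{prop:characterizationFan} $\Rightarrow$ polytope via \cref{prop:characterizationPolytopalFan}) is exactly the template the paper uses in the collinear case, and you are right to flag the choice of $\b{g}$-vectors as the crux: the paper itself raises this as an open question immediately after the conjecture, asking whether the construction of \cref{thm:wigglyhedron} can be adapted to give a more combinatorial realization of the pseudotriangulation polytope depending only on the oriented matroid. Your suggestion to look for a dual pseudoline-arrangement interpretation also echoes one of the paper's follow-up questions. So your proposal is well aligned with the paper's own speculation, but you should be clear that none of the three steps is currently known for general $P$; in particular, the paper does not even claim the first step (pure pseudomanifold without boundary).
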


In \cref{conj:polytopality}, the case of aligned points is given by the wigglyhedron of \cref{thm:wigglyhedron}, while the case of points in general position is given by the pseudotriangulation polytope of~\cite{RoteSantosStreinu-polytope}.
This raises in particular the following question.

\begin{question}
Can the construction of \cref{thm:wigglyhedron} be adapted to provide a more combinatorial construction of the polytope of pseudotriangulations~\cite{RoteSantosStreinu-polytope}, that would only depend on the order type (\aka oriented matroid~\cite{BjornerLasVergnasSturmfelsWhiteZiegler}) of the point configuration?
\end{question}

Using the duality between lines in the plane and points in the M\"obius strip, the pseudotriangulations of~\cite{PocchiolaVegter,RoteSantosStreinu-pseudotriangulations} were interpreted in~\cite{PilaudPocchiola} as pseudoline arrangements on sorting networks.
It is natural to look for the analogue of this interpretation in the wiggly setting.

\begin{question}
Is there a dual interpretation of wiggly pseudotriangulations as some sort of pseudoline arrangements? 
\end{question}

This dual interpretation enables \cite{PilaudPocchiola} to consider the pseudotriangulations of~\cite{PocchiolaVegter,RoteSantosStreinu-pseudotriangulations} and the multitriangulations of~\cite{PilaudSantos-multitriangulations} under the same roof, and to define multi-pseudo-triangulations.
The extension to the wiggly case is a natural question.

\begin{question}
Is there a multi wiggly complex?
\end{question}

Moreover, pseudoline arrangements on sorting networks can also be interpreted as (type~$A$) subword complexes of~\cite{KnutsonMiller-subwordComplex}, which in turn extend to arbitrary finite Coxeter group.
See \cite{Stump, CeballosLabbeStump} for the study of subword complexes corresponding to cluster complexes and multi-cluster complexes.

\begin{question}
Can the wiggly complex be extended to arbitrary finite Coxeter groups?
\end{question}


Finally, it is natural to consider the extension of \cref{conj:Hamiltonian}.

\begin{question}
\label{qu:Hamiltonian}
Is the wiggly flip graph~$\wigglyFlipGraph_P$ Hamiltonian for any point set~$P$ in the plane?
\end{question}


\section{Categorical interpretation of wiggly pseudotriangulations}
\label{sec:categoricalInterpretation}

The aim of this section is to explain a categorical interpretation of wiggly arcs.
We first present a particular triangulated category (\cref{subsec:category}), and recall results of Khovanov--Seidel~\cite{kho.sei:02} (\cref{subsec:KSCurveModel}) that interpret the objects of this category as wiggly arcs, such that intersections between wiggly arcs correspond to morphisms in this category.
We then explain how wiggly pseudotriangulations govern the combinatorial structure of a natural categorical decomposition of objects (\cref{subsec:cohomologyFiltrations}).
We also discuss categorical conditions for two wiggly arcs to be compatible as in~\cref{def:compatible} (\cref{subsec:categoricalCompatibility}).


\subsection{A triangulated category associated to a quiver}
\label{subsec:category}

We begin by describing the category of interest, which is the bounded homotopy category of finitely-generated graded projective modules over a finite-dimensional algebra \(A_m\). 
The algebra~\(A_m\), called the \defn{zigzag algebra}, is a quotient of the path algebra of the quiver \(Q_m\) with \(m\) vertices depicted in~\cref{fig:am-quiver}.
Let \(kQ_m\) be the path algebra of \(Q_m\), graded by path length.
Let~\(e_i\) denote the idempotent path at the vertex \(i\), and let \((i, j)\) denote the arrow from vertex \(i\) to a (neighbouring) vertex \(j\).
A longer path, which is a composition of arrows, will be denoted by~\((i_1, i_2, \dots, i_n)\).

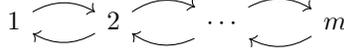
\begin{figure}[h]
	\centering
	\begin{tikzcd}
		1 \arrow[bend left]{r} & 2 \arrow[bend left]{r}\arrow[bend left]{l}& \cdots \arrow[bend left]{r}\arrow[bend left]{l}& m \arrow[bend left]{l}
	\end{tikzcd}
	\caption{The quiver \(Q_m\). The algebra \(A_m\) is a quotient of the path algebra \(kQ_m\).}
	\label{fig:am-quiver}
\end{figure}

\begin{definition}
The \defn{zigzag algebra} \(A_m\) corresponding to the quiver \(Q_m\) depicted in~\cref{fig:am-quiver} is the quotient of \(kQ_m\) by all paths of length at least \(3\), together with the relations \((i-1, i, i+1) = 0\), \((i+1, i, i-1)=0\) and \((i, i-1, i) = (i, i+1, i)\) for all~\(1 < i < m\).
\end{definition}

\begin{remark}
The definition of the zigzag algebra as used here originally appeared in~\cite[Sect.~3]{hue.kho:01}.
This is the ``un-signed'' zigzag algebra.
It is technically better to use a ``signed'' zigzag algebra, and the comparison between the two is discussed in~\cite[Rem.~6.6]{bap.deo.lic:20}.
As the two versions are isomorphic in our example, we ignore this technicality and use the easier definition.
\end{remark}

Let \(P_i\) denote the indecomposable projective \(A_m\)-module defined as \(P_i = A_m\cdot e_i\).
For any integer~\(j\), let \(P_i \langle j \rangle\) denote its graded shift by \(j\).

\begin{definition}
The category \(\mathcal{C}_m\) is defined to be the homotopy category of bounded complexes of finitely-generated graded projective \(A_m\)-modules.
\end{definition}
Let \(D^b(A_m\text{-grmod})\) denote the bounded derived category of graded \(A_m\)-modules.
Note that there is an embedding of \(A_m\text{-grmod}\) into \(D^b(A_m\text{-grmod})\), which sends any \(A_m\)-module as a complex of \(A_m\)-modules that is concentrated in degree \(0\).
In particular, the objects \(P_i\langle j \rangle\) can be regarded as objects of \(D^b(A_m\text{-grmod})\).
\(\mathcal{C}_m\) is thus the smallest full triangulated subcategory of \(D^b(A_m\text{-grmod})\) containing all the objects \(P_i\langle j \rangle\).

For further discussion on the construction, see \eg~\cite[Sect.~2.3.3]{bap.deo.lic:22}.
The category \(\mathcal{C}_m\) is \(k\)-linear and triangulated, where \([1]\) denotes the triangulated shift.
For \(X, Y \in \mathcal{C}_m\), let \(\Hom^{g,h}(X,Y)\) denote the space of morphisms in \(\mathcal{C}_m\) of internal degree \(g\) and homological degree \(h\).
In other words, we have
\[
\Hom^{g,h}(X,Y) = \Hom(X,Y\langle g \rangle[h]).
\]
Furthermore, for any integers \(a\) and \(b\), we have
\[
\Hom^{g,h}(X\langle g' \rangle[h'], Y\langle g' \rangle[h']) = \Hom^{g,h}(X,Y).
\]

\begin{remark}
\label{rem:generating-morphisms}
The generating objects \(P_i\) (and hence their graded and homological shifts) satisfy \(\Hom^{0,0}(P_i,P_i) = \Hom^{2,0}(P_i,P_i) = \Hom^{1,0}(P_i,P_{i\pm1}) = k\) and \(\Hom^{g,h}(P_i, P_j) = 0\) in all other cases.
\end{remark}

We now describe the \(t\)-structure induced by the full subcategory of \defn{linear complexes} of \(\mathcal{C}_m\).
A variant of this construction is also described in~\cite[Sect.~2.5]{lic.que:21}.
It is convenient to think of an object \(P^{\bullet}\) of \(\mathcal{C}_m\) as a bigraded (projective) \(A_m\)-module equipped with a differential \(d \colon P^{\bullet} \to P^{\bullet}\).
More precisely, we can write
\[
P^{\bullet} = \bigoplus_{g,h,i} M_i\langle g \rangle[h],
\]
where \(g\) is the internal grading shift, \(h\) is the homological grading shift, and 
each object \(M_i\) is a direct sum of copies of objects in \(\{P_1,\ldots, P_m\}\).
Set \(P^h\) to be the direct sum of all summands that have homological grading shift \(h\).
That is, in the notation above, we have
\[
P^{h} = \bigoplus_{g,i} M_i\langle g \rangle[h].
\]

\begin{warning}
  By convention, if \(M_i\) is an \(A_m\)-module, then the object \(M_i[h]\) is placed in homological degree \(-h\).
  Since the differential \(d\) raises the homological degree by one, it lowers the homological shift by one.
  That is, we have
  \[d \colon M_i\langle g \rangle[h] \to \bigoplus_{g',i'} M_{i'}\langle g' \rangle[h-1].\]
  In other words, \(d\) maps \(P^h\) to \(P^{h-1}\).
\end{warning}

\begin{definition}
\label{def:linear-complex}
Let \(P^{\bullet}\) be an object of \(\mathcal{C}_m\), described as a bigraded module
\[
P^{\bullet} = \bigoplus M_i\langle g \rangle[h]
\]
together with an internal differential \(d \colon P^{\bullet} \to P^{\bullet}\) as above.
We say that \(P^{\bullet}\) has \defn{level \(\ell\)} for some~\(\ell \in \mathbb{Z}\) if for each bigraded piece \(M_i \langle g \rangle[h]\) of \(P^{\bullet}\), we have \(g + h = \ell\).
We say that \(P^{\bullet}\) is \defn{linear} if \(P^{\bullet}\) has level \(\ell = 0\).
\end{definition}

For any integer \(N\), let \(\mathcal{C}_m^{\le N} \subset \mathcal{C}_m\) be the smallest full subcategory of \(\mathcal{C}_m\) closed under isomorphism, containing all complexes of level \(\geq -N\).
For any integer \(N\), let \(\mathcal{C}_m^{\geq N} \subset \mathcal{C}_m\) be the smallest full subcategory of \(\mathcal{C}_m\) closed under isomorphism, containing all complexes of level \(\leq -N\).

We refer the reader to \eg~\cite[Sect.~1.3]{bel.ber.del:82} for the definition of a \(t\)-structure on a triangulated category.
The following proposition is easily checked using the calculations in~\cref{rem:generating-morphisms} and standard techniques involving categories of complexes; we omit the proof.

\begin{proposition}
The pair of subcategories \((\mathcal{C}_m^{\leq 0}, \mathcal{C}_m^{\geq 0})\) forms a bounded \(t\)-structure on \(\mathcal{C}_m\).
\end{proposition}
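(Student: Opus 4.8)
The plan is to check the three axioms of a $t$-structure directly, together with boundedness, using the homotopy-category structure of $\mathcal{C}_m$ and the morphism computation of \cref{rem:generating-morphisms}. Throughout, the \emph{level} of a bigraded summand $M_i\langle g\rangle[h]$ means $g+h$, so that (as a sanity check) $\mathcal{C}_m^{\le 0}\cap\mathcal{C}_m^{\ge 0}$ is precisely the subcategory of linear complexes of \cref{def:linear-complex}. The first step I would take is a reduction to \emph{minimal} complexes: every object of $\mathcal{C}_m$ is homotopy equivalent to one whose differential has all entries in $\mathrm{rad}\,A_m$, by the standard theory of complexes of projectives over a finite-dimensional graded algebra. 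By \cref{rem:generating-morphisms} the only nonzero graded maps between the $P_i$ have internal degree $0$, $1$, or $2$, and those of degree $0$ are scalars; hence in a minimal complex every entry of $d$ raises the internal shift by $1$ or $2$. Combining this with the convention (recalled above) that $d$ lowers the homological shift by $1$, I get that $d$ carries a summand of level $\ell$ into a sum of summands of levels $\ell$ and $\ell+1$.

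The key consequence is that for every integer $\ell$ the sum $\sigma_{\ge\ell}X$ of the summands of $X$ of level $\ge\ell$ is a subcomplex of the (minimal) complex $X$, with quotient the complex $\sigma_{<\ell}X$ supported on the remaining summands; both being complexes of projectives, the short exact sequence $0\to\sigma_{\ge\ell}X\to X\to\sigma_{<\ell}X\to 0$ splits in each homological degree and yields a distinguished triangle $\sigma_{\ge\ell}X\to X\to\sigma_{<\ell}X\to\sigma_{\ge\ell}X[1]$ in $\mathcal{C}_m$. Taking $\ell=0$ then gives the required truncation triangle, with $\tau^{\le 0}X\eqdef\sigma_{\ge 0}X\in\mathcal{C}_m^{\le 0}$ and $\tau^{\ge 1}X\eqdef\sigma_{<0}X\in\mathcal{C}_m^{\ge 1}$ directly from the definitions of these subcategories. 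The shift-compatibility inclusions $\mathcal{C}_m^{\le -1}\subseteq\mathcal{C}_m^{\le 0}$ and $\mathcal{C}_m^{\ge 1}\subseteq\mathcal{C}_m^{\ge 0}$ are immediate, since a complex of level $\ge 1$ is a fortiori one of level $\ge 0$ and $[1]$ raises all levels by $1$. For the orthogonality $\Hom_{\mathcal{C}_m}(X,Y)=0$ when $X\in\mathcal{C}_m^{\le 0}$ and $Y\in\mathcal{C}_m^{\ge 1}$, I would represent $X$ and $Y$ by complexes all of whose summands have level $\ge 0$, resp.\ $\le -1$; by \cref{rem:generating-morphisms} a nonzero graded map between two summands preserves the homological shift and raises the internal shift by $0$, $1$, or $2$, hence never lowers the level, so a chain map $X\to Y$ can have no nonzero component and already vanishes at the chain level. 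Boundedness is clear: a finite complex has only finitely many summands, hence finitely many levels, so every object lies in $\mathcal{C}_m^{\le N}\cap\mathcal{C}_m^{\ge -N}$ for $N$ large.

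The only genuinely delicate point is the bookkeeping in the reduction step — one must combine the homological-shift convention with the internal-degree bounds of \cref{rem:generating-morphisms} exactly in order to conclude that the differential of a minimal complex is level-nondecreasing, which is what turns the brutal truncation by level into an honest subcomplex/quotient-complex decomposition. Everything else is formal once minimality has been invoked, so I expect no real obstacle beyond fixing the conventions precisely.
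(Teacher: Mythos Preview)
Your argument is correct and is precisely the kind of verification the paper has in mind: the paper omits the proof entirely, saying only that it ``is easily checked using the calculations in \cref{rem:generating-morphisms} and standard techniques involving categories of complexes.'' Your reduction to minimal complexes, the observation that the differential is level-nondecreasing (since its nonzero entries have internal degree $1$ or $2$), and the resulting filtration by level are exactly those standard techniques, and your orthogonality and boundedness checks are the expected ones.
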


We call this \(t\)-structure the \defn{standard \(t\)-structure} on \(\mathcal{C}_m\).
Set \(\mathcal{A}_m \eqdef \mathcal{C}^{\leq 0}_m \cap \mathcal{C}_m^{\geq 0}\) to be the heart of this \(t\)-structure; this is the smallest full subcategory of \(\mathcal{C}_m\) closed under isomorphism, which contains all linear complexes.
It follows that \(\mathcal{A}_m\) is abelian.

Also record the following statements for future use; the first may be found in~\cite[Sect.~1.3]{bel.ber.del:82}.

\begin{defprop}
\label{defprop:cohomology}
Let \((\mathcal{C}^{\le 0}, \mathcal{C}^{\geq 0})\) be a \(t\)-structure on a triangulated category \(\mathcal{C}\) with heart~\(\mathcal{A} \eqdef \mathcal{C}^{\leq 0} \cap \mathcal{C}^{\geq 0}\).
For each integer \(i\), we have the \(i\)th truncation functors
\[
\tau^{\leq i} \colon \mathcal{C} \to \mathcal{C}^{\leq i} \quad\text{and}\quad \tau^{\geq i} \colon \mathcal{C} \to \mathcal{C}^{\geq i},
\]
which are right and left adjoint respectively to the inclusion functors \(\mathcal{C}^{\leq 0} \to \mathcal{C}\) and \(\mathcal{C}^{\geq 0} \to \mathcal{C}\).
The \(i\)th \defn{cohomology functor} is defined to be the composition \(\tau^{\leq i} \circ \tau^{\geq i}\).
It is canonically isomorphic to the composition \(\tau^{\geq i} \circ \tau^{\leq i}\), and maps an object of \(\mathcal{C}\) to an object of \(\mathcal{A}\).
\end{defprop}

The next statement, easy to check from the definitions, is mentioned \eg in~\cite[Sect.~3.3]{bayer2011tour}.

\begin{defprop}
\label{defprop:cohomology-filtration}
Let \(\mathcal{C}\) be any triangulated category, and \(\mathcal{A} \subset \mathcal{C}\) the heart of a bounded \(t\)-structure on \(\mathcal{C}\).
Let \(X\) be a non-zero object of \(\mathcal{C}\), and let \(k\) and \(\ell\) be the maximum and minimum integers respectively for which \(H^k(X)\) and \(H^\ell(X)\) are non-zero.
Then \(X\) has a unique \defn{cohomology filtration}, which is the unique filtration of the form
\[
\begin{tikzcd}[column sep=0.1cm]
	0  \arrow{rr} && X_{-\ell} \arrow{dl} \arrow{rr} && \cdots \arrow{rr} \arrow{dl} && X_{-k+1} \arrow{rr} \arrow{dl} && X_{-k} = X \arrow{dl}\\
	& H^\ell(X)[-\ell] \arrow[dashed]{ul}{+1} && H^{\ell+1}(X)[-\ell-1] \arrow[dashed]{ul}{+1} && \cdots \arrow[dashed]{ul} &&H^k(X)[-k] \arrow[dashed]{ul}{+1}
\end{tikzcd}
\]  
\end{defprop}


\subsection{The Khovanov--Seidel curve model}
\label{subsec:KSCurveModel}

\afterpage{
\begin{figure}
\centerline{
	\begin{tabular}{c@{\qquad}c@{\qquad}c}
		Spherical objects in bounded 
		&
		&
		\\
		homotopy category of graded projective 
		&
		$\smash{\xrightarrow[\quad\text{combinatorial model}\quad]{}}$
		&
		Curves
		\\
		modules over the zigzag algebra
		&
		&
                \\
		\begin{tikzcd}[ampersand replacement=\&, row sep=-0.1cm, column sep=0.5cm]
			\&           \& P_3 \langle -1 \rangle\arrow{dr} \& \\
			\&           \&                      \& P_2\\
			P_1\langle -5 \rangle\arrow{r}    \& P_1\langle -3 \rangle\arrow{r}   \& P_1 \langle -1 \rangle\arrow{ur} \& 
		\end{tikzcd}
		&
		&
		\raisebox{-.5cm}{\includegraphics[scale=1.3]{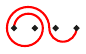}}
		\\[.5cm]
		{\scriptsize cohomology} $\Bigg\downarrow$ {\scriptsize filtration}
		&
		&
		$\Bigg\downarrow$ {\scriptsize cut}
		\\[.5cm]
		Heart of the standard $t$-structure
		&
		$\xrightarrow[\quad\text{combinatorial model}\quad]{}$
		&
		Wiggly arcs
		\\
		\begin{tikzcd}[ampersand replacement=\&, row sep=-0.1cm, column sep=0.5cm]
			P_1\langle -1 \rangle \arrow{dr}\& \\
			\& P_2 \\
			P_3 \langle -1 \rangle \arrow{ur}
		\end{tikzcd}
		&
          	&
		\raisebox{-.5cm}{\includegraphics[scale=1.3]{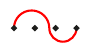}}
		\\
		\(P_1\)
		&
		&
		\raisebox{-.5cm}{\includegraphics[scale=1.3]{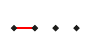}	}
	\end{tabular}
}
\caption{An illustration of the Khovanov--Seidel curve model and \cref{prop:cohomology-pieces}.}
\label{fig:curveModel}
\end{figure}
}

In this section we briefly recall how to recover an object of \(\mathcal{C}_{n+1}\) from a wiggly arc on \(n+2\) points on a line.
In fact, the procedure works on more general curves as well (namely, those that are not necessarily \(x\)-monotone).

Consider \(n+2\) points arranged in a straight horizontal line, and labelled \(0,1, \ldots, n+1\) as in~\cref{sec:combinatorics}.
In addition to the wiggly arcs, consider isotopy classes of curves whose endpoints lie on distinct marked points, and which do not cross themselves or any marked point in their interior.
For convenience, we refer to such an isotopy class as a \defn{curve}.

The Khovanov--Seidel correspondence (explained in Section 4 of~\cite{kho.sei:02}) associates objects of~\(\mathcal{C}_{n+1}\) (up to an overall grading and homological shift) to curves on a configuration of \(n+2\) marked points arranged in a straight line.
We illustrate this correspondence and~\cref{prop:cohomology-pieces} in~\cref{fig:curveModel}.
In particular, the line segment joining point \(i\) to point \(i+1\) is sent to the object \(P_{i+1}\) in the category \(\mathcal{C}_{n+1}\).
Moreover, the minimal intersection number between any pair of curves is the number of morphisms between the associated objects~\cite[Prop.~4.9]{kho.sei:02}.
In fact, the proposition in~\cite{kho.sei:02} is a stronger result: by considering \emph{bigraded curves}, we obtain \emph{bigraded intersection numbers} between the two associated objects.

Any object \(X\) obtained via the Khovanov--Seidel correspondence is~\defn{spherical}; that is, \(\Hom^{g,0}(X,X)\) is \(k\) when \(g = 0,2\), and \(\Hom^{g,h}(X,X) = 0\) in all other cases.
A spherical object \(X\) is necessarily indecomposable as \(\Hom^{0,0}(X,X) = k\).

\begin{proposition}
\label{prop:cohomology-pieces}
Let \(\alpha\) be any curve, and let \(X = X^{\bullet}\) be a chain complex that corresponds to \(\alpha\) under the Khovanov--Seidel reconstruction.
Recall that for any integer \(j\), the cohomology object~\(H^j(X)\) is an object of \(\mathcal{A}_m\), and hence a direct sum of indecomposable objects of \(\mathcal{A}_m\).
\begin{enumerate}
\item The object \(X\) is a (spherical) object of \(\mathcal{A}_m\) up to shift if and only if \(\alpha\) is a wiggly arc. In this case, \(H^j(X) \neq 0\) for exactly one \(j\), such that \(X \cong H^j(X)[-j]\).
\item If \(\alpha\) is a general curve, each indecomposable summand of \(H^j(X)\) for any \(j\) is a spherical object of \(\mathcal{A}_m\) that arises from a wiggly arc, up to shift.
\end{enumerate}
\end{proposition}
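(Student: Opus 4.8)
The plan is to isolate a purely algebraic input about minimal complexes in $\mathcal{C}_m$ from the genuinely geometric input coming from the Khovanov--Seidel recipe. Write $X=X^\bullet$ in minimal form as a bigraded projective $A_m$-module $\bigoplus M_i\langle g\rangle[h]$ with differential $d$. By \cref{rem:generating-morphisms}, the only nonzero morphism spaces among the generators $P_i$ are $\Hom^{0,0}(P_i,P_i)$, $\Hom^{2,0}(P_i,P_i)$ and $\Hom^{1,0}(P_i,P_{i\pm1})$, and minimality kills the $\Hom^{0,0}$ (identity) components of $d$. So every component of $d$ either preserves the \emph{level} $\ell\eqdef g+h$ of a summand (the arrows $P_i\to P_{i\pm1}$ raise $g$ by $1$ and lower $h$ by $1$) or raises it by $1$ (the loops $P_i\to P_i$ raise $g$ by $2$ and lower $h$ by $1$). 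Hence, for every $\ell$, the sub-bigraded-module $F_\ell$ spanned by the summands of level $\ge\ell$ is a subcomplex, and I get a finite filtration $0=F_{\ell_{\max}+1}\subseteq\cdots\subseteq F_{\ell_{\min}}=X$ whose $\ell$-th subquotient $\mathrm{gr}_\ell X\eqdef F_\ell/F_{\ell+1}$ is a complex of pure level $\ell$, hence (after an overall shift) a linear complex, hence an object of $\mathcal{A}_m$ sitting in cohomological degree $-\ell$.

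Next I would observe that, because the subquotients $\mathrm{gr}_\ell X$ lie in pairwise distinct cohomological degrees, all connecting maps in the long exact cohomology sequences of $0\to F_{\ell+1}\to F_\ell\to\mathrm{gr}_\ell X\to0$ vanish; therefore $H^{-\ell}(X)\cong\mathrm{gr}_\ell X$ for all $\ell$ and $H^j(X)=0$ for every other $j$. By uniqueness in \cref{defprop:cohomology-filtration}, this level filtration is the cohomology filtration of $X$. In particular $X$ lies in $\mathcal{A}_m$ up to shift if and only if all generators of $X^\bullet$ share a single level.

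It then remains to translate "single level'' into "wiggly arc''. Here I would use the explicit Khovanov--Seidel recipe: for a taut representative of $\alpha$, the generators of $X^\bullet$ are indexed by the transverse intersections of $\alpha$ with the horizontal line $L$ through the marked points (together with the local data at the two endpoints), the generator of an intersection lying in the strip between $p$ and $p+1$ being $P_{p+1}$ up to shift, and its bigrading being read off from the winding of $\alpha$. The point to extract is: traversing $\alpha$ from one endpoint to the other, the level $g+h$ of the running generator stays constant along each $x$-monotone arc of $\alpha$ and jumps by $\pm2$ at each U-turn (vertical tangency); moreover, cutting $\alpha$ along its U-turns into $x$-monotone pieces $\alpha_1,\dots,\alpha_r$ as in the Introduction partitions the generators of $X^\bullet$ by level, with $\mathrm{gr}_\ell X$ equal, up to shift, to the direct sum of the Khovanov--Seidel objects of those $\alpha_s$ of level $\ell$.

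Granting this, part~(1) is immediate: if $\alpha$ is a wiggly arc it is $x$-monotone with no U-turn, so $X^\bullet$ has a single level $\ell$, hence $X=\mathrm{gr}_\ell X$ lies in $\mathcal{A}_m[-\ell]$ with $X\cong H^{-\ell}(X)[\ell]$ and $H^j(X)=0$ otherwise; since every Khovanov--Seidel object is spherical, and spherical objects are indecomposable (as $\Hom^{0,0}(X,X)=k$), $H^{-\ell}(X)=X[\ell]$ is the desired spherical indecomposable. Conversely, if $X$ lies in $\mathcal{A}_m$ up to shift then $X^\bullet$ has a single level, hence $\alpha$ has no U-turn, i.e.\ $\alpha$ is a wiggly arc. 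For part~(2), for arbitrary $\alpha$ we get $H^j(X)=\mathrm{gr}_{-j}X$, a direct sum of Khovanov--Seidel objects of wiggly arcs up to shift, each of which is spherical and indecomposable, so these are exactly its indecomposable summands. The main obstacle is the third paragraph: verifying in the Khovanov--Seidel model that the level is locally constant in the $x$-direction and jumps by $\pm2$ exactly at vertical tangencies, and that "cut at U-turns'' corresponds to "pass to $\mathrm{gr}_\ell$'', which requires fixing a taut representative and carefully tracking the winding-number definition of the homological grading together with the internal grading through the local model at a U-turn. The other steps are formal once this is in place.
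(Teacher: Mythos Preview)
Your proposal is correct and follows essentially the same route as the paper: both analyze the differential of the Khovanov--Seidel complex to see that arrow components $P_i\to P_{i\pm1}$ preserve the level while loop components $P_i\to P_i$ raise it, identify the $t$-structure cohomology with the level truncation, and then translate this back to cutting $\alpha$ at its U-turns into $x$-monotone pieces. Your filtration $F_\ell$ and long exact sequence are a slightly more formal packaging of the paper's direct claim that ``computing $H^j(X)$ simply amounts to truncating the complex to level $j$''; the paper also dispatches your ``main obstacle'' by quoting the explicit Khovanov--Seidel recipe, which produces exactly the two differential types.

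One minor numerical slip: the level jump at a U-turn is $\pm 1$, not $\pm 2$, since the loop map $P_i\langle g\rangle[h]\to P_i\langle g+2\rangle[h-1]$ takes level $g+h$ to $g+h+1$. This does not affect your argument (adjacent pieces along $\alpha$ still lie in distinct levels, so $\mathrm{gr}_\ell X$ still splits as the direct sum of the Khovanov--Seidel objects of the pieces at level $\ell$).
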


\begin{proof}
  Let \(X\) be a complex built from an arbitrary (not necessarily wiggly) curve \(\alpha\).
  Recall from~\cite{kho.sei:02} that \(X\) is spherical, and hence indecomposable.
  Let us now analyse the indecomposable summands of its cohomology objects.
  
  We recall that the procedure to build up \(X\) from \(\alpha\) explicitly describes the differential.
  We analyse the possible non-zero components of the differential, checking how each interacts with the level.
  The only two possibilities are as follows:
  \begin{enumerate}
  \item a map \(P_i\langle g \rangle[h] \to P_{i\pm1}\langle g+1 \rangle[h-1]\), namely multiplication on the right by the path~\((i,i\pm1)\), which does not change the level; or
  \item a map \(P_i\langle g \rangle[h] \to P_{i}\langle g+2 \rangle[h-1]\), namely multiplication on the right by the ``loop'' path~\((i,i\pm1, i)\), which raises the level by one.
  \end{enumerate}
  The first possibility appears each time the curve wiggles either above the \(i\)th point or below the~\((i-1)\)th point.
  The second possibility appears each time the curve makes a U-turn around the right of \(i\)th or the left of the \((i-1)\)th point.

  Computing the \(j\)th cohomology \(H^j(X)\) simply amounts to truncating the complex to level \(j\).
  Explicitly, the truncation is a new chain complex in which we only keep the terms of \(X\) that have level \(j\), together with all maps between them, and forget all the other terms.
  From the calculation above, we see that the resulting complex only has components in its differential of the first type, namely \(P_i\langle g \rangle[h] \to P_{i\pm1}\langle g+1 \rangle[h-1]\) where \(g + h = j\).
  We kill all components of the second type (the ``loop'' components).
  The resulting chain complex breaks up into indecomposable summands, each of which is an object of~\(\mathcal{A}_m[j]\), and this direct sum is \(H^j(X)[j]\).
  
  The object \(X\) is in the heart up to shift if and only if it is concentrated in a single level \(j\).
  Truncating to level \(j\) as described above, we do not delete any of the terms of \(X\).
  This can happen if and only if the differential of \(X\) contains no ``loop'' components,
  which appear in the complex if and only if \(\alpha\) makes a U-turn around one of the \(n+2\) points.
  Thus \(X\) is in the heart up to shift if and only if \(\alpha\) is  \(x\)-monotone.
  This proves the first part of the proposition.

  More generally, consider the indecomposable pieces of the level-\(j\) truncations of \(X\) for various~\(j\), up to shift.
  As explained above, in terms of the curve \(\alpha\), this corresponds to breaking up the curve~\(\alpha\) into \(x\)-monotone portions.
  More precisely, we begin at an endpoint \(p_0\) of \(\alpha\) and follow the curve until it either ends at a point \(p_1\), or stops being \(x\)-monotone by making a U-turn around a point \(p_1\).
  In this case, we take the portion of \(\alpha\) between \(p_0\) and \(p_1\), calling it \(\alpha_1\), and setting its endpoints to be \(p_0\) and \(p_1\) respectively.
  Note that \(p_0 \neq p_1\) because \(\alpha\) is \(x\)-monotone between~\(p_0\) and~\(p_1\).
  We continue with the remainder of \(\alpha\) starting at \(p_1\), greedily taking the maximal \(x\)-monotone portion each time, to obtain pieces \(\alpha_1,\alpha_2, \ldots, \alpha_k\), which are all wiggly arcs.
  These wiggly arcs correspond to complexes \(X_1, \ldots, X_k\), which are spherical objects of \(\mathcal{A}_m\), up to shift.
  It is clear from the construction of the associated complex that these objects, up to shift, are exactly the indecomposable pieces of the cohomology truncations \(H^j(X)\) for various \(j\).
  This argument proves the second part of the proposition.
\end{proof}


\subsection{Cohomology filtrations of spherical objects}
\label{subsec:cohomologyFiltrations}

In this section we explain how wiggly pseudotriangulations govern the combinatorial structure of a natural categorical decomposition of spherical objects of \(\mathcal{C}_m\).

For a general object \(X\) of \(\mathcal{C}_m\), the indecomposable factors of the cohomology filtration can be arbitrary.
However, it turns out that the cohomology filtration is highly constrained if \(X\) is spherical, and these constraints are given by wiggly pseudotriangulations.

\begin{proposition}
\label{prop:hn-filtration}
  Let \(X \in \mathcal{C}_{n+1}\) be a spherical object.
  Consider its cohomology filtration
  \[
    \begin{tikzcd}[column sep=0.3cm]
      0 \arrow{rr} && X_{-\ell} \arrow{dl} \arrow{rr} && \cdots \arrow{rr} \arrow{dl} && X_{-k} = X \arrow{dl}\\
      & H^\ell(X)[-\ell] \arrow[dashed]{ul}{+1} && H^{\ell+1}(X)[-\ell-1] \arrow[dashed]{ul}{+1} & \cdots & H^k(X)[-k] \arrow[dashed]{ul}{+1}
    \end{tikzcd}
  \]
  Consider the set of all possible indecomposable summands of the cohomology objects \(\{H^j(X)\}\) taken without multiplicity.
  Let \(T\) be corresponding set of wiggly arcs, as explained in~\cref{prop:cohomology-pieces}.~Then
  \begin{enumerate}
  \item 
    \(T\) is a subset of a wiggly pseudotriangulation,
  \item \(T\) contains at most one of the two external wiggly arcs.
  \end{enumerate}
\end{proposition}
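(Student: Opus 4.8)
The plan is to translate the two assertions into statements about minimal intersection numbers between wiggly arcs, using the Khovanov--Seidel dictionary recalled in \cref{subsec:KSCurveModel} together with \cref{prop:cohomology-pieces}. For part (1), I would argue as follows. Let $X$ be spherical, with cohomology filtration as in \cref{defprop:cohomology-filtration}, and let $T$ be the set of distinct wiggly arcs arising (up to shift) as indecomposable summands of the various $H^j(X)$, as identified in the second part of \cref{prop:cohomology-pieces}. By \cref{prop:wigglyPseudotriangulations}, to show $T$ lies in a wiggly pseudotriangulation it suffices to show that the arcs of $T$ are pairwise compatible (this automatically bounds $|T| \le 2n-1$ as well). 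So the heart of the proof is: \emph{if $\alpha_p$ and $\alpha_q$ are two of the $x$-monotone pieces obtained by cutting a single curve $\alpha$ at its U-turns, then $\alpha_p$ and $\alpha_q$ are compatible.} Here I would invoke \cref{prop:cohomology-pieces}(2): the curve $\alpha$ is recovered as a spherical object $X$, and its cohomology pieces correspond precisely to the greedy $x$-monotone portions $\alpha_1, \dots, \alpha_k$ of $\alpha$. Two such portions are sub-curves of the \emph{same} embedded curve $\alpha$, hence their interiors are disjoint (so they are non-crossing), and consecutive portions share exactly an endpoint at a U-turn point $p_i$ — but a U-turn means $\alpha$ approaches and leaves $p_i$ from the \emph{same side}, so $\alpha_{i}$ ends at $p_i$ and $\alpha_{i+1}$ starts at $p_i$ with the next piece going back in the same horizontal direction; in the language of \cref{def:compatible} this is exactly the ``allowed'' configuration where two arcs share an endpoint but neither starts where the other ends, i.e.\ they are pointed. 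Non-consecutive portions do not even share endpoints. Hence all pairs are pointed and non-crossing, i.e.\ compatible, which is part (1).

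For part (2), I would argue by contradiction: suppose both external arcs $\alpha_\mathrm{top} = (0,n+1,[n],\varnothing)$ and $\alpha_\mathrm{bot} = (0,n+1,\varnothing,[n])$ appear among the cohomology pieces of $X$. Since they are shifts of the objects $P_1 \star \cdots$ corresponding to the ``all above'' and ``all below'' arcs joining $0$ to $n+1$, they would both be $x$-monotone pieces of the single curve $\alpha$ underlying $X$. But $\alpha_\mathrm{top}$ and $\alpha_\mathrm{bot}$ together form a closed curve (a digon) enclosing all interior marked points, so a single non-self-crossing curve $\alpha$ containing both as greedy $x$-monotone portions would either be closed or self-intersect, contradicting that a curve in the Khovanov--Seidel model has two distinct endpoints on marked points and no self-intersections. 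Alternatively, and perhaps more cleanly, one can run the categorical argument: $\alpha_\mathrm{top}$ and $\alpha_\mathrm{bot}$ correspond to objects whose Grothendieck classes are $\pm$ the same class, and the two external $\b g$-vectors are both $\b 0$ by \cref{def:gvectors}; a spherical object has indecomposable cohomology pieces distinguished by their (bigraded) classes, and having both external arcs as pieces of a single indecomposable $X$ would force a class coincidence incompatible with $X$ being built from a single curve. I would choose whichever of these two phrasings is cleanest given the level of detail in \cref{subsec:KSCurveModel}.

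The main obstacle, and the step needing the most care, is making rigorous the claim that ``two $x$-monotone sub-curves of one embedded curve $\alpha$ are compatible in the sense of \cref{def:compatible}'', including the subtle point that the shared endpoint at a U-turn is the \emph{pointed} rather than the \emph{non-pointed} case. This requires being precise about the convention for cutting at U-turns: at a U-turn around a point $p$, the curve approaches $p$ from one horizontal side and retreats to the same side, so the two incident pieces both have $p$ as a \emph{left} endpoint (if the U-turn is around the right of $p$) or both as a \emph{right} endpoint (if around the left of $p$) — in either case they do not realize the forbidden configuration ``$i = j'$'' of \cref{def:compatible}. I would make this explicit with a short case analysis keyed to the two differential-component types in the proof of \cref{prop:cohomology-pieces} (the loop path $(i, i\pm 1, i)$ is exactly what records a U-turn, and its endpoints on the two sides determine which of $i$ is a common left or common right endpoint). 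The rest — non-crossing of disjoint sub-curves, the reduction of ``lies in a pseudotriangulation'' to pairwise compatibility via \cref{prop:wigglyPseudotriangulations}, and the external-arc exclusion — is then routine.
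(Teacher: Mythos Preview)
Your approach is the same as the paper's: reduce part~(1) to pairwise compatibility of the $x$-monotone pieces of the underlying curve~$\alpha$, and argue part~(2) by observing that the two external arcs together would force a closed component.  The argument for part~(2) is fine.

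There is, however, a genuine gap in part~(1).  Your assertion that ``non-consecutive portions do not even share endpoints'' is false.  For instance, if~$\alpha$ starts at a marked point~$q$ and later makes a U-turn at~$q$, then the initial piece and the two pieces adjacent to that U-turn all have~$q$ as an endpoint, and some of these are non-consecutive.  More generally, a point can simultaneously be an endpoint of~$\alpha$ and a U-turn point, and one must rule out that this produces a non-pointed pair.  The paper does exactly this: given any two pieces~$\beta = (i,j,A,B)$ and~$\beta' = (i',j',A',B')$ with~$i' = j$, it enumerates the four combinations of reasons why~$\beta$ could terminate on the right at~$j$ (endpoint of~$\alpha$ going left, or right U-turn at~$j$) and why~$\beta'$ could start on the left at~$i'$ (endpoint of~$\alpha$ going right, or left U-turn at~$i'$), and checks that each combination forces~$\alpha$ to self-intersect or to have a closed component.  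Your plan to ``make this explicit with a short case analysis keyed to the two differential-component types'' is heading in the right direction, but as written you have only treated consecutive pieces and have not confronted the cases where an endpoint of~$\alpha$ coincides with a U-turn point, or where~$\alpha$ makes a left U-turn and a right U-turn at the same point.

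A smaller slip: you write that at a U-turn around the right of~$p$ the two incident pieces both have~$p$ as a \emph{left} endpoint.  It is the other way round --- a right U-turn at~$p$ means the curve approaches~$p$ from the left, overshoots, and returns leftward, so both pieces have~$p$ as their \emph{right} endpoint (and dually for a left U-turn).  Your conclusion that the pair is pointed is still correct, but getting the direction right is exactly what feeds into the four-case analysis above.
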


\begin{proof}
  The proof of this proposition is an easy consequence of analysing the cohomology filtration from the point of view of curves on the disk with marked points.
  Let \(X_1, \ldots, X_k\) be the objects in~\(T\) without repetition, which arise from wiggly arcs \(\alpha_1, \ldots, \alpha_k\) as explained in the proof of~\cref{prop:cohomology-pieces}.
  Let \(\beta = (i,j,A,B)\) and \(\beta' = (i',j', A',B')\) be two distinct wiggly arcs among~\(\{\alpha_1, \ldots,\alpha_k\}\).
  We check that \(\beta\) and \(\beta'\) are compatible in the sense of~\cref{def:compatible}.
  
  Recall that \(\beta\) and \(\beta'\) are two pieces of the curve \(\alpha\).
  Therefore \(\beta\) and \(\beta'\) cannot cross, because~\(\alpha\) does not cross itself.
  Suppose that \(\beta\) and \(\beta'\) are non pointed, and without loss of generality, suppose that \(i' = j\).
  This means that the piece \(\beta\) was cut from \(\alpha\) either because \(i' = j\) is an endpoint of \(\alpha\), or because \(\alpha\) loops around the right of the point \(i'\).
  Moreover, if \(i'\) is an endpoint of \(\alpha\), then the portion of \(\alpha\) closest to \(i'\) lies to the left of \(i'\).
  
  Similarly, the piece \(\beta'\) was cut from \(\alpha\) either because \(i' = j\) is an endpoint of \(\alpha\), or because~\(\alpha\) loops around the left of the point \(j\).
  Moreover, if \(j\) is an endpoint of \(\alpha\), then the portion of \(\alpha\) closest to \(j\) lies to the right of \(j\).

\pagebreak
  We have four possible combinations:
  \begin{enumerate}
  \item \(i' = j\) is an endpoint such that \(\alpha\) travels left starting at \(i'\), and \(i' = j\) is an endpoint such that \(\alpha\) travels right starting at \(j\);
  \item \(i' = j\) is an endpoint such that \(\alpha\) travels left starting at \(i'\), and \(\alpha\) loops around the left of the point \(j\);
  \item \(\alpha\) loops around the right of the point \(i' = j\), and \(i' = j\) is an endpoint such that \(\alpha\) travels right starting at \(j\); 
  \item \(\alpha\) loops around the right of the point \(i' = j\), and \(\alpha\) loops around the left of the point \(j\).
  \end{enumerate}
  It is clear that none of these possibilities can occur within the curve \(\alpha\), and hence \(\beta\) and \(\beta'\) must be pointed.
  Therefore \(T = \{\alpha_1, \ldots, \alpha_k\}\) is a subset of a wiggly pseudotriangulation.

  Now suppose for contradiction that \(T\) contains both external edges.
  Then some portion of \(\alpha\) curves around each of the extreme endpoints \(0\) and \(n+1\).
  Consider the outermost strands of \(\alpha\) that curve around \(0\) and \(n+1\) respectively.
  These must then connect to each other, resulting in~\(\alpha\) having a separate closed component that winds around the outside of the \(n+2\) points.
  This is not possible because \(\alpha\) is a connected curve with two endpoints.
  Therefore \(T\) cannot contain both external edges.
\end{proof}

\begin{remark}
Note that in \cref{def:wigglyComplex}, we adopted the convention that the faces of the wiggly complex contain none of the two external wiggly arcs.
In contrast, the wiggly dissections of \cref{prop:hn-filtration} contain at most one of the two external wiggly arcs.
Combinatorially, this is just the Cartesian product of the wiggly complex with an edge.
\end{remark}


\subsection{Categorical constraints on compatibility of curves}
\label{subsec:categoricalCompatibility}

In this section, we give a categorical criterion for two wiggly arcs to be compatible as in~\cref{def:compatible}.
As such, this section is somewhat dual to the previous section: the criterion in this section determines whether a pair of wiggly arcs can belong together in a single wiggly pseudotriangulation, whereas the previous section generated a wiggly pseudotriangulation given any object of the category.

For two objects \(X\) and \(X'\) of \(\mathcal{C}_m\), let \(\mathbb{E}^\ell(X,X')\) denote the direct sum over all level-\(\ell\) morphisms from \(X\) to \(X'\).
That is,
\[\mathbb{E}^\ell(X,X') = \bigoplus_{g + h = \ell}\Hom^{g,h}(X,X').\]

Recall that two wiggly arcs \(\alpha\) and \(\alpha'\) are said to be compatible (as in~\cref{def:compatible}) if they are non-crossing and pointed.
The incompatibility degree of \(\alpha\) and \(\alpha'\) (as in~\cref{def:incompatibilityDegree}) is \(1\) if \(\alpha\) and \(\alpha'\) are non pointed, and the number of crossings of \(\alpha\) and \(\alpha'\) otherwise.
\begin{proposition}
\label{prop:ext1}
Let \(X\) and \(X'\) be objects in the standard heart \(\mathcal{A}_m\) corresponding to two wiggly arcs \(\alpha\) and \(\alpha'\) respectively.
  Then \(\alpha\) and \(\alpha'\) are compatible if and only if \(\mathbb{E}^1(X, X') = 0\).
  More generally, the incompatibility degree of \(\alpha\) and \(\alpha'\) is the dimension of \(\mathbb{E}^1(X,X')\).
\end{proposition}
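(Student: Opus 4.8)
The plan is to compute $\dim_k\mathbb{E}^1(X,X')$ from the Khovanov--Seidel curve model of~\cref{subsec:KSCurveModel}, using a refinement of the intersection pairing that records the \emph{level} $g+h$ of a morphism. Since $\mathbb{E}^1(X,X')=\bigoplus_{g+h=1}\Hom^{g,h}(X,X')$, it suffices to show that the level-$1$ part of $\Hom^{\bullet,\bullet}(X,X')$ has total dimension $\delta(\alpha,\alpha')$; the first assertion of the proposition (compatibility $\iff \mathbb{E}^1(X,X')=0$) is then the case $\delta(\alpha,\alpha')=0$.

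First I would invoke the bigraded form of~\cite[Prop.~4.9]{kho.sei:02}. As $X$ and $X'$ lie in the standard heart $\mathcal{A}_m$ (and not merely up to shift), the curves $\alpha$ and $\alpha'$ acquire a canonical bigrading, and $\sum_{g,h}\dim_k\Hom^{g,h}(X,X')\,q^g t^h$ equals the bigraded geometric intersection number of $\alpha$ and $\alpha'$, namely $\sum_{p}q^{g(p)}t^{h(p)}$ over the intersection points $p$ of a representative of $\{\alpha,\alpha'\}$ in minimal position; so $\dim_k\mathbb{E}^1(X,X')$ is the number of points $p$ with $g(p)+h(p)=1$. The next step is a local analysis: since $\alpha$ and $\alpha'$ are $x$-monotone, each intersection point is either an interior transverse crossing---these are exactly the crossings counted in~\cref{def:incompatibilityDegree}---or a coincidence of an endpoint of $\alpha$ with an endpoint of $\alpha'$, and tracing the Khovanov--Seidel reconstruction at each such point determines its level. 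One checks that an interior crossing, and a coincidence of the form ``the left endpoint of one arc equals the right endpoint of the other'' (the non pointed case $i=j'$ or $i'=j$, whose contribution is built from a length-one path of the zigzag quiver), both have level $1$, whereas the endpoint coincidences permitted by pointedness (two arcs sharing a left endpoint, or two arcs sharing a right endpoint, whose contributions involve instead an idempotent or a length-two ``loop'' path at the common boundary edge, as in~\cref{rem:generating-morphisms}) have level different from $1$.

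I would then conclude by the same bookkeeping used after~\cref{def:incompatibilityDegree}: a non pointed pair has no interior crossing (the interiors of its curves meet only at the shared point) and precisely one level-$1$ endpoint coincidence, so $\dim_k\mathbb{E}^1(X,X')=1=\delta(\alpha,\alpha')$; a pointed but crossing pair has its level-$1$ points in bijection with its crossings (any shared endpoint being a shared left or shared right endpoint, hence of level $\neq 1$), so $\dim_k\mathbb{E}^1(X,X')=\delta(\alpha,\alpha')$; and a compatible pair has neither crossings nor a non pointed coincidence, hence $\mathbb{E}^1(X,X')=0=\delta(\alpha,\alpha')$. The main obstacle is the level computation in the second step: it forces one to set up the Khovanov--Seidel reconstruction of $X_\alpha$ and $X_{\alpha'}$ explicitly enough to pin down the bidegree attached to each type of intersection point, and the distinction between the two kinds of endpoint coincidence is precisely where ``pointed'' versus ``non pointed'' surfaces categorically. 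A more elementary but longer alternative, bypassing~\cite[Prop.~4.9]{kho.sei:02}, is to write $X_\alpha$ and $X_{\alpha'}$ as explicit complexes, decompose a degree-$(g,h)$ chain map into components that are scalar multiples of the generating morphisms of~\cref{rem:generating-morphisms}, and count directly the dimension of the level-$1$ cohomology of the associated Hom-complex, checking it against $\delta(\alpha,\alpha')$.
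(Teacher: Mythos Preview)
Your proposal is correct and follows essentially the same route as the paper: invoke the bigraded form of~\cite[Prop.~4.9]{kho.sei:02} to identify $\dim_k\mathbb{E}^1(X,X')$ with the number of intersection points of level~$1$, then carry out a local analysis showing that interior crossings and non pointed endpoint coincidences contribute level~$1$ while shared-left or shared-right endpoint coincidences do not. The one point the paper makes explicit that you should not skip in practice is that the grading convention here (path length) differs from the one in~\cite{kho.sei:02}, so the bigraded intersection tables of~\cite[Lem.~3.20 and~4.12]{kho.sei:02} must be recomputed before the level specialisation $q_1=q$, $q_2=q^{-1}$ yields the claimed answer.
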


\begin{proof}
  The proof follows from a specialisation of the bigraded intersection number calculations that appear in Sections 3 and 4 of~\cite{kho.sei:02}.
  The bigraded intersection number as defined in~\cite[Sect.~3d]{kho.sei:02} between the curves \(\alpha\) and \(\alpha'\) is a polynomial in the variables \(q_1^{\pm}\), \(q_2^{\pm}\).
  It follows from~\cite[Prop.~4.9]{kho.sei:02} that this bigraded intersection number equals the Poincar{\'e} polynomial of the morphisms between \(X\) and \(X'\), namely the expression
  \begin{equation}
    \label{eq:bigraded-homs}
  \sum_{s_1,s_2}\dim(\Hom_{\mathcal{C}_m}(X,X'\langle -s_2 \rangle[s_1]))q_1^{s_1}q_2^{s_2}.    
\end{equation}
However, the grading convention in~\cite{kho.sei:02} is not the same as ours --- our grading is by path length, while the grading in~\cite{kho.sei:02} is one for arrows in one direction and zero for arrows in the opposite direction.
So we have to modify the gradings when we construct the complexes.
In particular, the table in~\cite[Lem.~3.20]{kho.sei:02} that computes the bigraded intersection numbers will change, and so will the table in~\cite[Lem.~4.12]{kho.sei:02} that computes the Poincar{\'e} polynomials of \(\Hom(P_k,-)\).

With our path-length grading, note that a morphism in \(\Hom_{\mathcal{C}_m}(X,X'\langle -s_2 \rangle[s_1])\) has level \(s_1 - s_2\); that is, it takes an object \(X\) of level \(0\) to the object \(X'\langle -s_2 \rangle[s_1]\) of level \(s_1 - s_2\).
  We can thus obtain the dimension of all morphisms of level \(\ell\) by setting \(q_1 = q\) and \(q_2 = q^{-1}\) and taking the coefficient of \(q^{\ell}\) in the expression~\eqref{eq:bigraded-homs}.
    The dimension of \(\mathbb{E}^1(X,X')\), is therefore the coefficient of~\(q\) under this specialisation.

It can be checked that under the path-length grading, the space of morphisms arising from an intersection of curves of ``crossing'' type as well as ``non pointed'' type contains exactly one morphism (up to equivalence and scalar multiplication) of level one; that is, a unique element (up to scalar multiplication) of~\(\mathbb{E}^1\).
Morphisms arising from other intersections do not contribute to~\(\mathbb{E}^1\).
This calculation proves the proposition.
\end{proof}

\begin{remark}
  \cref{prop:ext1} is very similar to the condition of being compatible in a cluster-tilting object of a cluster category (see, \eg~\cite[Cor.~4.3]{bua.mar.rei.ea:06}) or in a $\tau$-tilting object (see, \eg~\cite{AdachiIyamaReiten}).
  The condition in terms of crossings of curves already appears for some specializations (see, \eg~\cite{FominShapiroThurston, PaluPilaudPlamondon-surfaces}).
  However, in \cref{prop:ext1}, it depends on shifting the objects~\(X\) and~\(X'\) so that they both lie in the standard heart, which is an auxiliary step that is not required in cluster-tilting and $\tau$-tilting theory.
\end{remark}

\begin{remark}
  As stated,~\cref{prop:ext1} in combination with~\cref{prop:hn-filtration} is not expected to be true in greater generality.
  For instance, for the zigzag algebra of Dynkin type~\(D_4\), one can find two spherical objects with a non-trivial \(\mathbb{E}^1\) that nevertheless appear together in the cohomology filtration of another spherical object.
  However, we expect to see similar constraints on compatibility, which we hope to turn to in forthcoming work.
\end{remark}

Recall from~\cref{rem:incompatibility-deg} that two wiggly arcs \(\alpha\) and \(\alpha'\) are exchangeable if and only if their incompatibility degree is one.
Recall from~\cref{prop:uerp} the wiggly arcs \(\beta\) and \(\beta'\) associated to \(\alpha\) and \(\alpha'\).
Finally, recall from~\cref{lem:linearDependences} that the \(\b{g}\)-vectors of the wiggly arcs \(\alpha\), \(\alpha'\), \(\beta\), and \(\beta'\) satisfy a linear dependence, depending on their type.
Let us interpret this dependence categorically.

Let \(X\), \(X'\), \(Y\) and~\(Y'\) be spherical objects in the standard heart that correspond to \(\alpha\), \(\alpha'\), \(\beta\) and~\(\beta'\) respectively.
First, note by~\cref{prop:ext1} that \(X\) and \(X'\) are exchangeable if and only if there is a unique non-trivial \(\mathbb{E}^1\) between the corresponding objects of the standard heart.
Next, let us understand~\(Y\) and \(Y'\) categorically.

\begin{proposition}\label{prop:extensions}
For \(\alpha\), \(\alpha'\), \(\beta\), \(\beta'\), and \(X\), \(X'\), \(Y\), \(Y'\) as just discussed,
\begin{itemize}
\item if \(\alpha\) and \(\alpha'\) are not pointed, then for some integers \(g\) and \(g'\), we have extensions
  \[0 \to X' \to Y \to X\langle g \rangle[-g] \to 0 \quad \text{and}\quad 0 \to X \to Y' \to X'\langle g' \rangle[-g'] \to 0;\]
\item if \(\alpha\) and \(\alpha'\) are crossing, then for some integer \(g\), we have an extension
  \[0 \to X' \to (Y \oplus Y') \to X\langle g \rangle[-g]\to 0.\]
\end{itemize}
\end{proposition}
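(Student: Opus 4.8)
The plan is to construct these short exact sequences explicitly from the curve model, using the fact that the wiggly arcs $\beta$ and $\beta'$ are literally built by concatenating (pieces of) $\alpha$ and $\alpha'$, as recorded in \cref{prop:uerp}. The guiding principle is that in the Khovanov--Seidel model, a nontrivial level-one morphism $X' \to X\langle g\rangle[-g]$ (which exists precisely because $\alpha$ and $\alpha'$ are exchangeable, by \cref{prop:ext1}) has a mapping cone, and that cone should be identified — via its image under the curve model — with the object(s) attached to $\beta$ and $\beta'$. So the first step is: using \cref{prop:ext1} and $\delta(\alpha,\alpha')=1$, fix a generator $\phi \in \mathbb{E}^1(X',X)$ and form the triangle $X' \xrightarrow{\phi} X\langle g\rangle[-g] \to C \to X'[1]$ for the appropriate shift $g$ determined by the bigraded intersection data.

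The second step is to identify the cone $C$ geometrically. Here I would invoke the standard fact (again from Sections~3--4 of~\cite{kho.sei:02}, or its refinements) that taking a cone along the unique morphism dual to a minimal intersection point corresponds on the curve side to \emph{smoothing} that intersection — i.e.\ resolving the crossing (or the shared endpoint) of $\alpha$ and $\alpha'$. In the crossing case, smoothing the single crossing of $\alpha$ and $\alpha'$ yields exactly the pair of arcs $\beta$ and $\beta'$ of \cref{prop:uerp} (the lower hull and the upper hull), so $C$ is the object attached to $\beta \sqcup \beta'$, which in the category is $Y \oplus Y'$ (possibly up to a uniform shift that one absorbs into $g$). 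This gives the triangle $0 \to X' \to Y\oplus Y' \to X\langle g\rangle[-g] \to 0$; one then checks it is a short exact sequence in the abelian heart $\mathcal{A}_m$ rather than merely a triangle, using that $X$, $X'$, $Y$, $Y'$ all lie in the standard heart by hypothesis (so the triangle has no room to be anything but a short exact sequence — the connecting map lands in the right level). In the non-pointed case, the two arcs share an endpoint $j=i'$, and \cref{prop:uerp} gives $\beta$ and $\beta'$ as the two ways of ``going straight through'' that shared point; smoothing the non-pointed intersection then produces $\beta$ and $\beta'$ as two \emph{separate} resolutions, yielding the two triangles $0 \to X' \to Y \to X\langle g\rangle[-g]\to 0$ and $0 \to X \to Y' \to X'\langle g'\rangle[-g']\to 0$ (the asymmetry between $X$ and $X'$ reflecting which arc contributes the ``inside'' part at $j$).

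The main obstacle, I expect, is pinning down the shifts $g, g'$ and verifying that the cone is genuinely a direct sum $Y\oplus Y'$ (in the crossing case) rather than a nonsplit extension of $Y$ by $Y'$. This requires the explicit form of the differentials in the Khovanov--Seidel complexes near the smoothing point, i.e.\ the kind of bookkeeping in~\cite[Lem.~3.20, Lem.~4.12]{kho.sei:02} adapted to the path-length grading used here (the same grading subtlety flagged in the proof of \cref{prop:ext1}). A clean way to organize this would be to note that the $\b{g}$-vector identities of \cref{lem:linearDependences} already force the right numerics on the level of Grothendieck groups: $[X]+[X'] = [Y]+[Y']$ in the crossing case (and the analogous relation in the non-pointed case), which both fixes $g,g'$ and shows that the cone and $Y\oplus Y'$ have the same class, so that checking the extension is split reduces to a $\mathbb{E}^1$-vanishing computation between $Y$ and $Y'$ — and since $\beta,\beta'$ are compatible (they both belong to $T\cap T'$ by \cref{lem:linearDependences}), \cref{prop:ext1} gives $\mathbb{E}^1(Y,Y')=0$, so the extension splits. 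That last observation is what makes the crossing case come out as a direct sum.
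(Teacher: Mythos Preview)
Your overall strategy---take the unique level-one morphism guaranteed by \cref{prop:ext1}, form its cone, and identify that cone with the curve obtained by smoothing the intersection of $\alpha$ and $\alpha'$---is exactly the approach the paper has in mind; indeed the paper simply defers to the explicit complex analysis in~\cite[Thm.~5.5]{tho:06} rather than spelling it out. So the skeleton is right.

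There is, however, a genuine gap in your final paragraph for the crossing case. You argue that $[C] = [Y]+[Y']$ in the Grothendieck group and that $\mathbb{E}^1(Y,Y')=0$, and conclude that $C \cong Y \oplus Y'$. This does not follow: equality of classes in $K_0$ does not exhibit $C$ as an extension of $Y$ by $Y'$, so there is nothing for the $\mathbb{E}^1$-vanishing to split. (Many non-isomorphic objects of the heart share a $K_0$-class.) The argument you want is the direct one you already gestured at in your second step: the Khovanov--Seidel complex for the cone is literally the complex attached to the smoothed curve, and since smoothing a single crossing of two $x$-monotone arcs produces the \emph{disjoint} union $\beta \sqcup \beta'$, the cone complex has two connected summands with no differential between them---hence $C \cong Y \oplus Y'$ on the nose. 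That is the content of the computation in~\cite{tho:06}, and no Grothendieck-group or Ext-vanishing detour is needed. In the non-pointed case your picture is correct once you note that the two extensions come from the two \emph{directions} of the level-one morphism (one in $\mathbb{E}^1(X,X')$ and one in $\mathbb{E}^1(X',X)$), each cone producing one of the two arcs passing through the shared endpoint.
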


\begin{proof}
  The proof is once again a consequence of analysing the extensions explicitly via the complexes from the Khovanov--Seidel reconstruction.
  We do not reproduce the proof here, as the argument is contained in the proof of~\cite[Thm.~5.5]{tho:06}.
  (See also~\cite[Fig.~4]{tho:06}.)
\end{proof}
Note that if we write down the relations in the Grothendieck group of our category that correspond to the extensions from each case of~\cref{prop:extensions}, then we obtain precisely the linear dependences that appear between the \(\b{g}\)-vectors in~\cref{lem:linearDependences}.
More precisely, we have that
\begin{itemize}
\item if \(\alpha\) and \(\alpha'\) are not pointed, then
  \([Y] = [X] + [X'] = [Y']\),
  from which we see \([X] + [X'] = ([Y] + [Y'])/2\);
\item if \(\alpha\) and \(\alpha'\) are crossing, then
  \([X] + [X'] = [Y] + [Y'].\)
\end{itemize}
This suggests that there should be a way to construct the \(\b{g}\)-vectors categorically in such a way that the \(\b{g}\)-vectors also map to the Grothendieck group of the category.
The \(\b{g}\)-vector of an object~\(X\) corresponding to a wiggly arc \(\alpha\) can, of course, be reconstructed from \(\alpha\).
However, at the moment we do not have a purely categorical construction of the \(\b{g}\)-vector of \(X\) that does not refer to \(\alpha\), and it would be interesting to find this.

\begin{question}
  Suppose we have a collection of spherical objects in the standard heart \(\mathcal{A}_m\) that satisfy the compatibility condition of~\cref{prop:ext1}.
  That is, the \(\mathbb{E}^1\) between any pair of objects in this collection is zero.
  \begin{enumerate}
  \item Can we prove properties about a maximal such collection without reference to curves?
  \item In particular, can we understand the combinatorics of maximal such collections without reference to curves?
  \item Given a maximal compatible collection of objects, can we always find a larger spherical object such that the factors of the cohomology filtration of that object are precisely shifts of the objects in the given collection?
  \end{enumerate}
\end{question}


\section*{Acknowledgements}

This work started during the \emph{Winterbraids XIII: School on low dimensional topology, braid groups and interactions}. We are grateful to the organizers Bérénice Delcroix-Oger, Peter Feller, Vincent Florens, Hoel Queffelec for this fruitful event.
We thank Anand Deopurkar and Anthony M. Licata for many useful conversations and ideas.
Finally, we are grateful to an anonymous referee for many relevant suggestions on the presentation of the paper.


\addtocontents{toc}{ \vspace{.1cm} }
\bibliographystyle{alpha}
\bibliography{wigglyhedra}

\newcommand{\etalchar}[1]{$^{#1}$}
\begin{thebibliography}{HHMW22}

\bibitem[AIR14]{AdachiIyamaReiten}
Takahide Adachi, Osamu Iyama, and Idun Reiten.
\newblock {$\tau$}-tilting theory.
\newblock {\em Compos. Math.}, 150(3):415--452, 2014.

\bibitem[Bay11]{bayer2011tour}
Arend Bayer.
\newblock A tour to stability conditions on derived categories.
\newblock 2011.

\bibitem[BBD82]{bel.ber.del:82}
Alexander~A. Be{\u\i}linson, Joseph Bernstein, and Pierre Deligne.
\newblock Faisceaux pervers.
\newblock In {\em Analysis and topology on singular spaces, {I} ({L}uminy,
  1981)}, volume 100 of {\em Ast\'erisque}, pages 5--171. Soc. Math. France,
  Paris, 1982.

\bibitem[BDL20]{bap.deo.lic:20}
Asilata Bapat, Anand Deopurkar, and Anthony~M. Licata.
\newblock {A Thurston compactification of the space of stability conditions}.
\newblock 2020.
\newblock \url{http://arxiv.org/abs/2011.07908}.

\bibitem[BDL22]{bap.deo.lic:22}
Asilata Bapat, Anand Deopurkar, and Anthony~M. Licata.
\newblock Spherical objects and stability conditions on 2-{C}alabi–{Y}au
  quiver categories.
\newblock {\em Mathematische Zeitschrift}, 303(1), Dec 2022.

\bibitem[BLS{\etalchar{+}}99]{BjornerLasVergnasSturmfelsWhiteZiegler}
Anders Bj{\"o}rner, Michel {Las Vergnas}, Bernd Sturmfels, Neil White, and
  G{\"u}nter~M. Ziegler.
\newblock {\em Oriented matroids}, volume~46 of {\em Encyclopedia of
  Mathematics and its Applications}.
\newblock Cambridge University Press, Cambridge, second edition, 1999.

\bibitem[BMR{\etalchar{+}}06]{bua.mar.rei.ea:06}
Aslak~Bakke Buan, Bethany Marsh, Markus Reineke, Idun Reiten, and Gordana
  Todorov.
\newblock Tilting theory and cluster combinatorics.
\newblock {\em Advances in Mathematics}, 204(2):572–618, Aug 2006.

\bibitem[BR87]{ButlerRingel}
M.~C.~R. Butler and Claus~Michael Ringel.
\newblock Auslander-{R}eiten sequences with few middle terms and applications
  to string algebras.
\newblock {\em Comm. Algebra}, 15(1-2):145--179, 1987.

\bibitem[CFZ02]{ChapotonFominZelevinsky}
Fr{\'e}d{\'e}ric Chapoton, Sergey Fomin, and Andrei Zelevinsky.
\newblock Polytopal realizations of generalized associahedra.
\newblock {\em Canad. Math. Bull.}, 45(4):537--566, 2002.

\bibitem[CLS14]{CeballosLabbeStump}
Cesar Ceballos, Jean-Philippe Labb\'e, and Christian Stump.
\newblock Subword complexes, cluster complexes, and generalized
  multi-associahedra.
\newblock {\em J. Algebraic Combin.}, 39(1):17--51, 2014.

\bibitem[CP16]{CeballosPilaud-diameterDAssociahedron}
Cesar Ceballos and Vincent Pilaud.
\newblock The diameter of type {$D$} associahedra and the non-leaving-face
  property.
\newblock {\em European J. Combin.}, 51:109--124, 2016.

\bibitem[DRS10]{DeLoeraRambauSantos}
Jesus~A. {De Loera}, J\"org Rambau, and Francisco Santos.
\newblock {\em Triangulations: Structures for Algorithms and Applications},
  volume~25 of {\em Algorithms and {C}omputation in Mathematics}.
\newblock Springer Verlag, 2010.

\bibitem[FST08]{FominShapiroThurston}
Sergey Fomin, Michael Shapiro, and Dylan Thurston.
\newblock Cluster algebras and triangulated surfaces {I}. {C}luster complexes.
\newblock {\em Acta Math.}, 201(1):83--146, 2008.

\bibitem[FZ02]{FominZelevinsky-ClusterAlgebrasI}
Sergey Fomin and Andrei Zelevinsky.
\newblock Cluster algebras. {I}. {F}oundations.
\newblock {\em J. Amer. Math. Soc.}, 15(2):497--529, 2002.

\bibitem[FZ03]{FominZelevinsky-ClusterAlgebrasII}
Sergey Fomin and Andrei Zelevinsky.
\newblock Cluster algebras. {II}. {F}inite type classification.
\newblock {\em Invent. Math.}, 154(1):63--121, 2003.

\bibitem[HHMW22]{HartungHoangMutzeWilliams}
Elizabeth Hartung, Hung~P. Hoang, Torsten M\"{u}tze, and Aaron Williams.
\newblock Combinatorial generation via permutation languages. {I}.
  {F}undamentals.
\newblock {\em Trans. Amer. Math. Soc.}, 375(4):2255--2291, 2022.

\bibitem[HK01]{hue.kho:01}
Ruth~Stella Huerfano and Mikhail Khovanov.
\newblock A category for the adjoint representation.
\newblock {\em J. Algebra}, 246(2):514--542, 2001.

\bibitem[HL07]{HohlwegLange}
Christophe Hohlweg and Carsten Lange.
\newblock Realizations of the associahedron and cyclohedron.
\newblock {\em Discrete Comput.~Geom.}, 37(4):517--543, 2007.

\bibitem[HLT11]{HohlwegLangeThomas}
Christophe Hohlweg, Carsten Lange, and Hugh Thomas.
\newblock Permutahedra and generalized associahedra.
\newblock {\em Adv. Math.}, 226(1):608--640, 2011.

\bibitem[HPS18]{HohlwegPilaudStella}
Christophe Hohlweg, Vincent Pilaud, and Salvatore Stella.
\newblock Polytopal realizations of finite type $\mathbf{g}$-vector fans.
\newblock {\em Adv. Math.}, 328:713--749, 2018.

\bibitem[KM04]{KnutsonMiller-subwordComplex}
Allen Knutson and Ezra Miller.
\newblock Subword complexes in {C}oxeter groups.
\newblock {\em Adv.~Math.}, 184(1):161--176, 2004.

\bibitem[KS02]{kho.sei:02}
Mikhail Khovanov and Paul Seidel.
\newblock Quivers, {F}loer cohomology, and braid group actions.
\newblock {\em J. Amer. Math. Soc.}, 15(1):203--271, 2002.

\bibitem[LQ21]{lic.que:21}
Anthony~M. Licata and Hoel Queffelec.
\newblock Braid groups of type ade, garside monoids, and the categorified root
  lattice.
\newblock {\em Annales Scientifiques de l’École Normale Supérieure},
  54(2):503–548, 2021.

\bibitem[Pou14]{Pournin}
Lionel Pournin.
\newblock The diameter of associahedra.
\newblock {\em Adv. Math.}, 259:13--42, 2014.

\bibitem[PP12]{PilaudPocchiola}
Vincent Pilaud and Michel Pocchiola.
\newblock Multitriangulations, pseudotriangulations and primitive sorting
  networks.
\newblock {\em Discrete Comput. Geom.}, 48(1):142--191, 2012.

\bibitem[PPP19]{PaluPilaudPlamondon-surfaces}
Yann Palu, Vincent Pilaud, and Pierre-Guy Plamondon.
\newblock Non-kissing and non-crossing complexes for locally gentle algebras.
\newblock {\em J. Comb. Algebra}, 3(4):401--438, 2019.

\bibitem[PPP21]{PaluPilaudPlamondon-nonkissing}
Yann Palu, Vincent Pilaud, and Pierre-Guy Plamondon.
\newblock Non-kissing complexes and tau-tilting for gentle algebras.
\newblock {\em Mem. Amer. Math. Soc.}, 274(1343), 2021.

\bibitem[PS09]{PilaudSantos-multitriangulations}
Vincent Pilaud and Francisco Santos.
\newblock Multitriangulations as complexes of star polygons.
\newblock {\em Discrete Comput. Geom.}, 41(2):284--317, 2009.

\bibitem[PV96]{PocchiolaVegter}
Michel Pocchiola and Gert Vegter.
\newblock Topologically sweeping visibility complexes via pseudotriangulations.
\newblock {\em Discrete Comput.~Geom.}, 16(4):419--453, 1996.

\bibitem[Rea06]{Reading-CambrianLattices}
Nathan Reading.
\newblock Cambrian lattices.
\newblock {\em Adv.~Math.}, 205(2):313--353, 2006.

\bibitem[Rea15]{Reading-arcDiagrams}
Nathan Reading.
\newblock Noncrossing arc diagrams and canonical join representations.
\newblock {\em SIAM J. Discrete Math.}, 29(2):736--750, 2015.

\bibitem[RS09]{ReadingSpeyer}
Nathan Reading and David~E. Speyer.
\newblock Cambrian fans.
\newblock {\em J.~Eur.~Math.~Soc.}, 11(2):407--447, 2009.

\bibitem[RSS03]{RoteSantosStreinu-polytope}
G{\"u}nter Rote, Francisco Santos, and Ileana Streinu.
\newblock Expansive motions and the polytope of pointed pseudo-triangulations.
\newblock In {\em Discrete and computational geometry}, volume~25 of {\em
  Algorithms Combin.}, pages 699--736. Springer, Berlin, 2003.

\bibitem[RSS08]{RoteSantosStreinu-pseudotriangulations}
G\"{u}nter Rote, Francisco Santos, and Ileana Streinu.
\newblock Pseudo-triangulations---a survey.
\newblock In {\em Surveys on discrete and computational geometry}, volume 453
  of {\em Contemp. Math.}, pages 343--410. Amer. Math. Soc., Providence, RI,
  2008.

\bibitem[San12]{Santos-Hirsch}
Francisco Santos.
\newblock A counterexample to the {H}irsch conjecture.
\newblock {\em Ann. of Math. (2)}, 176(1):383--412, 2012.

\bibitem[San13]{Santos-surveyHirsch}
Francisco Santos.
\newblock Recent progress on the combinatorial diameter of polytopes and
  simplicial complexes.
\newblock {\em TOP}, 21(3):426--460, 2013.

\bibitem[Stu11]{Stump}
Christian Stump.
\newblock A new perspective on $k$-triangulations.
\newblock {\em J. Combin. Theory Ser. A}, 118(6):1794--1800, 2011.

\bibitem[Tho06]{tho:06}
Richard~P. Thomas.
\newblock Stability conditions and the braid group.
\newblock {\em Commun. Anal. Geom.}, 14(1):135--161, 2006.

\bibitem[Zie98]{Ziegler-polytopes}
G{\"u}nter~M. Ziegler.
\newblock {\em Lectures on Polytopes}, volume 152 of {\em Graduate texts in
  Mathematics}.
\newblock Springer-Verlag, New York, 1998.

\end{thebibliography}
\label{sec:biblio}

\end{document}